\numberwithin{equation}{section}
\newtheorem{proposition}{Proposition}[section]
\newtheorem{theorem}[proposition]{Theorem}
\newtheorem{lemma}[proposition]{Lemma}
\newtheorem{corollary}[proposition]{Corollary}
\newtheorem{remarks}[proposition]{Remark}
\newtheorem{claim}[proposition]{Claim}
\newtheorem{exam}[proposition]{Example}
\newenvironment{customthm}[1]
 {\innercustomthm}
 {\endinnercustomthm}
\newcommand{\bC}{\mathbf{C}}
\newcommand{\bD}{\mathbf{D}}
\newcommand{\bF}{\mathbf{F}}
\newcommand{\bH}{\mathbf{H}}
\newcommand{\bP}{\mathbf{P}}
\newcommand{\bL}{\mathbf{L}}
\newcommand{\bM}{\mathbf{M}}
\newcommand{\bQ}{\mathbf{Q}}
\newcommand{\bp}{\mathbf{p}}
\newcommand{\CC}{\mathbf{C}}
\newcommand{\NN}{\mathbb{N}}
\newcommand{\RR}{\mathbf{R}}
\newcommand{\cA}{\mathcal A}
\newcommand{\cB}{\mathcal B}
\newcommand{\cC}{\mathcal C}
\newcommand{\cD}{\mathcal D}
\newcommand{\cE}{\mathcal E}
\newcommand{\cG}{\mathcal G}
\newcommand{\cH}{\mathcal H}
\newcommand{\cI}{\mathcal I}
\newcommand{\cJ}{\mathcal J}
\newcommand{\cK}{\mathcal K}
\newcommand{\cL}{\mathcal L}
\newcommand{\cM}{\mathcal M}
\newcommand{\cO}{\mathcal O}
\newcommand{\cP}{\mathcal P}
\newcommand{\cQ}{\mathcal Q}
\newcommand{\cR}{\mathcal R}
\newcommand{\cS}{\mathcal S}
\newcommand{\cT}{\mathcal T}
\newcommand{\cU}{\mathcal U}
\newcommand{\cV}{\mathcal V}
\newcommand{\cW}{\mathcal W}
\newcommand{\cY}{\mathcal Y}
\newcommand{\cZ}{\mathcal Z}
\def\fB{\mathfrak{B}}
\newcommand{\dist}{\mathrm{dist}}
\newcommand{\beq}{\begin{equation}}
\newcommand{\eeq}{\end{equation}}
\newcommand{\beqa}{\begin{eqnarray*}}
\newcommand{\eeqa}{\end{eqnarray*}}
\newcommand{\eps}{\epsilon}
\newcommand{\pa}[2]{\frac{\partial #1}{\partial #2}}
\def\wl{\par \vspace{\baselineskip}}
\newcommand{\res}{\mathbin{\vrule height 1.6ex depth 0pt width
0.13ex\vrule height 0.13ex depth 0pt width 1.3ex}}
\DeclareMathOperator{\tr}{tr}
\DeclareMathOperator{\divr}{div}
\DeclareMathOperator{\diam}{diam}
\DeclareMathOperator{\gr}{graph}
\DeclareMathOperator{\Tan}{Tan}
\DeclareMathOperator{\spt}{spt}
\DeclareMathOperator{\reg}{reg}
\DeclareMathOperator{\sing}{sing}
\DeclareMathOperator{\Var}{Var}
\address{Massachusetts Institute of Technology\\
Department of Mathematics\\
77 Massachusetts Avenue\\
Cambridge, MA 02139-4307 \\}
\email{stbeckerkahn@mit.edu}
\author{Spencer T. Becker-Kahn}
\title[Singularities of Minimal Two-Valued Graphs]{Transverse Singularities of Minimal Two-Valued Graphs in Arbitrary Codimension}
\thanks{Partially supported by the EPSRC grant EP/H023348/1 for the University of Cambridge's Centre for Doctoral Training: the Cambridge Centre for Analysis. Partially completed while the author was a PhD student under the supervision of Neshan Wickramasekera. The author was previously known as Spencer Hughes}
\begin{document}

\begin{abstract}
We prove some epsilon regularity results for $n$-dimensional minimal two-valued Lipschitz graphs. The main theorems imply uniqueness of tangent cones and regularity of the singular set in a neighbourhood of any point at which at least one tangent cone is equal to a pair of transversely intersecting multiplicity one $n$-dimensional planes, and in a neighbourhood of any point at which at which at least one tangent cone is equal to a union of four distinct multiplicity one $n$-dimensional half-planes that meet along an $(n-1)$-dimensional axis. The key ingredient is a new Excess Improvement Lemma obtained via a blow-up method (inspired by the work of L. Simon on the singularities of `multiplicity one' classes of minimal submanifolds) and which can be iterated unconditionally. We also show that any tangent cone to an $n$-dimensional minimal two-valued Lipschitz graph that is translation invariant along an $(n-1)$ or $(n-2)$-dimensional subspace is indeed a cone of one of the two aforementioned forms, which yields a global decomposition result for the singular set. 
\end{abstract}

\maketitle
\setcounter{tocdepth}{1}
\tableofcontents
\wl

There are very few results about the nature of the singular set of a minimal submanifold in arbitrary codimension. Allard's seminal work in this area (\cite{allard}) shows only that the singular set is closed and nowhere dense. In light of the simple example of a transverse union of hyperplanes, the optimal dimension estimate for the singular set (of an $n$-dimensional stationary integral varifold) would be $\dim_{\cH}(\sing V) \leq n-1$, but the possibility of a singular set with positive $\cH^n$ measure, \emph{i.e.} a `fat Cantor set'-like singular set, has not been ruled out. Despite this, sharp dimension estimates for the singular set have been obtained in various special cases, most notably for area-minimizing surfaces (\emph{i.e.} integral currents) in the celebrated work of Almgren (\cite{almgrenbig}). The same result has recently been obtained by De Lellis and Spadaro in a series of works \cite{despadQ, despadmultiple, despadregularityI, despadregularityII, despadregularityIII}. A sharp dimension estimate for the singular set is also known for minimal Lipschitz graphs via the combined work of Allard (\cite{allard}), Allard -- Almgren (\cite{allardalmgrenstructure}), Barbosa (\cite{barbosaextrinsic}) and Lawson -- Osserman (\cite{lawsonosserman}). However, there are even fewer examples where, in arbitrary codimension, a more detailed analysis of the singular set has been possible (\emph{e.g.} gaining precise asymptotics on approach to singularities or proving uniqueness of tangent cones). Of particular note are the following cases: The $n=1$ case (a complete description of one-dimensional stationary varifolds was given by Allard and Almgren in \cite{allardalmgrenstructure}), the two-dimensional area-minimizing case (uniqueness of tangent cones is due to White: \cite{whiteuniqueness}, and complete regularity Chang: \cite{changtwodimcurr}) and the work of Simon (\cite{simoncylindrical}) on `multiplicity one classes' of minimal submanifolds, in which he introduced techniques designed to control the linearization of the minimal surface operator (the `blow-up') at certain singular minimal cones. Since Simon's work, the blow-up method has been adapted by Wickramasekera to some codimension 1 settings in which the multiplicity one hypothesis does not hold (in \cite{wickrigidity}, \cite{wick08} and \cite{wickgeneral}, Wickramasekera studies certain higher multiplicity singularities of stable hypersurfaces). 

Here we study the regularity properties of the graph of a two-valued Lipschitz function when that graph is assumed to be minimal, \emph{i.e.} assumed to be a stationary point of the $n$-dimensional area functional in $\RR^{n+k}$. We will call such an object a `minimal two-valued graph' (note that by `two-valued function' we mean a function that maps points in $\RR^n$ to unordered pairs of points in $\RR^k$). This context, the context in which we work, is in arbitrary codimension and in the presence of higher multiplicity singularities (which we will explain shortly). Also, we do not assume that our objects are area-minimizing or stable. Our most restrictive assumption, and one that we rely on, is that of being a two-valued graph. We are interested in the local structure of a minimal two-valued graph close to certain density two singular points. More specifically, we describe the structure of an $n$-dimensional minimal two-valued graph and its singular set close to points at which at least one tangent cone is equal to a transversely intersecting pair of $n$-dimensional subspaces, and close to points at which at least one tangent cone is equal to a union of four $n$-dimensional half-spaces meeting only along an $(n-1)$-dimensional axis. The main results will be stated in full detail shortly, but roughly speaking can be summarized in the following three statements. 

\vspace{0.5cm}

\noindent \textbf{Theorem 1.} \emph{If an $n$-dimensional minimal two-valued graph lies sufficiently close to a pair of planes meeting along an axis of dimension at most $(n-2)$, then it must be equal to the union of two smooth minimal submanifolds, each of which lies close to one of the two planes and which intersect only along a subset of an $(n-2)$-dimensional smooth submanifold that is graphical over the axis of the pair of planes}.

\vspace{0.5cm}

\noindent \textbf{Theorem 2.} \emph{If an $n$-dimensional minimal two-valued graph lies sufficiently close to a pair of planes meeting along an $(n-1)$-dimensional axis, then its singular set is contained in an $(n-1)$-dimensional $C^{1,\alpha}$ submanifold and at each singular point there is a unique tangent cone equal to either a transversely intersecting pair of planes or a union of four half-planes meeting only along an $(n-1)$-dimensional axis}.

\vspace{0.5cm}

\noindent \textbf{Theorem 3.} \emph{If an $n$-dimensional minimal two-valued graph lies sufficiently close to a union of four $n$-dimensional half-planes that meet only along an $(n-1)$-dimensional axis and that are not equal to a pair of planes, then it must be equal to the union of four smooth, minimal submanifolds with boundary meeting only along an $(n-1)$-dimensional $C^{1,\alpha}$ submanifold, their common boundary}.

\vspace{0.5cm}

\noindent The key ingredient in the proof of Theorems 1-3 is a so-called ``Excess Improvement Lemma'' (Lemma 6.2). It says that if a minimal two-valued graph is sufficiently close in $L^2$ distance at scale 1 to a cylindrical cone $\bC$ of the appropriate form, then there exists another cone $\bC^{\prime}$, relative to which the $L^2$ distance at a smaller scale $\theta$ has decayed by a factor that is better than that which is expected from just scale invariance. The basic structure of the proof of this Lemma is very similar to that of Lemma 1 of \cite{simoncylindrical}, from which the main results are then achieved by careful iteration of this Lemma. At a technical level, our difficulties are compounded by the fact that there can be significant gaps in the part of the singular set consisting of points $X$ at which the density $\Theta_V(X)$ is at least 2, \emph{i.e.} gaps in the set $\{X : \Theta_V(X) \geq \Theta_{\bC}(0) = 2 \}$. In the aforementioned manifestations of the method, strongest results were achieved when it was either assumed (Remark 1.14 of \cite{simoncylindrical}) or could be checked (\emph{e.g.} using the stability inequality as in \cite{wickrigidity} and \cite{wickgeneral}) that there were `lots' of good density points in the sense that a $\delta$-neighbourhood of the set $\{X : \Theta_V(X) \geq \Theta_{\bC}(0) \}$ contained the axis of the cone. In our setting, this does not hold and extra effort must be expended to prove an excess decay Lemma that can (at multiplicity two points) be iterated indefinitely and thus still yield strong results.

We also show the following result, which classifies certain tangent cones and in light of well-known stratification results about the singular set, leads to global information.

\vspace{0.5cm}

\noindent \textbf{Theorem 4} \emph{Any tangent cone to an $n$-dimensional minimal two-valued graph that is invariant under translations along an $(n-2)$-dimensional subspace must be equal to either a union of two distinct, multiplicity one $n$-dimensional planes intersecting along an $(n-2)$-dimensional subspace or equal to a union of four $n$-dimensional half-planes meeting only along an $(n-1)$-dimensional axis}.

\vspace{0.5cm}

\noindent \textbf{Acknowledgements.} I would like to thank my advisor Neshan Wickramasekera for his guidance and encouragement and for the many hours that he spent teaching me. I would also like to thank the referee for making many helpful remarks on the first version of the article and drawing my attention to places where the clarity needed to be improved.

\section{Notation and Main Theorems} 
\subsection{Basic Notation}

We start by setting out basic notation and terminology that is common to all sections.

\noindent We will use upper case letters such as $X$ to denote points in $\RR^{n+k}$. \smallskip

\noindent For $X \in \RR^{n+k}$, we will write $R = R(X) = |X|$. \smallskip

\noindent For $X_0 \in \RR^{n+k}$ and $\rho > 0$, $B_{\rho}(X_0) = \{X \in \RR^{n+k} : |X_0-X| < \rho\}$.\smallskip

\noindent For $X_0 \in \RR^n\times\{0\}^k$ and $\rho > 0$, $B^n_{\rho}(X_0) = \{X \in \RR^n \times \{0\}^k : |X_0-X| < \rho\}$. \smallskip

\noindent For $X_0 \in \RR^{n+k}$ and $\rho > 0$, we define the transformations $\eta_{X_0,\rho}$, $T_{X_0}$ and $\tau_{X_0} : \RR^{n+k} \to  \RR^{n+k}$ by $\eta_{X_0,\rho}(X) = \rho^{-1}(X-X_0)$, $T_{X_0}(X) = X + X_0$ and $\tau_{X_0}(X) = X - X_0$. \smallskip

\noindent For $s \geq 0$, $\cH^s$ denotes the $s$-dimensional Hausdorff measure on $\RR^{n+k}$ and $\omega_n = \cH^n(B^n_1(0))$. \smallskip

\noindent For $A, B \subset \RR^{n+k}$, $d_{\cH}(A,B)$ denotes the Hausdorff distance between $A$ and $B$.\smallskip

\noindent For $X \in \RR^{n+k}$ and $A \subset \RR^{n+k}$, $\dist(X,A) = \inf_{Y \in A}|X-Y|$. \smallskip

\noindent For $A \subset \RR^{n+k}$ and $\rho > 0$, we write $(A)_{\rho} = \{X \in \RR^{n+k} : \dist(X,A) < \rho \}$. \smallskip

\noindent By a \emph{plane} we mean any affine $n$-dimensional subspace of $\RR^{n+k}$ and for any plane $T$, we use $\bp_T$ to denote the orthogonal projection onto $T$. More commonly, we will use the shorthand $Y^{\top_T} = \bp_T Y$ and $Y^{\perp_T} = \bp_{T^{\perp}}Y$. \smallskip

\noindent By a \emph{half-plane}, we mean a closed half-plane: Any set which is the closure of one of the connected components of $T\setminus L$, where $T$ is any plane and $L$ is any $(n-1)$-dimensional subspace of $T$. For any half-plane $H$, we write $\bp_{H}$ for the orthogonal projection onto the unique plane containing $H$. \smallskip

\noindent $G_n$ denotes the space of $n$-dimensional subspaces of $\RR^{n+k}$. \smallskip

For an integral $n$-varifold $V$ (see \cite{allard} or \cite[Chapter 4, 8]{simongmt}) in the open set $U$, we use the following notation:\smallskip

\noindent The \emph{weight measure} $\|V\|$ of $V$ is the Radon measure on $U$ given by $\|V\|(A) = V(\{(x,S) \in U \times G_n : x \in A\})$ and $\spt\|V\|$ is called the \emph{support} of the varifold $V$. \smallskip

\noindent Given an $n$-rectifiable set $M$, $|M|$ denotes the multiplicity one varifold associated with $M$. \smallskip

\noindent For $Z \in \spt \|V\|$, $\Var\Tan(V,Z)$ denotes the set of all varifold tangent cones to $V$ at $Z$, \emph{i.e.} each $W \in \Var\Tan(V,Z)$ arises as $W = \lim_{j \to \infty}(\eta_{Z,\rho_j})_*V$ for some sequence of positive numbers $\rho_j \to 0$, where, for any proper, injective, Lipschitz map $f$, $f_*V$ is the \emph{pushforward} of $V$ by $f$. \smallskip

\noindent For $\cH^n$-a.e. $Z \in \spt \|V\|$, we write $T_ZV$ for the approximate tangent plane (see \cite[Chapter 3]{simongmt}) to $\spt \|V\|$ at $Z$. \smallskip

\noindent $\reg V$ denotes the regular part of $V$, by which we mean that $X \in \reg V$ if and only if $X \in \spt \|V\|$ and there exists $\rho > 0$ such that $B_{\rho}(X) \cap \spt \|V\|$ is a smooth, $n$-dimensional embedded submanifold of $B_{\rho}(X)$. \smallskip

\noindent $\sing V$ denotes the (interior) singular part of $V$, \emph{i.e.} $\sing V = (\spt \|V\| \setminus \reg V) \cap U$. \smallskip

\subsection{The Minimal Surface System} For the single-valued function $f : B_2^n(0) \to \RR^k$, the area formula tells us that
\beq
\cH^n(\gr f) = \int_{B_2^n(0)} \det(\delta_{ij} + \Sigma_{\kappa=1}^k D_if^{\kappa}D_jf^{\kappa})^{1/2} \, d\cH^n.
\eeq
If $V_f := |\gr f|$ is stationary as a rectifiable $n$-varifold in $B_2^n(0)\times \RR^k$, then it is in particular stationary with respect to deformations only in the vertical directions and therefore
\beq
\left.\frac{d}{dt}\right|_{t=0} \cH^n(\gr (f + t\varphi)) = 0
\eeq
for any $\varphi \in C^{\infty}_c(B_2^n(0);\RR^k)$. This implies (as can be seen by direct computation using the fact that for a square matrix $A(t)$ that is a function of the scalar parameter $t$, one has $(d/dt)(\det A(t)) = \tr(\mathrm{adj}A(t)A'(t))$ ) that $f$ is a Lipschitz weak solution to the Minimal Surface System, \emph{i.e.} it satisfies
\beq \label{weakmss}
\int_{B_2^n(0)} \sqrt{g(f)}g^{ij}(f)\sum_{\kappa=1}^kD_if^{\kappa}D_j\varphi^{\kappa} d\cH^n = 0
\eeq
 for all $\varphi \in C^{\infty}_c(B_2^n(0);\RR^k)$, where $g_{ij}(f) = \delta_{ij} + \Sigma_{\kappa=1}^k D_if^{\kappa}D_jf^{\kappa}$, $g^{ij}(f) = (g_{ij}(f))^{-1}$ and $g(f) = \det g_{ij}(f)$. A homogeneous degree one Lipschitz weak solution $g : \RR^d \to \RR^k$ to the Minimal Surface System is necessarily linear if $d \in \{1,2,3\}$ (this follows from the main theorem of \cite{barbosaextrinsic}). Using this in conjunction with Allard's Regularity Theorem and the stratification of the singular set (see \eqref{singstra}), we see that a Lipschitz weak solution to the Minimal Surface System is $C^{1,\alpha}$ away from codimension four set. Hence, by standard regularity theory for elliptic systems (see \cite{morreymultiple}), such a function is analytic away from a codimension four set. In particular, $\sing V_f \leq n-4$. The following example due to Lawson and Osserman (\cite{lawsonosserman}) shows that such a function can indeed have singularities on a codimension four set:

\begin{exam} [\cite{lawsonosserman}] \label{lawsossecone} Consider $S^3$ to be the unit sphere in $\CC^2 \cong \RR^4$ and consider $S^2$ to be the unit sphere in $\RR\times\CC \cong \RR^3$. Define 
$\eta : S^3 \to S^2$ by 
\[
\eta(z_1,z_2) = (|z_1|^2 - |z_2|^2,2z_1\bar{z_2})
\]
(this is the \emph{Hopf map}). The homogeneous degree one function $f : \RR^4 \to \RR^3$ given by 
\[
f(x) = \frac{\sqrt{5}}{2}|x|\eta\Bigl(\frac{x}{|x|}\Bigr)\ \ \text{for}\ x \neq 0
\]
is a Lipschitz weak solution to the minimal surface system on $\RR^4$ with an isolated singularity at the origin. \end{exam}

\subsection{Two-Valued Functions}
We write $\cA_2(\RR^k)$ for the set of all unordered pairs of points in $\RR^k$. A \emph{two-valued function}  (or more generally a \emph{two-$\RR^k$-valued function}) on an open set $\Omega \subset \RR^n$ is a map $f : \Omega \to \cA_2(\RR^k)$. We equip $\cA_2(\RR^k)$ with the metric
\[
\cG(a,b) := \min \{ |a_1 - b_1| + |a_2 - b_2|, |a_1 - b_2| + |a_2 - b_1|\},
\]
where $a = \{a_1,a_2\} \in \cA_2(\RR^k)$ and $b = \{b_1,b_2\} \in \cA_2(\RR^k)$. Thus a two-valued function $f$ on $\Omega$ is \emph{Lipschitz} if there exists some constant $L$ such that 
\[
\cG(f(x),f(y)) \leq L|x-y|
\]
for all $x,y, \in \Omega$. We say that $f$ is \emph{differentiable} at $x \in \Omega$ if there exists a two-$\RR^k$-valued affine function $A_x$ on $\RR^n$ of the form
\[
A_x(h) = \{f_1(x) + A_1(x)h,f_2(x) + A_2(x)h\}
\]
for $k\times n$ matrices $A_1(x)$ and $A_2(x)$ such that
\[
\lim_{h \to 0}|h|^{-1}\cG(f(x),A_x(h)) = 0.
\]
In this case, we write $Df(x) = \{Df_1(x),Df_2(x)\}$ instead of $\{A_1(x),A_2(x)\}$. A two-valued function $f$ on $\Omega$ is \emph{continuously differentiable} on $\Omega$ and we write $f \in C^1(\Omega;\cA_2(\RR^k))$ if it is both differentiable at every point of $\Omega$ and the two-valued function $Df$ is continuous on $\Omega$. We say that $f \in C^{1,\mu}(\Omega;\cA_2(\RR^k))$ for $\mu \in (0,1]$ if $f$ is $C^1$ and also
\[
|f|_{1,\alpha;\Omega} < \infty
\]
where
\[
|f|_{1,\alpha;\Omega} = \sup_{\Omega}|f| + \sup_{\Omega}|Df| + [Df]_{\alpha,\Omega}.
\]
Here, the H\"{o}lder coefficient is interpreted in the obvious way, \emph{i.e.}
\[
[Df]_{\alpha;\Omega} = \sup_{\substack{x,y, \in \Omega\\ x\neq y}} |x-y|^{-\alpha}\cG(Df(x), Df(y)).
\]
Note that $C^1(\Omega;\cA_2(\RR^k))$ and $C^{1,\alpha}(\Omega;\cA_2(\RR^k))$ are not linear spaces as there is in general no well-defined pointwise addition on two-valued functions. We define the graph of a two-valued function $f$ by 
\[
\gr f := \{ (x,y) \in \Omega\times\RR^k : y \in \{f_1(x),f_2(x)\}\}.
\]

\subsection{Minimal Two-Valued Graphs}

Write $\pi$ for the orthogonal projection of $\RR^{n+k}$ onto $\RR^n\times\{0\}^k$. Now, it is not difficult to see that if $f : B_2^n(0) \to \cA_2(\RR^k)$ is Lipschitz, then $\gr f$ is $n$-rectifiable. So, by taking $\gr f$ together with the multiplicity function defined on it which is equal to two at points $Y \in \gr f$ for which $f_1(\pi Y) = f_2(\pi Y)$ and equal to 1 otherwise, we can consider $\gr f$ to be an integral $n$-varifold $V_f$ in $B_2^n(0)\times \RR^k$. We will say that $V = V_f$ is the varifold \emph{associated to} $\gr f$ or to $f$. When $V$ is stationary in $B_2^n(0) \times \RR^k$, \emph{i.e.} when
\beq \label{stat}
\int_{(B_2^n(0)\times \RR^k) \times G_n}\divr_{S}\Phi(x)\ dV(x,S) = 0
\eeq
for all $\Phi \in C^1_c(U;\RR^{n+k})$, we say that $V$ is a \emph{minimal two-valued graph}. Write $\cV$ for the set of all minimal two-valued graphs in $B_2^n(0)\times\RR^k$ that are associated to some Lipschitz function $f : B_2(0) \to \cA_2(\RR^k)$.

Let $V = V_f \in \cV$. For $X \in \spt \|V\|$, the assignment of single-valued Lipschitz functions $f_i : B_{\delta}^n(\pi X)\times\{0\}^k \to \RR^k$ for $i=1,2$ and some $\delta > 0$ such that $V \res (B^n_{\delta}(\pi X)\times\RR^k) = |\gr f_1| + |\gr f_2|$ is called a \emph{labelling} of $f$ in $B^n_{\delta}(\pi X)$. If $U \subset B_2^n(0) \times \RR^k$ is such that $V \res U = V_1 + V_2$, where for $i=1,2$, $V_i$ is a (possibly empty) stationary, Lipschitz single-valued graph, then we say that $V$ \emph{decomposes} in $U$. From the definition of stationarity and the fact that $f$ is continuous, it is easy to see that $V$ decomposes in any cylindrical region $\Omega \times \RR^k$ which is free of multiplicity two points and in a neighbourhood of any multiplicity one point. The \emph{branch set} $\cB_f$ is the complement in $\gr f$ of the set $\{X \in \gr f : V\ \text{decomposes in a neighbourhood of}\ X\}$. Any $X \in \cB_f$ is called a \emph{branch point} of $V$. Let us remark here that in general $V \in \cV$ does not globally decompose: Large classes of $C^{1,\alpha}$ branched minimal two-valued graphs were constructed directly in \cite{simonwickpresboun}, \cite{rosales2valumse} and \cite{krumexis}. 


\subsection{Stratification of The Singular Set} 

It is well-known that the singular set of a stationary integral varifold can be `stratified' in the following way: For any stationary cone $\bC$ (where, for our purposes, `cone' will mean an integral varifold whose support is a union of rays emanating from the origin), we write $S(\bC) : = \{ Z \in \RR^{n+k} : \Theta_{\bC}(Z) = \Theta_{\bC}(0)\}$. We call this set the \emph{spine} of $\bC$ and it is not difficult to show that it is a linear subspace of $\RR^{n+k}$. Given a stationary varifold $V$ we write
\beq 
\cS_j = \{ X \in \sing V : \dim S(\bC) \leq j\ \ \forall\ \bC \in \Var\Tan(V,X)\}
\eeq
Then we have that
\beq \label{singstra}
\dim_{\cH}\cS_j \leq j.
\eeq
 This was first shown for stationary integral varifolds by F. Almgren (\cite{almgrenbig}), but is true in analogous forms in other settings in the study of solutions to geometric variational problems ( \emph{e.g.} energy-minimizing maps \cite{simontheorems} and mean curvature flow \cite{whitestratification}. Or see \cite{simongmt} for a general abstract version).

\subsection{Relevant Classes of Varifolds}

\noindent Write $\cP$ for the set of all integral $n$-varifolds in $\RR^{n+k}$ which are of the form $\bC = |\bP_1| + |\bP_2|$, where $\bP_1$, $\bP_2$ are \emph{distinct} planes meeting only along an affine subspace $A(\bC) := \bP_1 \cap \bP_2 \neq \emptyset$, which we call the \emph{axis} of $\bC$. \smallskip

\noindent We write $\cP_{\emptyset}$ for the set of all integral $n$-varifolds in $\RR^{n+k}$ which are of the form $\bC = |\bP_1| + |\bP_2|$, where $\bP_1$, $\bP_2$ are \emph{disjoint} planes. \smallskip

\noindent We write $\cP_{\leq n-2}$ for the set of all $\bC \in \cP$ with $\dim A(\bC) \leq n-2$ and $\cP_{n-1}$ for the set of all $\bC \in \cP$ with $\dim A(\bC) = n-1$. \smallskip

\noindent Write $\cC_{n-1}$ for the set of all integral $n$-varifolds in $\RR^{n+k}$ which are of the form $\bC = \sum_{i=1}^4 |\bH_i|$, where for $i=1,...,4$, the $\bH_i$ are distinct half-planes meeting only along their common boundary $A(\bC) = \cap_{i=1}^4 \bH_i$, the axis of $\bC$, which is an affine $(n-1)$-dimensional subspace. Note that $\cP_{n-1} \subset \cC_{n-1}$. \smallskip

\noindent Write $\cC := \cC_{n-1} \cup \cP_{\leq n-2}$. \smallskip

\noindent For $\bC \in \cC$, when the coordinates of $\RR^{n+k}$ are labelled in such a way that for $m:= \dim A(\bC)$ and $l:=n-m$ we have $A(\bC) = \{0\}^{l+k}\times\RR^m \subset \RR^{l+k}\times\RR^m$ (so that $X = (x,y) \in A(\bC)^{\perp}\times A(\bC) = \RR^{l+k}\times \RR^m = \RR^{n+k}$), we will say that $\bC$ is \emph{properly aligned}. In this case, $\bC = \bC_0\times\RR^m$, where $\sing \bC_0 = \{0\}$ and $\bC_0$, the \emph{cross-section} of $\bC$, is either the sum of two distinct $l$-dimensional subspaces of $\RR^{l+k}$ meeting only at the origin or the sum of four distinct rays in $\RR^{1+k}$ meeting only at the origin, depending on whether $\bC \in \cP_{\leq n-2}$ or $\bC \in \cC_{n-1}$, respectively. When $\bC^{(0)} \in \cC_{n-1}$ is properly aligned, we write $\{\omega_1,...,\omega_4\} = \{r_{\bC^{(0)}} = 1\} \cap A(\bC^{(0)})^{\perp} \cap \spt \|\bC^{(0)}\|$, where, for any cone $\bC \in \cC$ we define $r_{\bC} = r_{\bC}(X) := \dist(X,A(\bC))$. \smallskip

\noindent For $V \in \cV$ and $\bC$, $\bC^{(0)} \in \cC$, we define
 \begin{align*}
\cQ_V&(\bC) := \left(\int_{B_2^n(0)\times\RR^k}\dist^2(X,\spt \|\bC\|) \, d\|V\|(X)\right. \\
&+ \left.\int_{(B_2^n(0)\times\RR^k) \setminus \{r_{\bC^{(0)}} < 1/8\}}\dist^2(X,\spt \|V\|) \, d\|\bC\|(X)\right)^{1/2}.
\end{align*}  

\noindent Finally we define $\cV_L$ to be the set of all $V = V_f \in \cV$ for which the Lipschitz constant of $f$ is at most $L$.

\subsection{Main Results}

Suppose throughout these statements that we have fixed $L > 0$.

\begin{customthm}{1} \label{one} Let $\bC^{(0)} \in \cP_{\leq n-2}$. There exists $\eps = \eps(n,k,\bC^{(0)},L)$ $> 0$ such that the following is true. If $V \in \cV_L$ is such that $0 \in \spt \|V\|$ and $\cQ_V(\bC^{(0)}) < \eps$, then we have the following conclusions:
\begin{enumerate}
\item $V\res B_{1/2}(0) = |M_1| + |M_2|$, where, for $i=1,2$, $M_i$ is a smooth, embedded $n$-dimensional minimal submanifold of $B_{1/2}(0)$.

\item $\sing V \cap B_{1/2}(0) = M_1 \cap M_2 \subset \gr \varphi$, where, for some \\ $\alpha = \alpha(n,k,\bC^{(0)},L) \in (0,1)$, $\varphi : A(\bC^{(0)}) \cap B_{1/2}(0) \to A(\bC^{(0)})^{\perp}$ is a $C^{1,\alpha}$ function satisfying $\|\varphi\|_{C^{1,\alpha}(A(\bC^{(0)}) \cap B_{1/2}(0))} \leq c\cQ_V(\bC^{(0)})$ for some $c$ $=$ $c(n,k,$ $\bC^{(0)},L)$.

\item At each $Z \in \sing V \cap B_{1/2}(0)$, we have that $\Var\Tan(V,Z) = \{\bC_Z\}$ for some $\bC_Z \in \cP_{\leq n-2}$ and we have the decay estimate
\beq \label{onea}
\rho^{-n-2}\int_{B^n_{\rho}(\pi Z)\times\RR^k} \dist^2(X,\spt \|\bC_Z\|)d\|V\|(X) \leq c\rho^{2\alpha} \cQ^2_V(\bC^{(0)}),
\eeq
which holds for all $\rho \in (0,1/8)$ and for some $c = c(n,k,\bC^{(0)},L) > 0$.
\end{enumerate}
\end{customthm}

\begin{customthm}{2} \label{two} Let $\bC^{(0)} \in \cP_{n-1}$. There exists $\eps = \eps(n,k,\bC^{(0)},L) > 0$ such that the following is true.  If $V \in \cV_L$ is such that $0 \in \spt \|V\|$ and $\cQ_V(\bC^{(0)}) < \eps$, then we have the following conclusions:
\begin{enumerate}
\item $\sing V \cap B_{1/2}(0) \subset \gr \varphi$, where, for some $\alpha = \alpha(n,k,\bC^{(0)},L) \in (0,1)$, $\varphi : A(\bC^{(0)}) \cap B_{1/2}(0) \to A(\bC^{(0)})^{\perp}$ is a $C^{1,\alpha}$ function satisfying \\$\|\varphi\|_{C^{1,\alpha}(A(\bC^{(0)}) \cap B_{1/2}(0))}$ $\leq$  $c\cQ_V(\bC^{(0)})$ for some $c$ $=$ $c(n,k,$ $\bC^{(0)},L)$.

\item At each $Z \in \sing V \cap B_{1/2}(0)$, we have that $\Var\Tan(V,Z) = \{\bC_Z\}$ for some $\bC_Z \in \cC$ and we have the decay estimate
\beq \label{twoa}
\rho^{-n-2}\int_{B^n_{\rho}(\pi Z)\times\RR^k} \dist^2(X,\spt \|\bC_Z\|)d\|V\|(X) \leq c\rho^{2\alpha} \cQ^2_V(\bC^{(0)}),
\eeq
which holds for all $\rho \in (0,1/8)$ and for some $c = c(n,k,\bC^{(0)},L) > 0$.
\end{enumerate}
\end{customthm}

\begin{customthm}{3} \label{three} Let $\bC^{(0)} \in \cC_{n-1}\setminus \cP_{n-1}$. There exists $\eps = \eps(n,k,\bC^{(0)},L) > 0$ such that the following is true.  If $V \in \cV_L$ is such that $0 \in \spt \|V\|$ and $\cQ_V(\bC^{(0)}) < \eps$, then we have the following conclusions:
\begin{enumerate}
\item $V \res B_{1/2}(0) = \sum_{j=1}^4 |M_j|$, where for $j=1,2,3,4$, $M_j$ is a smooth, embedded $n$-dimensional minimal submanifold in $B_{1/2}(0)$.

\item $\sing V \cap B_{1/2}(0) = \gr \varphi \cap B_{1/2}(0) = \cap_{j=1}^4\overline{M_j}$, where, for some $\alpha = \alpha(n,k,\bC^{(0)},L) \in (0,1)$, $\varphi : A(\bC^{(0)}) \cap B_{1/2}(0) \to A(\bC^{(0)})^{\perp}$ is a $C^{1,\alpha}$ function satisfying $\|\varphi\|_{C^{1,\alpha}(A(\bC^{(0)}) \cap B_{1/2}(0))}$ $\leq$ $c\cQ_V(\bC^{(0)})$ for some $c$ $=$ $c(n,k,$ $\bC^{(0)},L)$. Moreover,  $\gr \varphi \cap B_{1/2}(0) = \partial M_i$ occurs in the sense of $C^{1,\alpha}$ manifolds-with-boundary for $i = 1,2,3,4$.

\item At each $Z \in \sing V \cap B_{1/2}(0)$, we have that $\Var\Tan(V,Z) = \{\bC_Z\}$ for some $\bC_Z \in \cC_{n-1}\setminus \cP_{n-1}$ and we have the decay estimate
\beq \label{threea}
\rho^{-n-2}\int_{B^n_{\rho}(\pi Z)\times\RR^k} \dist^2(X,\spt \|\bC_Z\|)d\|V\|(X) \leq c\rho^{2\alpha} \cQ^2_V(\bC^{(0)}),
\eeq
which holds for all $\rho \in (0,1/8)$ and for some $c = c(n,k,\bC^{(0)},L) > 0$.
\end{enumerate}
\end{customthm}

Note first that our assumptions do not immediately ensure multiplicity one convergence of $V$ to $\bC^{(0)}$ away from the axis of $\bC^{(0)}$ (this was an assumption in \cite{simoncylindrical}). To expand on this point a little, in a region very close to the axis of $\bC^{(0)}$, the smallness of $\cQ_V(\bC^{(0)})$ only amounts to the $L^2$ smallness of $\dist^2(X,\spt\|\bC^{(0)}\|)$ over the support of the varifold and a priori, smallness of this latter quantity allows for both `sheets' of the two-valued graph to be very close to the same plane of $\bC^{(0)}$. This is what we mean by having to deal with higher multiplicity singularities.

Note that the conclusions of Theorem \ref{one} imply that the the varifold is smooth \emph{as a two-valued graph} in a neighbourhood of a singular point at which at least one tangent cone belongs to $\cP_{\leq n-2}$. The conclusions of the other theorems however do not imply that $V$ is $C^1$ as a two-valued graph. The following example makes this explicit:

\begin{exam} Let $f$ denote the two-$\RR^2$-valued function on $\RR$ given by $f(t) = \{(t,0), (-t,0)\}$ for $t \leq 0$ an $f(t) = \{(0, t), (0,-t)\}$ for $t > 0$. This is Lipschitz as a two-valued function and its graph is minimal in $\RR^3$ (and indeed equal to a union of four smooth `sheets'). However, it is clearly not $C^1$ as a two-valued function at the origin. The codimension of this example is irrelevant and so one can produce such examples of any dimension and codimension by `crossing' this example with Euclidean space. \end{exam}

The next example shows that more exotic singular minimal two-valued graphical cones exist:

\begin{exam} If $f$ is as in Example \ref{lawsossecone}, then the two valued function $x \mapsto \{f(x),-f(x)\}$ is an example of a minimal two-valued Lipschitz graph which is a cone and which is not equal to a union of planes or half-planes. \end{exam}

The conclusions of Theorem \ref{one} to \ref{three} imply that sufficiently close to a singular point at which at least one tangent cone belongs to $\cC$, every tangent cone is unique and also belongs to $\cC$. In particular, a point with a tangent cone in $\cC$ cannot be the limit point of points at which there are `exotic tangent cones', such as that of the above example.

If one is able to achieve $C^{1,\alpha}$ regularity for a minimal two-valued graph, then one may apply the results of \cite{simonwickfrequency} to deduce that the two-valued function in question is $C^{1,1/2}$. Assuming only $C^{1,\alpha}$ regularity to begin with, this is the best possible general result for the regularity of such objects, as the following example shows:

\begin{exam} Consider the irreducible holomorphic variety $I := \{(z,w) \in \CC\times\CC : z^2 = w^3\} \subset \RR^4$. It is well known that such a variety is area-minimizing (because it is `calibrated') and therefore minimal and yet it is easy to see that if viewed as the two-valued graph of $w \mapsto w^{3/2}$, then the regularity at the origin is no better than $C^{1,1/2}$. \end{exam}

We also establish the following theorem:

\begin{customthm}{4} \label{four} Let $V \in \cV$ and $X \in \sing V$. If $\bC \in \Var\Tan(V,X)$ is such that $\dim S(\bC) = n-2$, then $\bC \in \cP$ and $A(\bC) = S(\bC)$. \end{customthm}

When combined with our two main $\eps$-regularity theorems, this result implies that we have a complete description of a minimal two-valued Lipschitz graph near points in $\cS_{n-1} \setminus \cS_{n-2}$ and points in $\cS_{n-2} \setminus \cS_{n-3}$. Let $\fB$ denote the set of points $X \in \sing V$ for which there exists $\bC \in \Var\Tan(V,X)$ equal to a multiplicity two hyperplane. Write $\tilde{\cS}_{n-1}$ for the set of points $X \in \sing V$ for which there exits $\bC \in \Var\Tan(V,X) \cap \cC_{n-1}$ and similarly $\tilde{\cS}_{n-2}$ for the set of points $X \in \sing V$ for which there exits $\bC \in \Var\Tan(V,X) \cap \cP_{ \leq n-2}$. Finally define $\tilde{\cS}_{n-3} := \sing V \setminus (\fB \cup \tilde{S}_{n-1} \cup \tilde{\cS}_{n-2})$.

\begin{corollary} For $V \in \cV$, the singular set $\sing V$ is the disjoint union $\fB \cup \tilde{\cS}_{n-1} \cup \tilde{\cS}_{n-2} \cup \tilde{\cS}_{n-3}$, where
\begin{enumerate}[nolistsep]
\item By definition, for every $X \in \fB$, there is $\bC \in \Var\Tan(V,X)$ equal to a multiplicity two $n$-dimensional plane. 

\item $\dim_{\cH} \tilde{\cS}_{n-1} \leq n-1$ , $\tilde{\cS}_{n-1} \cup \tilde{\cS}_{n-2}$ is relatively open in $\sing V$ and for every $X \in \tilde{\cS}_{n-1}$, we have that the conclusions of Theorem \ref{two} or \ref{three} hold in a neighbourhood of $X$.

\item $\dim_{\cH} \tilde{\cS}_{n-2} \leq n-2$, $\tilde{\cS}_{n-2}$ is relatively open in $\sing V$ and for every $X \in \tilde{\cS}_{n-2}$, we have that the conclusions of Theorem \ref{one} hold in a neighbourhood of $X$.

\item $\dim_{\cH} \tilde{\cS}_{n-3} \leq n-3$ and the closure of $\tilde{\cS}_{n-3}$ does not intersect $\tilde{\cS}_{n-1} \cup \tilde{\cS}_{n-2}$.
\end{enumerate} 
\end{corollary}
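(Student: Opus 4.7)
My plan is to establish the corollary in three steps: disjointness of the four strata, the local structure of $\tilde{\cS}_{n-1}$ and $\tilde{\cS}_{n-2}$ read off from the $\eps$-regularity Theorems \ref{one}--\ref{three}, and a stratification-based analysis of $\tilde{\cS}_{n-3}$. Claim (1) is just the definition of $\fB$. For disjointness, fix $X \in \tilde{\cS}_{n-1}$ and pick $\bC \in \Var\Tan(V,X) \cap \cC_{n-1}$; after translating $X$ to the origin and rescaling $V$ so that $\cQ_V(\bC) < \eps$, Theorem \ref{two} (if $\bC \in \cP_{n-1}$) or Theorem \ref{three} (otherwise) applies and forces $\Var\Tan(V,X) = \{\bC\}$. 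In particular no tangent cone at $X$ can be a multiplicity-two hyperplane or an element of $\cP_{\leq n-2}$, so $X \notin \fB \cup \tilde{\cS}_{n-2}$. The parallel argument with Theorem \ref{one} shows that $X \in \tilde{\cS}_{n-2}$ implies $X \notin \fB \cup \tilde{\cS}_{n-1}$, so the four sets are pairwise disjoint.

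For claim (2), the same translation and rescaling at $X \in \tilde{\cS}_{n-1}$ yields a neighbourhood $U_X$ in which $\sing V \cap U_X$ lies in the graph of a $C^{1,\alpha}$ function over an $(n-1)$-dimensional affine subspace, and in which every $Z \in \sing V \cap U_X$ admits a unique tangent cone lying in $\cC = \cC_{n-1} \cup \cP_{\leq n-2}$. The graphicality gives $\dim_{\cH}(\sing V \cap U_X) \leq n-1$, whence $\dim_{\cH}\tilde{\cS}_{n-1} \leq n-1$ by a countable cover. The uniqueness part shows $\sing V \cap U_X \subset \tilde{\cS}_{n-1} \cup \tilde{\cS}_{n-2}$, establishing relative openness of $\tilde{\cS}_{n-1} \cup \tilde{\cS}_{n-2}$ in $\sing V$. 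Claim (3) is the identical argument with Theorem \ref{one}, giving a graph over a subspace of dimension at most $n-2$ and every nearby tangent cone in $\cP_{\leq n-2}$, so $\sing V \cap U_X \subset \tilde{\cS}_{n-2}$ and $\tilde{\cS}_{n-2}$ itself is relatively open in $\sing V$.

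For claim (4), it suffices to show $\tilde{\cS}_{n-3} \subset \cS_{n-3}$, after which the general stratification estimate \eqref{singstra} yields $\dim_{\cH}\tilde{\cS}_{n-3} \leq n-3$. Fix $X \in \tilde{\cS}_{n-3}$ and $\bC \in \Var\Tan(V,X)$; I rule out $\dim S(\bC) \in \{n, n-1, n-2\}$ in turn. If $\dim S(\bC) = n$, then $\bC$ is a multiplicity-$m$ hyperplane: $m=1$ contradicts $X \in \sing V$ by Allard's theorem, and $m=2$ forces $X \in \fB$. If $\dim S(\bC) = n-2$, Theorem \ref{four} gives $\bC \in \cP$ with $A(\bC) = S(\bC)$ of dimension $n-2$, so $\bC \in \cP_{\leq n-2}$ and $X \in \tilde{\cS}_{n-2}$. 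If $\dim S(\bC) = n-1$, then $\bC$ is a cylinder over a one-dimensional stationary integral cone in $\RR^{k+1}$ of density two at the origin; using the two-valued graph structure of $V$ (which constrains the cross-section to have total multiplicity two on each side of the axis in the base direction), the cross-section must be either a multiplicity-two line (so $\bC$ is a multiplicity-two hyperplane and $X \in \fB$) or a sum of four distinct unit rays (so $\bC \in \cC_{n-1}$ and $X \in \tilde{\cS}_{n-1}$). Each alternative contradicts $X \in \tilde{\cS}_{n-3}$, so $\dim S(\bC) \leq n-3$ as required. The closure statement $\overline{\tilde{\cS}_{n-3}} \cap (\tilde{\cS}_{n-1} \cup \tilde{\cS}_{n-2}) = \emptyset$ follows immediately from the neighbourhoods $U_X$ produced in the proofs of (2) and (3): each satisfies $\sing V \cap U_X \subset \tilde{\cS}_{n-1} \cup \tilde{\cS}_{n-2}$, leaving no room for a limit of $\tilde{\cS}_{n-3}$ points.

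The one genuinely non-routine input is the $\dim S(\bC) = n-1$ subcase of claim (4): Theorem \ref{four} as stated covers only spine dimension $n-2$, so I would need the companion classification for $(n-1)$-translation-invariant tangent cones announced in the abstract. This reduces to an enumeration of balanced one-dimensional stationary integral cones in $\RR^{k+1}$ of density two at the origin that are compatible with a two-valued graph; the graphicality constraint is what rules out intermediate multiplicity patterns such as $(2,1,1)$ and leaves only the multiplicity-two-line and four-distinct-rays configurations.
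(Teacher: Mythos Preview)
The paper does not provide a proof of this corollary; it is stated immediately after Theorem~\ref{four} and left to the reader. Your argument is correct and is the natural deduction from Theorems~\ref{one}--\ref{three}, Theorem~\ref{four}, and the Almgren stratification~\eqref{singstra}.

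One small correction to your final paragraph: the $(2,1,1)$ multiplicity pattern is ruled out by \emph{stationarity} alone, not by the graphicality constraint. If three distinct unit vectors satisfy $2u_1 + u_2 + u_3 = 0$, then $|u_2 + u_3| = 2 = |u_2| + |u_3|$, and the equality case of the triangle inequality forces $u_2 = u_3$, a contradiction. What the two-valued Lipschitz graph structure contributes is that the one-dimensional cross-section $\bC_0$ has exactly two rays (counted with multiplicity) on each side of the hyperplane orthogonal to the domain direction, so the total density at the origin is $2$ and no ray can carry multiplicity $\geq 3$. With those constraints in hand, stationarity leaves only the four-distinct-rays case (giving $\bC \in \cC_{n-1}$) and the multiplicity-two-line case (giving a multiplicity-two plane, hence spine dimension $n$, not $n-1$). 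This is essentially the argument in the paper's proof of Lemma~\ref{dimredu}, transplanted from the context $\Theta_V(X) < 2$ to the present one; your identification of the need for this $(n-1)$-spine classification, which is announced in the abstract but not restated as a separate theorem, is accurate.
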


One application of our main results is to two-valued graphs that are locally area minimizing: It is a standard fact (and not difficult to see by a comparison argument) that an $n$-dimensional area-minimizing current without boundary cannot have a tangent cone with spine dimension $(n-1)$. The work of Almgren (\cite{almgrenbig}) implies that for such a current, the set of points where there is a multiplicity 2 tangent plane has Hausdorff dimension at most $(n-2)$.  Thus we get the following corollary of our main theorems:

\begin{corollary} If $V \in \cV$ is a locally area-minimizing current with $\partial \, V  = 0$ in $B_2^n(0)\times \RR^k$, then $V$ is smoothly immersed away from a closed set $S$ with $\dim_{\cH}(S) \leq n-2$. Moreover, $S$ is the disjoint union $S_1 \cup S_2$, where $S_1$ is the set of points at which there exists at least one tangent cone equal to a multiplicity two hyperplane and $\dim_{\cH}(S_2) \leq n-3$.
\end{corollary}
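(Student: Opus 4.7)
The plan is to assemble the corollary from the stratification in the preceding corollary together with two inputs that are specific to area-minimizing currents. The first input is the standard comparison argument quoted in the excerpt: an $n$-dimensional area-minimizing current with $\partial V = 0$ admits no tangent cone whose spine has dimension $n-1$. Since every member of $\cC_{n-1} = \cP_{n-1} \cup (\cC_{n-1}\setminus\cP_{n-1})$ has spine of dimension exactly $n-1$, this immediately forces $\tilde{\cS}_{n-1} = \emptyset$. Feeding this into the trichotomy of the preceding corollary gives
\[
\sing V \;=\; \fB \,\cup\, \tilde{\cS}_{n-2} \,\cup\, \tilde{\cS}_{n-3}.
\]

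Next I would invoke Theorem \ref{one} at each point of $\tilde{\cS}_{n-2}$: by definition such a point has a tangent cone in $\cP_{\leq n-2}$, so in a neighbourhood $V$ splits as $|M_1|+|M_2|$ with $M_1, M_2$ smooth embedded minimal submanifolds meeting along an $(n-2)$-dimensional $C^{1,\alpha}$ submanifold. This is a smooth immersion of $M_1 \sqcup M_2$ into the ambient space and thus \emph{not} in the non-immersed set $S$. Therefore
\[
S \;\subseteq\; \fB \cup \tilde{\cS}_{n-3}.
\]
Because $\tilde{\cS}_{n-1}=\emptyset$, part (2) of the preceding corollary gives that $\tilde{\cS}_{n-2}$ is relatively open in $\sing V$; so $S$ is the complement in the closed set $\sing V$ of a relatively open subset, hence closed in $B_2^n(0)\times\RR^k$.

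For the dimension bounds I would use the two quoted inputs from \cite{almgrenbig}: the singular set of any area-minimizing current satisfies $\dim_{\cH}(\sing V) \leq n-2$, and the set of multiplicity-two branch points satisfies $\dim_{\cH}(\fB) \leq n-2$. The first gives $\dim_{\cH}(S) \leq n-2$. Setting $S_1 := \fB$ and $S_2 := S \setminus S_1$, the disjoint union is tautological; and since $S_2 \subseteq \tilde{\cS}_{n-3}$, the singular stratification \eqref{singstra} (applicable because Theorem \ref{four} together with the main theorems show that every point of $\sing V$ with a tangent cone of spine dimension $\geq n-2$ lies in $\fB \cup \tilde{\cS}_{n-2} \cup \tilde{\cS}_{n-1}$, whose complement in $\sing V$ is exactly $\tilde{\cS}_{n-3} \subseteq \cS_{n-3}$) yields $\dim_{\cH}(S_2) \leq n-3$.

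The entire argument is bookkeeping on top of the main theorems; the only substantive external input is the comparison argument that eliminates spine-$(n-1)$ tangent cones, and that is the only place area-minimality (rather than mere stationarity) is used beyond Almgren's two dimension estimates. Consequently I do not expect any technical obstacle — the work has already been done upstream in Theorems \ref{one}--\ref{four} and in the preceding corollary.
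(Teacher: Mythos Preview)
Your argument is correct and follows exactly the approach the paper sketches in the paragraph immediately preceding the corollary: eliminate $\tilde{\cS}_{n-1}$ via the comparison argument for area-minimizers, use Theorem~\ref{one} to see that points of $\tilde{\cS}_{n-2}$ are smoothly immersed, and then read off the dimension bounds from Almgren and from the stratification. The only cosmetic point is that, strictly speaking, $S_1$ should be $S\cap\fB$ rather than $\fB$ itself (the corollary defines $S_1$ as a subset of $S$), but since $S_2 := S\setminus S_1$ is unchanged and still contained in $\tilde{\cS}_{n-3}$, the dimension estimate is unaffected.
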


\section{Gaps In The Top Density Part}

In this section we analyse the structure of a minimal two-valued graph in regions in which there are no points of density greater than or equal to 2.

\subsection{Decomposition Into Single-Valued Graphs}
The result of this subsection is the following:

\begin{theorem} \label{thm:grapdeco}
Let $V \in \cV$ and let $U \subset B_2^n(0)\times \RR^k$ be a simply connected open set. If $\{Z \in U\ : \Theta_V(Z) \geq 2\} = \emptyset$, then $V$ decomposes in $U$.
\end{theorem}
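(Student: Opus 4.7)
The plan is a two-step argument: first establish a local decomposition of $V$ around every point of $U$, then globalize this via a monodromy/cohomology argument using simple connectedness of $U$.

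\textbf{Local decomposition.} First I would show that every $X_0 \in U$ has a neighborhood $B_\rho(X_0) \subset U$ on which $V \res B_\rho(X_0)$ is a (possibly zero) single-valued Lipschitz graph. If $X_0 \notin \spt\|V\|$ this is trivial. If $X_0 = (x_0, y_0) \in \spt\|V\|$, then integrality combined with the hypothesis $\Theta_V(X_0) < 2$ forces $\Theta_V(X_0) = 1$, hence $f_1(x_0) \neq f_2(x_0)$ (otherwise the density at $X_0$ would be two). Choosing $\rho$ small compared to $|f_1(x_0) - f_2(x_0)|$ and to the Lipschitz constant $L$ of $f$, Lipschitz continuity of $f$ keeps the two branches separated by more than $2\rho$ on $B^n_\rho(x_0)$, so only the branch through $X_0$ meets $B_\rho(X_0)$. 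It follows that $W := U \cap \spt\|V\|$ is a Lipschitz $n$-submanifold of $U$, locally presentable as a single-valued graph.

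\textbf{Global decomposition via \v{C}ech cohomology.} I would then cover $U$ by balls $\{B_\alpha\}$ from the previous step. On each $B_\alpha$ one has a local decomposition $V \res B_\alpha = V_1^\alpha + V_2^\alpha$ into single-valued Lipschitz graphs (each possibly zero), unique up to the $\ZZ/2$-action swapping labels. On every overlap $B_\alpha \cap B_\beta$ the two labelings agree either directly or after a swap, yielding a \v{C}ech $1$-cocycle $\sigma_{\alpha\beta} \in \ZZ/2$. Its class lies in $H^1(U;\ZZ/2) \cong \Hom(\pi_1(U),\ZZ/2)$, which vanishes by simple connectedness of $U$. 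Hence $\sigma$ is a coboundary and the local labelings can be chosen consistently, producing $V \res U = V_1 + V_2$ with each $V_i$ a single-valued Lipschitz graph. Stationarity of each $V_i$ descends from that of $V$ by localizing the first variation formula against vector fields supported near one sheet at a time.

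\textbf{Main obstacle.} The delicate step is the $\ZZ/2$-torsor and cocycle bookkeeping, because the number of sheets of $V$ present in a small ball around $X \in U$ jumps among $0$, $1$, and $2$, depending on whether $X \in \spt\|V\|$ and on whether both branches of $f$ over $\pi X$ lie in $U$. The $\ZZ/2$-torsor of ordered decompositions is genuinely nontrivial only on the open subset of $U$ where two sheets are present; the transitions between this subset and the one- or zero-sheet regions must be handled carefully (e.g.\ by consistently matching genuine labels to ``ghost'' labels of absent sheets) before the cohomology vanishing can be cleanly invoked.
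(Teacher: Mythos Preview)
Your globalization step via a $\ZZ/2$-monodromy/\v{C}ech argument is in the same spirit as the paper's path-labelling argument, and that part is fine. The genuine gap is in your local decomposition step. Two claims there are unjustified. First, ``integrality \dots\ forces $\Theta_V(X_0)=1$'' is simply false: integrality gives integer density only $\cH^n$-a.e., and at a singular point (think of the vertex of the Lawson--Osserman cone sitting as one sheet of a two-valued graph) the density can be any real number in $[1,2)$. Second, and more seriously, the implication ``$f_1(x_0)=f_2(x_0)\Rightarrow\Theta_V(X_0)\geq 2$'' is not obvious. A crude mass estimate using only the Lipschitz bound gives merely $\Theta_V(X_0)\geq 2/(1+L^2)^{n/2}$; upgrading this to $\geq 2$ genuinely needs stationarity and amounts to analysing the possible tangent cones $V_g$ with $g(0)=\{0,0\}$ --- essentially the content of the paper's Lemma~2.2 (and, at the bottom of the dimension reduction, the Allard--Almgren classification of one-dimensional stationary varifolds). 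Without this, you cannot rule out a coincidence point $f_1(x_0)=f_2(x_0)$ lying in $U$, and at such a point your ``only one branch meets $B_\rho(X_0)$'' picture breaks down.

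The paper sidesteps this issue entirely: rather than claiming a clean local decomposition at every point of $U$, it first proves $\dim_{\cH}(\sing V\cap U)\leq n-3$ by a tangent-cone/spine-dimension analysis. On the complement of $\pi(\sing V\cap U)$ --- which remains simply connected because one has removed a set of $\cH^{n-2}$-measure zero --- the two sheets are genuinely separated (at \emph{regular} points, multiplicity~2 does force density~2), and the path-labelling argument goes through. The resulting single-valued functions are then extended across $\pi\cS$ by continuity, and stationarity of each sheet across the singular set is recovered by a standard removable-singularity argument using $\cH^{n-1}(\cS)=0$. This last step is also missing from your outline: your ``localize the first variation near one sheet'' only works where the sheets are already separated.
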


We prove a separate lemma first.

\begin{lemma} \label{dimredu}
Suppose that $V \in \cV$ is such that $\{Z \in U\ :\ \Theta_V(Z) \geq 2\} = \emptyset$ for some set $U \subset B_2^n(0)\times \RR^k$. Then $\dim_{\cH} (\sing V \cap U) \leq n-3$.
\end{lemma}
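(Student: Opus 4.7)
The plan is to reduce to the already-understood case of a single-valued minimal Lipschitz graph, exploiting the fact that the hypothesis prevents the two sheets of $f$ from meeting in $U$. First I would verify that for $V = V_f \in \cV$ the density $\Theta_V$ takes only the values $1$ and $2$ on $\spt\|V\|$: at any $Z = (x_0, y_0) \in \spt\|V\|$, a direct computation using the area formula for Lipschitz graphs together with the continuity of $f$ as a two-valued map shows that $\Theta_V(Z) = 2$ precisely when $f_1(x_0) = f_2(x_0) = y_0$, and equals $1$ otherwise. Hence the hypothesis $\{Z \in U : \Theta_V(Z) \geq 2\} = \emptyset$ forces $\Theta_V \equiv 1$ on $\spt\|V\| \cap U$ and rules out any coincidence of the two sheets of $f$ above points lying in $\pi(U)$ at heights landing in $U$.

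Next I would observe that in a neighbourhood of every $Z = (x_0, y_0) \in \spt\|V\| \cap U$ the varifold $V$ is locally a single-valued graph. Choosing a labelling $\{f_1, f_2\}$ of $f$ on a small disk around $x_0$ with $y_0 = f_1(x_0)$, the separation $f_1(x_0) \neq f_2(x_0)$ together with the continuity of the two-valued $f$ yields some $\delta > 0$ such that $V \res B_\delta(Z) = |\gr f_1 \cap B_\delta(Z)|$, i.e.\ $V$ decomposes in $B_\delta(Z)$ (as already noted in Section 1.4 of the introduction, $V$ decomposes near every multiplicity-one point). Since $V$ is stationary, the single-valued Lipschitz function $f_1$ is a weak solution of the minimal surface system on $\pi(B_\delta(Z))$.

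The conclusion then follows from the regularity theory for single-valued Lipschitz solutions of the minimal surface system recalled in the introduction: combining Allard's regularity theorem with Barbosa's result that any homogeneous degree-one Lipschitz weak solution $\RR^d \to \RR^k$ is linear for $d \in \{1,2,3\}$, and the standard stratification of the singular set (together with the analyticity upgrade \cite{morreymultiple} once $C^{1,\alpha}$ is in hand), gives $\dim_{\cH}(\sing V_{f_1}) \leq n-4$. Covering $\sing V \cap U$ by countably many such single-valued neighbourhoods yields $\dim_{\cH}(\sing V \cap U) \leq n-4$, which is in fact stronger than the claimed bound of $n-3$. The only content beyond citations is the density calculation and the local labelling step above; no blow-up analysis or iteration is needed, since the hypothesis collapses the two-valued problem locally to the classical single-valued one, and this local-to-global reduction is the only real technical point to get right.
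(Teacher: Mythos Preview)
Your argument has a genuine gap in the first step. The assertion that $\Theta_V(Z) = 2$ precisely at coincidence points and $\Theta_V(Z) = 1$ otherwise is false: at a non-coincidence point the varifold is locally a single-valued Lipschitz minimal graph, and such graphs can have density strictly between $1$ and $2$ at singular points (the Lawson--Osserman cone being the standard example---indeed if the density there were $1$, Allard's theorem would force regularity). Likewise, at a coincidence point the density can exceed $2$ (take $f(x) = \{f_{LO}(x), -f_{LO}(x)\}$). So the dichotomy you assert does not hold, and the ``direct computation using the area formula'' you invoke only yields $\Theta_V(Z) \geq 2(1+L^2)^{-n/2}$ at a coincidence point, not $\Theta_V(Z) \geq 2$: both sheets lie over $B^n_{\rho/\sqrt{1+L^2}}(x_0)$ inside $B_\rho(Z)$, but the Lipschitz factor cannot be removed without using stationarity in a more substantial way.

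What your argument actually needs is the implication ``coincidence $\Rightarrow \Theta_V(Z) \geq 2$''. This is in fact true, but establishing it requires analysing the tangent cone at $Z$---one must show that a stationary two-valued Lipschitz graphical cone through the origin has density at least $2$ there, and this is not an area-formula triviality. The paper's proof sidesteps this entirely: rather than first ruling out coincidence points, it picks $X \in \sing V \cap U$, takes an arbitrary tangent cone $\bC$, and shows $\dim S(\bC) \leq n-3$ by a case analysis. When $\spt\|\bC\|$ happens to be a genuinely two-valued graph, the paper shows directly (using the Allard--Almgren classification of one-dimensional stationary varifolds for the link in the $n-2$ case) that spine dimension $n$, $n-1$, or $n-2$ would force $\Theta_\bC(0) = 2$, contradicting the hypothesis. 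Crucially, the paper never needs to exclude two-valued tangent cones with spine dimension $\leq n-3$, since the stratification then already gives the desired bound. Your route, by contrast, needs to rule out two-valued tangent cones altogether, which is strictly more than the lemma requires and not available by the justification you give.
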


\begin{proof} Pick $X \in \sing V$ and consider $\bC \in \Var\Tan(V,X)$. Bearing in mind the stratification of the singular set (\eqref{singstra}), the proof will be complete once we show that $\dim S(\bC) \leq n-3$. 

Suppose first that $\spt\|\bC\|$ is a single-valued graph (\emph{i.e.} suppose that $X$ is a multiplicity one point) and assume for the sake of contradiction that $\dim S(\bC) \in \{n,n-1,n-2\}$. If $\dim S(\bC) = n$, then $\bC$ would be a multiplicity one plane. By Allard's Regularity Theorem, this would mean that $X \in \reg V$, which is a contradiction. If $\dim S(\bC) = \{n-1, n-2\}$, then we can write $\bC = \bC_0 \times \RR^d$ where $d \in \{n-1,n-2\}$ and $\bC_0$ is the graph of a single-valued, Lipschitz, homogeneous degree one weak solution to the Minimal Surface System $g : \RR^{d'} \to \RR^k$, where $d' \in \{1,2\}$. In both of these cases, $g$ must be linear, which proves that $\bC$ is a multiplicity one plane and thus we derive a contradiction as before.

Suppose now that $\spt\|\bC\|$ is a two-valued graph. If $\dim S(\bC) = n$, then $\bC$ must be a multiplicity two plane which implies that $\Theta_V(X) = 2$, but this is false by hypothesis. If $\dim S(\bC) = n-1$ and we again write $\bC = \bC_0\times\RR^{n-1}$, then $\spt\|\bC_0\|$ is the union of four rays meeting at a point. This means that $\bC_0$ and hence $\bC$ has density equal to two at the origin and hence $\Theta_V(X) = 2$, which is again a contradiction. Finally, suppose that $\dim S(\bC) = n-2$ and write $\bC = \bC_0\times\RR^{n-2}$. Consider the \emph{link} $\bM := |\spt\|\bC_0\| \cap S^{2+k-1}|$, which is a 1-dimensional stationary integral varifold in the sphere. Suppose that $\sing \bM \neq \emptyset$ and pick $Y \in \sing \bM$. The Allard-Almgren classification of stationary 1-varifolds (\cite{allardalmgrenstructure}) together with the fact that $\spt\|\bC_0\|$ is a two-valued graph implies that a tangent cone $\bD \in \Var\Tan(\bM,Y)$ is a union of $4$ rays meeting at a point. This means that
\beq \label{dimredu1}
2 =  \Theta^1_{\bD}(0) = \Theta^1_{\bM}(Y) = \Theta^2_{\bC_0}(Y) \leq \Theta^2_{\bC_0}(0) = \Theta^n_{\bC}(0) = \Theta^n_V(X),
\eeq
which is a contradiction. Therefore $\bM$ is free of singular points and so must consist of a union of two disjoint great circles. We then deduce that $\Theta^2_{\bC_0}(0) = 2$ and therefore that $\Theta^n_V(X) = 2$. This is again a contradiction and we therefore have that $\dim S(\bC) \leq n-3$, as required. \end{proof}

\noindent \emph{Proof of Theorem \ref{thm:grapdeco}}  Write $\cS := \sing V \cap U$. Crucially, since $\dim_{\cH}\pi\cS \leq n-3$ (by the above lemma), we have that $\pi U \setminus \pi\cS$ is simply connected (see \emph{e.g.} the appendix to \cite{simonwickfrequency} for a proof that the complement in $\RR^n$ of a set of $\cH^{n-2}$ measure zero is simply connected). Write $\Omega := \pi U \setminus \pi\cS$. We construct two smooth functions $f_1, f_2 : \Omega \to \RR^k$ which are solutions to the Minimal Surface System on $\Omega$ and such that $\gr f \cap (\Omega\times\RR^k)$ is the disjoint union $\gr f_1 \cup \gr f_2$. 

First note that for any $z \in \Omega$, there exists $\eta_z > 0$ such that $\gr f \cap (B_{\eta_z}^n(z) \times\RR^k)$ is the disjoint union of two smooth graphs $G^z_a$ and $G^z_b$, say. Fix a point $x \in \Omega$ and define $f_1$ and $f_2$ by writing $G^x_a = \gr f_1$ and $G^x_b = \gr f_2$. For any other point $y \in \Omega$, since $\Omega$ is path-connected, we can find a simple continuous path $\gamma_y : [0,1] \to \Omega$ with $\gamma(0) = x$ and $\gamma(1) = y$. 

By the compactness of $\gamma := \gamma([0,1])$, we have that 
\beq
\gamma \subset B_{\eta_x}^n(x) \cup \bigcup_{i=1}^N B_{\eta_{z_i}}^n(z_i) \cup B_{\eta_y}^n(y)
\eeq
for some $z_i \in \gamma$ for $i=1,....N$. Assume that $\gamma(t_i) = z_i$ where $t_1 < ... < t_N$. Since $\gr f \cap ((B_{\eta_x}^n(x) \cup B^n_{\eta_{z_1}}(z_1)) \times \RR^k)$ is embedded and consists of two connected components, there is a bijection $\Phi : \{a,b\} \to \{a,b\}$ so that $G_{\Phi(a)}^{z_1} \cup \gr f_1$ and $G_{\Phi(b)}^{z_1} \cup \gr f_2$ are two disjoint, embedded, smooth submanifolds. Thus we can extend $f_1$ and $f_2$ to the domain $B_{\eta_x}^n(x) \cup B_{\eta_{z_1}}^n(z_1)$ in such a way that $f_1$ and $f_2$ are both still smooth solutions to the Minimal Surface System. 

We continue this process: Given smooth solutions to the minimal surface system $f_i : B_{\eta_x}^n(x) \cup (\bigcup_{i=1}^{K-1} B_{\eta_{z_i}}^n(z_i)) \to \RR^k$, for which 
\[
\gr f \cap \left( \bigl[ B_{\eta_x}^n(x) \cup \bigcup_{i=1}^{K-1} B_{\eta_{z_i}}^n(z_i) \bigr] \times\RR^k \right)
\]
is the disjoint union $\gr f_1 \cup \gr f_2$, the above procedure gives a labelling in $B_{\eta_{z_K}}^n(z_K)$ which suitably extends the domains of $f_i$ for $i=1,2$. When this process terminates, we have defined a labelling at $y$. Without loss of generality, we will assume that
\beq \label{grapdeco1}
\gr f_1 \cap (B_{\eta_y}^n(y) \times\RR^k) = G^y_a\ \text{and}\ \gr f_2 \cap (B_{\eta_y}^n(y) \times\RR^k) = G^y_b.
\eeq
Now, let $F$ denote the two-$\RR^{n+k}$-valued function $F(x) = (x,f(x))$ and notice that $F(\gamma)$ is embedded and is the disjoint union of two paths $\omega_1$ and $\omega_2$ in $\gr f \subset \RR^{n+k}$ such that $\omega_i(0) = (x,f_i(x))$ for $i=1,2$, say, and
\beq
\omega_1(1) \in G^y_a\ \text{and}\ \omega_2(1) \in G^y_b.
\eeq
We now see that the labelling produced in \eqref{grapdeco1} is well-defined: Take another path $\gamma'$ connecting $x$ to $y$ along which the same process has been performed but assume for the sake of contradiction that by labelling along a finite sequence of balls covering $\gamma'$ in the manner described above, we obtain a different - \emph{i.e} the opposite, as there are only two - labelling of $f$ in $B_{\eta_y}^n(y)$. Note again that since $\gamma ' \in \Omega$, the image $F(\gamma')$ is the disjoint union of two paths $\omega_1'$ and $\omega_2'$ in $\gr f$. This time we have $\omega_i'(0) = (x,f_i(x))$ as before, but 
\beq
\omega_1'(1) = \omega_2(1)\ \text{and}\ \omega_2' (1) = \omega_1(1).
\eeq
Consider now the loop $\Gamma$ formed by joining $\gamma$ and $\gamma'$, \emph{i.e.}
\[
\Gamma(t) := \begin{cases} \hfill \gamma'(2t) \hfill & \text{for}\ 0 \leq t \leq \tfrac{1}{2} \\
\hfill \gamma(2-2t) \hfill & \text{for}\ \tfrac{1}{2} \leq t \leq 1 \end{cases}
\]
By construction, we have that $F(\Gamma)$ is a loop in $\gr f$. Since $\Omega$ is simply connected we can continuously contract $\Gamma$ while staying in $\Omega$, \emph{i.e.} we have a continuous family $\{\Gamma_t\}_{0 \leq t \leq 1}$ of loops, all of which lie in $\Omega$ and such that $\Gamma_0 = \Gamma$ and $\Gamma_1$ is a single point $\{x_0\} \subset \Omega$. Now, using the Lipschitz continuity of $f$, the continuity of $\Gamma_t(s)$ in both variables, and the fact that a (two-valued) graph over a simply connected domain is simply connected, we get that $F(\Gamma_t)$ must also contract \emph{to a single point}. This means that $(x_0,f(x_0))$ is a single multiplicity two point, which means that the graph is not embedded at $(x_0,f(x_0))$. Since $x_0 \in \Omega$, this is a contradiction. Therefore the labelling we described must in fact be well-defined. We can therefore define two functions $f_1$ and $f_2$ on the whole of $\Omega$ as claimed. 

We extend $f_1$ and $f_2$ to the whole of $\pi U$, by using the facts that $\Omega$ is dense in $\pi U$ and $f$ is continuous. We claim that the graphs of $f_1$ and $f_2$ are both stationary in $U$. We have that $V_{f_1}$ - the varifold associated to $\gr f_1$ - is an integral $n$-varifold which is stationary away from $\cS$. However, since $\cH^{n-1}(\cS) = 0$ and we have the volume growth bound
\beq
\|V_{f_1}\|(B_{\rho}(X)) \leq c\rho^n\ \forall\ X \in \cS
\eeq
(which follows from the fact that $V_{f_1}$ is a Lipschitz graph), a standard cut-off argument implies that $\gr f_1$ is stationary. The same holds for the varifold associated to $\gr f_2$ and this completes the proof. $\hfill$ q.e.d.

\subsection{Excess Relative to a Single Plane}

What follows is a very important Lemma that bounds the excess relative to one plane by the excess relative to the pair of planes. It is elementary in the sense that it does not use stationarity; it is just a geometric fact about Lipschitz graphs.

\begin{lemma} \label{lemm:planlemm} Fix $\bC^{(0)} = |\bP_1^{(0)}| + |\bP_2^{(0)}| \in \cP$ and $L > 0$. There exists a number $\eps_0 = \eps_0(n,k,\bC^{(0)},L) \in (0,1)$ such that the following is true. If, for some $\eps < \eps_0$, we have that $\bC = |\bP_1| + |\bP_2| \in \cP$ and $V = V_f$ for $f \in C^{0,1}(B_2^n(0),\cA_2(\RR^k))$ with $\mathrm{Lip}\; f < L$ satisfy the following hypotheses:
\begin{enumerate}[nolistsep]
\item $\|V\|(B_2^n(0)\times \RR^k) \leq \|\bC^{(0)}\|(B_2^n(0)\times \RR^k) + 1/2$.
\item $0 \in A(\bC) \subset A(\bC^{(0)})$ and $d_{\cH}(\spt\|\bC\| \cap B_2(0),\spt\|\bC^{(0)}\| \cap B_2(0)) < \eps$.
\item $\int_{B_2(0)}\dist^2(X,\bP_1^{(0)})d\|V\|(X) < \eps$.
\end{enumerate}
Then,
\beq \label{planlemm1}
\int_{B_{1/2}(0)} \dist^2(X,\bP_1) d\|V\|(X) \leq c\int_{B_1(0)} \dist^2(X,\spt\|\bC\|) d\|V\|(X).
\eeq
for some $c = c(n,k,\bC^{(0)},L) > 0$.
\end{lemma}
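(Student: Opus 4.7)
The plan is as follows. First, using hypothesis (2), I would set up coordinates so that each of the planes $\bP_1^{(0)}, \bP_2^{(0)}, \bP_1, \bP_2$ is the graph of an affine function $u_1^{(0)}, u_2^{(0)}, u_1, u_2 : \RR^n \to \RR^k$. Since $\bC$ is Hausdorff close to $\bC^{(0)}$ on $B_2(0)$ and $A(\bC) \subset A(\bC^{(0)})$, the slopes of the $u_i$ are bounded by a constant depending only on $\bC^{(0)}$, and $\sup_{B_2^n(0)} |u_i - u_i^{(0)}| \leq C\eps$. A change of variables then rewrites both sides as integrals over the domain of $f$, with a Jacobian factor bounded above and below by constants depending only on $L$.

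Second, combining hypothesis (3) with the uniform estimate $|u_1 - u_1^{(0)}| \leq C\eps$ yields
\[
\int_{B_2^n(0)} \bigl(|f_1(x) - u_1(x)|^2 + |f_2(x) - u_1(x)|^2\bigr)\,dx \leq C'\eps,
\]
so both sheets of $f$ are $L^2$-close to the graph $u_1$ of $\bP_1$. Next, I would decompose $B_{1/2}^n(0)$ into the good set $G := \{x : |f_j(x)-u_1(x)| \leq |f_j(x)-u_2(x)|\text{ for both } j\}$ and its complement, the bad set $E$. On $G$, the pointwise equality $\dist^2(X,\bP_1) = \dist^2(X,\spt\|\bC\|)$ holds for each of the two points of $V$ over $x$, so this contribution is directly controlled by the RHS (extended from $B_{1/2}$ to $B_1$).

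Third — the principal step — I would control the contribution of the bad set. On $E$ at least one sheet $j$ satisfies $|f_j(x) - u_2(x)| < |f_j(x) - u_1(x)|$, which via the triangle inequality gives $|u_1(x) - u_2(x)| \leq 2|f_j(x) - u_1(x)|$. The naive follow-up $|f_j-u_1|^2 \leq 2|f_j-u_2|^2 + 2|u_1-u_2|^2$ combined with this bound produces only the trivial inequality $-7|f_j-u_1|^2 \leq 2|f_j-u_2|^2$, so a single-sheet argument cannot close. I would instead exploit the two-valued structure: by the second step, the configuration in which both sheets at the same $x$ belong to the bad set forces $f(x)$ to be far from $u_1(x)$ in both components, and together with the affine lower bound $|u_1(x)-u_2(x)| \geq \theta_0\dist(x,\pi A(\bC))$ (with $\theta_0 = \theta_0(\bC^{(0)}) > 0$), this configuration is confined to a neighbourhood of $\pi A(\bC)$ whose measure is controlled in terms of $\eps$ by Chebyshev. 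Off this neighbourhood, the other sheet $f_{3-j}$ is automatically in the good set, so $|f_{3-j}(x) - u_1(x)|^2 = \min_i|f_{3-j}(x)-u_i(x)|^2$ appears in the RHS integrand; pairing the two sheets and using $|u_1 - u_2|(x) \leq K\dist(x,\pi A(\bC))$ supplies the missing factor to absorb the $|u_1-u_2|^2$ term into the RHS rather than into the LHS.

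The main obstacle is precisely this absorption: a purely single-sheet, triangle-inequality argument is inadequate, and one must exploit (i) the two-valued structure of $f$ (so that the good sheet's $L^2$-distance to $\bP_1$ contributes to the RHS), (ii) the affine control of $|u_1-u_2|$ in terms of distance to $\pi A(\bC)$, and (iii) the $L^2$-smallness provided by hypothesis (3) to localize the simultaneous-bad configuration near the axis. The constant $c$ produced by this argument depends on the opening angles of $\bC^{(0)}$, which is consistent with the stated dependence $c = c(n,k,\bC^{(0)},L)$.
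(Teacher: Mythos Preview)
Your approach is genuinely different from the paper's, and your third step has a gap that your sketch does not close.

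The paper does not work with a good/bad decomposition of the domain. Instead it slices: it picks a hyperplane $\bQ \supset \bP_2$, foliates $\bP_1$ by lines $\cL_Y$ perpendicular to $\bP_1\cap\bQ$, and proves the one-dimensional estimate
\[
\int_{B_1(0)} \dist^2(X,\bP_1)\,d\|V_Y\| \;\leq\; c\int_{B_1(0)} \dist^2(X,\bP_1\cup\bQ)\,d\|V_Y\|
\]
for each slice $V_Y := V_{f|_{L_Y}}$. The full lemma then follows by coarea. The one-dimensional claim is proved by a compactness/contradiction argument: setting $\delta := \sup\{|X| : X \in \spt\|V_Y\|\cap W_{1/2}\}$ with $W := \{\dist(\cdot,\bQ)\leq\dist(\cdot,\bP_1)\}$, one has trivially $\int_W \dist^2(\cdot,\bP_1)\,d\|V_Y\| \leq c\delta^3$, and the substance is showing $\|V_Y\|(\{X\in W : |X^{\perp_\bQ}|\geq\gamma\delta\}) \geq \gamma\delta$. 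If this fails along a sequence, one rescales by $\delta_j$ and passes to a uniform Lipschitz limit $g$; the contradiction hypothesis forces $\gr g \cap W \subset \bQ \cap B_\lambda(0)$, so $\gr g$ has a positive Hausdorff gap between its part over $\pi(\gr g\cap W)$ and the rest, contradicting continuity. The Lipschitz constant enters precisely here: it is what makes the rescaled limits exist and be connected.

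Your sketch, by contrast, never uses the Lipschitz structure in a way that could close the estimate. Consider the ``one good, one bad'' case at a point $x$ far from the axis: take $f_2(x)=u_1(x)$ exactly and $f_1(x)=u_2(x)$ exactly. Then the LHS integrand at $x$ is $|u_2(x)-u_1(x)|^2 + 0$, while the RHS integrand is $0+0$. Your proposed remedy --- ``pairing the two sheets and using $|u_1-u_2|\leq K\dist(x,\pi A(\bC))$ supplies the missing factor'' --- does nothing here: the good sheet contributes $0$ to the RHS, and no upper bound on $|u_1-u_2|$ converts a zero into something positive. The same failure occurs on the ``both bad'' set; your Chebyshev argument shows that set has small $\dist(\cdot,A)^2$-weighted measure, but it does not bound its LHS contribution by the RHS.

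What actually rescues the integral inequality is that a Lipschitz sheet cannot jump from $u_1$ to $u_2$: any region where a sheet equals $u_2$ must be bordered by a transition zone of width $\gtrsim |u_1-u_2|/L$ on which $\min_i|f_j-u_i|$ is comparable to $|u_1-u_2|$, and this zone contributes enough to the RHS. Your outline neither states nor exploits this; the paper's slicing argument is exactly a clean way to capture it, since on a one-dimensional slice the transition is forced by connectedness of the graph. If you want to push your decomposition strategy through, you would need to replace the ``pairing'' step by a genuine transition-region estimate driven by the Lipschitz bound.
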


\begin{proof} By tilting $\RR^n$ by an arbitrarily small amount (if necessary), we can assume that $\bP_1 = \gr p_1$ for a linear function $p_1 : \RR^n \to \RR^k$ the domain of which contains the domain as $f$. The idea of the proof is to slice $V$ into one-dimensional varifolds for which we can prove the result directly and then to use the coarea formula to reconstruct the full result.

Suppose that the hypotheses of the lemma are satisfied and let $\bQ$ be a hyperplane in $\RR^{n+k}$ containing $\bP_2$ and such that $\dim (\bP_1 \cap \bQ) = n-1$. Let $\cK \subset \bP_1$ be an $n$-dimensional cube in $\bP_1$ of edge length $1/(4\sqrt{n})$ centred at the origin and with sides parallel and perpendicular to $\bP_1 \cap \bQ$. Let $K := \pi \cK$ and notice that $K \Subset B_1^n(0)$. Define $M := \pi (\bP_1 \cap \bQ \cap \cK)$. Now for each $Y \in M$, let $\cL_Y'$ be the unique line that lies in $\bP_1$ with $\cL_Y' \perp (\bP_1 \cap \bQ)$ and $\cL_Y' \cap \bP_1 \cap \bQ = \{(Y,p_1(Y))\}$. Then define $\cL_Y := \cL_Y' \cap \cK$ and let $L_Y := \pi \cL_Y$. Finally, define $V_Y := V_{f\vert_{L_Y}}$.

\bigskip

\noindent \textbf{Step 1.} Define
\[
\delta = \delta(Y) := \sup_{X \in \spt\|V_Y\| \cap W_{1/2}} |X|,
\]
where $W_{1/2} :=  \{X \in \RR^{n+k} : 2\dist(X,\bQ) \leq \dist(X,\bP_1)\}$. We first claim that if $\eps$ is sufficiently small, then there exists $\gamma$ $=$ $\gamma(n,k,\bC^{(0)},L)$ $>$ $0$ such that for each $Y \in M$, either
\[
\spt \|V_Y\| \cap W_{1/2} = \emptyset
\]
or
 \beq \label{planlemm2}
 \|V_Y\|\bigl(\{X \in W : \dist(X,\bQ) \geq \gamma\delta \}\bigr) \geq \gamma\delta,
 \eeq
 where $W :=  \{X \in \RR^{n+k} : \dist(X,\bQ) \leq \dist(X,\bP_1)\}$. To prove this claim, take sequences $\{f^j\}_{j=1}^{\infty} \in C^{0,1}(B_2^n(0),\cA_2(\RR^k))$, $\{\bC^j\}_{j=1}^{\infty} \in \cP$, and $\{\eps_j\}_{j=1}^{\infty} \downarrow 0^+$ satisfying the hypotheses of the lemma with $f^j$, $\bC^j$ and $\eps_j$ in place of $f$, $\bC$ and $\eps$, respectively. Write $\bC^j = |\bP_1^j| + |\bP_2^j|$ in such a way that $\bP^j_i \to \bP_i^{(0)}$ as $j \to \infty$ for $i=1,2$. Define $M^j$,$K^j$, $L_Y^j$, $Q^j$ and $\delta_j$ analogously to their definitions above, but of course with $\bP_1$, $\bP_2$ and $V$ replaced by $\bP_1^j$, $\bP_2^j$ and $V^j := V_{f^j}$ respectively. Let $p_1^{(0)}$ and $p_1^j$ be the linear functions the graphs of which are equal to $\bP_1^{(0)}$ and $\bP_1^j$ for $j=1,2,...$ respectively. Now, assume for the sake of contradiction, that there exist $\{Y_j\}_{j=1}^{\infty} \in M_j$ and $\gamma_j \downarrow 0^+$ such that for sufficiently large $j$ we have $\spt \|V^j_{Y_j}\| \cap W^j_{1/2} \neq \emptyset$ and
 \beq \label{planlemm3}
\|V^j_{Y_j}\|(\{X \in W^j : \dist(X,\bQ^j) \geq \gamma_j\delta_j(Y_j) \}) < \gamma_j\delta_j(Y_j),
\eeq
where $W^j$ and $W^j_{1/2}$ are defined analogously to $W$ and $W_{1/2}$ but with $\bQ^j$ and $\bP^j$ in place of $\bQ$ and $\bP$. Now, hypotheses 2) and 3) imply that $f^j \to p_1^{(0)}$ pointwise as $j \to \infty$. Using the fact that $\{f^j\}_{j=1}^{\infty}$ is a sequence of Lipschitz functions with fixed Lipschitz constant, this implies (after passing to a subsequence) that $f^j \to p_1^{(0)}$ uniformly on compact subsets of $B_1^n(0)$. Of course hypothesis 2) also implies that $p_1^j \to p_1^{(0)}$ uniformly. So we have that $\sup_{U}|f^j - p_1^j| \to 0$ as $j \to \infty$ for all $U \Subset B_1^n(0)$. We may also assume that along this subsequence:
\begin{itemize}
\item $d_{\cH}(\bP_1^j \cap \bQ^j \cap B_1(0), A'\cap B_1(0)) \to 0$, where $A'$ is a fixed $(n-1)$-dimensional subspace of $\RR^{n+k}$.
\item $d_{\cH}(\bQ^j \cap B_2(0), \bQ^{(0)} \cap B_2(0)) \to 0$, where $\bQ^{(0)}$ is some $(n+k-1)$-dimensional subspace containing $\bP_2^{(0)}$,
\item $Y_j \to Y_0$ for some $Y_0 \in \pi(\bP_1^{(0)} \cap \bQ^{(0)} \cap \cK)$, so that $d_{\cH}(L_{Y_j},L_{Y_0}) \to 0$. Here $L_{Y_0}$ is a interval that contains $Y_0$ and lies in $\bP_1^{(0)}$.
\end{itemize} 
Let $\lambda \in (0,1)$ be such that for sufficiently large $j$, we have $L_{Y_j} \setminus \pi(B_{\lambda}(0)) \neq \emptyset$. Then set 
\[
\tilde{V}^j := T_{Y_j\; *}\eta_{Y_j,\delta_j(Y_j)/\lambda\; *} V^j_{Y_j}.
\]
Notice that $\tilde{V}^j = V_{\tilde{f}^j}$ for some $\tilde{f}^j \in C^{0,1}(L_{Y_j}; \cA_2(\RR^k))$ with $\mathrm{Lip}\; \tilde{f}^j \leq L$. Also, we observe that
 \beq \label{planlemm4}
\|\tilde{V}^j\|\bigl(\{X \in W^j : \dist(X, \bQ^j) \geq \gamma_j\lambda \}\bigr) < \gamma_j\lambda
\eeq
and
\beq \label{planlemm5}
\sup_{X \in \spt\|\tilde{V}^j\| \cap W^j_{1/2}} |X| = \lambda.
\eeq
Then, let $l_j : L_{Y_0} \to L_{Y_0}^{\perp}$ be the function that represents $L_{Y_j}$ as a graph over $L_{Y_0}$ and define $g^j(X) = \tilde{f}^j(X + l^j(X))$ for $X \in L_{Y_0}$. After passing to another subsequence, $g^j$ converges uniformly on compact subsets to some Lipschitz two-valued function $g$. From \eqref{planlemm4} and \eqref{planlemm5} we deduce that 
\begin{align}
\gr g \cap W^{(0)} \subset \bQ^{(0)} \cap \overline{B_{\lambda}(0)},
\end{align}
where $W^{(0)} := \{X \in \RR^{n+k} : \dist(X,\bQ^{(0)}) \leq \dist(X,\bP_1^{(0)})\}$. Also, \eqref{planlemm5} shows that there exists $X \in \gr g \cap W^{(0)}$ with $|X| = \lambda$, but since $L_{Y_0} \setminus \pi(B_{\lambda}(0)) \neq \emptyset$, this now means that 
\[
d_{\cH}\bigl(\gr g\vert_{\pi(\gr g \cap W^{(0)})} ,\ \gr g\vert_{L_{Y_0}\setminus \pi(\gr g \cap W^{(0)})}\bigr)  > 0
\]
which contradicts the fact that $g$ is a Lipschitz graph over $L_{Y_0}$. This proves the claim.

\medskip

\noindent \textbf{Step 2.} We can now prove the `$1$-dimensional' version of the lemma: Namely, if the hypotheses are satisfied for sufficiently small $\eps > 0$, then
\beq \label{planlemm6}
\int_{B_1(0)} \dist^2(X,\bP_1)d\|V_Y\|(X) \leq c\int_{B_1(0)}\dist^2(X,\bP_1 \cup \bQ)d\|V_Y\|(X),
\eeq
for every $Y \in M$, where $c = c(n,k,\bC^{(0)},L) > 0$. To see this we observe that for every $Y \in M$,
\beq \label{planlemm7}
\int_{W_{1/2} \cap B_1(0)} \dist^2(X,\bP_1) d\|V_Y\|(X) \leq c\delta^3,
\eeq
for some constant $c = c(n,k,\bC^{(0)},L) > 0$. Then notice that \eqref{planlemm2} of \textbf{Step 1.} implies that
\beq \label{planlemm8}
\int_{W_{1/2} \cap B_1(0)}\dist^2(X,\bP_1 \cup \bQ)d\|V_Y\|(X) \geq \gamma^3\delta^3.
\eeq
Putting these together proves \eqref{planlemm6} with a constant $c$ that does not depend on $Y$.

\medskip

\noindent \textbf{Step 3.} Now we can prove the full lemma. When the hypotheses are satisfied for appropriate $\eps > 0$, the previous two steps show that
\beq \label{planlemm9}
\int_{B_1(0)} \dist^2(X,\bP_1) d\|V_Y\|(X) \leq c\int_{B_1(0)} \dist^2(X,\bP_1 \cup \bQ) d\|V_Y\|(X)
\eeq
for every $Y \in M$. We also note that when $\eps$ is small enough we have that
\beq \label{planlemm10}
\spt\|V\| \cap B_{1/8n}(0) \subset \bigcup_{Y \in M} \spt\|V_Y\|.
\eeq
So, using a change of variables, the coarea formula, the uniform Lipschitz constant bound and another change of variables, we have that
\begin{align*}
 \int_{B_{1/8n}(0)}&\dist^2 (X,\bP_1)d\|V\|(X) \nonumber \\
& \leq  \sum_{\alpha = 1,2} \int_{\bigcup_{Y \in M}L_Y}\dist^2 ((X,f^{\alpha}(X)),\bP_1) \times \\
& \hspace{2cm} \det(\delta_{\alpha \beta} + \Sigma_{\kappa} D_{\alpha}f^{\alpha,\kappa}\cdot D_{\beta}f^{\alpha,\kappa})^{1/2}d\cH^n(X) \\
& \leq c\sum_{\alpha =1,2}\int_{M} \int_{L_Y}\dist^2((x,Y,f^{\alpha}(x,Y)),\bP_1) \times \nonumber \\
&  \hspace{2cm} \det(\delta_{\alpha \beta} + \Sigma_{\kappa} D_{\alpha}f^{\alpha,\kappa} \cdot D_{\beta}f^{\alpha,\kappa})^{1/2}d\cH^1(x)d\cH^{n-1}(Y) \\
&  \leq  c \int_{M} \int_{B_1(0)}\dist^2(X,\bP_1)d\|V_Y\|(X) d\cH^{n-1}(Y),
\end{align*}
where $c = c(n,k,\bC^{(0)},L) > 0$ and where $f^j(X) = \{f^{1}(X),f^{2}(X)\}$ and $f^{\alpha}(X) = (f^{\alpha,1}(X), \dots, f^{\alpha,k}(X))$. From here we can apply \eqref{planlemm9} to get that this is at most
\beq
c \int_{M} \int_{B_1(0)}\dist^2(X,\bP_1 \cup \bQ)d\|V_Y\|(X) d\cH^{n-1}(Y).
\eeq
Then we can again write the integral as an integral over the domain of $f$, apply the coarea formula and re-write as an integral with respect to $\|V\|$ in order to see that this is indeed at most
\beq
c\int_{B_1(0)} \dist^2(X,\bP_1 \cup \bQ) d\|V\|(X),
\eeq
which is in turn bounded above by the right-hand side of \eqref{planlemm1}. This proves \eqref{planlemm1} with an integral over $B_{1/8n}(0)$ on the left-hand side, but this can now be leveraged to prove the inequality as claimed. 
\end{proof}

\section{$L^2$ Estimates}

In this section we discuss in detail the main $L^2$ estimates. The proofs are technically involved and are deferred to Section 6. Suppose we have fixed $\bC^{(0)} \in \cC$.

\subsection{Specific Notation}
Recall that for $\bC \in \cC$ and $V \in \cV$, we define
 \begin{align*}
\cQ_V&(\bC) := \left(\int_{B_2^n(0)\times\RR^k}\dist^2(X,\spt \|\bC\|) \, d\|V\|(X)\right. \\
&+ \left.\int_{(B_2^n(0)\times\RR^k) \setminus \{r_{\bC^{(0)}} < 1/8\}}\dist^2(X,\spt \|V\|) \, d\|\bC\|(X)\right)^{1/2}.
\end{align*} 
We also define:
\begin{align*} 
E^2_V(\bC) & := \int_{B_1(0)}\dist^2(X,\spt\|\bC\|)d\|V\|(X) \\
\text{and}\quad a^{\bC}_V(X)& := \left(\sum_{j=1}^m|e_{l+k+j}^{\perp_{T_XV}}|^2\right)^{1/2},
\end{align*}
where $e_1,...,e_{n+k}$ is an orthonormal basis of $\RR^{n+k}$ in which $\bC$ is properly aligned (so that $e_{l+k+1},...,e_{n+k}$ is an orthonormal basis of $A(\bC)$).

\subsubsection{Comparing Nearby Cones}
For $\bC \in \cC$, the integer quantity $q_{\bC} := \dim A(\bC^{(0)}) - \dim A(\bC)$, which throughout will be non-negative, will play an important role: It will be the parameter with which we perform an induction argument in Section 6 in order to prove the main $L^2$ estimates. Given another $\bD \in \cC$, we define
\beq
\nu_{\bC,\bD} := d_{\cH}(\spt\|\bC\|\cap B_2(0),\spt\|\bD\| \cap B_2(0)).
\eeq
This will play a similar role to the $\nu$ defined on \cite[p. 908]{wickgeneral}.
\begin{remarks} \label{R:C0inP} \mbox{}
\begin{enumerate} [ref=\arabic*)]
\item The reason $q_{\bC}$ will always be non-negative is that if $\eps > 0$ is sufficiently small depending on $\bC^{(0)}$ then $\nu_{\bC,\bC^{(0)}} < \eps$ implies that $q_{\bC} \geq 0$.  
\item\label{en:C0inP2} If $\bC^{(0)} \notin \cP$, then there is some constant $c = c(n,k,\bC^{(0)}) > 0$ such that $\inf_{\bC^{\prime} \in \cP}\nu_{\bC^{(0)},\bC^{\prime}} \geq c > 0$. 
\item If $q_{\bC} > 0$, then $\bC \in \cP_{\leq n-2}$. Thus, if $\eps > 0$ is sufficiently small depending on $\bC^{(0)}$, then $\nu_{\bC,\bC^{(0)}} < \eps$ with $q_{\bC} > 0$ implies that $\bC^{(0)} \in \cP$.
\end{enumerate}
\end{remarks}
Fix $Z \in B_1(0)$ and write $\xi = Z^{\perp_{A(\bC)}}$. Using the invariance of $\bC$ under translations in directions along its axis, we have that for any $X \in B_1(0)$:
\begin{align} 
|\dist(X,\spt\|\bC\|) - \dist(X,\spt\|T_{Z *}\bC\|)|\  &\leq\  \nu_{\bC,T_{Z *}\bC} \label{E:CtoC_Z1}\\
& \leq |\xi |. \label{E:CtoC_Z2}
\end{align}

Also, if $0 \in A(\bC) \subsetneq A(\bC^{(0)})$ with $\bC^{(0)} = |\bP_1^{(0)}| + |\bP_2^{(0)}| \in \cP$, then we have that:
\begin{align} 
\nu_{\bC,T_{Z *}\bC}  &\leq   2\max_{i=1,2} d_{\cH}(\bP_i\cap B_2(0), T_{Z *}\bP_i \cap B_2(0)) \label{E:CtoC_Z3}\\
&\leq c\max_{i=1,2}\left[|\xi^{\perp_{\bP_i^{(0)}}}| + \nu_{\bC,\bC^{(0)}}|\xi^{\top_{\bP_i^{(0)}}}|\right].  \label{E:CtoC_Z4}
\end{align}
To get to \eqref{E:CtoC_Z4}, we have used the triangle inequality and the fact that $d_{\cH}(\bP_i\cap B_2(0), T_{Y *}\bP_i \cap B_2(0)) \leq c\nu_{\bC,\bC^{(0)}}|Y|$ for any $Y \in \bP_i^{(0)}$. This last inequality holds because if one were to write $\bP_i$ as a graph over $\bP_i^{(0)}$, the Lipschitz constant would be controlled by $\nu_{\bC,\bC^{(0)}}$. 

\subsubsection*{Hypotheses A}
We will often have the following hypotheses in place for some appropriate $\eps \in (0,1)$: 
\begin{enumerate}[nolistsep]
\item $\bC^{(0)} \in \cC$ and $V \in \cV$; $\|V\|(B_2^n(0)\times\RR^k) \leq \|\bC^{(0)}\|(B_2^n(0)\times\RR^k) + 1/2$.
\item $\bC \in \cC$ and $0 \in A(\bC) \subset A(\bC^{(0)})$.
\item $\nu_{\bC, \bC^{(0)}} < \eps$.
\item $\cQ_V(\bC^{(0)}) < \eps$.
\end{enumerate}
 
\subsection{Main Theorems}
We now state the main $L^2$ estimates. These theorems are analogous to Theorem 3.1 of \cite{simoncylindrical} and Theorems 10.1 and 16.2 of \cite{wickgeneral}. We assume throughout that we have fixed $\bC^{(0)} \in \cC$ and $L > 0$.

\begin{theorem} \label{L2} Fix $\tau \in (0,1)$. There exists $\eps_0 = \eps_0(n,k,\bC^{(0)},\tau,L) > 0$ such that the following is true. Suppose that for some $\eps < \eps_0$, we have that $V \in \cV_L$ and $\bC, \bC^{(0)} \in \cC$ satisfy Hypotheses A and that $\Theta_V(0) \geq 2$. Then we have the following conclusions:
\smallskip
\begin{enumerate}[nolistsep, label = \roman*),ref= \roman*)]
\item \label{en:L21} $\{X \in B_2(0) : \Theta_V(X) \geq 2\} \subset \{r_{\bC^{(0)}} < \tau\}$ and $V \res (B_{15/8}(0) \cap \{r_{\bC^{(0)}} > \tau \}) = |\gr (u + c)| \res  (B_{15/8}(0) \cap \{r_{\bC^{(0)}} > \tau \})$,
where $u \in C^{\infty}(\spt\|\bC^{(0)}\| \cap \{r_{\bC^{(0)}} > \tau/2\};\bC^{(0)\perp})$ and $c \in C^{\infty}(\spt\|\bC^{(0)}\| \cap \{r_{\bC^{(0)}} > 0\};\bC^{(0)\perp})$ is such that $\bC \res \{r_{\bC^{(0)}} > 0\} = |\gr c|$ and for $X \in \spt\|\bC^{(0)}\| \cap \{r_{\bC^{(0)}} > \tau/2\}$, $|u(X)| \leq (1 + c\eps)\dist(X + u(X) + c(X),\spt\|\bC\|)$ for some constant $c = c(n,k) > 0$.
\item \label{en:L22}
$\int_{B_{5/8}(0)} |a^{\bC}_V(X)|^2d\|V\|(X)  \leq  c\int_{B_1(0)} \dist^2(X,\spt \|\bC\|)d\|V\|(X)$.
\item \label{en:L23} $\int_{B_{5/8}(0)}\frac{\dist^2(X,\spt \|\bC\|)}{|X|^{n+7/4}} d\|V\|(X) \leq c\int_{B_1(0)} \dist^2(X,\spt \|\bC\|)d\|V\|(X)$.
\item \label{en:L24} $\int_{B_{5/8}(0)}\frac{|X^{\perp_{T_XV}}|^2}{|X|^{n+2}} d\|V\|(X) \leq c\int_{B_1(0)} \dist^2(X,\spt \|\bC\|)d\|V\|(X)$.
\item \label{en:L25} \begin{align*}
\int_{\Omega_{\tau/2} \cap B_{5/8}(0)} & R^{2-n}\left|\pa{((u(X) + c(X))/R)}{R}\right|^2d\cH^n(X) \nonumber \\ 
&\leq  c\int_{B_1(0)} \dist^2(X,\spt \|\bC\|)d\|V\|(X),
\end{align*}
where $\Omega_{\tau/2} := \spt\|\bC^{(0)}\| \cap \{r_{\bC^{(0)}} > \tau/2\}$
\end{enumerate}
\smallskip
In ii) to v), $c = c(n,k,\bC^{(0)},L) > 0$.
\end{theorem}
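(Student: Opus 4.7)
The plan is to establish the five conclusions in the stated order, using a compactness argument for (\ref{en:L21}), a first variation identity for (\ref{en:L22}), and the monotonicity formula together with an induction on $q_\bC := \dim A(\bC^{(0)}) - \dim A(\bC) \geq 0$ for (\ref{en:L23})--(\ref{en:L25}).

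For (\ref{en:L21}), I would argue by contradiction and varifold compactness. Suppose sequences $V^j \in \cV_L$ and $\bC^j \in \cC$ satisfy Hypotheses A with $\eps_j \downarrow 0$, yet contain points $X_j \in \spt\|V^j\|$ with $\Theta_{V^j}(X_j) \geq 2$ and $r_{\bC^{(0)}}(X_j) \geq \tau$. After passing to a subsequence, $V^j$ converges as varifolds to a limit $V^\infty$ supported in $\spt\|\bC^{(0)}\|$, and $X_j \to X_\infty$ with $r_{\bC^{(0)}}(X_\infty) \geq \tau$. Off the axis, $\bC^{(0)}$ is a transverse, multiplicity-one union of smooth pieces, so upper semicontinuity of density forces $\Theta_{V^\infty}(X_\infty) \geq 2$, contradicting the structure of $\bC^{(0)}$ near $X_\infty$. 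Having ruled out density $\geq 2$ in $\{r_{\bC^{(0)}} > \tau\}$, Theorem~\ref{thm:grapdeco} decomposes $V$ there into single-valued pieces, each of which is smooth by Allard's Regularity Theorem. Projecting these pieces onto the planes of $\bC^{(0)}$ and of $\bC$ produces the graphing functions $u$ and $c$, and the pointwise bound on $|u(X)|$ reduces to an elementary projection inequality between the two $\eps$-close planes.

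For (\ref{en:L22}), I would apply the first variation identity with test vector field $\Phi(X) = \phi(X)^2 e_{l+k+j}$, where $\phi$ is a standard cutoff, $\phi \equiv 1$ on $B_{5/8}(0)$ and $\supp \phi \subset B_1(0)$. The translation invariance of $\bC$ along $A(\bC)$ makes $e_{l+k+j}$ tangent to $\bC$, so after expanding $\divr_S \Phi$ and applying Cauchy--Schwarz, the identity $\int \divr_S \Phi \, dV = 0$ yields a Caccioppoli-type bound of $\int \phi^2 |e_{l+k+j}^{\perp_S}|^2 d\|V\|$ by terms involving $\nabla\phi$. Localizing via the graphical representation of (\ref{en:L21}) on the cylindrical region $\{r_{\bC^{(0)}} > \tau\}$ converts the gradient term into the excess $\int_{B_1(0)} \dist^2(X,\spt\|\bC\|) d\|V\|$, and Lemma~\ref{lemm:planlemm} is invoked in the regime $q_\bC > 0$ to handle the contribution of the region near $A(\bC^{(0)}) \setminus A(\bC)$ where $V$ may still lie close to a single plane.

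For (\ref{en:L23})--(\ref{en:L25}), I would induct on $q_\bC$. In the base case $q_\bC = 0$ one has $A(\bC) = A(\bC^{(0)})$, so $\bC$ is translation invariant along the full axis; the monotonicity formula at $0$, combined with $\Theta_V(0) \geq 2 = \Theta_\bC(0)$, yields
\begin{align*}
\int_{B_{5/8}(0)} \frac{|X^{\perp_{T_XV}}|^2}{|X|^{n+2}} d\|V\|(X) \leq (5/8)^{-n}\bigl(\|V\|(B_{5/8}(0)) - \|\bC\|(B_{5/8}(0))\bigr),
\end{align*}
and the mass excess on the right is bounded by the $L^2$ excess via Hypothesis~A(1) and the graphical representation from (\ref{en:L21}), proving (\ref{en:L24}). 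Estimate (\ref{en:L23}) follows by combining (\ref{en:L22}) and (\ref{en:L24}) with a Hardy-type weighted inequality, the weaker weight $|X|^{n+7/4}$ absorbing the loss in integration-by-parts near the axis. For (\ref{en:L25}), on $\Omega_{\tau/2} \cap B_{5/8}(0)$ we have $V = |\gr(u+c)|$ by (\ref{en:L21}), and a direct Jacobian computation expresses $R^{2-n}|\partial_R((u+c)/R)|^2$ in terms of $|X^{\perp_{T_XV}}|^2/|X|^{n+2}$ up to bounded factors, reducing (\ref{en:L25}) to (\ref{en:L24}). In the inductive step $q_\bC > 0$, Remark~\ref{rema:C0inP}(2) forces $\bC^{(0)} \in \cP$ and $\bC$ to be a pair of planes; I would cover $B_{5/8}(0)$ by a tube around $A(\bC)$, where recentering at nearby density-$2$ points and translation invariance of $\bC$ permit appeal to the induction hypothesis with $q_\bC - 1$, together with a complementary region handled by Lemma~\ref{lemm:planlemm} applied to a single plane. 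The main obstacle, as the author highlights in the introduction, is precisely this inductive step: the set $\{X : \Theta_V(X) \geq 2\}$ need not be dense in a neighborhood of $A(\bC^{(0)})$, so the usual recentering argument breaks down in the gaps, and Lemma~\ref{lemm:planlemm} must be used delicately to maintain control of the excess relative to a single plane where the cone has only one ``nearby'' sheet.
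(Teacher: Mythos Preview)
Your treatment of (\ref{en:L21}) matches the paper's compactness argument. The gaps are in (\ref{en:L22})--(\ref{en:L25}).

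For (\ref{en:L22}), the test field $\Phi = \phi^2 e_{l+k+j}$ does not yield the bound you claim. Since $e_{l+k+j}$ is constant, $\divr_S e_{l+k+j} = 0$, so the first variation reduces to $\int 2\phi\, D\phi \cdot e_{l+k+j}^{\top_S}\, d\|V\| = 0$; rewriting this as $\int D_{e_{l+k+j}}(\phi^2)\, d\|V\| = \int 2\phi\, D\phi \cdot e_{l+k+j}^{\perp_S}\, d\|V\|$ and applying Cauchy--Schwarz puts $\int \phi^2 |e_{l+k+j}^{\perp_S}|^2$ on the wrong side of the inequality. The paper instead tests with $\Phi(X) = \psi(R)^2\,(x,0)$, the radial cutoff times the projection onto $A(\bC)^\perp$, for which $\divr_S(x,0) = l + \sum_j |e_{l+k+j}^{\perp_S}|^2$; this produces identity \eqref{L27}, and (\ref{en:L22}) is then obtained simultaneously with (\ref{en:L24}) from the resulting mass-difference estimate \eqref{L29}, not from a Caccioppoli argument.

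For (\ref{en:L24}), your monotonicity inequality is correct, but the step ``the mass excess on the right is bounded by the $L^2$ excess via the graphical representation from (\ref{en:L21})'' is where the real difficulty lies. The representation from (\ref{en:L21}) holds only on $\{r_{\bC^{(0)}} > \tau\}$; in the tube $\{r_{\bC^{(0)}} < \tau\}$ there is no a priori bound on $\|V\|(B_{5/8} \cap \{r_{\bC^{(0)}} < \tau\}) - \|\bC\|(B_{5/8} \cap \{r_{\bC^{(0)}} < \tau\})$ in terms of the height excess --- both terms are of order $\tau$, while $E_V(\bC)^2$ may be arbitrarily smaller. Controlling this near-axis contribution is the core of the proof. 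The paper covers $\{0 < r_\bC < 1/28\}$ by toric regions $T^\bC_{|x|,c|x|}(y)$ of radius comparable to $r_\bC$, builds a partition of unity subordinate to this cover, and proves the cube-by-cube estimate Claim~\ref{L2claim}. In each torus one faces a trichotomy: either the local scaled excess is large (and the trivial mass bound suffices), or it is small and Lemma~\ref{q=0smalexcetori} (for $q_\bC = 0$) or Lemma~\ref{q>0tori} (for $q_\bC > 0$) gives graphical structure over $\bC$, or --- when $q_\bC > 0$ --- the excess relative to $\bC$ is comparable to that relative to a coarser cone $\bD$ with $A(\bD) \supsetneq A(\bC)$, in which case a further multi-scale cube decomposition around $A(\bD)$ is needed and the induction hypothesis $H(\bC,\bC^{(0)})$ is applied via Corollary~\ref{cor:L2Z} with $\bD$ in place of $\bC$. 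Your description of the inductive step as ``recentering at nearby density-$2$ points'' does not address the tori containing no such points, and Lemma~\ref{lemm:planlemm} alone is insufficient there: one needs the full strength of Corollary~\ref{cor:L2Z} for the coarser cone to close the estimate.
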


\begin{corollary} \label{cor:L2Z} For $\sigma \in (0,1)$ and each $\rho \in (0,1-\sigma]$ and, there exists $\eps_1 = \eps_1(n,k,\bC^{(0)},L,\rho,\sigma) > 0$ such that the following is true. Suppose that for some $\eps < \eps_1$, we have that $V \in \cV$ and $\bC, \bC^{(0)} \in \cC$ satisfy Hypotheses A and that $Z \in \spt\|V\| \cap B_{\sigma}(0)$ has $\Theta_V(Z) \geq 2$. Then we have the following conclusions: 
\smallskip
\begin{enumerate}[nolistsep, label = \roman*),ref= \roman*)]
\item \label{en:L2Z1}
$\dist^2(Z,A(\bC^{(0)})) \leq c\int_{B_1(0)}\dist^2(X,\spt\|\bC\|)d\|V\|(X)$.
\item \label{en:L2Z2} If $q_{\bC} = 0$, then $|\xi|^2 \leq c\int_{B_1(0)}\dist^2(X,\spt\|\bC\|)d\|V\|(X)$. If instead $q_{\bC} > 0$, then $\bC^{(0)} = |\bP_1^{(0)}| + |\bP_2^{(0)}| \in \cP$ and $|\xi^{\perp_{\bP_i^{(0)}}}|^2 + \nu_{\bC,\bC^{(0)}}^2|\xi^{\top_{\bP_i^{(0)}}}|^2 \leq c\int_{B_1(0)}\dist^2(X,\spt\|\bC\|)d\|V\|(X)$ for $i=1,2$, where in both cases $\xi = Z^{\perp_{A(\bC)}}$.
\item \label{en:L2Z3} $\int_{B_{5\rho/8}(Z)}\frac{\dist^2(X,\spt \|T_{Z*}\bC\|)}{|X - Z|^{n+7/4}} d\|V\|(X) $\\
$\leq c\rho^{-n-7/4}\int_{B_{\rho}(Z)} \dist^2(X,\spt \|T_{Z*}\bC\|)d\|V\|(X)$.
\item \label{en:L2Z4} $\int_{B_{5\rho/8}(Z)}\frac{\dist^2(X,\spt \|\bC\|)}{|X - Z|^{n-1/4}} d\|V\|(X) $\\
$\leq c\int_{B_1(0)} \dist^2(X,\spt \|\bC\|)d\|V\|(X)$.
\item \label{en:L2Z5} \begin{align*}
\int_{\Omega_{\tau/2} \cap B_{5\rho/8}(Z)}& \bar{R}_Z^{2-n}\left|\pa{((u(X) + c(X) - Z^{\perp_{T_X\bC^{(0)}}})/\bar{R}_Z)}{\bar{R}_Z}\right|^2d\cH^n(X) \\ 
& \leq  c\rho^{-n-2}\int_{B_{\rho}(Z)}\dist^2(X,\spt \|\bC\|)d\|V\|(X),
\end{align*}
for some constant $c = c(n,k,\bC^{(0)},L,\sigma) > 0$, where $\bar{R}_Z = \bar{R}_Z(X) = |X-Z^{\top_{T_X\bC^{(0)}}}|$, where $u$ and $c$ are functions as in \ref{en:L21} of Theorem \ref{L2} and where $\Omega_{\tau/2}$ is as in \ref{en:L25} of Theorem \ref{L2}.
\end{enumerate}
\smallskip
In i) to iv), $c = c(n,k,\bC^{(0)},L) > 0$. 
\end{corollary}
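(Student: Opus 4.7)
I would prove the corollary by first establishing the position estimates \ref{en:L2Z1}--\ref{en:L2Z2} for $Z$ and then deriving the weighted integral bounds \ref{en:L2Z3}--\ref{en:L2Z5} by translating and rescaling Theorem \ref{L2} to the centre $Z$.

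For \ref{en:L2Z3} and \ref{en:L2Z5}, the plan is to apply Theorem \ref{L2} to the rescaled varifold $\tilde V := \eta_{Z,\rho *} V$, taking as the auxiliary cone just $\bC$ (since $\bC$ is a cylinder with axis through $0$, the rescaling $\eta_{Z,\rho *}(T_{Z *}\bC)$ returns $\bC$) and as the new target cone an appropriate translate $\bC^{(0)}_{\mathrm{new}}$ of $\bC^{(0)}$ whose axis passes through $0$. The required mass and $\cQ$-bounds on $\tilde V$ would follow from the volume growth of $V$, the smallness of $\cQ_V(\bC^{(0)})$, and the fact that Theorem \ref{L2}\ref{en:L21} forces $\dist(Z,A(\bC^{(0)})) < \tau$, so the translation required to build $\bC^{(0)}_{\mathrm{new}}$ is small. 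Noting $\Theta_{\tilde V}(0) = \Theta_V(Z) \geq 2$, Theorem \ref{L2} applies and the change of variables $X = (Y-Z)/\rho$, using $d\|\tilde V\|(X) = \rho^{-n}d\|V\|(Y)$, $\dist(X,\spt\|\bC\|) = \rho^{-1}\dist(Y,\spt\|T_{Z*}\bC\|)$, and $|X| = \rho^{-1}|Y-Z|$, converts \ref{en:L23} and \ref{en:L25} of Theorem \ref{L2} directly into \ref{en:L2Z3} and \ref{en:L2Z5}. Estimate \ref{en:L2Z4} then follows from \ref{en:L2Z3} via the triangle inequality $\dist^2(X,\spt\|\bC\|) \leq 2\dist^2(X,\spt\|T_{Z*}\bC\|) + 2\nu_{\bC,T_{Z*}\bC}^2$, using that $|X-Z|^{-(n-1/4)}$ is integrable with respect to $\|V\|$ in $B_{5\rho/8}(Z)$ by volume growth and that $\nu_{\bC,T_{Z*}\bC}$ is controlled by \ref{en:L2Z2}.

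The heart of the corollary lies in \ref{en:L2Z1}--\ref{en:L2Z2}, and the main obstacle is the quadratic-in-excess bound $|\xi| \leq c\,E^2_V(\bC)$ when $q_\bC = 0$ (and the analogous bound in the $q_\bC > 0$ case). This bound cannot follow from Hausdorff closeness alone; it rests on an $L^2$ orthogonality between the first-order perturbation of $V$ from $\bC$ and the translation modes of $\bC$. Concretely, I would use the first variation of $V$ with a test vector field of the form $\phi(X)\,\xi$ for a suitable cutoff $\phi$ supported near $Z$, combined with a Taylor expansion at $Z$ of a monotonicity-type identity that exploits $\Theta_V(Z) \geq 2$; the linear-in-$\xi$ terms cancel by the orthogonality, leaving a remainder quadratic in $\dist(\cdot,\spt\|\bC\|)$. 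Once $\nu_{\bC, T_{Z *}\bC} \leq c\,\int_{B_1(0)}\dist^2(X,\spt\|\bC\|)d\|V\|(X)$ is established, the two cases of \ref{en:L2Z2} follow from \eqref{CtoC_Zhausq=0} (case $q_\bC = 0$) and \eqref{CtoC_Z} (case $q_\bC > 0$), and \ref{en:L2Z1} follows from $\dist(Z,A(\bC^{(0)})) \leq |\xi|$ because $A(\bC) \subset A(\bC^{(0)})$.
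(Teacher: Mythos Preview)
Your plan for \ref{en:L2Z3} and \ref{en:L2Z5} via rescaling Theorem~\ref{L2} at centre $Z$ is correct and is exactly what the paper does; likewise your derivation of \ref{en:L2Z4} from \ref{en:L2Z3} together with \ref{en:L2Z2} and \eqref{CtoC_Z} matches the paper.

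The substantive divergence is in \ref{en:L2Z2}. First, you have been misled by a typo: the estimate actually proved (and used throughout, e.g.\ in \eqref{9j1} and in the construction of $\kappa_v(Z)$) is $|\xi|^2 \le c\,E_V^2(\bC)$, respectively $|\xi^{\perp_{\bP_i^{(0)}}}|^2 + \nu_{\bC,\bC^{(0)}}^2|\xi^{\top_{\bP_i^{(0)}}}|^2 \le c\,E_V^2(\bC)$ when $q_\bC>0$; this is linear in excess, not quadratic. The ``orthogonality'' cancellation you sketch would be needed for the quadratic bound $|\xi|\le cE_V^2$, but that bound is neither asserted (correctly) nor required, and your mechanism for it is not substantiated. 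Second, and more importantly, the paper does \emph{not} prove \ref{en:L2Z2} by a direct first-variation/monotonicity identity at $Z$. Instead it reverses your order: it first obtains \ref{en:L2Z3} by rescaling, and then feeds \ref{en:L2Z3} back into the proof of \ref{en:L2Z2}. The argument runs as follows (adapting \cite[Lemma~6.21]{wickrigidity}): for a fixed small $\rho_0=\rho_0(n,k,\bC^{(0)},L)$ one shows the measure lower bound
\[
\|V\|\bigl(\{X\in B_{\rho_0}(Z): |\xi^{\perp_{T_{x'}\bC}}|\ge c\,\nu_{\bC,\bC^{(0)}}|\xi|\}\bigr)\ \ge\ c\rho_0^n,
\]
which yields $|\xi^{\perp_{\bP_i^{(0)}}}|^2+\nu_{\bC,\bC^{(0)}}^2|\xi^{\top_{\bP_i^{(0)}}}|^2 \le c\rho_0^{-n}\int_{B_{\rho_0}(Z)}|\xi^{\perp_{T_{x'}\bC}}|^2\,d\|V\|$. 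One then uses the pointwise inequality $|\xi^{\perp_{T_{x'}\bC}}|^2 \le 2\dist^2(X,\spt\|T_{Z*}\bC\|)+2\dist^2(X,\spt\|\bC\|)$ together with \ref{en:L2Z3} (applied to $\eta_{Z,\rho_0*}V$) to bound the first term by $c\rho_0^{7/4}E_V^2(\bC)+c\rho_0^{7/4}\bigl(|\xi^{\perp_{\bP_i^{(0)}}}|^2+\nu_{\bC,\bC^{(0)}}^2|\xi^{\top_{\bP_i^{(0)}}}|^2\bigr)$, and the crucial step is to choose $\rho_0$ small (depending only on $n,k,\bC^{(0)},L$) so that the last term can be absorbed on the left. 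This absorption at a fixed small scale, with \ref{en:L2Z3} as input, is the point your sketch is missing; a bare first-variation test with $\phi(X)\xi$ does not by itself produce this estimate in the present multi-sheeted setting.
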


\begin{remarks} \label{R:CtoC_Z} When the hypotheses of Corollary \ref{cor:L2Z} are satisfied, we can combine \eqref{E:CtoC_Z1} to \eqref{E:CtoC_Z4} with \ref{en:L2Z2} of Corollary \ref{cor:L2Z} to get that
\begin{align} \label{E:CtoC_Z5}
\nu^2_{\bC,T_{Z *}\bC} \leq c\int_{B_1(0)}\dist^2(X,\spt\|\bC\|)d\|V\|(X)
\end{align}
for some $c = c(n,k,L,\bC^{(0)}) > 0$. 
\end{remarks}

\section{The Blow-Up Class}

In this section, we will assume Theorem \ref{L2} and Corollary \ref{cor:L2Z} and use them to construct the blow-up class, which is a class of functions defined on $\bC^{(0)}$ that represents the linearised problem. We will say that a properly aligned cone $\bC^{(0)} \in \cC$, sequences $\{\bC^j\}_{j=1}^{\infty} \in \cC$ (with $q_{\bC^j}$ constant, equal to $q$, say) and $\{V^j\}_{j=1}^{\infty} \in \cV_L$, a sequence of real numbers $\{\eps_j\}_{j=1}^{\infty}$ with $\eps_j \downarrow 0^+$ and a relatively closed set $\cD \subset A(\bC^{(0)}) \cap B_2(0)$ satisfy Hypotheses $\dagger$ if the following hold.

\bigskip

\noindent \textbf{Hypotheses} $\dagger$

\begin{enumerate}[label = (\arabic*$\dagger$), ref=(\arabic*$\dagger$)]
\item \label{en:dagger1} For each $j$, we have that $V^j$, $\bC^j$ and $\bC^{(0)}$ satisfy Hypotheses A with $\eps_j$ in place of $\eps$.
\item \label{en:dagger2} $d_{\cH}(\cD_j \cap \overline{B_2(0)}, \cD \cap \overline{B_2(0)}) \to 0$ and $\cD_j \cap B_{1/16}(0) \neq \emptyset$ for every $j$, where $\cD_j := \{Z \in B^n_2(0)\times \RR^k : \Theta_{V^j}(Z) \geq 2 \}$.
\item \label{en:dagger3} If $\cD \cap B_1(0) \neq A(\bC^{(0)}) \cap B_1(0)$, then $\bC^j \in \cP$ for all $j$.
\end{enumerate}

When we have these hypotheses in place, we write $E_j := E_{V^j}(\bC^j)$, which we call the \emph{excess} of $V^j$ relative to $\bC^j$.

\begin{remarks} \label{rema:dagger} Suppose $\bC^{(0)}$, $\{\bC^j\}_{j=1}^{\infty}$, $\{V^j\}_{j=1}^{\infty}$,  $\{\eps_j\}_{j=1}^{\infty}$, and $\cD$ satisfy Hypotheses $\dagger$. We make the following observations.
\begin{enumerate}
\item If $\cD \cap \overline{B_1(0)} \neq A(\bC^{(0)}) \cap \overline{B_1(0)}$, then $\bC^{(0)} \in \cP$. To see this, consider $Z \in (A(\bC^{(0)}) \setminus \cD) \cap \overline{B_1(0)}$. By \ref{en:dagger2}, we know that there exists some $\delta = \delta(Z) > 0$ such that for sufficiently large $j$ we have 
\beq \label{persgap1}
B_{\delta}(Z) \cap \cD_j = \emptyset.
\eeq
Thus we have a decomposition $V^j \res B_{\delta}(Z) = V^j_1 + V^j_2$ of $V^j$ in $B_{\delta}(Z)$ as per Theorem \ref{thm:grapdeco}. Using the mass bound (1) of Hypotheses A) and the compactness theorem for stationary integral varifolds (\cite{allard}), we may pass to a subsequence (depending on $Z$) along which $V^j_i$ converges in the sense of varifolds to a stationary integral $n$-varifold $W_i$ in $B_{\delta}(Z)$ for $i=1,2$.  Since $V^j \res B_{\delta}(Z) \to \bC^{(0)} \res B_{\delta}(Z)$ (from 4) of Hypotheses A), we know that $W_1 + W_2 = \bC^{(0)} \res B_{\delta}(Z)$. By applying the Constancy Theorem (\cite[$\S$ 41]{simongmt}) on each of the connected components of $\spt\|\bC^{(0)}\| \cap \{r_{\bC^{(0)}} > \eta\}$ for arbitrary $\eta > 0$, we deduce that $W_i$ has constant multiplicity along each of the half-planes that constitute $\bC^{(0)}$. Since $W_i$ is itself stationary in $B_{\delta}(Z)$ this implies that there must be a plane $\bP^{(0)}_i$ for which $W_i = \bP^{(0)}_i \res B_{\delta}(Z)$ whence $\bC^{(0)} \res B_{\delta}(Z) = (|\bP^{(0)}_1| + |\bP^{(0)}_2|) \res B_{\delta}(Z)$ whence $\bC^{(0)} = |\bP^{(0)}_1| + |\bP^{(0)}_2|$, because $\bC^{(0)}$ is a cylindrical cone. 

\item For $\delta \in (0,1/8)$ and $\sigma \in (0,1)$, there exists $J = J(\delta,\sigma) \in \NN$ such that for all $j \geq J$, we have
\beq \label{nonconcnearD1}
\int_{(\cD)_{\delta} \cap B_{\sigma}(0)} \dist^2(X,\spt\|\bC^j\|)d\|V^j\|(X) \leq c\delta^{3/4} E_j^2,
\eeq
for some $c = c(n,k,\bC^{(0)},L,\sigma) > 0$. The proof of this claim follows the argument of \cite[Corollary 3.2(ii)]{simoncylindrical}, except that we use \ref{en:L2Z4} of Corollary \ref{cor:L2Z} instead of (i) of Theorem 3.1 therein. The minor difference is that instead of being able to cover the whole axis by balls, here we can only cover the set $\cD$ and the correspondingly weaker conclusion therefore follows naturally. 

\item The argument of 1) above shows generally that singularities concentrate near good density points in the following sense: Given $\eta > 0$ there is $\eps = \eps(n,k,\bC^{(0)},\eta) > 0$ such that if $V \in \cV$ satisfies $\cQ_V(\bC^{(0)}) < \eps$, then $\sing V \cap B_{1/2}(0) \subset (\{Z \in \spt\|V\| \cap B_{1/2}(0) : \Theta_V(Z) \geq 2\})_{\eta}$.
\end{enumerate}
\end{remarks} 
\wl

\subsection{Constructing Blow-Ups} Given parameters $\tau_0$, $\rho_0$ and $\sigma_0$ we define $\bar{\eps}$ as follows: Let $\bar{\eps}_i$ for $i=0,1$ be those constants the existence of which is asserted by Theorem \ref{L2} and Corollary \ref{cor:L2Z} when one takes $\tau = \tau_0$, $\rho = \rho_0$ and $\sigma = \sigma_0$ in the statements. Then set $\bar{\eps} = \min_i\bar{\eps}_i$. Now if we suppose that $\bC^{(0)}$, $\{\bC^j\}_{j=1}^{\infty}$, $\{V^j\}_{j=1}^{\infty}$, $\{\eps_j\}_{j=1}^{\infty}$ and $\cD$ satisfy Hypotheses $\dagger$, then we can pick $\tau_j, \rho_j$ and $\sigma_j \to 0$ sufficiently \emph{slowly} so as to ensure that $\eps_j < \bar{\eps}(n,k,\bC^{(0)},L,\tau_j,\rho_j,\sigma_j)$ for every $j$. Possibly after passing to a subsequence we have the following:

\begin{enumerate} 
\renewcommand{\theenumi}{\textbf{(\arabic{enumi}$_j$)}}
\renewcommand{\labelenumi}{\textbf{(\arabic{enumi}$_j$)}}

\item\label{en:1j} $\|V^j\|(B^n_2(0)\times\RR^k) \leq \|\bC^{(0)}\|(B_2^n(0)\times\RR^k) + 1/2$ for all $j$.
\item \label{en:2j} $0 \in A(\bC^j) \subset A(\bC^{(0)})$.
\item\label{en:3j} $\cQ_{V^j}(\bC^j) < \eps_j$ for all $j$.
\item\label{en:4j} $\nu_{\bC^j,\bC^{(0)}} < \eps_j$ for all $j$.

\item\label{5j} By Theorem \ref{thm:grapdeco}: For any $Y \in A(\bC^{(0)}) \cap B_{15/8}(0)$ and $\rho \in (0,1/4)$, such that $B_{\rho}(Y) \cap \cD_j = \emptyset$, we have the decomposition $V^j \res (B^n_{\rho}(Y)\times\RR^k) = V^j_1 + V^j_2$, where for $i=1,2$, $V^j_i$ is a minimal Lipschitz graph (which means that we also have $\dim_{\cH}(\sing V^j_i) \leq n-4$ for $i=1,2$).

\item\label{6j} By 1) of Remark \ref{rema:dagger}, \ref{en:dagger2} and \ref{en:dagger3}, a closed set $\cD \subset A(\bC^{(0)}) \cap B_2(0)$ for which $\cD \cap B_{1/16}(0) \neq \emptyset$, $d_{\cH}(\cD_j \cap \overline{B_2(0)}, \cD \cap \overline{B_2(0)}) \to 0$ and such that if $\cD \cap B_1(0) \subsetneq A(\bC^{(0)}) \cap B_1(0)$, then $\bC^{(0)} \in \cP$ and $\bC^j \in \cP$ for all $j$.

\item\label{7j} By Theorem \ref{L2}
\begin{align}
V^j \res (B_{15/8}(0) \cap &\{r_{\bC^{(0)}} > \tau_j \}) \nonumber \\
& = |\gr (u^j + c^j)| \res  (B_{15/8}(0) \cap \{r_{\bC^{(0)}} > \tau_j \}),  \label{7j1}
\end{align}
where $u^j \in C^{\infty}(\spt\|\bC^{(0)}\| \cap \{r_{\bC^{(0)}} > \tau_j/2\} \cap B_{31/16}(0);\bC^{(0)\perp})$, $c^j$ $:$ $\spt\|\bC^{(0)}\|\setminus A(\bC^{(0)}) \to \bC^{(0)\perp}$ is such that $\bC^j \res \{r_{\bC^{(0)}} > 0\} = |\gr c^j|$ and for $X \in \spt\|\bC^{(0)}\| \cap \{r_{\bC^{(0)}} > \tau_j/2\}$, \\ $\dist(X + u^j(X) + c^j(X),\spt\|\bC^j\|) \geq (1 + c\eps_j)|u^j(X)|$ for some constant $c = c(n,k) > 0$.

\item\label{8j} By 2) of Remark \ref{rema:dagger}, \emph{i.e.} by \eqref{nonconcnearD1}: For any $\delta \in (0,1/8)$ and $\sigma \in (0,1)$, there exists $J(\delta,\sigma) \in \NN$ such that for $j \geq J$:
\beq  \label{8j1} 
\int_{(\cD)_{\delta} \cap B_{\sigma}(0)}\dist^2(X,\spt\|\bC^j\|) d\|V^j\|(X) \leq c\delta^{3/4}E_j^2,
\eeq
for some $c = c(n,k,\bC^{(0)},L) > 0$.
\item\label{en:9j} By \ref{en:L2Z1} to \ref{en:L2Z5} of Corollary \ref{cor:L2Z}: Given $Z_j \in \cD_j \cap B_{1/4}(0)$ and $\rho \in (0,1/4]$, there exists an integer $J(\rho)$ such that for $j \geq J(\rho)$ we have 
\beq \label{9j1} 
\dist^2(Z_j,A(\bC^{(0)})) \leq cE_j^2,
\eeq
\beq \label{9j2}
\begin{aligned} 
|\xi_j |^2 \leq cE_j^2 \qquad &\text{if}\ q_{\bC^j} \equiv q = 0, \\
\max_{i=1,2}\Bigl[ |\xi_j^{\perp_{\bP_i^{(0)}}}|^2 + \nu_{\bC^j,\bC^{(0)}}^2|\xi_j^{\top_{\bP_i^{(0)}}}|^2\Bigr] \leq cE_j^2\qquad &\text{if}\ q_{\bC^j} \equiv q > 0,
\end{aligned}
\eeq
\begin{align}
&\int_{B_{5\rho/8}(Z_j)}\frac{\dist^2(X,\spt\|T_{Z_j *}\bC^j\|)}{|X-Z_j|^{n+7/4}}d\|V^j\|(X) \leq cE_j^2, \label{9j3}\\
&\int_{B_{5\rho/8}(Z_j)}\frac{\dist^2(X,\spt\|T_{Z_j *}\bC^j\|)}{|X-Z_j|^{n+7/4}}d\|V^j\|(X) \label{9j4} \\
&\hspace{2cm} \leq c\rho^{-n-7/4}\int_{B_{\rho}(Z_j)} \dist^2(X,\spt\|T_{Z_j *}\bC^j\|)d\|V^j\|(X), \nonumber\\
\text{and} \nonumber \\
& \int_{\Omega_{\tau_j} \cap B_{5\rho/8}(Z_j)} R_{Z_j}^{2-n}\left|\pa{((u^j(X) + c^j(X) - Z_j^{\perp_{T_X\bC^{(0)}}})/R_{Z_j})}{R_{Z_j}}\right|^2d\cH^n(X) \label{9j5}  \\ 
& \hspace{2cm} \leq  c\rho^{-n-2}\int_{B_{\rho}(Z_j)}\dist^2(X,\spt \|\bC^j\|)d\|V^j\|(X), \nonumber
\end{align}
for some constant $c = c(n,k,\bC^{(0)},L) > 0$, where $R_Z = R_Z(X) = |X-Z^{\top_{T_X\bC^{(0)}}}|$ and where $\xi_j := Z_j^{\perp_{A(\bC^j)}}$.
\item \label{en:10j}
From \eqref{E:CtoC_Z5} of Remark \ref{R:CtoC_Z}: Given $Z_j \in \cD_j \cap B_{1/4}(0)$ we have 
\beq \label{10j1}
\nu_{\bC^j,T_{Z_j *}\bC^j}^2 \leq cE_j^2.
\eeq
\end{enumerate}

\wl

\noindent In the rest of this section, we construct the blow-up class $\fB(\bC^{(0)})$ and prove a list of basic properties. Suppose first that $\cD \cap B_1(0) = A(\bC^{(0)}) \cap B_1(0)$. Extend each function $u^j$ to all of $\spt \|\bC^{(0)}\| \cap \{r_{\bC^{(0)}} > 0\} \cap B_{31/16}(0)$ by defining its values to be zero on $\spt\|\bC^{(0)}\| \cap \{0 < r_{\bC^{(0)}} < \tau_j\}$. By \eqref{7j1} and elliptic estimates, there exists a harmonic function $v : \spt \|\bC^{(0)}\| \cap \{r_{\bC^{(0)}} > 0\} \cap B_1(0) \to \bC^{(0)\perp}$ such that 
\beq \label{consblowup1}
E_j^{-1}u^j \to v,
\eeq
where the convergence is in $C^2(K)$ for every compact subset of the domain of $v$. Then, using \eqref{8j1} of \ref{8j}, we deduce that for $\sigma \in (0,1)$, sufficiently small $\delta$ and sufficiently large $j$ depending on $\delta$ and $\sigma$, we have
\beq \label{consblowup2}
\int_{\spt\|\bC^{(0)}\| \cap \{0 < r_{\bC^{(0)}} < \delta\} \cap B_{\sigma}(0)}|E_j^{-1}u^j|^2 d\cH^n \leq c\delta^{3/4},
\eeq
from which we deduce that the convergence in \eqref{consblowup1} is also in $L^2(\spt\|\bC^{(0)}\| \cap \{r_{\bC^{(0)}} > 0\} \cap B_{\sigma}(0); \bC^{(0)\perp})$ for every $\sigma \in (0,1)$. 

Now suppose instead that $\cD \cap B_1(0) \subsetneq A(\bC^{(0)}) \cap B_1(0)$. From \ref{6j} we have that $\bC^{(0)} = |\bP_1^{(0)}| + |\bP_2^{(0)}| \in \cP$ and $\bC^j = |\bP_1^j| + |\bP_2^j| \in \cP$, which we label so that $\bP_i^j \to \bP_i^{(0)}$ as $j \to \infty$. Define $r_{\cD} = r_{\cD}(X) := \dist(X,\cD)$. Note that we can extend the domain of definition of $c^j_i := c^j\vert_{\bP_i^{(0)}}$ over the axis $A(\bC^{(0)})$ so that $\bP_i^j = \gr c^j_i$. Fix a small number $\tau_0 \in (0,1/64)$ and using \ref{6j}, choose $j$ sufficiently large (depending on $\tau_0$) such that $d_{\cH}(\cD_j \cap \overline{B_2(0)}, \cD \cap \overline{B_2(0)}) < \tau_0/2$. Now for $Y \in A(\bC^{(0)}) \cap B_{31/32}(0) \cap \{r_{\cD} > \tau_0\}$, we have that $B_{\tau_0/2}(Y) \cap \cD_j = \emptyset$ and so (as per \ref{5j}), we have the decomposition $V^j \res B_{\tau_0/2}(Y) = V_1^j + V_2^j$ for each $j$, where $V^j_i \to \bP_i^{(0)} \res B_{\tau_0/2}(Y)$ as $j \to \infty$, for $i=1,2$. By covering $\{r_{\bC^{(0)}} < \tau_0/8\} \cap \{r_{\cD} > \tau_0\} \cap B_{31/32}(0)$ by a finite collection of balls $\{B_{\tau_0/2}(Y_p)\}_{p=1}^M$ where $M = M(n,\tau_0)$, performing this argument at each point $Y_p$ for $p=1,..,M$ and using Allard's Regularity Theorem on each of the single-valued minimal graphs obtained, we have that (for sufficiently large $j$, depending on $\tau_0$) we can extend $u^j_i := u^j\vert_{\bP_i^{(0)}}$ for $i=1,2$ to a smooth function $u^j_i \in C^{\infty}(\bP_i^{(0)} \cap \{r_{\cD} > \tau_0\} \cap B_{31/32}(0);\bP_i^{(0)\perp})$ such that
\beq
V^j\ \res\ (\{r_{\cD} > \tau_0\} \cap B_{15/8}(0))  = \sum_{i=1}^2 V^j_i
\eeq
where
\beq
V^j_i = |\gr (u^j_i + c^j_i)|\ \res\ (\{r_{\cD} > \tau_0\} \cap B_{15/8}(0)).
\eeq
Further extend each $u^j_i$ to be equal to zero everywhere inside the region $\bP_i^{(0)} \cap \{r_{\cD} \leq \tau_0\} \cap B_{15/8}(0)$. Now, using elliptic estimates and combining these with Lemma \ref{lemm:planlemm}, we have that for each $i=1,2$, there exists a harmonic function $v_i : \bP_i^{(0)} \cap \{r_{\cD} > 0\} \cap B_1(0) \to \bP_i^{(0)\perp}$ such that 
\beq \label{consblowup3}
E_j^{-1}u^j_i \to v_i,
\eeq
where the convergence is in $C^2(K)$ for every compact subset of the domain of $v_i$. We can then again use \ref{8j1} of \ref{8j} to deduce that the convergence is also in $L^2(\bP_i^{(0)} \cap \{r_{\cD} > 0\} \cap B_{\sigma}(0);\bP_i^{(0)\perp})$ for every $\sigma \in (0,1)$. We then define $v : \spt\|\bC^{(0)}\| \cap B_1(0) \to \bC^{(0)\perp}$ by $v\vert_{\bP_i^{(0)}} = v_i$.

\smallskip

\noindent Write $\Omega := \spt\|\bC^{(0)}\| \cap \{r_{\bC^{(0)}} > 0\} \cap B_1(0)$

\bigskip

\noindent \textbf{Definition.} Corresponding to $\bC^{(0)}$, $\{\bC^j\}_{j=1}^{\infty}$, $\{V^j\}_{j=1}^{\infty}$,  $\{\eps_j\}_{j=1}^{\infty}$, and $\cD$ satisfying Hypotheses $\dagger$, a function $v \in$ $L^2(\Omega; \bC^{(0)\perp})$ $\cap$ $C^{\infty}(\Omega; \bC^{(0)\perp})$ constructed in this way is called a \emph{blow-up} of the sequence $V^j$ off $\bC^{(0)}$ relative to $\bC^j$. We define $\fB(\bC^{(0)})$ to be the class of all blow-ups off $\bC^{(0)}$.


\subsection{Properties of Blow-Ups}

We now prove that the class of functions $\fB(\bC^{(0)})$ satisfies certain fundamental properties that will enable us in the next section to prove that they exhibit quantitative $C^{1,\alpha}$ regularity properties.

\wl

\noindent \textbf{Definition.} Given a properly aligned cone $\bC^{(0)} \in \cC$, the class of functions $\cH(\bC^{(0)})$ is defined as follows. If $\bC^{(0)} \in \cC_{n-1}$, then it consists of functions $\psi : \spt\|\bC^{(0)}\| \cap \{r_{\bC^{(0)}} > 0 \} \to \bC^{(0)\perp}$ of the following form: For some collection of vectors $c_1,...,c_{n-1} \in A(\bC^{(0)})^{\perp}$ and a function $\varphi : \{\omega_1,...,\omega_4\} \to \bC^{(0)\perp}$ with $\varphi(\omega_j) \in T_{(\omega_j,0)}^{\perp}\bC^{(0)}$ for $j=1,...,4$ we have
\beq \label{Hdefn}
\psi(X) = \psi(x,y) = \sum_{p=1}^{n-1} y^pc_p^{\perp_{T_X\bC^{(0)}}} + |x|\varphi(x/|x|).
\eeq
If $\bC^{(0)} \in \cP_{\leq n-2}$ then it consists of functions $\psi : \spt\|\bC^{(0)}\| \cap \{r_{\bC^{(0)}} > 0 \} \to \bC^{(0)\perp}$ for which $\psi\vert_{\bP_i^{(0)}}$ is linear for $i=1,2$.

\begin{remarks} \label{rema:Hasblowup} When $\bC^{(0)} \in \cC_{n-1}$, the class $\cH(\bC^{(0)})$ accounts for all blow-ups of sequences of cones $\{\bD^j\}_{j=1}^{\infty} \in \cC_{n-1}$ of the form $\bD^j = R^j_*\tilde{\bD}^j$ where $R^j$ are rotations with $R^j \to \mathrm{id}_{\RR^{n+k}}$ and $\tilde{\bD}^j$ has $A(\tilde{\bD}^j) = A(\bC^{(0)})$ for all $j$. See section 2 of \cite{simoncylindrical}. \end{remarks}

The rest of this section is devoted to the proof of the following theorem.
 
\begin{theorem} \label{thm:propblowup} For a properly aligned cone $\bC^{(0)} \in \cC$, the class $\fB = \fB(\bC^{(0)})$ satisfies the following properties. 

\begin{enumerate} 
[nolistsep, ref = ($\fB$\arabic*), label= ($\fB$\arabic*)]
\item\label{B1} $v \in L^2(\Omega; \bC^{(0)\perp}) \cap C^{\infty}(\Omega; \bC^{(0)\perp})$

\item\label{B2} $\Delta v = 0$ on $\Omega$.

\item\label{B3} For each $v \in \fB$ there is a distinguished closed set $\cD_v \subset A(\bC^{(0)}) \cap B_1(0)$ with $\cD_v \cap B_{1/16}(0) \neq \emptyset$ such that if $\cD_v \neq A(\bC^{(0)}) \cap B_1(0)$, then $\bC^{(0)} \in \cP$ and $v_i := v \vert_{\Omega \cap \bP_i^{(0)}}$ extends to a smooth, $L^2$ harmonic function on $(\bP_i^{(0)} \setminus \cD_v) \cap B_1(0)$ for $i=1,2$.

\item\label{B4} When $\bC^{(0)} \in \cC_{n-1}$, we have that 
\[
\sup_{|y| \leq 3/8}\left|\frac{\partial^2}{\partial r_{\bC^{(0)}} \partial y^p}\sum_{j=1}^4 v(r_{\bC^{(0)}}\omega_j,y)\right| \to 0
\]
as $r_{\bC^{(0)}} \downarrow 0^+$, for $p=1,...,n-1$.

\item\label{B5} For any $v \in \fB$, we have the following closure and compactness properties:
\begin{enumerate}
[nolistsep, ref = ($\fB$5\Roman*), label= ($\fB$5\Roman*)]
\item \label{B5I} $\tilde{v}_{Y,\rho}(X) := \|v(Y + \rho(\cdot))\|_{L^2(\Omega)}^{-1}v(Y + \rho X) \in \fB$ for any $Y \in A(\bC^{(0)}) \cap B_{1/2}(0)$ and $\rho \in (0,1/4(1/2-|Y|)]$ for which $v(Y + \rho(\cdot)) \not\equiv 0$.
\item  \label{B5II}$\|v  - \kappa^{\perp_{T_{(\cdot)}\bC^{(0)}}} - \psi\|_{L^2(\Omega)}^{-1}(v - \kappa^{\perp_{T_{(\cdot)}\bC^{(0)}}} - \psi)  \in \fB$ for any $\kappa \in A(\bC^{(0)})^{\perp}\times\{0\}^m$ and any $\psi \in \cH(\bC^{(0)})$ such that $v  - \kappa^{\perp_{T_{(\cdot)}\bC^{(0)}}} - \psi \not\equiv 0$
\item  \label{B5III} For any sequence $\{v^j\}_{j=1}^{\infty} \in \fB$, there exists a subsequence $\{j'\}$ of $\{j\}$ and some $v \in \fB$ such that $v^{j'} \to v$ in $C^2_{loc}(\spt\|\bC^{(0)}\| \cap \{\dist(\cdot,\cD_v) > 0 \} \cap B_1(0);\bC^{(0)\perp})$.
\end{enumerate} 

\item\label{B6} For every $Z \in \cD_v \cap B_{1/4}(0)$, there exists $\kappa_v(Z) \in A(\bC^{(0)})^{\perp}$ satisfying $|\kappa_v(Z)|^2 \leq c\int_{\Omega} |v|^2$ for some $c = c(n,k,\bC^{(0)},L) > 0$ and such that for all $\rho \in (0,1/4]$ we have the estimates

\begin{align}
& \int_{\Omega \cap B_{\rho/2}(Z)} \frac{|v(X) - \kappa_v(Z)^{\perp_{T_X\bC^{(0)}}}|^2}{|X-Y|^{n+7/4}}d\cH^n(X) \nonumber \\
& \hspace{2cm} \leq c\int_{\Omega} |v(X)|^2 d\cH^n(X), \label{B6I}  \\
& \int_{\Omega \cap B_{\rho/2}(Z)}  \frac{|v(X) - \kappa_v(Z)^{\perp_{T_X\bC^{(0)}}}|^2}{|X-Y|^{n+7/4}}d\cH^n(X) \nonumber \\
& \hspace{2cm} \leq c\rho^{-n-7/4}\int_{\Omega \cap B_{\rho}(Z)} |v(X) - \kappa_v(Z)^{\perp_{T_X\bC^{(0)}}}|^2 d\cH^n(X), \label{B6II} \\
\text{and} \nonumber \\
& \int_{\Omega \cap B_{\rho/2}(Z)} R_Z^{2-n}  \left|\pa{((v(X) - \kappa_v(Z)^{\perp_{T_X\bC^{(0)}}})/R_Z)}{R_Z}\right|^2d\cH^n(X) \nonumber \\
& \hspace{2cm} \leq c\rho^{-n-2}\int_{\Omega \cap B_{\rho}(Z)} |v(X)|^2 d\cH^n(X),  \label{B6III}
\end{align}
where here $R_Z = R_Z(X) = |X-Z|$.
\end{enumerate}
\end{theorem}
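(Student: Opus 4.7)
Properties \ref{B1} and \ref{B2} are built into the construction: the $C^2_{\mathrm{loc}}$ convergence $E_j^{-1}u^j \to v$ on $\Omega \cap \{\dist(\cdot,\cD)>0\}$ comes from standard interior elliptic estimates applied to the minimal surface system (whose principal part linearises to $\Delta$ since $|Du^j| \to 0$ locally uniformly), and the $L^2$ convergence on all of $\Omega$ follows by combining this with the gap estimate \eqref{8j1}; harmonicity of $v$ is then the linearisation of the minimal surface equation. Property \ref{B3} identifies $\cD_v$ with the relatively closed set $\cD$ furnished by Hypotheses $\dagger$ and observation \ref{6j}; the extension statement in the $\cP$-case is a direct consequence of the construction carried out between \eqref{consblowup2} and \eqref{consblowup3}, which shows that on compact subsets of $(\bP_i^{(0)}\setminus \cD_v)\cap B_1(0)$ the functions $u^j_i$ are eventually defined and smooth, combined with the same elliptic estimates.

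\textbf{Property \ref{B4}.} For $\bC^{(0)} \in \cC_{n-1}$ I would derive the balancing identity from stationarity of $V^j$. Testing the first variation against a vector field of the form $\Phi(X) = \zeta(r_{\bC^{(0)}}(X), X^{\top_{A(\bC^{(0)})}})\, e_{l+k+p}$ with $\zeta \in C^\infty_c((0,\infty)\times \RR^{n-1})$, dividing by $E_j$ and passing to the limit via \eqref{consblowup1} (using the cylindrical structure of $\bC^{(0)}$) yields an identity for $v$ whose integrand, after integration by parts in $r$ and $y^p$, coincides with the quantity appearing in \ref{B4}; the claim then reduces to the ``four-sheets balance'' familiar from $\S 2$ of \cite{simoncylindrical}.

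\textbf{Property \ref{B5}.} For \ref{B5I}, translate the defining sequence by points $Y^j \in A(\bC^j)$ with $Y^j \to Y$ and rescale by $\rho$; the pushforwards $(\eta_{Y^j,\rho})_* V^j$, $(\eta_{Y^j,\rho})_* \bC^j$ with the same $\bC^{(0)}$ continue to satisfy Hypotheses $\dagger$ (with $\cD_v$ replaced by $(\cD_v - Y)/\rho$), and after $L^2$-renormalisation the resulting blow-up is $\tilde v_{Y,\rho}$. For \ref{B5II}, the key observation is that a constant $\kappa \in A(\bC^{(0)})^{\perp}\times\{0\}^m$ generates the ``translation mode'' $\kappa^{\perp_{T_{(\cdot)}\bC^{(0)}}}$ (the first-order change in the graphing function as $\bC^j$ is translated by $E_j\kappa$), and by Remark \ref{rema:Hasblowup} every $\psi \in \cH(\bC^{(0)})$ arises as the blow-up of a sequence $R^j_* \tilde\bD^j \in \cC$ with $R^j \to \mathrm{id}$ and $A(\tilde\bD^j) = A(\bC^{(0)})$; replacing $\bC^j$ in the defining sequence of $v$ by the appropriately translated/rotated cone and renormalising produces $v - \kappa^{\perp_{T_{(\cdot)}\bC^{(0)}}} - \psi$ as a new element of $\fB(\bC^{(0)})$. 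For \ref{B5III}, the $C^2$-bounds from harmonicity together with the $L^2$ bound give Arzel\`a--Ascoli on compact subsets of $\Omega \setminus \cD_{v^j}$; a diagonal argument plus Hausdorff-compactness of $\{\cD_{v^j}\}$ then extract a convergent subsequence.

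\textbf{Property \ref{B6}.} Fix $Z \in \cD_v \cap B_{1/4}(0)$ and, by Hypotheses $\dagger$ and \ref{6j}, choose $Z^j \in \cD_j$ with $Z^j \to Z$. When $q_{\bC^j} = 0$, set $\kappa_v(Z) := \lim_{j\to\infty} E_j^{-1}(Z^j)^{\perp_{A(\bC^j)}}$ (the limit exists along a subsequence, and the bound $|\kappa_v(Z)|^2 \leq c \int_\Omega |v|^2$ both follow from \ref{en:L2Z2} of Corollary \ref{cor:L2Z}); when $q_{\bC^j} > 0$, an analogous limit of the $\bP_i^{(0)}$-components of $(Z^j)^{\perp_{A(\bC^j)}}$ identifies $\kappa_v(Z)$. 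With this identification, \eqref{B6I}, \eqref{B6II}, \eqref{B6III} follow from \eqref{9j1a}, \eqref{9j2}, \eqref{9j3} respectively upon dividing by $E_j^2$, using \eqref{CtoC_Zhausq=0}--\eqref{CtoC_Z} to exchange $T_{Z^j *}\bC^j$ for $\bC^j$ in the integrands modulo errors controlled by $E_j^{-1}|\xi^j| \to |\kappa_v(Z)|$, and passing to the limit by Fatou on the left and $C^2_{\mathrm{loc}}$-convergence on the right. The main technical obstacle is \ref{B5II}: one must carefully verify that an order-$E_j$ translation and rotation of $\bC^j$ remains in $\cC$, that the resulting sequence still satisfies Hypotheses $\dagger$ with excess comparable to $E_j$, and that the corresponding blow-up is exactly $v - \kappa^{\perp_{T_{(\cdot)}\bC^{(0)}}} - \psi$ -- this is where the structural description of $\cH(\bC^{(0)})$ in Remark \ref{rema:Hasblowup} is essential.
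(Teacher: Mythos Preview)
Your outline follows the paper's approach closely and is largely correct, but there are two places where the sketch omits an essential ingredient.

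\textbf{\ref{B4}.} Choosing $\zeta \in C^\infty_c((0,\infty)\times\RR^{n-1})$ avoids the axis entirely, and the resulting identity is not strong enough to force the mixed derivative to vanish at $r=0$. In the actual argument (Lemma~1 of \cite{simoncylindrical}, from (16) onward) one takes $\zeta$ whose \emph{radial} derivative vanishes near $\{r=0\}$ but whose $y^p$-derivative need not, and one must then estimate the near-axis piece $\int_{\{r<\tau\}}\nabla^{V^j}x^p\cdot\nabla^{V^j}\zeta\,d\|V^j\|$. This is where the paper's ``only significant difference'' arises: that term is controlled by $\bigl(\int |a^{\bC}_V|^2\bigr)^{1/2}$, and to bound the latter by $cE_j$ one applies \ref{en:L22} of Theorem~\ref{L2} \emph{centred at a point} $Z_j\in\cD_j\cap B_{1/16}(0)$, whose existence is exactly what \ref{en:dagger2} guarantees. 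Your sketch omits this step.

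\textbf{\ref{B5III}.} Arzel\`a--Ascoli and a diagonal argument give a subsequential limit $v$, but do not by themselves place $v$ in $\fB(\bC^{(0)})$: membership requires exhibiting $v$ as the blow-up of some sequence satisfying Hypotheses~$\dagger$. The paper does this explicitly by, for each $j$, picking $p_j$ large enough that $\|E^{-1}_{V^{p_j}_j}u^{p_j}_j - v^j\|_{L^2}<j^{-1}$, then showing that the diagonal sequence $\{V^{p_j}_j,\bC^{p_j}_j\}$ (with $\cD$ obtained from Hausdorff compactness of $\cD'_j$) itself satisfies Hypotheses~$\dagger$ and produces $v$ as its blow-up. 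You should make this diagonal-sequence construction explicit.

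Two minor remarks: in \ref{B5I} the paper translates by the fixed point $Y\in A(\bC^{(0)})$ (no $Y^j$ needed, since $A(\bC^j)\subset A(\bC^{(0)})$), and in \ref{B5II} the paper realises the $\kappa$-mode by translating the \emph{varifold} sequence ($V^j\mapsto\tau_{E_j\kappa\,*}V^j$) rather than the cone --- equivalent, but worth matching conventions.
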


\begin{proof} It is clear from the construction in Section 4.1 that \ref{B1} and \ref{B2} hold. And \ref{B3} follows from 1) of Remark \ref{rema:dagger} once we set $\cD_v := \cD \cap B_1(0)$, where $\cD$ is as in \ref{en:dagger2}.

Proof of \ref{B4}: This is proved using the argument of the proof of Lemma 1 of \cite{simoncylindrical} from equation (16) therein until the end. The only significant difference is that in \cite{simoncylindrical} at line (20), Theorem 3.1 was used, whereas here we must use estimate \ref{en:L22} of Theorem \ref{L2} applied to $\eta_{Z_j,9/10\ *}V^j$ for some $Z_j \in \cD_j \cap B_{1/16}(0)$ (the existence of which is guaranteed by \ref{B3}).

Proof of \ref{B5}: Firstly, if $v \in \fB$ is not identically zero, then for any $Y \in A(\bC^{(0)}) \cap B_{1/2}(0)$ and $\rho \in (0,1/4(1/2-|Y|)]$, we have that $\tilde{v}_{Y,\rho}$ is a blow-up of $\{(\eta_{Y,\rho})_*V^j\}_{j=1}^{\infty}$ off $\bC^{(0)}$ relative to $\{\bC^j\}_{j=1}^{\infty}$. This establishes \ref{B5I}.

Now, if $\bC^{(0)} \in \cP_{\leq n-2}$ and we are given $v \in \fB$ and $\psi \in \cH(\bC^{(0)})$, then firstly let $\hat{\bC}^j$ be the unique element of $\cP$ which contains the graph of $c^j + E_j\psi$ (where $c^j$ is the function that graphically represents $\bC^j$ over $\bC^{(0)}$). Secondly, for $\kappa \in A(\bC^{(0)})^{\perp}$, we replace the sequence $\{V^j\}_{j=1}^{\infty}$ with $\{\tau_{E_j\kappa *}V^j\}_{j=1}^{\infty}$. One can then check that Hypotheses $\dagger$ are still satisfied and that $\|v - \kappa^{\perp_{T_X\bC^{(0)}}} - \psi\|_{L^2(\Omega)}^{-1}(v - \kappa^{\perp_{T_X\bC^{(0)}}} - \psi)$ is a blow-up of $\tau_{E_j\kappa *}V^j$ off $\bC^{(0)}$ relative to $\hat{\bC^j}$.

Now suppose that $\bC^{(0)} \in \cC_{n-1}$ and that we are given $\psi \in \cH(\bC^{(0)})$. Let $\bD^j = R^j_*\tilde{\bD}^j$ be as in Remark \ref{rema:Hasblowup}. Let $d_j$ be the function that represents $\tilde{\bD}^j$ as a graph over $\bC^{(0)}$ and then let $\hat{\bC}^j$ be the unique element of $\cC_{n-1}$ that contains the graph of $c^j + E_jd_j$ (where $c^j$ is the function that graphically represents $\bC^j$ over $\bC^{(0)}$). If we are also given $\kappa \in A(\bC^{(0)})^{\perp}$, we replace the sequence $\{V^j\}_{j=1}^{\infty}$ by $\tilde{V}^j := \tau_{E_j\kappa *}(R^j)^{-1}_*V^j$ and again the result is that $\|v - \kappa^{\perp_{T_X\bC^{(0)}}} - \psi\|_{L^2(\Omega)}^{-1}(v - \kappa^{\perp_{T_X\bC^{(0)}}} - \psi)$ is a blow-up of $\{\tilde{V}^j\}_{j=1}^{\infty}$ off $\bC^{(0)}$ relative to $\{\hat{\bC}^j\}_{j=1}^{\infty}$.
  
To see \ref{B5III}, suppose that for each $j$, we have that $v^j$ is the blow-up of $\{V^p_j\}_{p=1}^{\infty}$ relative to $\{\bC^p_j\}_{p=1}^{\infty}$. For each $j$, notice that we can choose $p_j$ such that $\{p_j\}_{j=1}^{\infty}$ is strictly increasing and such that 
\beq \label{propblowup1}
\|(E_{V^{p_j}_j}(\bC^p_j)^{-1}u^{p_j}_j - v_j\|_{L^2(\Omega)} < j^{-1},
\eeq
where $u^{p}_j$ is the function that represents $V^p_j$ as a graph over as per \ref{7j}. That this is possible is clear from the construction of the blow-up. We then select a further subsequence of the $\{V^{p_j}_j\}_{j=1}^{\infty}$ to ensure that $E_{V^{p_j}_j}(\bC^p_j) \to 0$ as $j \to \infty$. Now, with $\cD'_j = \{X \in B_2(0) : \Theta_{V^{p_j}_j}(X) \geq 2\}$, we construct $\cD$ by using the sequential compactness of the space of closed sets with the Hausdorff metric: This means we can pass to a subsequence for which $\cD'_j \cap \overline{B_2(0)}$ converges in the Hausdorff metric to $\cD \subset \overline{B_2(0)}$. Then we choose $\eps_j$ such that $\bC^{(0)}$, $\{\eps_j\}_{j=1}^{\infty}$, $\{V^{p_j}_j\}_{j=1}^{\infty}$, $\cD$ and $\{\bC^{p_j}_j\}_{p=1}^{\infty}$ satisfy Hypotheses $\dagger$ and therefore we can define $v$ to be a blow-up of $V^{p_j}_j$ relative to $\bC^{p_j}_j$. Then using \eqref{propblowup1}, elliptic estimates, the Arz\'{e}la-Ascol\'{i} theorem, a compact exhaustion and a diagonalisation, we deduce that along a further subsequence, $v_{j'} \to v$ locally in $C^2$ as required.

Proof of \ref{B6}: Let $Z \in \cD_v \cap B_{1/4}(0)$ and $\rho \in (0,1/2]$. Suppose that $Z_j \in \cD_j$ is such that $Z_j \to Z$ as $j \to \infty$ and pick $\rho' \in (\rho/2,\rho)$ so that for sufficiently large $j$ we have $B_{\rho/2}(Z) \subset B_{5\rho'/8}(Z_j) \subset B_{\rho'}(Z_j) \subset B_{\rho}(Z)$. Write $\xi_j := Z_j^{\perp_{A(\bC^j)}}$. Fix $\tau > 0$. Having applied \eqref{9j4} of \ref{en:9j} at the point $Z_j$ and at scale $\rho'$, we then use the area formula to write the graphical part of the integral on the left-hand side of \eqref{9j4} over the domain in $\bC^{(0)}$. The result is that for sufficiently large $j$ we have:
\begin{align}  
\int_{\Omega \cap \{r_{\cD_v} > \tau\} \cap B_{\rho/2}(Z)}&\frac{|u^j(X) - \xi_j^{\perp_{T_X\bC^{(0)}}}|^2}{|X + u^j(X) + c^j(X) - Z_j|^{n+7/4}}d\cH^n(X)  \nonumber \\
& \leq c\rho^{-n-7/4}\int_{B_{\rho}(Z)} \dist^2(X,\spt\|T_{Z_j *}\bC^j\|)d\|V^j\|(X) \label{propblowup2}
\end{align}
where $c = c(n,k,\bC^{(0)},L) > 0$. By splitting the domain of integration of the integral on the right-hand side of the above line into the complementary regions $B_{\rho}(Z) \cap \{r_{\cD_v} < \tau\}$ and $B_{\rho}(Z) \cap \{r_{\cD_v} > \tau\}$ and using \eqref{E:CtoC_Z1} on the second integral, we see that the right-hand side is at most
\begin{align}
c\rho^{-n-7/4}\int_{B_{\rho}(Z) \cap \{r_{\cD_v} > \tau\}}& \dist^2(X,\spt\|T_{Z_j *}\bC^j\|)d\|V^j\|(X)\nonumber \\
+ &c\rho^{-n-7/4}\int_{B_{\rho}(Z) \cap \{r_{\cD_v} < \tau\}} \dist^2(X,\spt\|\bC^j\|)d\|V^j\|(X)\nonumber \\
+ &c\rho^{-n-7/4}\nu^2_{\bC^j,T_{Z^j *}\bC^j}\; \times\; \|V^j\|(B_{\rho}(Z) \cap \{r_{\cD_v} < \tau\}). \label{propblowup3}
\end{align}
From \eqref{8j1} of \ref{8j} we have
\beq \label{propblowup4}
\rho^{-n-7/4}\int_{B_{\rho}(Z) \cap \{r_{\cD_v} < \tau\}} \dist^2(X,\spt\|\bC^j\|)d\|V^j\|(X) \leq \rho^{-n-7/4}\tau^{3/4}E_j^2,
\eeq
and using \eqref{10j1} of \ref{en:10j} we have
\beq \label{propblowup5}
\rho^{-n-7/4}\nu^2_{\bC^j,T_{Z^j *}\bC^j}\|V^j\|(B_{\rho}(Z) \cap \{r_{\cD_v} < \tau\}) \leq \rho^{-n-7/4}E_j^2 \tau .
\eeq
Also, using \eqref{9j2} of \ref{en:9j}, we have that if $\bC^{(0)} \notin \cP$, then $E_j^{-1}|\xi_j^{\perp_{\bH_i^{(0)}}}|$ is bounded for $i=1,\dots,4$ and if $\bC^{(0)} \in \cP$ then $E_j^{-1}|\xi_j^{\perp_{\bP_i^{(0)}}}|$ is bounded for $i=1,2$. Therefore, by taking an appropriate subsequential limit and using \eqref{9j1} of \ref{en:9j}, we can conclude that there exists $\kappa_v(Z) \in A(\bC^{(0)})^{\perp}$ with
\beq \label{propblowup6}
|\kappa_v(Z)| \leq c = c(n,k,\bC^{(0)},L)
\eeq
and such that $E_j^{-1}\xi_j^{\perp_{T_X\bC^{(0)}}} \to \kappa_v(Z)^{\perp_{T_X\bC^{(0)}}}$ as $j \to \infty$, for all $X \in \spt\|\bC^{(0)}\|\setminus A(\bC^{(0)})$. Now we divide \eqref{propblowup2} by $E_j^2$. And, using \eqref{propblowup3}, \eqref{propblowup4}, \eqref{propblowup5}, the strong $L^2$ convergence of $E_j^{-1}u^j$ to $v$, the $C^2_{loc}$ convergence of $c^j$ and $u^j$ to $0$, and the dominated convergence theorem, we can justify taking limits first along a subsequence as $j \to \infty$ and then as $\tau \downarrow 0^+$.  The result is 
\begin{align}
 \int_{\Omega \cap B_{\rho/2}(Z)} & \frac{|v(X) - \kappa_v(Z)^{\perp_{T_X\bC^{(0)}}}|^2}{|X-Z|^{n+7/4}}d\cH^n(X) \nonumber \\
& \leq c\rho^{-n-7/4}\int_{\Omega \cap B_{\rho}(Z)} |v(X) - \kappa_v(Z)^{\perp_{T_X\bC^{(0)}}}|^2 d\cH^n(X). \label{propblowup7}
\end{align}

By starting with \eqref{9j3} and performing a similar procedure we also have that
\begin{align}
 \int_{\Omega \cap B_{\rho/2}(Z)} \frac{|v(X) - \kappa_v(Z)^{\perp_{T_X\bC^{(0)}}}|^2}{|X-Z|^{n+7/4}}d\cH^n(X) \leq c\int_{\Omega} |v(X)|^2 d\cH^n(X). \label{propblowup8}
\end{align}
This latter estimate implies that $\kappa_v(Z)$ indeed depends only on $Z$ and the particular blow-up $v$. Finally, the estimate \eqref{B6III} is obtained similarly, by dividing \eqref{9j5} by $E_j^2$ and carefully justifying the taking of the limit as $j \to \infty$.
\end{proof}

\begin{remarks} \label{rema:psiasblowup} Suppose $v \in \fB(\bC^{(0)})$ arises as the blow up of $\{V^j\}_{j=1}^{\infty}$. The proof of \ref{B5II} implies the following:  Given $\psi \in \cH(\bC^{(0)})$ with $ \sup_{\Omega}|\psi|^2 \leq c\int_{\Omega}|v|^2$, there exist rotations $R^j$ with $|R^j - \mathrm{id}_{\RR^{n+k}}| \leq cE_j$ and a sequence $\hat{\bC}^j \in \cC$ such that $\|v - \psi\|_{L^2(\Omega)}^{-1}(v - \psi)$ arises as a blow-up of $\{R^j_*V^j\}_{j=1}^{\infty}$ off $\bC^{(0)}$ relative to $\{\hat{\bC}^j\}_{j=1}^{\infty}$.
\end{remarks}

\begin{remarks}  \label{rema:kappa=0} The way that the function $\kappa_v$ is defined in the proof of \ref{B6} implies the following: Suppose that $v$ arises as the blow up of $\{V^j\}_{j=1}^{\infty}$ relative to some sequence of cones $\{\bC^j\}_{j=1}^{\infty}$ for which the conclusions of Theorem \ref{L2} and Corollary \ref{cor:L2Z} hold with $V^j$ and $\bC^j$ in place of $V$ and $\bC$. Fix $Y \in \cD_v$. If there exists a sequence $Y_j \in A(\bC^j) \cap \spt\|V^j\|$ with $\Theta_{V^j}(Y_j) \geq 2$ for all $j$ and such that $Y_j \to Y$, then $\kappa_v(Y) = 0$. This is because in this case $\xi_j := Y_j^{\perp_{A(\bC^j)}} = 0$ for all $j$.
\end{remarks}

\section{Regularity of Blow-Ups} 

The content of this section is that blow-ups satisfy a quantitative $C^{1,\alpha}$ estimate. The first result is a non-concentration estimate for blow-ups.

\begin{lemma} \label{blowupnonconc} Suppose $\bC^{(0)} \in \cC_{n-1}$. For any $v \in \fB(\bC^{(0)})$, there exists a function $\kappa_v : \Omega \to A(\bC^{(0)})^{\perp}$ of the form $\kappa_v(X)= \kappa_v(r_{\bC^{(0)}},X^{\top_{A(\bC^{(0)}}})$ that satisfies $\sup_{\Omega} |\kappa_v|^2 \leq c\int_{\Omega} |v|^2d\cH^n$ for some $c = c(n,k,\bC^{(0)},L) > 0$ and is such that: 
\beq \label{blowupnonconc1}
\int_{\Omega \cap (\cD_v)_{\rho/4}} \frac{|v(X) - \kappa_v(X)^{\perp_{T_X\bC^{(0)}}}|^2}{\dist(X,\cD_v)^{5/2}}d\cH^n(X) \leq c\int_{\Omega} |v(X)|^2 d\cH^n(X) 
\eeq
for every $\rho \in (0,1/4]$, where $c = c(n,k,\bC^{(0)},L) > 0$. 
\end{lemma}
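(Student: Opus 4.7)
The plan is to construct $\kappa_v$ by nearest-point projection onto $\cD_v$, to invoke Property \ref{B6} at each anchoring point, and then to sum a dyadic decomposition of the tubular neighbourhood $(\cD_v)_{\rho/4}$. For each $Z \in \cD_v \cap B_{1/4}(0)$, I would let $\kappa_v(Z) \in A(\bC^{(0)})^{\perp}$ be the vector supplied by \ref{B6}, so that $|\kappa_v(Z)|^2 \leq c \int_\Omega |v|^2$ and the Morrey bounds \ref{B6I}--\ref{B6II} hold at $Z$. Since $\cD_v \subset A(\bC^{(0)})$, the identity $|X - Z|^2 = |X^{\top_{A(\bC^{(0)})}} - Z|^2 + |X^{\perp_{A(\bC^{(0)})}}|^2$ for $Z \in \cD_v$ shows that a nearest point to $X \in \Omega$ in $\cD_v$ depends only on $y := X^{\top_{A(\bC^{(0)})}}$, so I would set $\kappa_v(X) := \kappa_v(Z(y))$ with $Z(y) \in \cD_v$ a nearest point to $y$. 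This gives $\kappa_v$ of the required form $\kappa_v(r_{\bC^{(0)}}, X^{\top_{A(\bC^{(0)})}})$ (with no $r$-dependence) and inherits the sup bound.

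The key intermediate ingredient will be a H\"older-type oscillation estimate for $\kappa_v$ along $\cD_v$: for $Z_1, Z_2 \in \cD_v \cap B_{1/4}(0)$ with $\rho := |Z_1 - Z_2| \leq 1/8$, and for each of the four half-planes $\bH_j^{(0)}$ of $\bC^{(0)}$,
\[
|\kappa_v(Z_1)^{\perp_{\bH_j^{(0)}}} - \kappa_v(Z_2)^{\perp_{\bH_j^{(0)}}}|^2 \leq c \rho^{7/4} \int_\Omega |v|^2.
\]
To establish this I would apply \ref{B6I} at $Z_1$ and at $Z_2$ (since $(\rho/2)^{-(n+7/4)}$ appears as a crude upper bound on $|X-Z_i|^{-(n+7/4)}$) to extract the Morrey estimates $\int_{\Omega \cap B_{2\rho}(Z_i)} |v - \kappa_v(Z_i)^{\perp}|^2 \leq c \rho^{n+7/4} \int_\Omega |v|^2$, and then conclude by the triangle inequality averaged over the intersection $B_{2\rho}(Z_1) \cap B_{2\rho}(Z_2) \cap \bH_j^{(0)}$, which has $\cH^n$-measure of order $\rho^n$.

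With these in hand, the main estimate proceeds via a dyadic decomposition. For integers $k$ with $2^{-k} < \rho/4$, set $A_k := \{X \in \Omega : 2^{-k-1} \leq \dist(X, \cD_v) < 2^{-k}\}$ and, by a Vitali argument, cover $\cD_v \cap B_{1/2}(0)$ by a finitely overlapping family $\{B_{2^{-k}}(Z_\ell)\}_{\ell=1}^{N_k}$ centred in $\cD_v$; since $\cD_v$ lies in the $(n-1)$-dimensional subspace $A(\bC^{(0)})$, one has $N_k \leq c\, 2^{k(n-1)}$, and $A_k \cap B_{1/4}(0) \subset \bigcup_\ell B_{2 \cdot 2^{-k}}(Z_\ell)$. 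On each such ball I would split
\[
|v(X) - \kappa_v(X)^{\perp_{T_X\bC^{(0)}}}|^2 \leq 2|v - \kappa_v(Z_\ell)^{\perp_{T_X\bC^{(0)}}}|^2 + 2|\kappa_v(Z_\ell)^{\perp_{T_X\bC^{(0)}}} - \kappa_v(Z(X))^{\perp_{T_X\bC^{(0)}}}|^2,
\]
controlling the first term directly by \ref{B6I} at $Z_\ell$ and the second by the H\"older estimate, using $|Z_\ell - Z(X)| \leq 3 \cdot 2^{-k}$ together with the volume bound $\cH^n(A_k \cap B_{2\cdot 2^{-k}}(Z_\ell)) \leq c\, 2^{-kn}$. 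Combining with $\dist(\cdot, \cD_v)^{-5/2} \leq c\, 2^{5k/2}$ on $A_k$, the contribution of a single ball is $\leq c\, 2^{-k(n - 3/4)} \int_\Omega |v|^2$; summing over $\ell$ produces a factor $N_k \leq c\, 2^{k(n-1)}$, and summing over $k$ yields the geometric series $\sum_k 2^{-k/4}$, which converges to give the claim.

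The main obstacle will be securing the H\"older estimate for $\kappa_v$ along $\cD_v$: because \ref{B6} canonically determines only the projections $\kappa_v(Z)^{\perp_{T_X\bC^{(0)}}}$, which depend on the half-plane containing $X$, this oscillation bound has to be established and then used one half-plane at a time before being reassembled. Once this is in hand, the dyadic summation and the boundary bookkeeping (to ensure the anchoring points $Z_\ell$ remain in $B_{1/4}(0)$ where \ref{B6} applies) are routine.
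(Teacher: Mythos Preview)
Your approach is correct, but it is more elaborate than the paper's. The paper avoids the H\"older oscillation estimate for $\kappa_v$ along $\cD_v$ entirely by defining $\kappa_v$ differently: rather than anchoring at the nearest point of $\cD_v$, the paper sets, for each $(r,y)$ with $r>0$,
\[
\kappa_v(r,y) := \arg\min_{\lambda}\ \sum_{j=1}^4 \bigl|v(r\omega_j,y) - \lambda^{\perp_{T_{(\omega_j,0)}\bC^{(0)}}}\bigr|^2,
\]
where the minimum is over $\lambda \in A(\bC^{(0)})^{\perp}$ with $|\lambda|^2 \leq c\int_{\Omega}|v|^2$. Because any ball $B_{\sigma}(Z)$ centred at $Z \in \cD_v \subset A(\bC^{(0)})$ contains, with each point $(r\omega_j,y)$, all four of its rotations $(r\omega_1,y),\dots,(r\omega_4,y)$, this pointwise-optimal choice satisfies
\[
\int_{\Omega \cap B_{\sigma}(Z)} \bigl|v - \kappa_v^{\perp_{T_X\bC^{(0)}}}\bigr|^2 \ \leq\ \int_{\Omega \cap B_{\sigma}(Z)} \bigl|v - \kappa_v(Z)^{\perp_{T_X\bC^{(0)}}}\bigr|^2
\]
automatically, so the Morrey bound from \ref{B6} transfers to the variable $\kappa_v$ with no comparison between different centres needed. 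From there the paper covers $(\cD_v)_{\sigma/4}$ by balls of radius $\sigma$ (using $\dim A(\bC^{(0)}) = n-1$), sums to obtain $\sigma^{-11/4}\int_{\Omega \cap (\cD_v)_{\sigma/4}}|v-\kappa_v^{\perp}|^2 \leq c\int_\Omega |v|^2$, and then integrates in $\sigma$ against $\sigma^{-3/4}$ and applies Fubini --- a short three-line computation.

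What your route buys is a $\kappa_v$ that is piecewise constant in $r$ and tied directly to values of the map $Z \mapsto \kappa_v(Z)$ from \ref{B6}; what the paper's route buys is the elimination of the oscillation lemma and the dyadic bookkeeping. Both are valid, but the minimizer trick is the shorter path.
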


\begin{proof} Work in a basis in which $\bC^{(0)}$ is properly aligned. Fix $\rho \in (0,1/4)$. For each $(r,y) = (r_{\bC^{(0)}},y)$ with $r > 0$ define $\kappa_v(r,y) \in \RR^{1+k}\times\{y\}$ by
\beq \label{blowupnonconc2}
\sum_{j=1}^4 |v(r\omega_j,y) - \kappa_v(r,y)^{\perp_{T_{(\omega_j,0)}\bC^{(0)}}}|^2 = \inf_{\lambda} \sum_{j=1}^4 |v(r\omega_j,y) - \lambda^{\perp_{T_{(\omega_j,0)}\bC^{(0)}}}|^2,
\eeq
where the infimum is taken over $\lambda \in \RR^{l+k}\times\{0\}^{n-1}$ with $|\lambda|^2 \leq c\int_{\Omega} |v|^2$. By using \eqref{B6I} of Theorem \ref{thm:propblowup} and the definition of $\kappa_v$, it follows directly that for $Z \in \cD_v$ and $\sigma \in (0,\rho)$ we have
\beq \label{blowupnonconc3}
\sigma^{-n-7/4}\int_{\Omega \cap B_{\sigma}(Z)} |v - \kappa_v^{\perp_{T_X\bC^{(0)}}}|^2d\cH^n  \leq c\int_{\Omega}|v|^2d\cH^n.
\eeq
Then we cover $(\cD_v)_{\sigma/4} \cap B_{1/2}(0)$ with a collection of at most $c(n,k)\sigma^{-(n-1)}$ balls $\{B_{\sigma}(Z_j)\}$, where $Z_j \in \cD_v$ for each $j$ and sum up the integrals to get that
\beq \label{blowupnonconc4}
\sigma^{-11/4} \int_{\Omega \cap (\cD_v)_{\sigma/4}} |v - \kappa_v^{\perp_{T_X\bC^{(0)}}}|^2d\cH^n \leq c\int_{\Omega}|v|^2d\cH^n.
\eeq
When we multiply by $\sigma^{-3/4}$, integrate in $\sigma$ from $0$ to $\rho$ and use Fubini's theorem to carry out the $\sigma$ integral, a short computation gives \eqref{blowupnonconc1}.  \end{proof}

Boundedness and continuity of blow-ups also follows immediately from the basic properties:

\begin{lemma} \label{blowupbouncont} For any $v \in \fB(\bC^{(0)})$, the following statements hold:
\begin{enumerate}[nolistsep]
\item $\sup_{\Omega \cap B_{1/4}(0)}|v| \leq c$, for some constant $c = c(n,k,\bC^{(0)},L) > 0$.
\item If $\bC^{(0)} \in \cC_{n-1}\setminus \cP_{n-1}$, then for $j=1,..,4$, we have that $v\vert_{\bH_j^{(0)} \cap B_{1/4}(0)}$ extends continuously to the boundary portion $A(\bC^{(0)}) \cap \Omega$. If $\bC^{(0)} \in \cP$, then for $i=1,2$, we have that $v_i := v\vert_{\bP_i^{(0)}}$ extends continuously to the whole of $\bP_i^{(0)} \cap B_{1/4}(0)$.
\end{enumerate} 
\end{lemma}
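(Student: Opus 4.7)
Both statements reduce to the weighted $L^2$ estimate~\eqref{B6I}, the mean value inequality for harmonic functions on each smooth piece of $\Omega$, and the smooth extension across $A(\bC^{(0)}) \setminus \cD_v$ supplied by~\ref{B3}. As a preliminary I would record that every $v \in \fB(\bC^{(0)})$ satisfies $\int_\Omega |v|^2\,d\cH^n \leq c(n,k,\bC^{(0)},L)$; this is immediate from the blow-up construction, since~\ref{7j} gives $|u^j| \leq c\,\dist(\,\cdot\, + u^j + c^j, \spt\|\bC^j\|)$ on $\spt\|\bC^{(0)}\| \cap \{r_{\bC^{(0)}} > \tau_j/2\}$, so the area formula yields $\int_{\{r_{\bC^{(0)}} > \tau_j\}} |u^j|^2\,d\cH^n \leq cE_j^2$, while $u^j$ has been extended by zero on $\{0 < r_{\bC^{(0)}} \leq \tau_j\}$ (with the same estimate applying to the $\bC^j \in \cP$ branch of the construction). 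Dividing by $E_j^2$ and passing to the $L^2$ limit gives the claim.

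For the boundedness claim, $v$ is smooth and harmonic on each connected smooth piece of $\Omega$, so at any $X \in \Omega$ with $\delta := \dist(X, A(\bC^{(0)})) > 0$ the mean value inequality on the $n$-disk $B_{\delta/2}(X) \cap \spt\|\bC^{(0)}\|$, which lies in a single half-plane (respectively plane) since $\delta/2 < \delta$, yields $|v(X)|^2 \leq c\delta^{-n}\!\int_\Omega |v|^2$; this bounds $|v|$ away from the axis. For $X$ close to the axis, pick $Z \in A(\bC^{(0)}) \cap B_{1/2}(0)$ with $|X-Z| = \delta$. If $Z \in \cD_v$, then applying~\eqref{B6I} with $\rho = 1/4$ and using that $|Y - Z| \sim \delta$ throughout $B_{\delta/2}(X)$ gives
\[ \int_{B_{\delta/2}(X) \cap \Omega} \bigl|v(Y) - \kappa_v(Z)^{\perp_{T_Y \bC^{(0)}}}\bigr|^2 d\cH^n(Y) \leq c\,\delta^{n+7/4}\!\int_\Omega |v|^2, \]
and the mean value inequality for the harmonic function $v - \kappa_v(Z)^{\perp_{T_{(\cdot)} \bC^{(0)}}}$ then produces the pointwise decay
\[ \bigl|v(X) - \kappa_v(Z)^{\perp_{T_X \bC^{(0)}}}\bigr|^2 \leq c\,\delta^{7/4}\!\int_\Omega |v|^2, \]
which together with the bound $|\kappa_v(Z)|^2 \leq c\!\int_\Omega|v|^2$ from~\ref{B6} gives $|v(X)| \leq c$. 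Points $Z \in A(\bC^{(0)}) \setminus \cD_v$, which by~\ref{B3} can only arise in the $\cP_{\leq n-2}$ case with $v \in \fB_\cP$, are handled by the smooth harmonic extension provided by~\ref{B3} combined with the mean value inequality on a fixed-size ball around such a $Z$.

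The same pointwise decay delivers the continuity claim: on each half-plane $\bH_j^{(0)}$ (when $\bC^{(0)} \in \cC_{n-1}$) or plane $\bP_i^{(0)}$ (when $\bC^{(0)} \in \cP_{\leq n-2}$), the estimate $|v(X) - \kappa_v(Z)^{\perp_{T_X \bC^{(0)}}}| \leq c|X-Z|^{7/8}$ shows that $v$ extends continuously to each $Z \in \cD_v$ with boundary trace $\kappa_v(Z)^{\perp}$; continuity of this trace as $Z$ varies in $\cD_v$ follows from a second application of~\eqref{B6II} at neighbouring axis points (comparing the affine modulations $\kappa_v(Z_1)$ and $\kappa_v(Z_2)$), and in the $\cP_{\leq n-2}$ case the remaining axis points $A(\bC^{(0)}) \setminus \cD_v$ are covered directly by the smooth extension in~\ref{B3}. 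The main technical obstacle I anticipate is the passage from the weighted $L^2$ estimate to the pointwise rate $\delta^{7/8}$: this hinges on the geometric fact that at a point $X$ at distance $\delta$ from the axis the mean-value ball $B_{\delta/2}(X)$ intersects $\spt\|\bC^{(0)}\|$ in an $n$-disk entirely contained in one smooth sheet of $\Omega$, which is precisely what guarantees that the harmonic mean value property is applicable to $v - \kappa_v(Z)^{\perp_{T_{(\cdot)}\bC^{(0)}}}$ on this ball.
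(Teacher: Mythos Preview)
Your strategy—combine the weighted estimate \eqref{B6I} at points of $\cD_v$ with the smooth harmonic extension of \ref{B3} away from $\cD_v$, and read off pointwise bounds via the mean value inequality—is essentially the same as the paper's. The paper packages these two ingredients into a Campanato-type two-scale decay and then invokes a standard Campanato--Morrey lemma (citing \cite{wickgeneral}) to obtain $C^{0,\beta}$ regularity on each sheet, from which both the sup bound and the continuous extension follow in one stroke. Your more direct route is fine in principle, but two points need repair.

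First, the sentence ``Points $Z \in A(\bC^{(0)}) \setminus \cD_v$, which by~\ref{B3} can only arise in the $\cP_{\leq n-2}$ case'' is incorrect: \ref{B3} (together with Remark~\ref{rema:dagger}(1)) only tells you that if $\cD_v \neq A(\bC^{(0)}) \cap B_1(0)$ then $\bC^{(0)} \in \cP$, and this includes $\cP_{n-1}$. So the gap case you must handle is the general $\bC^{(0)} \in \cP$ situation, not only $\cP_{\leq n-2}$.

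Second, and more substantively, the ``fixed-size ball'' argument for $Z \in A(\bC^{(0)}) \setminus \cD_v$ does not work as stated: the harmonic extension of $v_i$ furnished by \ref{B3} is only valid on $\bP_i^{(0)} \setminus \cD_v$, so the radius of the mean-value ball available at $X$ is $\tfrac12\dist(X,\cD_v)$, which need not be bounded below. To close the argument you must run a dichotomy: if $\dist(X,\cD_v)$ is at least some fixed $\eta$, apply the mean value inequality on a ball of radius $\eta/2$; if $\dist(X,\cD_v) < \eta$, pick the nearest $Z' \in \cD_v$ and apply \eqref{B6I} at $Z'$, then use the mean value inequality on $B_{\dist(X,\cD_v)/2}(X) \cap \bP_i^{(0)}$ (which lies inside the harmonic-extension region) to convert the $L^2$ decay into a pointwise bound. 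The same dichotomy is needed for the continuity claim across the relative boundary of $\cD_v$ inside $A(\bC^{(0)})$, which your sketch does not address. This interpolation between the two regimes is precisely what the paper's reference to the argument of \cite[Lemma~4.1 or Lemma~12.2]{wickgeneral} handles, and once you write it out carefully you will have reproduced the Campanato-type proof.
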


\begin{proof} Suppose first that $\bC^{(0)} \in \cP$. For $Z \in \cD_v \cap B_{1/4}(0)$ and $0 < \sigma < \rho/2 < 1/4$, \eqref{B6I} of \ref{B6} of Theorem \ref{thm:propblowup} implies that 
\begin{align}
\sigma^{-n} \int_{\Omega \cap B_{\sigma}(Z)}& |v - \kappa_v(Z)^{\perp_{T_X\bC^{(0)}}}|^2 d\cH^n(X) \nonumber \\
& \leq c\left(\frac{\sigma}{\rho}\right)^{7/4} \rho^{-n} \int_{\Omega \cap B_{\rho}(Z)} |v - \kappa_v(Z)^{\perp_{T_X\bC^{(0)}}}|^2 d\cH^n(X). \label{blowupbouncont1}
\end{align}
Fix a rotation $\Gamma$ with the properties that $\Gamma(A(\bC^{(0)})) = A(\bC^{(0)})$ and $\Gamma(\bP_1^{(0)}) = \bP_2^{(0)}$. Now, for any $X_0 \in (\bP_1^{(0)} \setminus \cD_v) \cap B_{1/4}(0)$ and any constant vector $\lambda \in A(\bC^{(0)})^{\perp}$ we have (using the mean value property of harmonic functions) that
\begin{align}
\sigma^{-n}& \Big( \int_{\bP_1^{(0)} \cap B_{\sigma}(X_0)}  |v_1 - v_1(X_0)|^2d\cH^n \nonumber \\
&+ \int_{\bP_2^{(0)} \cap B_{\sigma}(\Gamma(X_0))}  |v_2 - v_2(\Gamma(X_0))|^2d\cH^n \Big)  \nonumber \\
& \leq c\left(\frac{\sigma}{\rho}\right)^2\rho^{-n}\left( \int_{\bP_1^{(0)} \cap B_{\rho}(X_0)}|v_1 - \lambda^{\perp_{\bP_1^{(0)}}}|^2 d\cH^n \right. \nonumber \\
& \hspace{4cm} + \left. \int_{\bP_2^{(0)} \cap B_{\rho}(\Gamma(X_0))}|v_2 - \lambda^{\perp_{\bP_2^{(0)}}}|^2 d\cH^n\right) ,  \label{blowupbouncont2}
\end{align}
for $0 < \sigma \leq \rho/2 \leq \tfrac{1}{2}\min\{\tfrac{1}{4},\dist(X_0,\cD_v)\}$ and where $c = c(n,k) > 0$. By elementary means, \eqref{blowupbouncont1} and \eqref{blowupbouncont2} can be leveraged (see \cite{wickgeneral}, \emph{e.g.} the proof of Lemma 4.3 or the proof of Lemma 12.1 from line (12.5) onwards, for details of such an argument) to yield the estimate
\begin{align}
\rho^{-n}& \left( \int_{\bP_1^{(0)} \cap B_{\rho}(Z)}  |v_1 - v_1(Z)|^2d\cH^n + \int_{\bP_2^{(0)} \cap B_{\rho}(\Gamma(Z))}  |v_2 - v_2(\Gamma(Z))|^2d\cH^n \right)  \nonumber \\
& \leq c\rho^{2\beta}\left( \int_{\bP_1^{(0)} \cap B_{1/2}(Z)}|v_1|^2 d\cH^n + \int_{\bP_2^{(0)} \cap B_{1/2}(\Gamma(Z))}|v_2|^2 d\cH^n\right),  \label{blowupbouncont3}
\end{align}
for any $\rho \in (0,\gamma]$, some fixed $\gamma = \gamma(n,k) > 0$ and $\beta = \beta(n,k,\bC^{(0)},L) \in (0,1)$. From here, it is standard (see \emph{e.g.} Lemma 1 of \cite{simontheorems}) that $v_i \in C^{0,\beta}(\bP_i^{(0)} \cap B_{1/4}(0);\bP_i^{(0)\perp})$ for $i=1,2$. 

If, on the other hand $\bC^{(0)} \in \cC_{n-1}\setminus \cP_{n-1}$, then a similar argument shows that $v\vert_{\bH_j^{(0)} \cap B_{1/4}(0)} \in C^{0,\beta}(\bH_j^{(0)} \cap B_{1/4}(0); \bH_j^{(0)\perp})$. In either case, 1) of the present Lemma also follows.
\end{proof}

\subsection{Homogeneous Degree One Blow-Ups}

Most of the work of this section goes into understanding the structure of a homogeneous degree one blow-up for which $\cH^{n-2}(\cD_v) = \infty$. Firstly, we must gain better information about the way in which $v$ decays to its values on $\cD_v$.

\begin{lemma} \label{decaytoD} Suppose $\bC^{(0)} \in \cC_{n-1}$ is properly aligned. For any $v \in \fB(\bC^{(0)})$ that is homogeneous degree one and satisfies $\cH^{n-2}(\cD_v \cap B_{1/8}(0)) = \infty$, there are vectors $c_p \in \RR^{1+k}\times\{0\}^{n-1}$ for $p=1,...,n-1$ such that for $j=1,..,4$ we have
\beq \label{decaytoD1}
\lim_{\rho \downarrow 0^+} \rho^{-5/4}\int_{\Omega_j \cap (\cD_{v})_{\rho} \cap B_{1/4}(0)}| v(r\omega_j,y) - \Sigma_{i=1}^{n-1}y^pc_p^{\perp_{\bH_j}}|^2\ d\cH^n(X) = 0,
\eeq
where $\Omega_j := \Omega \cap \bH_j^{(0)}$.
\end{lemma}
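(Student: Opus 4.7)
\emph{The plan is to reduce the stated decay to a corresponding linear-approximation claim for the weighted-average function $\kappa_v$ from Lemma~\ref{blowupnonconc}, and then to identify the coefficients $c_p$ from the pointwise constants furnished by \ref{B6} of Theorem~\ref{thm:propblowup} at points of $\cD_v$.}

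First I would invoke Lemma~\ref{blowupnonconc}: the uniform bound
\[
\int_{\Omega\cap(\cD_v)_{1/16}}\frac{|v(X)-\kappa_v(X)^{\perp_{T_X\bC^{(0)}}}|^2}{\dist(X,\cD_v)^{5/2}}\,d\cH^n(X)\leq C,
\]
together with the fact that $\dist(\cdot,\cD_v)>0$ on $\Omega$ (since $\Omega\subset\{r_{\bC^{(0)}}>0\}$ while $\cD_v\subset A(\bC^{(0)})$), lets me apply dominated convergence to conclude that the same integral over the shrinking set $\Omega\cap(\cD_v)_\rho$ tends to $0$ as $\rho\downarrow0^+$. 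Bounding $\dist(X,\cD_v)^{5/2}\leq\rho^{5/2}$ pointwise inside $(\cD_v)_\rho$ then gives $\rho^{-5/4}\int_{\Omega\cap(\cD_v)_\rho}|v-\kappa_v^{\perp_{T_X\bC^{(0)}}}|^2\,d\cH^n=o(\rho^{5/4})$. By the triangle inequality, the lemma reduces to finding $c_p$'s for which the analogous statement holds with $v$ replaced by $\kappa_v^{\perp_{T_X\bC^{(0)}}}$.

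Next I would identify the $c_p$'s. At each $Z\in\cD_v\cap B_{1/4}(0)$, property \ref{B6} of Theorem~\ref{thm:propblowup} furnishes a vector $\lambda_v(Z)\in A(\bC^{(0)})^\perp$ (I relabel to distinguish it from the function $\kappa_v$ of Lemma~\ref{blowupnonconc}) such that $v$ is approximated by $\lambda_v(Z)^{\perp_{T_X\bC^{(0)}}}$ in the $L^2$-integrated sense of \ref{B6I}. Letting $X\to Z$ along each $\bH_j^{(0)}$ using the continuous boundary extension provided by Lemma~\ref{blowupbouncont}, the Morrey-type estimate \ref{B6I} forces the boundary value $V_j(y_Z):=\lim_{r\downarrow0^+}v(r\omega_j,y_Z)$ to equal $\lambda_v(Z)^{\perp_{\bH_j^{(0)}}}$ for each $j=1,\ldots,4$. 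A direct computation with the variational characterization of $\kappa_v$ from Lemma~\ref{blowupnonconc} then shows $\kappa_v(0,y_Z)=\lambda_v(Z)$ on the set of $y$-coordinates of points of $\cD_v$, and the task becomes showing that this map extends to a linear function $y\mapsto\sum_p y^p c_p$ from $A(\bC^{(0)})$ into $\RR^{1+k}$.

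The main obstacle is exactly this linearity step, and I expect it to be the heart of the proof. I would establish it using three ingredients: positive degree-$1$ homogeneity of $\lambda_v$ on $\cD_v$ (inherited from the homogeneity of $v$ together with the scale-invariance of the construction in \ref{B5I}); a Lipschitz-type continuity estimate for $\lambda_v$, obtained by combining \ref{B6I} at two nearby points $Z_1,Z_2\in\cD_v$ with the continuity of $v$ up to $A(\bC^{(0)})$ to bound $|\lambda_v(Z_1)-\lambda_v(Z_2)|$ by a multiple of $|Z_1-Z_2|$; and the richness hypothesis $\cH^{n-2}(\cD_v\cap B_{1/8}(0))=\infty$, which forces $\cD_v$ to span enough directions in $A(\bC^{(0)})\cong\RR^{n-1}$ that any Lipschitz, positively $1$-homogeneous map on it satisfying the four projection constraints from the preceding paragraph must be the restriction of a linear map. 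Without the assumption on $\cH^{n-2}(\cD_v)$ one could not expect linearity: a generic degree-$1$ homogeneous harmonic function on a half-space continuous up to its boundary (such as the Poisson integral of $|y|$) has non-linear boundary trace, and the size condition on $\cD_v$ is precisely what rules such examples out.

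Finally, with the $c_p$'s in hand, I would conclude as follows. By smoothness of $\kappa_v$ in $\Omega$ (inherited from that of $v$, via the explicit formula in Lemma~\ref{blowupnonconc}), positive degree-$1$ homogeneity, and continuity up to $\{r_{\bC^{(0)}}=0\}$, a short oscillation argument combined with $\kappa_v(0,y)=\sum_p y^p c_p$ on $\cD_v$ yields the pointwise bound $|\kappa_v(r,y)^{\perp_{T_X\bC^{(0)}}}-\sum_p y^p c_p^{\perp_{T_X\bC^{(0)}}}|\leq C\,\dist(X,\cD_v)$ on $(\cD_v)_\rho\cap B_{1/4}(0)$. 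Since $\cH^n(\Omega_j\cap(\cD_v)_\rho\cap B_{1/4}(0))\leq C\rho$, the remaining integral is at most $O(\rho^3)$, so $\rho^{-5/4}\cdot O(\rho^3)=O(\rho^{7/4})\to 0$, which combined with the first-step reduction finishes the proof.
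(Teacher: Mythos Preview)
Your first-paragraph reduction via Lemma~\ref{blowupnonconc} is correct and parallels the paper. The identification of the boundary traces $V_j(y_Z)$ with $\lambda_v(Z)^{\perp_{\bH_j^{(0)}}}$ via \ref{B6} is also sound, though note that \ref{B6} only yields $C^{0,7/8}$-H\"older continuity of $Z\mapsto\lambda_v(Z)$, not the Lipschitz estimate you assert.

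The genuine gap is the linearity step in your third paragraph. You claim that positive $1$-homogeneity, regularity, the ``four projection constraints'', and the hypothesis $\cH^{n-2}(\cD_v\cap B_{1/8}(0))=\infty$ together force $\lambda_v$ to be the restriction of a linear map, and you say the size condition on $\cD_v$ is precisely what rules out examples like the Poisson integral of $|y|$. It is not. Take $n\geq 3$, let $u(r,y)$ be the Poisson extension to $\{r>0\}\subset\RR^n$ of $y\mapsto|y_1|$, fix any nonzero $e\in\RR^{1+k}\times\{0\}^{n-1}$, and set $v(r\omega_j,y)=u(r,y)\,e^{\perp_{\bH_j^{(0)}}}$ for each $j$. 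This $v$ is harmonic on $\Omega$, homogeneous of degree $1$, continuous up to the axis with trace $V_j(y)=|y_1|\,e^{\perp_{\bH_j^{(0)}}}$, and satisfies the estimates of \ref{B6} with $\lambda_v(Z)=|y_{Z,1}|\,e$. One may take $\cD_v=A(\bC^{(0)})\cap B_1(0)$, so the $\cH^{n-2}$ hypothesis holds with room to spare, yet $\lambda_v$ is not linear. What this example violates is exactly \ref{B4}, which you never invoke.

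The paper uses \ref{B4} as the source of linearity. Combined with the reflection principle, \ref{B4} shows that $\Psi(r,y):=\sum_{j=1}^4 v(r\omega_j,y)$ extends to an entire homogeneous degree-one harmonic function on $\RR^n$, hence $\Psi(r,y)=ra+\sum_{p} y^p b_p$ for constant vectors $a,b_p$. The hypothesis $\cH^{n-2}(\cD_v\cap B_{1/8}(0))=\infty$ then enters for a much weaker purpose: to show that each already-constant $b_p$ lies in the range $T=\{\sum_j c^{\perp_{\bH_j^{(0)}}}:c\in\RR^{1+k}\times\{0\}^{n-1}\}$, so that one may solve $b_p=\sum_j c_p^{\perp_{\bH_j^{(0)}}}$ for the desired $c_p$. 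You have reversed the roles of \ref{B4} and the $\cH^{n-2}$ hypothesis, asking the latter to manufacture linearity on its own, and this cannot succeed.
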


\begin{proof} This proof is based on the proof of Lemma 4.2 of \cite{simoncylindrical}. In light of \ref{B4}, we may apply the reflection principle for harmonic functions to the function
\beq
(r,y) \mapsto \frac{\partial^2}{\partial r \partial y^p}\sum_{j=1}^4v(r\omega_j,y),
\eeq
which is initially defined on the domain $(0,\infty)\times\RR^{n-1}$. We therefore deduce that the function
\beq \label{decaytoD2}
\Psi_{y_p}(r,y) := \pa{}{y^p}\sum_{j=1}^4 v(r\omega_j,y)
\eeq
extends via even reflection in the $r$-variable to a homogeneous degree zero harmonic function on the whole of $\RR^n$. Since such functions are necessarily constant and since this holds for each $p \in \{1,...,n-1\}$, we deduce that
\beq \label{decaytoD3}
\Psi(r,y) := \sum_{j=1}^4 v(r\omega_j,y) = ra + \sum_{p=1}^{n-1}y^pb_p
\eeq
for some $a, b_p \in \RR^{1+k}\times\{0\}^{n-1}$ (we have also used the fact that $v$ is homogeneous degree one to deduce the form of the dependence on the $r$ variable). From here, \eqref{blowupnonconc1} of Lemma \ref{blowupnonconc} implies that
\beq \label{decaytoD7}
\lim_{\rho \downarrow 0^+}\rho^{-5/2}
\int_{\Omega \cap (\cD_{v})_{\rho} \cap B_{1/4}(0)}\bigl|\Sigma_{p=1}^{n-1}y^pb_p - \sum_{j=1}^4 \kappa_v(r,y)^{\perp_{\bH_j^{(0)}}}\bigr|^2d\cH^n = 0.
\eeq
We claim that this means that each $b_p$ is in the subspace 
\[
T:= \Bigl\{ \sum_{j=1}^4 c^{\perp_{\bH_j^{(0)}}} : c \in \RR^{1+k}\times\{0\}^{n-1} \Bigr\}.
\]
This is equivalent to the claim that $S:= \text{span}\langle b_1,...,b_{n-1}\rangle \subset T$. So, suppose for the sake of contradiction that $S \nsubseteq T$ and write $L(x,y) = \Sigma_{p=1}^{n-1}y^pb_p$. Since $L$ does not depend on the $r$-variable, if $Y = (0,y) \in L^{-1}(T) \cap A(\bC^{(0)})$, then $(r\omega_j,y) \in L^{-1}(T)$ for all $r > 0$ and each $j=1,..,4$. Now, by assumption we have that $L^{-1}(T) \cap A(\bC^{(0)}) \cap B_{1/8}(0) \subsetneq A(\bC^{(0)}) \cap B_{1/8}(0)$, whence $\dim_{\cH}(L^{-1}(T) \cap A(\bC^{(0)})) \leq n-2$ (because $L^{-1}(T) \cap A(\bC^{(0)})$ is a subspace). And since $\cH^{n-2}(\cD_v \cap B_{1/8}(0)) = \infty$, we can find some subset $\cD' \subset \cD_v \cap B_{1/8}(0)$ with $\cH^{n-2}(\cD') > 0$ and $\dist(\cD',L^{-1}(T)) > 0$. Thus we know that $\delta := \inf_{(0,y) \in \cD'} \dist(L(x,y),T)$ is strictly positive. It follows from the definition of Hausdorff measure and the fact that $\cH^{n-2}(\cD') > 0$ that for all sufficiently small $\rho > 0$ we have an estimate $\cH^n(\Omega \cap (\cD')_{\rho} \cap B_{1/4}(0)) \geq c\rho^2$ for some $c = c(\cD',n) > 0$. Moreover, for sufficiently small $\rho > 0$, we have that $\dist(L(x,y),T) \geq \delta/2$ for all $(r,y) \in (\cD')_{\rho}$. Thus we can bound below by integrating only over $(\cD')_{\rho}$ to deduce that
\begin{align*}
& \rho^{-5/2} \int_{\Omega \cap (\cD_{v})_{\rho} \cap B_{1/4}(0)}\bigl|\Sigma_{i=1}^{n-1}y^pb_p - \sum_{j=1}^4 \kappa_v(r,y)^{\perp_{\bH_j^{(0)}}} \bigr|^2d\cH^n \\
& \geq  c\rho^{-5/2}(\delta/2)^2 \rho^2 \\
& \geq  c\rho^{-1/2}\ \to \infty
\end{align*}
as $\rho \downarrow 0^+$, which is a contradiction. Therefore $S \subset T$ and the claim is proved. 

So, we have that for each $p \in \{1,...,n-1\}$, there is some $c_p \in \RR^{1+k}\times\{0\}^{n-1}$ for which $b_p = \sum_{j=1}^4 c_p^{\perp_{\bH_j^{(0)}}}$ and this means that
\beq \label{decaytoD8}
\lim_{\rho \downarrow 0^+} \rho^{-5/2} \int_{\Omega \cap  (\cD_{v})_{\rho} \cap B_{1/4}(0)}\bigl|\sum_{j=1}^4 (\Sigma_{p=1}^{n-1}y^pc_p - \kappa_v(r,y))^{\perp_{\bH_j^{(0)}}}\bigr|^2d\cH^n = 0.
\eeq
From here the argument can be finished exactly as on page 622 of \cite{simoncylindrical}.

\end{proof}

We must introduce one more piece of useful terminology: Suppose that $\bC^{(0)} \in \cC_{n-1}$. Given $v \in \fB(\bC^{(0)})$ that is homogeneous degree one, $Z \in \cD_v \cap B_{1/4}(0)$ and $\rho \in (0,1/4]$ we say that the function $\psi$ \emph{dehomogenizes} $v$ in $B_{\rho}(Z)$ when $\psi(\cdot - Z) \in \cH(\bC^{(0)})$ and
\begin{align*}
\int_{\Omega \cap B_{\rho}(Z)} &|v(X) - \psi(X)|^2d\cH^n(X) \\
&= \inf_{l \in \cH(\bC^{(0)})}\int_{\Omega \cap B_{\rho}(Z) }|v(X) - l(Z + X)|^2d\cH^n(X).
\end{align*}
When $v$ satisfies
\beq \label{dehomo}
\inf_{l \in \cH(\bC^{(0)})}\int_{\Omega \cap B_{\rho}(Z)}|v(X) - l(Z + X)|^2d\cH^n = \int_{\Omega \cap B_{\rho}(Z)}|v(X)|^2d\cH^n(X),
\eeq
we say that $v$ is \emph{dehomogenized} in $B_{\rho}(Z)$. It is straightforward (using orthogonal projection in $L^2(\Omega \cap B_{\rho}(Z),\bC^{(0)\perp})$) to prove the existence of dehomogenizers and one can see (from \eqref{Hdefn}) that it is equivalent to being $L^2$-orthogonal to the functions $(r\omega,y) \mapsto r\varphi(\omega)$ and $(r\omega,y) \mapsto y^pe_j^{\perp_{T_{(r\omega,y)}\bC^{(0)}}}$ for $p=1,..,n-1$, $j=1,...,1+k$. We are now in a position to categorize homogeneous degree one blow-ups. The proof of the following theorem uses a modification of the proof of Proposition 4.2 of \cite{wickgeneral}.

\begin{theorem} \label{homblowup} Fix a properly aligned cone $\bC^{(0)} \in \cC$ and a homogeneous degree one blow-up $v \in \fB(\bC^{(0)})$. Then $v \in \cH(\bC^{(0)})$.
\end{theorem}

\begin{proof} \mbox{}

\noindent \textbf{Step 1.} \emph{The Negligible Part of $\cD_v$.} By \ref{B3}, if $\cH^{n-2}(\cD_v) < \infty$, then $\bC^{(0)} \in \cP$. This means that the set $\cD_v \cap B_{1/8}(0)$ is of zero 2-capacity and is therefore a removable set for the bounded harmonic function $v_i$. So $v_i$ can be extended to a homogeneous degree one harmonic function defined on the whole of $\bP_i^{(0)}$ and such functions are linear. In particular, $v \in \cH(\bC^{(0)})$. Thus from now on we may assume that $\cH^{n-2}(\cD_v \cap B_{1/8}(0)) = \infty$ (which means that $\bC^{(0)} \in \cC_{n-1}$). 

\bigskip

\noindent \textbf{Step 2.} \emph{The Thick Part of $\cD_v$.}  Let $\cT_v$ denote the set of points $Z \in \cD_v \cap B_{1/4}(0)$ for which $\cH^{n-1}(\cD_v \cap B_{\eta}(Z)) > 0$ for every $\eta > 0$. We claim that for all $Z \in \cT_v$, we have that 
\beq \label{homblowup12}
\kappa_v(Z)^{\perp_{\bH_j^{(0)}}} = \Sigma_{p=1}^{n-1}y^pc_p^{\perp_{\bH_j^{(0)}}},
\eeq
with $c_p$ as per \eqref{decaytoD1} of Lemma \ref{decaytoD}. Since
\begin{eqnarray*}
& & \rho^{-5/4}\int_{\Omega_j \cap (\cD_{v})_{\rho} \cap B_{1/4}(0)}| v(r\omega_j,y) - \Sigma_{p=1}^{n-1}y^pc_p^{\perp_{\bH_j^{(0)}}}|^2d\cH^n \\
&\geq & \rho^{-1}\int_{\bH_j^{(0)} \cap (B_{\rho}^{1+k}(0) \times \cT_v) \cap B_{1/4}(0)}| v(r\omega_j,y) - \Sigma_{p=1}^{n-1}y^pc_p^{\perp_{\bH_j^{(0)}}}|^2d\cH^n \\
& \geq & \rho^{-1}\int_0^{\rho} \int_{(B_r^{1+k}(0) \times \cT_v) \cap B_{1/4}(0)}| v(r\omega_j,y) - \Sigma_{p=1}^{n-1}y^pc_p^{\perp_{\bH_j^{(0)}}}|^2\, d\cH^{n-1}(y)\, dr,
\end{eqnarray*}
Lemma \ref{decaytoD} implies that this last expression goes to zero as $\rho \to 0$ and so by Lebesgue differentiation we conclude that 
\[
\lim_{r \to 0}v(r\omega_j,y) = \Sigma_{p=1}^{n-1}y^pc_p^{\perp_{\bH_j^{(0)}}}
\]
for $\cH^{n-1}$-almost every $y \in \cT_v$, \emph{i.e.} \eqref{homblowup12} holds at $\cH^{n-1}$-almost every point of $\cT_v$. But $v\vert_{\Omega_j}$ is continuous along $A(\bC^{(0)})$ (from Lemma \ref{blowupbouncont}) and using the definition of $\cT_v$, we can see that every point of $\cT_v$ is a limit point of a sequence along which \eqref{homblowup12} holds, which implies that \eqref{homblowup12} holds for all $Z \in \cT_v$.

Note that at this stage of the argument, if $\cD_v \cap B_{1/4}(0) = A(\bC^{(0)}) \cap B_{1/4}(0)$, then $\cT_v = \cD_v \cap B_{1/4}(0)$. Thus for any $j \in \{1,..,4\}$, the odd reflection of $v(r\omega_j,y) - \Sigma_{p=1}^{n-1}y^pc_p^{\perp_{\bH_j^{(0)}}}$ in the $r$-variable is entire, homogeneous degree one, harmonic and equal to zero on $\{r=0\}$. It follows that
\[
v(r\omega_j,y) = ra + \Sigma_{p=1}^{n-1}y^pc_p^{\perp_{\bH_j^{(0)}}}
\]
for some $a \in \bH_j^{(0)\perp}$, which proves exactly that $v \in \cH(\bC^{(0)})$. So, in light of Step 2., the remaining case is that in which $\bC^{(0)} \in \cP_{n-1}$, $\cD_v \cap B_{1/4}(0) \neq A(\bC^{(0)}) \cap B_{1/4}(0)$ and $\cH^{n-2}(\cD_v \cap B_{1/8}(0)) = \infty$.

\bigskip

\noindent \textbf{Step 3.} \emph{Setting up the Induction.} For any homogeneous degree one blow up $w$, we will write 
\[
S(w) = \{ Z \in A(\bC^{(0)})\ :\ w(X + Z) = w(X)\ \textit{for all}\ X \in \Omega \}.
\]
It is easy to verify, using the homogeneity of $w$, that $S(w)$ is always a linear subspace of $A(\bC^{(0)})$. We will prove, by induction on $d$, the following statement: If $v \in \fB(\bC^{(0)})$ is homogeneous degree one with $\cH^{n-2}(\cD_v) = \infty$ and has $\dim S(v) = n - d$, then $v \in \cH(\bC^{(0)})$. Note that when $d=1$, \emph{i.e.} when $\dim S(v) = n-1$, then $S(v) = A(\bC^{(0)})$, from which, using the homogeneity of $v$, we immediately deduce that $v \in \cH(\bC^{(0)})$. So, we now fix $d \geq 2$ and using the inductive hypothesis (together with the results of Steps 1. and 2.) we may assume that any homogeneous degree one blow up $w \in \fB(\bC^{(0)})$ with $\dim S(w) > n-d$ belongs to $\cH(\bC^{(0)})$. 

For $Z \in \cD_v$ and $\rho > 0$ let $\psi_{Z,\rho}$ be the function that dehomogenizes $v$ in $B_{\rho}(Z)$. Obviously we may assume that $v \notin \cH(\bC^{(0)})$ (or else there is nothing to prove), so that $v - \psi_{Z,\rho} \not\equiv 0$. And note that that since $\cH^{n-2}(\cD_v) = \infty$, we have that $\cD_v \setminus S(v) \neq \emptyset$. 

\bigskip

\noindent \textbf{Step 4.} \emph{The Reverse Hardt-Simon Inequality.} We claim that for any compact subset $K$ of $(A(\bC^{(0)}) \setminus S(v)) \cap B_{1/4}(0)$, there exists $\eps = \eps(v,K) \in (0,1)$ such that for any $Y \in \cD_v \cap K$ with $\kappa_v(Y) = 0$ and for any $\rho \in (0,\eps]$, we have
\begin{align} 
 \int_{\Omega \cap (B_{\rho}(Y)\setminus B_{\rho/2}(Y))} &R_Y^{2-n} \left|\pa{((v - \psi_{Y,\rho})/R_Y)}{R_Y}\right|^2d\cH^n \nonumber \\
& \geq  \eps \rho^{-n-2}\int_{\Omega \cap B_{\rho}(Y)} |v - \psi_{Y,\rho}|^2d\cH^n. \label{homblowup2}
\end{align}
If this were false, there would exist a sequence of points $\{Y_j\}_{j=1}^{\infty} \in \cD_{v} \cap K$ with $\kappa_{Y_j} = 0$ and $v - \psi_{Y_j,\rho_j} \not\equiv 0$ for all $j$, a sequence $\eps_j \downarrow 0^+$, and a sequence of radii $\rho_j \downarrow 0^+$ such that, writing $\psi^j := \psi_{Y_j,\rho_j}$, we have
\begin{align}
\int_{\Omega \cap (B_{\rho_j}(Y_j)\setminus B_{\rho_j/2}(Y_j))} & R_{Y_j}^{2-n} \left|\pa{((v - \psi^j)/R_{Y_j})}{R_{Y_j}}\right|^2d\cH^n \nonumber \\
& < \eps_j \rho_j^{-n-2}\int_{\Omega \cap B_{\rho_j}(Y_j)} |v - \psi^j|^2d\cH^n. \label{homblowup3}
\end{align}
By property \ref{B5II}, we have that $v^j_{\psi} := \|v - \psi^j\|_{L^2(\Omega)}^{-1}(v - \psi^j) \in \fB(\bC^{(0)})$ for each $j$ and then by property \ref{B5I}, we also have that 
\beq
w^j := \|v^j_{\psi}(Y_j + \rho_j(\cdot)\|^{-1}_{L^2(\Omega)}\, v^j_{\psi}(Y_j + \rho_j(\cdot)) \in \fB(\bC^{(0)})
\eeq
for each $j$. The result of these transformations is that $w^j$ is dehomogenized in $B_{1/4}(0)$ (as one can check using \eqref{dehomo}). Then, using \ref{B5III}, we have that there exists $w \in \fB(\bC^{(0)})$ and a subsequence $\{j'\}$ of $\{j\}$ (to which we pass to without changing notation) for which $w^j\vert_{\bP_i^{(0)}} \to w\vert_{\bP_i^{(0)}}$ in $C^2_{loc}(\bP_i^{(0)} \cap \{ \dist(\cdot,\cD_w) > 0 \} \cap B_1(0);\bP_i^{(0)\perp})$ for $i=1,2$. Assume also, by compactness of $K$, that along this subsequence we have $Y_j \to Y \in \cD_v \cap K$. Dividing \eqref{homblowup3} by $\rho_j^{-n-2}\int_{\Omega \cap B_{\rho_j}(Y_j)} |v - \psi^j|^2d\cH^n$ and making the appropriate substitutions in the integrals we see that
\beq
\int_{\Omega \cap (B_1(0)\setminus B_{1/2}(0))} R^{2-n} \left|\pa{(w^j/R)}{R}\right|^2 d\cH^n < \eps_j,
\eeq
which implies that
\beq \label{homblowup4}
\int_{\Omega \cap (B_1(0) \setminus B_{1/2}(0))} \left|\pa{(w/R)}{R}\right|^2d\cH^n = 0,
\eeq
which means that $w$ is homogeneous degree one on $\Omega \cap (B_1(0) \setminus B_{1/2}(0))$. And note that by unique continuation of harmonic functions, it is equal to its homogeneous degree one extension on $\Omega$. Since $w^j \to w$ weakly in $L^2(\Omega)$, one can also check that $w$ is dehomogenized in $B_1(0)$. 

Let us now see that $\dim S(w) > \dim S(v)$: Write $\mu_j = \|v^j_{\psi}(Y_j + \rho_j(\cdot)\|_{L^2(\Omega)}$. For each $X_0 \in \Omega$, sufficiently small $\sigma > 0$ and sufficiently large $j$, we have:
\begin{eqnarray*}
& & \sigma^{-n}\int_{\Omega \cap B_{\sigma}(X_0)} w^j(X + Y) d\cH^n(X) \\
 &=& \mu_j^{-1}\sigma^{-n}\int_{\Omega \cap B_{\sigma}(X_0)}v^j_{\psi}(Y_j + \rho_j(X +Y)) d\cH^n(X) \\
&=& (1 + \rho_j) \mu_j^{-1}\sigma^{-n} \times \\
 & & \quad \quad \int_{\Omega \cap B_{\sigma}(X_0)}v^j_{\psi}(Y_j + (1 + \rho_j)^{-1}\rho_j(Y-Y_j) + (1 + \rho_j)^{-1}\rho_j X) d\cH^n(X) \\
&=& (1 + \rho_j)^{n+1} \mu_j^{-1}\sigma^{-n} \times \\
& & \quad \quad \int_{\Omega \cap B_{(1+\rho_j)^{-1}\sigma}((1 + \rho_j)^{-1}(Y - Y_j + X_0)}v^j_{\psi}(Y_j + \rho_j X) d\cH^n(X) \\
&=& (1 + \rho_j)^{n+1} \sigma^{-n}\int_{\Omega \cap B_{(1+\rho_j)^{-1}\sigma}((1 + \rho_j)^{-1}(Y - Y_j + X_0)}w^j(X) d\cH^n(X).
\end{eqnarray*}
Letting $j \to \infty$ and $\sigma \downarrow 0^+$, we conclude that $w(X_0 + Y) = w(X_0)$ for every $X_0 \in \Omega$, which implies that $Y \in S(w)$. Since (as one can check) $S(v) \subset S(w)$, we have that $\dim S(w) \geq n-d+1$. Thus by the inductive hypothesis we have that $w \in \cH(\bC^{(0)})$.  However, since $w$ is dehomogenized in $B_1(0)$ we deduce that $w \equiv 0$. But now, following the proof of Lemma 5.7 of \cite{wick08}, we deduce a contradiction: Given any $\eta > 0$, we can choose $j$ sufficiently large so that
\beq \label{homblowup5}
\int_{\Omega \cap B_{1/2}(0)} |w^j|^2 d\cH^n  \leq \eta .
\eeq
Since by construction we have that $\int_{\Omega} |w^j| d\cH^n = 1$, this shows that for any $\eta > 0$, we have 
\beq \label{homblowup6}
\int_{\Omega \cap (B_1(0)\setminus B_{1/2}(0))} |w^j|^2d\cH^n > 1 - \eta
\eeq
for sufficiently large $j$. But now, For $r,s \in (1/4,1)$ and $\omega \in \bC^{(0)} \cap \{ r_{\bC^{(0)}} > 0 \} \cap \partial B_1(0)$, we have
\beq
\left|\frac{w^j(r\omega)}{r} - \frac{w^j(s\omega)}{s}\right| = \left| \int_s^r \pa{(w^j(t\omega)/t)}{t} dt \right| \leq \int_s^r \left|\pa{(w^j(t\omega)/t)}{t}  \right| dt,
\eeq
and so by the triangle inequality, Cauchy-Schwarz and the fact that $|r/s|$ is bounded we have
\beq
|w^j(r\omega)|^2 \leq c\left(|w^j(s\omega)|^2 + \int_{1/4}^{1} t^{n-1}\left|\pa{(w^j(t\omega)/t)}{t}  \right|^2 dt\right)
\eeq
for some constant $c = c(n) > 0$. Now we integrate with respect to $\omega \in \bC^{(0)} \cap \{ r > 0 \} \cap \partial B_1(0)$. Then, we multiply by $r^{n-1}$ and integrate with respect to  $r$ in $(1/4,1)$ and finally we multiply by $s^{n-1}$ and integrate with respect to $s$ in $(1/4,1/2)$ to give (using the coarea formula)
\begin{eqnarray*}
\int_{\Omega \cap (B_1(0) \setminus B_{1/4}(0))} |w^j|^2d\cH^n & \leq & c\left(\int_{\bC^{(0)} \cap (B_{1/2}(0) \setminus B_{1/4}(0))} |w^j|^2d\cH^n + \right. \\
& & \quad \quad \left.  \int_{\Omega \cap (B_1(0) \setminus B_{1/4}(0))}\left|\pa{(w^j/R)}{R}\right|^2 d\cH^n \right) 
\end{eqnarray*}
for some $c = c(n) \geq 1$. Now we add $\int_{\Omega \cap  B_{1/4}(0)} |w^j|^2d\cH^n$ to both sides and use \eqref{homblowup5} and the fact that the final term in the above line tends to zero to deduce that
\beq
\eta > \int_{\Omega \cap B_{1/2}(0)} |w^j|^2d\cH^n \geq c(n) > 0
\eeq
independently of $j$, which is a contradiction. Thus the proof of the claim is complete and the estimate \eqref{homblowup2} indeed holds. 

\bigskip

\noindent \textbf{Step 5.} \emph{$C^1$ Boundary Regularity.} Now with $K$ again any compact subset of $(A(\bC^{(0)}) \setminus S(v)) \cap B_{1/4}(0)$ and $Y \in \cD_v \cap K$ for which $\kappa_v(Y) \neq 0$, we can replace $v$ by $\|v + \kappa_v(Y)^{\perp_{T_{(\cdot)}\bC^{(0)}}}\|_{L^2(\Omega)}^{-1} (v + \kappa_v(Y)^{\perp_{T_{(\cdot)}\bC^{(0)}}})$ (which belongs to $\fB(\bC^{(0)})$ by \ref{B5II}) in order to arrange that $\kappa_v(Y) = 0$. Assuming we have made this replacement, \eqref{homblowup2} together with the fact that $\partial(\psi_{Y,\rho}/R)/\partial R \equiv 0$ implies
\beq \label{homblowup6a}
\eps \rho^{-n-2}\int_{\Omega \cap B_{\rho}(Y) } |v - \psi_{Y,\rho}|^2 d\cH^n \leq \int_{\Omega  \cap (B_{\rho}(Y)\setminus B_{\rho/2}(Y)) }R_Y^{2-n}\left|\pa{(v/R_Y)}{R_Y} \right|^2 d\cH^n.
\eeq
Also, \eqref{B6III} of \ref{B6} applied to $\|v-\psi_{Y,\rho}\|_{L^2(\bC^{(0)} \cap B_1(0))}^{-1}(v-\psi_{Y,\rho})$ tells us that
\beq \label{homblowup7}
\int_{\Omega  \cap B_{\rho/2}(Y)} R_Y^{2-n}\left|\pa{(v/R_Y)}{R_Y} \right|^2d\cH^n \leq c\rho^{-n-2}\int_{\Omega  \cap B_{\rho}(Y)} |v - \psi_{Y,\rho}|^2d\cH^n.
\eeq
Combining these two inequalities we see that
\begin{align}
\frac{\eps}{c} \int_{\Omega  \cap B_{\rho/2}(Y)} R_Y^{2-n}&\left|\pa{(v/R_Y)}{R_Y} \right|^2 d\cH^n \nonumber \\
&\leq \int_{\Omega  \cap (B_{\rho}(Y)\setminus B_{\rho/2}(Y)) }R_Y^{2-n}\left|\pa{(v/R_Y)}{R_Y} \right|^2 d\cH^n,
\end{align}
where the $c$ that appears on the left-hand side here is the constant from the right-hand side of \eqref{homblowup7}. Adding $\int_{\Omega  \cap B_{\rho/2}(Y)}R_Y^{2-n}\left|\pa{(v/R_Y)}{R_Y} \right|^2$ to both sides (`hole-filling') and dividing by $(1 +\eps/c)$ we get that
\beq 
\int_{\Omega  \cap B_{\rho/2}(Y)} R_Y^{2-n}\left|\pa{(v/R_Y)}{R_Y} \right|^2 d\cH^n  \leq \eta\int_{\Omega  \cap B_{\rho}(Y)}R_Y^{2-n}\left|\pa{(v/R_Y)}{R_Y} \right|^2 d\cH^n.
\eeq
for some $\eta \in (0,1)$. Then, by iterating this with $2^{-i}\rho$ in place of $\rho$ and using a standard argument to interpolate between these scales, we deduce that
\beq
\int_{\Omega  \cap B_{\sigma}(Y)} R_Y^{2-n}\left|\pa{(v/R_Y)}{R_Y} \right|^2 d\cH^n  \leq c\left(\frac{\sigma}{\rho}\right)^{\mu} \int_{\Omega  \cap B_{\rho}(Y)}R_Y^{2-n}\left|\pa{(v/R_Y)}{R_Y} \right|^2 d\cH^n,
\eeq
for some $\mu = \mu(n,k,\bC^{(0)},v,K) \in (0,1)$, $c = c(n,k,\bC^{(0)},L) > 0$ and $0 < \sigma \leq \rho/2 \leq \eps/4$. Then using \eqref{homblowup6a} and \eqref{B6III} of \ref{B6} we deduce easily from this that
\beq \label{homblowup8}
\sigma^{-n-2}\int_{\Omega  \cap B_{\sigma}(Y)} |v - \psi_{Y,\sigma}|^2 d\cH^n \leq c\eps^{-1}\left(\frac{\sigma}{\rho}\right)^{\mu}\rho^{-n-2}\int_{\Omega \cap B_{\rho}(Y)} |v - \psi_{Y,\rho}|^2 d\cH^n
\eeq
for some $c = c(n,k,\bC^{(0)},L) > 0$ and $0 < \sigma \leq \rho/2 \leq \eps/8$. Using \eqref{homblowup8} and the triangle inequality, it is then straightforward to check that there exists a single $\psi_Y \in \cH(\bC^{(0)})$ for which
\beq \label{homblowup9}
\sigma^{-n-2}\int_{\Omega  \cap B_{\sigma}(Y)} |v - \psi_Y|^2d\cH^n \leq c\eps^{-1}\left(\frac{\sigma}{\rho}\right)^{\mu}\rho^{-n-2}\int_{\Omega  \cap B_{\rho}(Y)} |v - \psi_Y|^2d\cH^n
\eeq
for all $\sigma \in (0,\rho/2]$. Now, in general by replacing $v$ with 
\[
\|v - \kappa_v(Y)^{\perp_{T_{(\cdot)}\bC^{(0)}}}\|_{L^2(\Omega)}^{-1} (v - \kappa_v(Y)^{\perp_{T_{(\cdot)}\bC^{(0)}}}),
\]
we see that for each $Y \in \cD_v \cap K$, there is $\varphi_Y \in \cH(\bC^{(0)})$ for which we have
\begin{align} 
\sigma^{-n-2}\int_{\Omega  \cap B_{\sigma}(Y)} &|v - \kappa_v(Y)^{\perp_{T_X\bC^{(0)}}} - \varphi_Y|^2 d\cH^n \nonumber\\
&\leq \beta\left(\frac{\sigma}{\rho}\right)^{\mu}\rho^{-n-2}\int_{\Omega  \cap B_{\rho}(Y)} |v - \kappa_v(Y)^{\perp_{T_X\bC^{(0)}}} - \varphi_Y|^2d\cH^n, \label{homblowup10}
\end{align}
for all $0 < \sigma \leq \rho/2 \leq \eps/8$. Here $\beta = \beta(n,k,\bC^{(0)},K,v) > 0$. Using the regularity of $v$ away from $\cD_v$, we know that for any $X_0 = (r\omega_j,y) \in (\overline{\bH_j^{(0)}} \setminus \cD_v) \cap B_{1/4}(0)$ and any affine function $l : \RR^n \to \RR^k$ we have
\begin{align}
\sigma^{-n-2} & \sum_{j=1}^4 \int_{\bH_j^{(0)} \cap B_{\sigma}((r\omega_j,y))}  |v(X) - (v((r\omega_j,y)) + X \cdot Dv((r\omega_j,y))|^2d\cH^n(X) \nonumber \\
& \leq c\left(\frac{\sigma}{\rho}\right)^2\rho^{-n-2}\sum_{j=1}^4  \int_{\bH_j^{(0)} \cap B_{\rho}((r\omega_j,y))}|v - l|^2 d\cH^n \label{homblowup11}
\end{align}
for $0 < \sigma \leq \rho/2 \leq \tfrac{1}{2}\min\{\tfrac{1}{4},\dist(X_0,\cD_v)\}$ and where $c = c(n,k) > 0$. Provided we allow a larger constant in this estimate, a short compactness argument using only basic properties of harmonic functions shows that it is still true if we replace the $l$ that appears on the right-hand side by any $\psi \in \cH(\bC^{(0)})$. Then these two estimates can be used in an elementary way together with a Campanato Lemma (\emph{e.g.} \cite[Theorem 4.4]{rafeiromorreycampanato}) to give that $v\vert_{\bH_j^{(0)} \cap \Omega } \in C^1( (\bH_j^{(0)}\setminus S(v)) ; \bH_j^{(0)\perp})$ for $j=1,..,4$. Recalling that $\{\omega_1,...,\omega_4\} = \{r_{\bC^{(0)}} = 1\} \cap A(\bC^{(0)})^{\perp} \cap \spt \|\bC^{(0)}\|$, for $(r,y) = (r_{\bC^{(0)}},y) \in [0,\infty) \times A(\bC^{(0)})$, we will write $\tilde{v}_j(r,y) = v(r\omega_j,y)$.

\bigskip

\noindent \textbf{Step 6.} \emph{The Thin Part of $\cD_v$.} We claim that $v_i$ is harmonic at points of $\bP_i^{(0)} \setminus \cT_v$. By the definition of $\cT_v$, we have that for each $Y \in \cD_v \setminus \cT_v$, there is $\eta > 0$ such that $\cH^{n-1}(\cD_v \cap B_{\eta}(Y)) = 0$. The previous step shows that $v_i$ is Lipschitz and sets of $\cH^{n-1}$ measure zero are removable for Lipschitz harmonic functions (this follows from a short cut-off argument using the fact that such sets are of zero 1-capacity). Therefore $\cD_v\setminus \cT_v$ is removable for $v_i$. We can obviously now assume that $0 \in \cT_v$.

\bigskip

\noindent \textbf{Step 7.} \emph{Concluding the Argument.} 
Suppose that $\omega_1$, $\omega_3 \in \bP_1^{(0)}$. Notice that $(r,y) \mapsto \tilde{v}_1(r,y) - \tilde{v}_3(r,y)$ is a homogeneous degree one harmonic function that solves the half-space Dirichlet problem with zero boundary values. This means that $\tilde{v}_1(r,y) - \tilde{v}_3(r,y) = ra$ for some constant vector $a \in \bP_1^{(0)\perp}$, but by considering a point $(0,y) \in A(\bC^{(0)})$ at which $v_1$ is smooth, we see that
\beq \label{homblowup13}
\lim_{r \downarrow 0^+}\pa{}{r} \tilde{v}_1(r,y) = \lim_{r \downarrow 0^+}\pa{}{r}\tilde{v}_3(r,y) 
\eeq
which implies that $a = 0$ and hence that the above equation holds for all $(0,y) \in A(\bC^{(0)})\setminus S(v)$. 

We now turn our attention to derivatives in the directions along the axis. By taking derivatives on the set $\mathrm{Int}_{A(\bC^{(0)})}(\cT_v)$ (by which we mean the interior of $\cT_v$ as a subset of or `relative to' $A(\bC^{(0)})$) and using \eqref{homblowup12}, we have that
\beq \label{homblowup14}
D_p (v_1\vert_{\cT_v}) \equiv c_p^{\perp_{\bP_1^{(0)}}}\ \ \text{on}\ \ \mathrm{Int}_{A(\bC^{(0)})}(\cT_v) \setminus S(v)
\eeq
for $p \in \{1,...,n-1\}$. If we write $T_v := \overline{\mathrm{Int}_{A(\bC^{(0)})}(\cT_v)}$, then using the continuity of $D_p \tilde{v}_j$ on $A(\bC^{(0)})\setminus S(v)$ for $j \in \{1,3\}$, we get that \eqref{homblowup14} holds on all of $T_v \setminus S(v)$. In conjunction with \eqref{homblowup13}, this means that $v_1$ is $C^1$ at points of $T_v \setminus S(v)$. Now consider a point $Y = (0,y) \in (\cT_v\setminus T_v) \setminus S(v)$. The general (\emph{i.e.} purely topological) fact that for any set $U$ we have $\overline{U^c} \supset U \setminus \overline{\mathrm{Int}(U)}$ means that there exists a sequence of points $Y_m = (0,y_m) \in (A(\bC^{(0)}) \setminus \cT_v) \setminus S(v)$ with $Y_m \to Y$. By \textbf{Step 6.}, $v_1$ is smooth and harmonic at each $Y_m$, which implies that
\[
\lim_{r \downarrow 0^+} D_p\tilde{v}_1(r,y_m) = \lim_{r \downarrow 0^+}D_p\tilde{v}_3(r,y_m) 
\]
for each $p =1,...,n-1$. The fact that $v\vert_{\bH_j^{(0)} \cap \Omega } \in C^1( (\bH_j^{(0)}\setminus S(v)) ; \bH_j^{(0)\perp})$ for $j=1,..,4$ (from \textbf{Step 5.}) means that we can then let $m \to \infty$ to deduce that
\[
\lim_{r \downarrow 0^+} D_p\tilde{v}_1(r,y) = \lim_{r \downarrow 0^+}D_p\tilde{v}_3(r,y).
\]
In conjunction with \eqref{homblowup13}, this proves that $v_1$ is $C^1$ at $(0,y)$ and hence is $C^1$ on $\bP_1^{(0)}\setminus S(v)$. Now, since $T_v$ is contained in the zero set of $v_1 - \Sigma_{p=1}^{n-1}y^pc_p^{\perp_{\bP_1^{(0)}}}$, we can deduce that $v_1$ is smooth and harmonic at points of $T_v \setminus S(v)$, because the zero set of a $C^1$ harmonic function is removable (see, for example \cite{juutlindremovability}).
we therefore deduce that $v_1$ is smooth and harmonic on $\bP_1^{(0)}\setminus S(v)$. Finally, since $\dim S(v) \leq n-2$, it too is removable and thus $v_1$ is smooth at the origin and therefore linear. Of course the same is true for $v_2$ and this completes the proof.
\end{proof}

Finally we prove the main result of this section.

\begin{theorem} \label{blowupregu} Fix a properly aligned cone $\bC^{(0)} \in \cC$ and $L > 0$. There exists $\bar{\theta}_1  = \bar{\theta}_1(n,k,\bC^{(0)},L) \in (0,1/16)$ and $\mu = \mu(n,k,\bC^{(0)},L) \in (0,1)$ such that the following is true. Let $v \in \fB(\bC^{(0)})$ be the blow-up of a sequence $\{V^j\}_{j=1}^{\infty} \in \cV_L$ with $\Theta_{V^j}(0) \geq 2$ for every $j$. Then there exists $\psi \in \cH(\bC^{(0)})$ with $\sup_{\Omega} |\psi|^2 \leq c\int_{\Omega} |v|^2d\cH^n$ such that for any $\theta \in (0,\bar{\theta}_1)$ we have the estimate:
\beq \label{blowupregu1}
\theta^{-n-2}\int_{\Omega \cap B_{\theta}(0)} |v - \psi|^2d\cH^n \leq c\theta^{2\mu}\int_{\Omega} |v|^2 d\cH^n,
\eeq
for some $c = c(n,k,\bC^{(0)},L) > 0$.
\end{theorem}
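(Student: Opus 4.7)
The plan is to establish a one-step excess decay at a fixed small scale $\bar\theta_0$ and iterate it to obtain the power decay, in the spirit of Simon \cite{simoncylindrical} and Wickramasekera \cite{wickgeneral}. The key input at the compactness step is the classification of homogeneous blow-ups in Theorem \ref{homblowup}.

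Specifically, I aim first to show that there exist $\bar\theta_0 \in (0,1/16)$ and $\eta \in (0,1)$, depending only on $n,k,\bC^{(0)},L$, such that for every $v \in \fB(\bC^{(0)})$ as in the hypotheses there exists $\psi_v \in \cH(\bC^{(0)})$ with $\sup_\Omega|\psi_v|^2 \leq c\int_\Omega|v|^2\, d\cH^n$ and
\[
\bar\theta_0^{-n-2}\int_{\Omega \cap B_{\bar\theta_0}(0)}|v-\psi_v|^2\, d\cH^n \leq \eta\int_\Omega|v|^2\, d\cH^n.
\]
To prove this one-step estimate I would argue by contradiction, extracting parameters $\bar\theta_k \downarrow 0$, $\eta_k \uparrow 1$, $c_k \uparrow \infty$ and normalized $v_k \in \fB$ (from sequences $\{V^{k,j}\}$ with $\Theta_{V^{k,j}}(0) \geq 2$) violating the estimate. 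Setting $E_k := \|v_k(\bar\theta_k\,\cdot)\|_{L^2(\Omega)}$ and $\tilde v_k(X) := v_k(\bar\theta_k X)/E_k$, property \ref{B5I} gives $\tilde v_k \in \fB$, and combining \ref{B5III} with the non-concentration Lemma \ref{blowupnonconc} yields strong $L^2(\Omega)$ convergence $\tilde v_k \to v_* \in \fB$ along a subsequence. The crux is then to show $v_*$ is homogeneous of degree one: the hypothesis $\Theta_{V^{k,j}}(0) \geq 2$ puts $0 \in \cD_{\tilde v_k}$ with $\kappa_{\tilde v_k}(0) = 0$, and iterating \ref{B6III} at the origin across a chain of inner scales drives the radial-derivative energy of $\tilde v_k$ to zero in the limit. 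Since rescaling preserves $\fB_{\cP}$ when $\bC^{(0)} \in \cP$, Theorem \ref{homblowup} applies and gives $v_* \in \cH(\bC^{(0)})$. The limiting form of the rescaled failure bound then produces a contradiction: writing $\lambda_k := \bar\theta_k/E_k$, in the non-degenerate regime $\lambda_k \to \lambda > 0$ taking $\psi = v_*/\lambda$ yields $0 \geq \lambda^2$; the degenerate case $\lambda = 0$ (which can occur since the decay estimate from \ref{B6I} yields only $E_k \leq c\bar\theta_k^{7/8}$) is handled by a secondary rescaling capturing the leading-order behaviour of $v_k$ and a parallel application of Theorem \ref{homblowup}.

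For the iteration, the homogeneity of $\psi_1 \in \cH$ gives $\psi_1(\bar\theta_0 X) = \bar\theta_0\psi_1(X)$, so $u_1(X) := \bar\theta_0^{-1}[v(\bar\theta_0 X) - \psi_1(\bar\theta_0 X)]$ has $\|u_1\|_{L^2(\Omega)}^2 \leq \eta\int_\Omega|v|^2\, d\cH^n$. By \ref{B5I}--\ref{B5II} (with the shift parameter $\kappa$ in \ref{B5II} set to zero, since $\psi_1 \in \cH$ contributes no constant term), the normalization $u_1/\|u_1\|_{L^2(\Omega)}$ lies in $\fB$ (resp. $\fB_{\cP}$), and its defining sequence retains $\Theta \geq 2$ at the origin. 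Iterating produces $\psi_m \in \cH$ with $\sup_\Omega|\psi_m|^2 \leq c\eta^{m-1}\int_\Omega|v|^2\, d\cH^n$ and
\[
(\bar\theta_0^m)^{-n-2}\int_{\Omega \cap B_{\bar\theta_0^m}(0)}\bigl|v - \textstyle\sum_{j\leq m}\psi_j\bigr|^2\, d\cH^n \leq \eta^m \int_\Omega|v|^2\, d\cH^n.
\]
Linearity of $\cH$ and the geometric decay of sup-norms yield $\Psi := \sum_{j\geq 1}\psi_j \in \cH$ with $\sup_\Omega|\Psi|^2 \leq c\int_\Omega|v|^2\, d\cH^n$; homogeneity of $\Psi - \Psi_m \in \cH$ controls the tail $\int_{\Omega \cap B_\theta}|\Psi - \Psi_m|^2 \leq c\eta^m \theta^{n+2}\int_\Omega|v|^2\, d\cH^n$, and interpolating across the dyadic scales $\bar\theta_0^m$ gives \eqref{blowupregu1} with $\mu := -\log\eta/(2\log(1/\bar\theta_0)) > 0$. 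The hard part is the one-step decay, in particular the homogeneity of $v_*$ and the $\lambda = 0$ case; Theorem \ref{homblowup} is the decisive classification tool that makes the argument go through.
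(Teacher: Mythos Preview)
Your approach differs from the paper's, and the difference matters. The paper follows Steps 4 and 5 of the proof of Theorem \ref{homblowup}: it first establishes by contradiction a \emph{reverse Hardt--Simon inequality} \eqref{blowupregu2} at each $Y \in \cD_v \cap B_{1/4}(0)$ (in particular at $Y=0$). The point is that the \emph{failure} of that inequality is itself the statement that an annular radial-derivative integral is small, so after rescaling the limit is automatically homogeneous of degree one on an annulus, hence everywhere by unique continuation; Theorem \ref{homblowup} then places it in $\cH(\bC^{(0)})$, and since it is also dehomogenized and (by a separate argument) nonzero, one reaches a contradiction. Combining \eqref{blowupregu2} with the forward estimate \ref{B6III} and hole-filling yields geometric decay of $\rho \mapsto \int_{\Omega \cap B_\rho(0)} R^{2-n}|\partial_R(v/R)|^2$, from which \eqref{blowupregu1} follows.

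Your direct one-step-decay route has a genuine gap at the homogeneity step, and it lies in the case you call non-degenerate. A change of variables gives
\[
\int_{\Omega \cap B_1(0)} R^{2-n}\Bigl|\frac{\partial(\tilde v_k/R)}{\partial R}\Bigr|^2\,d\cH^n \;=\; \lambda_k^{\,2}\int_{\Omega \cap B_{\bar\theta_k}(0)} R^{2-n}\Bigl|\frac{\partial(v_k/R)}{\partial R}\Bigr|^2\,d\cH^n,
\]
and the only uniform control on the inner integral that \ref{B6III} provides (applied to $v_k$ at a fixed scale, say $\rho = 1/4$) is a bound by a constant $c = c(n,k,\bC^{(0)},L)$. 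So the radial-derivative energy of $\tilde v_k$ is bounded by $c\lambda_k^2$, which forces homogeneity of $v_*$ only when $\lambda_k \to 0$. When $\lambda_k \to \lambda > 0$ this bound is merely finite; since the $v_k$ vary with $k$, absolute continuity of a single integral does not help, and nothing in the failed one-step $L^2$ inequality touches the radial derivative at all. Without homogeneity Theorem \ref{homblowup} is unavailable, $v_* \in \cH(\bC^{(0)})$ is not established, and the contradiction ``$0 \geq \lambda^2$'' does not follow. (Taking $\psi = 0$ in your failure hypothesis already yields $\lambda_k < \eta_k^{-1/2}$, so $\lambda \in (0,1]$ is a genuine possibility.) This is exactly why the paper contradicts the reverse Hardt--Simon inequality rather than the one-step $L^2$ decay: the former is a statement about $\partial_R(v/R)$, so its negation hands you homogeneity of the rescaled limit for free.
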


\begin{proof} We argue exactly as in Steps 4. and 5. of the proof of Theorem \ref{homblowup}. That is, we first argue by contradiction to prove that there exists $\eps = \eps(n,k,\bC^{(0)}) > 0$ such that for every $\rho \in (0,1/4]$ and $Y \in \cD_v \cap B_{1/4}(0)$, there exists $\varphi_Y \in \cH(\bC^{(0)})$ such that 
\begin{align} 
 \int_{\Omega \cap (B_{\rho}(Y) \setminus B_{\rho/2}(Y))}& R^{2-n}\left|\pa{((v -\kappa_v(Y)^{\perp_{T_X\bC^{(0)}}} - \varphi_Y)/R)}{R}\right|^2d\cH^n \nonumber \\
& \geq \eps  \rho^{-n-2}\int_{\Omega \cap B_{\rho}(Y)} |v -\kappa_v(Y)^{\perp_{T_X\bC^{(0)}}} - \varphi_Y|^2 d\cH^n. \label{blowupregu2} 
\end{align}
Then by the same `hole-filling' argument used in Step 5., we get that there is $\bar{\theta} \in (0,1)$ such that for every $Y \in \cD_v \cap B_{1/4}(0)$, there is $\varphi_Y \in \cH(\bC^{(0)})$ and $\kappa_v(Y) \in \RR^{1+k}\times\{0\}^m$ such that
\begin{align} 
\sigma^{-n-2}&\int_{\bC^{(0)} \cap B_{\sigma}(Y)} |v - \kappa_v(Y)^{\perp_{T_X\bC^{(0)}}} - \varphi_Y|^2 d\cH^n \nonumber \\
& \leq \beta\left(\frac{\sigma}{\rho}\right)^{\mu}\rho^{-n-2}\int_{\bC^{(0)} \cap B_{\rho}(Y)} |v - \kappa_v(Y)^{\perp_{T_X\bC^{(0)}}} - \varphi_Y|^2d\cH^n, \label{blowupregu3}
\end{align}
for all $0 < \sigma \leq \rho/2 \leq \bar{\theta}/2$. Here $\beta = \beta(n,k,\bC^{(0)}) > 0$. And as before, using the regularity of $v$ away from $\cD_v$, we know that for any $X_0 = (r\omega_j,y) \in (\overline{\bH_j^{(0)}} \setminus \cD_v) \cap B_{1/4}(0)$ and any $\varphi \in \cH(\bC^{(0)})$ we have
\begin{align}
\sigma^{-n-2} & \sum_{j=1}^4 \int_{\bH_j^{(0)} \cap B_{\sigma}((r\omega_j,y))}  |v(X) - (v((r\omega_j,y)) + X \cdot Dv((r\omega_j,y))|^2d\cH^n(X) \nonumber \\
& \leq c\left(\frac{\sigma}{\rho}\right)^2\rho^{-n-2}\sum_{j=1}^4  \int_{\bH_j^{(0)} \cap B_{\rho}((r\omega_j,y))}|v - \varphi|^2 d\cH^n \label{blowupregu4}
\end{align}
for $0 < \sigma \leq \rho/2 \leq \tfrac{1}{2}\min\{\tfrac{1}{4},\dist(X,\cD_v)\}$ and where $c = c(n,k) > 0$. And in a similar way to the end of Step 5., using no further properties these estimates can be leveraged to yield \eqref{blowupregu1}. Notice that that the assumption $\Theta_{V^j}(0) \geq 2$ $\forall j$ implies, via the content of Remark \ref{rema:kappa=0}, that $\kappa_v(0) = 0$.
\end{proof}

\section{Proofs of $L^2$ Estimates} \label{S:ProofofL2}

The work of Sections 3, 4 and 5 relied on Theorem \ref{L2} and Corollary \ref{cor:L2Z}. In this section, we prove Theorem \ref{L2} and Corollary \ref{cor:L2Z} using an induction argument on $q_{\bC}$. Given $\bC$ with $q_{\bC} > 0$, we define the following induction hypothesis: 

\smallskip

\noindent \textbf{Induction Hypothesis $H(\bC,\bC^{(0)})$.} \emph{The statements of Theorem \ref{L2} and Corollary \ref{cor:L2Z} both hold with any $\bC^{\prime} \in \cP$ that satisfies $0 \leq q_{\bC^{\prime}} < q_{\bC}$ in place of $\bC$.}

\smallskip

We will prove the $q_{\bC} = 0$ case simultaneously, in a manner which is not circular. Let us briefly outline how this is done. Logically, one must proceed as follows: 
\begin{enumerate}
\item Prove Lemma \ref{q=0smalexcetori}.
\item Prove Theorem \ref{L2} for $q_{\bC} = 0$ (using Section \ref{SS:ProofofTheoremL2} up to but not including Section \ref{SSS:proofofclaimcont}).
\item Prove Corollary \ref{cor:L2Z} for $q_{\bC} = 0$ (Section \ref{SS:ProofofcorL2Z}).
\end{enumerate}
Once this is done, we can consider a general value of $q_{\bC} > 0$ and assume the induction hypothesis $H(\bC,\bC^{(0)})$.  Now, starting from Lemma \ref{L:q>0tori}, the logical order matches the order in which the article is written.

In order to use the induction hypothesis it will be useful to make the following definition for $\bC \in \cP$ with $q_{\bC} > 0$.
\beq
\cE^2_V(\bC) := \inf_{\substack{\bD \in \cC :\\ A(\bD) \supsetneq A(\bC)\\ q_{\bD} \geq 0 }}\int_{B_1(0)}\dist^2(X,\spt\|\bD\|)d\|V\|(X).
\eeq
 Given a subspace $A$ of $\RR^{n+k}$, $\zeta \in A$, $\rho \in (0,1]$ and $r \in (0,\rho)$, we define
\[
T^A_{\rho,r}(\zeta):= \{ (x,y) \in A^{\perp} \times A : (|x| - \rho)^2 + |y - \zeta|^2 < r^2\}.
\]
For any $\bC \in \cC$, we define $T^{\bC}_{\rho,r}(\zeta):= T^{A(\bC)}_{\rho,r}(\zeta)$. Notice that $T^{\bC}_{\rho,r}(\zeta)$ is always a toric region that `goes around' the axis of $\bC$. In particular, $T^{\bC}_{\rho,r}(\zeta) \cap A(\bC) = \emptyset$. More generally, for any set $S \subset \RR^{n+k}$, we define $T^{\bC}(S)$ to be the region of revolution formed by rotating $S$ about $A(\bC)$, \emph{i.e.} $T^{\bC}(S)$ is the set of points $(x,y)$ (in coordinates in which $\bC$ is properly aligned) such that there is some $\omega \in A(\bC)^{\perp} \cap \{r_{\bC} = 1\}$ such that $(|x|\omega,y) \in S$. 

Of crucial importance will be a good understanding of the behaviour that can occur when the varifold exhibits small $L^2$ excess in a region of the form $T^{\bC}(S)$ for some set $S$, where usually $S \cap A(\bC) = \emptyset$. The situation when $q_{\bC} > 0$ is significantly more complicated than when $q_{\bC} = 0$. Let us begin with the easier case. Henceforth in this section we assume that we have fixed $\bC^{(0)} \in \cC$ and $L > 0$.

\subsection{Small One-Sided Excess in Toric Regions when $q_{\bC} = 0$}

\begin{lemma} \label{q=0smalexcetori} There exists $\eps_0 = \eps_0(n,k,\bC^{(0)},L) > 0$ such that the following is true. Suppose that, for some $\eps < \eps_0$, we have that $V \in \cV_L$, $\bC, \bC^{(0)} \in \cC$ satisfy the following hypotheses:
\begin{enumerate}[nolistsep]
\item $\|V\|(B_2^n(0)\times\RR^k) \leq \|\bC^{(0)}\|(B_2^n(0)\times\RR^k) + 1/2$.
\item $A(\bC) = A(\bC^{(0)})$ (in particular, $q_{\bC} = 0$).
\item $\nu_{\bC,\bC^{(0)}} < \eps$ .
\item $E_{V \tiny{\res} T^{\bC}_{1/2,7/16}(0)} (\bC) < \eps$.
\end{enumerate}
Then there are various different possible conclusions. 

If $\dim A(\bC^{(0)}) = n-1$, then one of A) and B) hold. Write $\bC = \sum_{i=1}^4|\bH_i|$ and $\bC^{(0)} = \sum_{i=1}^4|\bH_i^{(0)}|$, labelled so that $d_{\cH}(\bH_i \cap (B_2^n(0) \times \RR^k), \bH_i^{(0)} \cap (B_2^n(0) \times \RR^k)) < \eps$. There exists $I \subset \{1,2,3,4\}$ such that $V \res T_{1/2,6/16}(0) = \sum_{i \in I} V_i$ where each $V_i$ is a stationary varifold in $T^{\bC}_{1/2,6/16}(0)$ for which there exists a domain $\Omega_i \subset \bH_i \cap T^{\bC}_{1/2,6/16}(0)$ such that either 
\begin{enumerate}
\item[\emph{A)}] $V_i$ \emph{is a smooth minimal graph}: There exists $u_i \in C^{\infty}(\Omega_i; \bC^{\perp})$ such that  
\begin{enumerate} [nolistsep]
\item[i)] $V_i = |\gr u_i|$.
\item[ii)] For $X \in \Omega_i$, $\dist(X + u_i(X),\bH_i) = |u_i(X)|$.
\end{enumerate}
Or,
\item[\emph{B)}] $V_i$ \emph{is, up to a small set, a minimal two-valued graph}: There exists a measurable set $\Sigma_i \subset \Omega_i$ and $u_i \in C^{0,1}(\Omega_i; \cA_2(\bH^{\perp}_i))$ such that
\begin{enumerate} [nolistsep]
\item[i)] $V_i \res T^{\bC}_{1/2,6/16}(0)\setminus (\Sigma_i \times \bH_i^{\perp}) = V_{u_i \vert_{\Omega_i\setminus \Sigma_i}}$.
\item[ii)] $\cH^n(\Sigma) + \|V_i \res T_{1/2,6/16}^{\bC}(0)\|(\Sigma_i \times \bH_i^{\perp})$ \\ $\leq c \int_{T^{\bC}_{1/2,7/16}(0)}\dist^2(X,\spt\|\bC|)d\|V\|(X)$
\item[iii)] For $X \in \Omega_i\setminus \Sigma_i$, $\dist(X + u_i(X),\bH_i) = |u_i(X)|$.
\end{enumerate}
\end{enumerate}
If $\dim A(\bC^{(0)}) < n-1$, then one of C) and D) hold. Write $\bC = \sum_{i=1}^2|\bP_i|$ and $\bC^{(0)} = \sum_{i=1}^2|\bP_i^{(0)}|$, labelled so that $d_{\cH}(\bP_i \cap (B_2^n(0) \times \RR^k), \bP_i^{(0)} \cap (B_2^n(0) \times \RR^k)) < \eps$. Then we have either
\begin{enumerate}
\item[\emph{C)}] $V \res T^{\bC}_{1/2,6/16}(0)$ \emph{is a union of smooth minimal graphs}: There exists a domain $\Omega \subset \spt\|\bC\| \cap T^{\bC}_{1/2,6/16}(0)$ and a smooth function $u \in C^{\infty}(\Omega; \bC^{\perp})$. such that
\begin{enumerate} [nolistsep]
\item[i)] $V = |\gr u|$.
\item[ii)] For $X \in \Omega \cap \bP_i$, $\dist(X + u(X),\bP_i) = |u(X)|$.
\end{enumerate}
Or
\item[\emph{D)}] $V \res T^{\bC}_{1/2,6/16}(0)$ \emph{is, up to a small set, a minimal two-valued graph}: There exists $i \in \{1,2\}$, a domain $\Omega \subset \bP_i \cap T^{\bC}_{1/2,6/16}(0)$, a measurable set $\Sigma \subset \Omega$ and $u_i \in C^{0,1}(\Omega;\cA_2(\bP_i^{\perp}))$ such that
\begin{enumerate}[nolistsep]
\item[i)] $V \res T^{\bC}_{1/2,6/16}(0) \setminus (\Sigma \times \bP_i^{\perp}) = V_{u_i\vert_{\Omega\setminus \Sigma}}$.
\item[ii)] $\cH^n(\Sigma) + \|V \res T^{\bC}_{1/2,6/16}(0)\|(\Sigma \times \bP_i^{\perp})$ \\ $ \leq c \int_{T^{\bC}_{1/2,7/16}(0)}\dist^2(X,\spt\|\bC|)d\|V\|(X)$.
\item[iii)] For $X \in \Omega\setminus \Sigma$, $\dist(X + u_i(X),\bP_i) = |u_i(X)|$.
\end{enumerate}
\end{enumerate}
\end{lemma}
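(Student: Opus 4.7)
The plan is to localize $V$ in the torus $T^{\bC}_{1/2,1/4}(0)$ to tubular neighbourhoods of the constituent planes or half-planes of $\bC$, analyse each localized piece separately, and then assemble the conclusions. The key geometric input is that since $T^{\bC}_{1/2,7/16}(0) \subset \{r_{\bC} \geq 1/16\}$, the parts of $\spt\|\bC\|$ associated to distinct constituent planes or half-planes $\{P_i\}$ of $\bC$ (a half-plane if $\dim A(\bC) = n-1$, a plane if $\dim A(\bC) \leq n-2$) are pairwise separated inside this torus by a distance $\delta = \delta(\bC^{(0)}) > 0$; this uses Hypothesis (3) to transfer separation from $\bC^{(0)}$ to $\bC$. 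Combined with the Lipschitz bound $V \in \cV_L$ and Hypothesis (4), a standard Chebyshev-and-slicing argument upgrades the $L^2$-smallness of the excess to an $L^{\infty}$-smallness of $\dist(\cdot, \spt\|\bC\|)$ on $\spt\|V\|\cap T^{\bC}_{1/2,1/4}(0)$ once $\eps_0$ is chosen small in terms of $\bC^{(0)}$ and $L$. Consequently $\spt\|V\|\cap T^{\bC}_{1/2,1/4}(0)$ decomposes disjointly into sets $S_i := \spt\|V\|\cap T^{\bC}_{1/2,1/4}(0)\cap(P_i)_{\delta/4}$, and I would let $V_i$ be the restriction of $V$ to a slight open thickening of $S_i$ inside $T^{\bC}_{1/2,1/4}(0)$.

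For each nonempty $V_i$, I split based on whether $\cD(V_i) := \{Z \in \spt\|V_i\| : \Theta_{V_i}(Z) \geq 2\}$ is empty. If $\cD(V_i) = \emptyset$, then by Theorem \ref{thm:grapdeco} $V_i$ decomposes into single-valued stationary Lipschitz graphs on any simply connected subregion of its domain; the Constancy Theorem (applied to tangent cones of any varifold limit) together with the mass bound (1) and the $L^{\infty}$-closeness to the plane containing $P_i$ forces exactly one such graph to represent $V_i$, and Allard's Regularity Theorem then upgrades that graph to $C^{\infty}$. This yields Case A when $\dim A(\bC) = n-1$, and when $\dim A(\bC) \leq n-2$ and both $V_1, V_2$ are of this type, their sum gives Case C. If instead $\cD(V_i) \neq \emptyset$, then both sheets of the global two-valued function representing $V$ over $\RR^n$ must lie in $(P_i)_{\delta/4}$ near a density-$2$ point, so $V_i$ is representable via the orthogonal projection $\pi_i : \RR^{n+k} \to P_i$ as a two-valued Lipschitz graph over $\Omega_i := \pi_i(\spt\|V_i\|)$. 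I would define the exceptional set $\Sigma_i \subset \Omega_i$ as the $\pi_i$-image of the union of the $\geq 3$-multiplicity preimage locus in $\spt\|V_i\|$ and the set where the tangent plane to $V_i$ differs in Grassmann distance by more than $1/2$ from $P_i$; on $\Omega_i \setminus \Sigma_i$ the projection $\pi_i$ is at most $2$-to-$1$ with tangents close to $P_i$, hence produces the Lipschitz two-valued function $u_i$ of Case B (or of Case D when $\dim A(\bC) \leq n-2$).

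The main obstacle is the exceptional-set bound $\cH^n(\Sigma_i) + \|V_i\|(\Sigma_i \times P_i^\perp) \leq c\, E^2_V(\bC)$. The Lipschitz bound $V \in \cV_L$ immediately gives $\cH^n(\Sigma_i) \leq c\,\|V_i\|(\Sigma_i \times P_i^\perp)$, so it suffices to bound the latter. Every $X \in \spt\|V_i\|$ lying over $\Sigma_i$ satisfies $\dist(X, \spt\|\bC\|) \geq \eta = \eta(\bC^{(0)}) > 0$: for $X$ with $|T_XV_i - P_i| > 1/2$ this is immediate because $\spt\|\bC\|$ is a union of affine planes, while for $X$ lying above a point of $\Omega_i$ with at least three preimages in $\spt\|V_i\|$, at least one of those preimages must lie a positive distance from $\spt\|\bC\|$ because $\bC$ has at most $2$ multiplicity-one sheets over $P_i$ in the torus, and Lipschitz continuity of $V$ propagates this far-ness back to $X$ with a controlled loss. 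Chebyshev's inequality applied to the $L^2$-excess then yields $\|V_i\|(\Sigma_i \times P_i^\perp) \leq \eta^{-2} E^2_V(\bC)$. I expect the control of the multiplicity-$\geq 3$ portion of $\Sigma_i$ to be the most delicate step, most likely requiring a Vitali-type covering by balls centered at the bad preimage points and a careful use of the monotonicity formula to relate local mass to local excess.
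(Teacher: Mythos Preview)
Your overall architecture matches the paper's: localize $V$ in the torus to neighbourhoods of the individual half-planes or planes of $\bC$, then on each piece invoke Allard when the sheet is multiplicity one and a two-valued Lipschitz approximation when it is multiplicity two. The paper sets this up by a compactness--contradiction argument (pass to a varifold limit $W$ in the torus, apply the Constancy Theorem to read off integer multiplicities $\theta_i$ on each $\bH_i^{(0)}$, and then split on $\theta_i \in \{1,2\}$), whereas you attempt a direct quantitative argument via Chebyshev and the Lipschitz bound. Either route is viable for the localization step, and your use of Theorem~\ref{thm:grapdeco} followed by Allard in the density-$<2$ situation is exactly what the paper does.

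The genuine gap is in your treatment of the multiplicity-two case, specifically the bound on the exceptional set $\Sigma_i$. Your claim that every $X$ over $\Sigma_i$ satisfies $\dist(X,\spt\|\bC\|)\geq \eta$ is false on both counts. First, $|T_XV_i - P_i| > 1/2$ says nothing about $\dist(X,P_i)$: a smooth graph can have large tilt at a point arbitrarily close to $P_i$. The correct mechanism for the bad-tilt set is the tilt-excess estimate (e.g.\ \cite[Lemma 22.2]{simongmt}), which bounds $\int \|\bp_{T_XV}-\bp_{P_i}\|^2\,d\|V\|$ by the height excess, after which Chebyshev applies to the \emph{tilt} integrand. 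Second, three $\pi_i$-preimages over a point of $\Omega_i$ can all lie within any prescribed $\eps$ of $P_i$; nothing forces one of them to be a definite distance away. The paper does not attempt to prove the exceptional-set estimate from scratch: it simply invokes Almgren's Lipschitz Approximation Lemma (\cite[Corollary 3.11]{almgrenbig}, or \cite[Theorem 5.1]{wickgeneral}), which takes as input the small height excess relative to a single plane and outputs precisely the two-valued Lipschitz function $u_i$, the measurable set $\Sigma_i$, and the estimate $\cH^n(\Sigma_i)+\|V_i\|(\Sigma_i\times P_i^{\perp})\leq c\,E_V^2(\bC)$. That lemma's proof uses a maximal-function/stopping-time construction together with monotonicity, not a one-line Chebyshev bound, and you should cite it as a black box rather than try to re-derive it.
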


\begin{proof}
To prove this Lemma we take a sequence of varifolds and cones satisfying the hypotheses for smaller and smaller choices of $\eps_0$ and show that the conclusions must hold at least along a subsequence. So consider a sequence of real numbers $\eps_j \downarrow 0^+$, sequences $\{V^j\}_{j=1}^{\infty} \in \cV_L$ and $\{\bC^j\}_{j=1}^{\infty} \in \cC$ such that for every $j \geq 1$, the hypotheses 1) to 4) in the statement of the Lemma are satisfied with $V^j$, $\bC^j$ and $\eps_j$ in place of $V$, $\bC$ and $\eps$ respectively.

Now, using the mass bound 1) and the compactness theorem for stationary integral varifolds, we know that there exists a subsequence $\{j'\}$ of $\{j\}$ (to which we pass without changing notation and) along which we have that $V^j \res T^{\bC^j}_{1/2,7/16}(0)$ converges to some stationary integral varifold $W$ in $T^{\bC^{(0)}}_{1/2,7/16}(0)$. By 3) and 4) we have that $\spt \|W\| \subset \spt \|\bC^{(0)}\| \cap T^{\bC^{(0)}}_{1/2,7/16}(0)$. 

Suppose now that $\dim A(\bC) = n-1$. Since $W$ is stationary, the Constancy Theorem \cite[Theorem 41.1]{simongmt} implies that it has constant multiplicity on each connected component of $\spt\|W\|$ and so we see that each connected component of $\spt \|W\|$ must be of the form $\bH_i^{(0)} \cap T^{\bC^{(0)}}_{1/2,7/16}(0)$ for one of the half-planes $\bH_i^{(0)}$ of $\bC^{(0)}$, \emph{i.e.} we can write $W = \sum_{i \in I} \theta_i|\bH_i^{(0)} \cap T_{1/2,7/16}(0)|$ where $\theta_i$ is a positive integer for each $i$. 

Choose a small but fixed $\eta > 0$ so that the regions $(\bH_i^{(0)})_{\eta} \cap T_{1/2,7/16}(0)$ for $i \in \cI$ are disjoint. Then, since $d_{\cH}(\spt \|V^j\| \cap T^{\bC^{(0)}}_{1/2,7/16}(0), \spt \|W\| \cap T^{\bC^{(0)}}_{1/2,7/16}(0)) \to 0$, we know that for sufficiently large $j$,  
\beq \label{q=0smalexcetori1}
\spt\|V^j\| \cap T^{\bC^{(0)}}_{1/2,7/16}(0) \subset (\bH_i^{(0)})_{\eta} \cap T^{\bC^{(0)}}_{1/2,7/16}(0),
\eeq
whence we define $V_i^j := V^j \res (\bH_i^{(0)})_{\eta} \cap T^{\bC^{(0)}}_{1/2,1/4}(0)$. Now given $i \in I$ for which $\theta_i = 1$, for sufficiently large $j$ we may apply Allard's Regularity Theorem to $V^j_i$ in order to write it as a smooth graph and deduce conclusion A). In fact, given any $i \in I$ for which $\Theta_{V_i^j}(X) < 2$ for all $X \in \spt\|V_i^j\|$ and all sufficiently large $j$, we can use Theorem \ref{thm:grapdeco} followed by Allard's Regularity Theorem to get the same conclusions. Given $i \in \cI$ for which $\theta_i = 2$ and such that there exists $Z \in \spt\|V_i^j\|$ with $\Theta_V(Z) \geq 2$, we apply instead Almgren's Lipschitz Approximation Lemma (\cite[Corollary 3.11]{almgrenbig}, or see \cite[Theorem 5.1]{wickgeneral}) in order to deduce B). Finally, if we were to suppose that there were an $i$ for which $\theta_i \geq 3$, one can use the fact that $W$ is a limit of two-valued Lipschitz graphs together with the mass bound ( 1.) to reach a contradiction. This completes the proof in the case $\dim A(\bC) = n-1$. We omit the proof for the case where $\dim A(\bC) < n-1$ as it follows exactly the same method. The slightly different conclusion results naturally from the different geometry. 
\end{proof}

\subsection{Small One-Sided Excess in Toric Regions when $q_{\bC} > 0$}

In this subsection we will prove a lemma that that is analogous to Lemma \ref{q=0smalexcetori}, but in the case where $q_{\bC} > 0$. To do so, we must make use of the induction hypothesis $H(\bC,\bC^{(0)})$ (recall that logically, the use of this lemma comes only after the proofs of Theorem \ref{L2} and Corollary \ref{cor:L2Z} have been established for the $q_{\bC} = 0$ case). 

Notice that if $V^j \to \bC^{(0)}$, the set $\{Z : \Theta_{V^j}(Z) \geq 2\}$ will concentrate near $A(\bC^{(0)})$ - this is how the proof of Lemma \ref{q=0smalexcetori} worked. The next lemma expresses the important fact that if $q_{\bC} > 0$ and if the excess measured relative to $\bC$ is \emph{much smaller} than the excess measured relative to pairs of planes with higher-dimensional axes, (see hypothesis \emph{6)} below), then the set $\{Z : \Theta_{V^j}(Z) \geq 2\}$  will concentrate near the smaller set $A(\bC) \subsetneq A(\bC^{(0)})$.

\begin{lemma} \label{L:q>0tori} There exist numbers $\eps_0 = \eps_0(n,k,\bC^{(0)},L) >0$ and $\gamma_0$ $=$ $\gamma_0(n,k,\bC^{(0)},L)$ $>$ $0$ such that the following is true. If, for some $\eps < \eps_0$ and $\gamma < \gamma_0$, we have that $V \in \cV_L$, $\bC = |\bP_1| + |\bP_2| \in \cP$ and $\bC^{(0)} \in \cP$ satisfy
\begin{enumerate}[nolistsep]
\item $\|V\|(B_2^n(0)\times\RR^k) \leq \|\bC^{(0)}\|(B_2^n(0)\times\RR^k) + 1/2$.
\item $q_{\bC} > 0$ and $0 \in A(\bC) \subsetneq A(\bC^{(0)})$.
\item Hypothesis $H(\bC,\bC^{(0)})$.
\item $\nu_{\bC, \bC^{(0)}} < \eps$.
\item $E_{V \tiny{\res} T^{\bC}_{1/2,7/16}(0)}(\bC) < \eps$
\item $E_{V \tiny{\res} T^{\bC}_{1/2,7/16}(0)}(\bC)  < \gamma\cE_{V \tiny{\res} T^{\bC}_{1/2,7/16}(0)}(\bC)$.
\end{enumerate}
Then there are two possible conclusions, \emph{E)} and \emph{F)}: Either
\begin{enumerate}
\item[\emph{E)}] $V \res T^{\bC}_{1/2,6/16}(0)$ \emph{is a union of single-valued graphs}: $\{X \in \spt \|V\| \cap T^{\bC}_{1/2,6/16}(0) : \Theta_V(X) \geq 2 \} = \emptyset$ and $V \res T^{\bC}_{1/2,6/16}(0) = \sum_{i=1}^2V_i$, where for each $i \in \{1,2\}$, $V_i$ is a stationary varifold for which there exists a domain $\Omega_i \subset \bP_i \cap T^{\bC}_{1/2,6/16}(0)$ and $u_i\in C^{\infty}(\Omega_i; \bC^{\perp})$ such that
\begin{enumerate}[nolistsep, label = \roman*),ref= \roman*)]
\item \label{en:q>0tori1} $V_i = |\gr u_i|$
\item \label{en:q>0tori2} $\int_{T^{\bC}_{1/2,13/32}(0)} \dist^2(X,\bP_i)d\|V_i\|(X)$ \\ $\leq c\int_{T^{\bC}_{1/2,7/16}(0)}\dist^2(X,\spt\|\bC\|)d\|V\|(X)$, for some constant \\ $c$ $=$ $c(n,k,\bC^{(0)},L) > 0$ (for $X \in \Omega_i$, $\dist(X + u_i(X),\bP_i) = |u_i(X)|$). 
\end{enumerate}
\end{enumerate}
Or,
\begin{enumerate}
\item[\emph{F)}] $V \res T^{\bC}_{1/2,6/16}(0)$ \emph{is, up to a small set, a minimal two-valued graph}: There exists $i \in \{1,2\}$, a domain $\Omega \subset \bP_i \cap T^{\bC}_{1/2,6/16}(0)$, a measurable set $\Sigma \subset \Omega$ and $u_i \in C^{0,1}(\Omega;\cA_2(\bP_i^{\perp}))$ such that
\begin{enumerate}[nolistsep, label = \roman*),ref=\roman*)]
\item\label{en:q>0tori3} $V \res T^{\bC}_{1/2,6/16}(0) \setminus (\Sigma \times \bP_i^{\perp}) = V_{u_i\vert_{\Omega\setminus \Sigma}}$ and for $X \in \Omega\setminus \Sigma$, $\dist(X + u_i(X),\bP_i) = |u_i(X)|$, for some constant $c = c(n,k) > 0$.
\item\label{en:q>0tori4} $\cH^n(\Sigma) + \|V \res T^{\bC}_{1/2,6/16}(0)\|(\Sigma \times \bP_i^{\perp})$ \\ $\leq c \int_{T^{\bC}_{1/2,13/32}(0)}\dist^2(X,\bP_i)d\|V\|(X)$, for some constant \\ $c = c(n,k,\bC^{(0)},L) > 0$.
\item\label{en:q>0tori5} $\int_{T^{\bC}_{1/2,13/32}(0)} \dist^2(X,\bP_i)d\|V\|(X)$ \\ $\leq c\int_{T^{\bC}_{1/2,7/16}(0)}\dist^2(X,\spt\|\bC\|)d\|V\|(X)$, for some constant \\ $c = c(n,k,\bC^{(0)},L) > 0$.
\end{enumerate}
\end{enumerate}
\end{lemma}

\begin{proof} Consider to begin with sequences of real numbers $\eps_j \downarrow 0^+$ and $\gamma_j \downarrow 0^+$, sequences $\{V^j\}_{j=1}^{\infty} \in \cV$ and $\{\bC^j\}_{j=1}^{\infty} \in \cP$  such that for every $j \geq 1$, the hypotheses are satisfied with $V^j$, $\bC^j$, $\eps_j$ and $\gamma_j$ in place of $V$, $\bC$, $\eps$ and $\gamma$ respectively. It suffices to prove that the conclusions of the lemma hold along some subsequence. 

\bigskip

\noindent \textbf{Step 1.} \emph{Reducing to transverse case.}
By passing to a subsequence we may assume the following: 
\begin{itemize}[nolistsep]
\item There exists a subspace $A \subsetneq A(\bC^{(0)})$ for which $d_{\cH}(A(\bC^j)\cap B_2(0),A \cap B_2(0)) \to 0$ as $j \to \infty$.
\item $q_{\bC^j} \equiv q > 0$.
\item Using the mass bound (hypothesis 1) of the present lemma) and the compactness theorem for stationary integral varifolds, there exists a stationary integral varifold $W$ in $T^{A}_{1/2,7/16}(0)$ for which $V^j \res T^{\bC^j}_{1/2,7/16}(0) \to W$.
\end{itemize} 
First note that hypotheses 4) and 5) imply that $\spt \|W\| \subset \spt \|\bC^{(0)}\| \cap T^A_{1/2,7/16}(0)$ and the Constancy Theorem (\cite[Theorem 41.1]{simongmt}) tells us that the multiplicity of $W$ is constant on each connected component of $\spt\|\bC^{(0)}\|\setminus A(\bC^{(0)})$. If $\bC^{(0)} \in \cP_{\leq n-2}$, then this means that either $W = \bC^{(0)} \res T^A_{1/2,7/16}(0)$ or  $W = 2|\bP_i^{(0)} \cap T^A_{1/2,7/16}(0)|$ for some $i \in \{1,2\}$. If $\bC^{(0)} \in \cP_{n-1}$, then we have the same conclusions because the stationarity of $W$ rules out the case where $W$ is supported on 3 half-planes of $\bC^{(0)}$.

Suppose that $W = 2|\bP_i^{(0)} \cap T^A_{1/2,7/16}(0)|$. If $\Theta_{V^j}(X) < 2$ for all $X \in \spt\|V^j\| \cap T^{\bC^j}_{1/2,7/16}(0)$ for infinitely many $j$, then conclusion F) follows along this subsequence by combining Lemma \ref{lemm:planlemm} with Theorem \ref{thm:grapdeco} and Allard's Regularity Theorem. If, on the other hand, there exist points $Z_j \in \spt\|V^j\| \cap T^{\bC^j}_{1/2,7/16}(0)$ with $\Theta_{V^j}(Z_j) \geq 2$ for sufficiently large $j$, then we use Almgren's Lipschitz Approximation Lemma (\cite[Corollary 3.11]{almgrenbig}) to also yield conclusion F). Therefore we can suppose that $W = \bC^{(0)} \res T^A_{1/2,7/16}(0)$. This means that $\cQ_{V^j \tiny{\res} T^{\bC^j}_{1/2,7/16}(0)}(\bC^{(0)}) \to 0$ as $j \to \infty$.

\bigskip

\noindent We now assume, for the sake of contradiction, that:
\begin{align} \label{E:q>0toricontra}
\tag{X} &\text{for sufficiently large}\ j\ \text{there exists}\ Z_j \in \spt\|V^j\| \cap T^{\bC^j}_{1/2,6/16}(0)\\
 &\text{with}\ \Theta_{V^j}(Z_j) \geq 2. \nonumber
\end{align}

\bigskip

\noindent \textbf{Step 2.} \emph{Picking optimal coarser cones.} For some constant $\beta > 0$ (that we will show can be determined depending only on $n$, $k$, $\bC^{(0)}$ and $L$) and for each $j = 1,2,...$, we pick a new cone $\bD^j \in \cP$ according to the following algorithm: First set $\bD^j_{(0)} = \bC^j$. Then starting with $p=1$, choose $\bD^j_{(p)}$ inductively to satisfy
\begin{align}
& A(\bD^j_{(p)}) \supsetneq A(\bD^j_{(p-1)}), \label{E:q>0tori6}\\
& \dim A(\bD^j_{(p)}) \leq \dim A(\bC^{(0)}),\ \text{and}\label{E:q>0tori7}\\
& E_{V^j \tiny{\res} T^{\bC^j}_{1/2,7/16}(0)}(\bD^j_{(p)}) \leq \frac{3}{2}\cE_{V^j \tiny{\res} T^{\bC^j}_{1/2,7/16}(0)}(\bD^j_{(p-1)}).\label{E:q>0tori8}
\end{align}
At each step (\emph{i.e.} for each $p$ in turn), if either
\begin{align}
& \dim A(\bD^j_{(p)}) = \dim A(\bC^{(0)}),\  \text{or} \label{E:q>0tori9}\\
 & E_{V^j \tiny{\res} T^{\bC^j}_{1/2,7/16}(0)}(\bD^j_{(p)}) \leq \beta \cE_{V^j \tiny{\res} T^{\bC^j}_{1/2,7/16}(0)}(\bD^j_{(p)}) \label{E:q>0tori10}
\end{align}
hold, then stop and set $\bD^j = \bD^j_{(p)}$ (in fact, if \eqref{E:q>0tori9} holds, then we can say that \eqref{E:q>0tori10} holds vacuously). Notice that it must eventually be the case that \eqref{E:q>0tori9} holds, because the sequence $\dim A(\bD^j_{(p)})$ is strictly increasing in $p$. Notice also from the construction that
\beq \label{E:q>0tori11}
E_{V^j \tiny{\res} T^{\bC^j}_{1/2,7/16}(0)}(\bD^j) \leq c \cE_{V^j \tiny{\res} T^{\bC^j}_{1/2,7/16}(0)}(\bC^j).
\eeq
for some constant $c = c(n,k,\bC^{(0)}, L,\beta) > 0$. This follows from the fact that if $\bD^j = \bD^j_{(p')}$, then we have the opposite inequality in \eqref{E:q>0tori10} for all $p = 1,\dots,p'-1$. Using this in conjunction with \eqref{E:q>0tori8} for $p=1,\dots,p'-1$ establishes \eqref{E:q>0tori11}.

By passing to another subsequence we may further assume that: 
\begin{itemize}[nolistsep]
\item There exists a subspace $A'$ with $A \subsetneq A' \subset A(\bC^{(0)})$ for which $d_{\cH}(A(\bD^j)\cap B_2(0),A' \cap B_2(0)) \to 0$ as $j \to \infty$.
\item $0 \leq q_{\bD^j} \equiv q' < q$.
\item $\bD^j \to \bC^{(0)}$.
\item $Z_j \to Z \in A(\bC^{(0)}) \cap T^A_{1/2,6/16}(0)$.
\end{itemize}

\bigskip

\noindent \textbf{Step 3.} \emph{Blowing Up.} Now let $\cR$ be the set of all rotations $R$ of $\RR^{n+k}$ such that
\begin{enumerate}
\item $R$ fixes $A(\bC^j)$.
\item $R$ maps $T^{\bC^j}_{1/2,7/16}(0)$ to itself, and
\item $R(Z_j) \in A(\bD^j)$. 
\end{enumerate}
And then, for each $j$, choose $R^j \in \cR$ satisfying
\beq 
|R^j - \mathrm{id}_{\RR^{n+k}}| \leq \frac{3}{2}\inf_{R \in \cR}|R - \mathrm{id}_{\RR^{n+k}}|.
\eeq
Next we let $\cG$ be the set of all rotations $\Gamma$ of $\RR^{n+k}$ such that
\begin{enumerate}
\item $\Gamma$ fixes $A(\bC^j)$.
\item $\Gamma$ maps $T^{\bC^j}_{1/2,7/16}(0)$ to itself, and
\item $(A(\bC^j) \cup \{Z\}) \subset A(\Gamma_*\bD^j) \subset A(\bC^{(0)}) $. 
\end{enumerate} 
And for each $j$ we choose $\Gamma^j \in \cG$ satisfying
\beq 
|\Gamma^j - \mathrm{id}_{\RR^{n+k}} | \leq \frac{3}{2}\inf_{\Gamma \in \cG}|\Gamma - \mathrm{id}_{\RR^{n+k}}|.
\eeq
We claim that
\beq \label{E:q>0tori12}
\nu^2_{R^j_*\bD^j,\bD^j} \leq c\; \int_{T_{1/2,7/16}^{\bC^j}(0)} \dist^2(X,\spt\|\bD^j\|)d\|V^j\|(X).
\eeq
We will need to use Corollary \ref{cor:L2Z} (applied with $\bD^j$ in place of $\bC$); this is valid because of the induction hypothesis $H(\bC,\bC^{(0)})$ and because $q' < q$. If $q' = 0$, then \ref{en:L2Z2} of Corollary \ref{cor:L2Z} implies easily that $\dist(Z_j, A(\bD^j)) < 1/32$, whence some elementary geometric considerations show that for sufficiently large $j$,
\beq \label{E:q>0tori13}
\nu_{R^j_*\bD^j,\bD^j}  \leq c\;\nu_{T_{Z_j *}\bD^j, \bD^j}
\eeq
for some constant $c = c(n,k) > 0$ (here we have used the fact that $Z_j$ is bounded away from $A(\bC^j)$, by virtue of belonging to $T^{\bC^j}_{1/2,6/16}(0)$).  We see that \eqref{E:q>0tori12} now follows readily from \eqref{E:CtoC_Z5} of Remark \ref{R:CtoC_Z} applied with $(\eta_{Z_j^{\top_{A(\bC^{(0)})}},1/32})_*V^j$ in place of $V$. If, on the other hand $q' > 0$, then more work is needed because \ref{en:L2Z2} of Corollary \ref{cor:L2Z} gives only that
\beq \label{E:q>0tori14}
\dist^2(Z_j, A(\bD^j)) \leq c\bigl(\nu^2_{\bD^j,\bC^{(0)}}\bigr)^{-1}\int_{T_{1/2,7/16}^{\bC^j}(0)} \dist^2(X,\spt\|\bD^j\|)d\|V^j\|(X).
\eeq 
But, by using the triangle inequality and \eqref{E:CtoC_Z1} we have that
\begin{align} 
\int_{T_{1/2,7/16}^{\bC^j}(0)} \dist^2(X,&\spt\|\bC^{(0)}\|)d\|V^j\|(X) \label{E:q>0tori15}\\
&\leq 2\int_{T_{1/2,7/16}^{\bC^j}(0)} \dist^2(X,\spt\|\bD^j\|)d\|V^j\|(X) + c\; \nu^2_{\bD^j,\bC^{(0)}} \nonumber
\end{align}
for some absolute constant $c > 0$. And so, by using \eqref{E:q>0tori10}, we deduce from this that
\beq \label{E:q>0tori16}
c^{-1}(1-2\beta^2)\; \int_{T_{1/2,7/16}^{\bC^j}(0)} \dist^2(X,\spt\|\bC^{(0)}\|)d\|V^j\|(X)  \leq \nu^2_{\bD^j,\bC^{(0)}}.
\eeq
Combining this with \eqref{E:q>0tori14} and using \eqref{E:q>0tori10} again, we have that
\beq \label{E:q>0tori17}
\dist^2(Z_j, A(\bD^j)) \leq c\; \frac{\beta^2}{1-2\beta^2},
\eeq
for $c = c(n,k,\bC^{(0)},L)  > 0$. Therefore, by single choice of $\beta$ depending only on $n$, $k$, $\bC^{(0)}$ and $L$, we can ensure once again that $\dist^2(Z_j, A(\bD^j))  < 1/32$, whence for sufficiently large $j$ we have that $\nu_{R^j_*\bD^j,\bD^j}  \leq c\;\nu_{T_{Z_j *}\bD^j, \bD^j}$. From here, \eqref{E:CtoC_Z5} of Remark \ref{R:CtoC_Z} applied with $(\eta_{Z_j^{\top_{A(\bC^{(0)})}},1/32})_*V^j$ completes the proof of \eqref{E:q>0tori12}.

Moreover, since $d_{\cH}(A(\bD^j) \cap B_2(0),A' \cap B_2(0)) \to 0$ and $|Z_j - Z| \to 0$, we have that $\nu_{\Gamma^j_*\bD^j,(R^j)^{-1}_*\bD^j} \to 0$, whence using triangle inequality and the fact that $\bD^j \to \bC^{(0)}$, we get that $\nu_{\Gamma^j_*\bD^j,\bC^{(0)}} \to 0$.

We will now blow up $\{\Gamma^j_*R^j_*V^j$ $\res$ $T^{\bC^j}_{1/2,7/16}(0)\}_{j=1}^{\infty}$ off $\bC^{(0)}$ relative to the sequence of cones $\{\Gamma^j_*\bD^j\}_{j=1}^{\infty}$ in the region $T^A_{1/2,7/16}(0)$. Because of the different domain, this is slightly different from the general blow-up procedure used to construct $\fB(\bC^{(0)})$ in Section 4, but we will only need a few properties of the blow-up so let us briefly outline the procedure here: Fix $\tau_0 \in (0,1/64)$ and suppose that $\Gamma^j_*\bD^j = |\Gamma^j\bQ_1^j| + |\Gamma^j\bQ_2^j|$, labelled so that $|\Gamma^j\bQ_i^j| \to |\bP_i^{(0)}|$ as $j \to \infty$ for $i=1,2$. Then, for sufficiently large $j$, we can represent $\Gamma^j\bQ_i^j \cap \{r_{\bC^{(0)}} > \tau_0\}$ as the graph of $d^j$ (defined on a domain in $\Omega := B_1(0) \cap (\spt\|\bC^{(0)}\|\setminus A(\bC^{(0)}))$). Furthermore, we can represent $\Gamma^j_*R^j_*V^j \res (T^{\bC^j}_{1/2,(1-\tau_0)7/16}(0) \cap \{r_{\bC^{(0)}} > \tau_0\})$ as the graph of $d^j + u^j$, where $u^j$ is defined on a domain in $\Omega \cap T^A_{1/2,7/16}(0)$. Then, since $\tau_0$ is arbitrary, if we set
\beq
E_j := \left(\int_{T^{\bC^j}_{1/2,7/16}(0)} \dist^2(X,\spt\|\Gamma^j_*\bD^j\|)d\|\Gamma^j_*R^j_*V^j\|(X)\right)^{1/2},
\eeq
we have that $E_j^{-1}u^j$ converges on compact subsets of $\Omega \cap T^A_{1/2,7/16}(0)$ to a harmonic function $v$.

\bigskip

\noindent \textbf{Step 4.} \emph{The Structure of the Blow-Up.} Using the facts that both $R^j$ and $\Gamma^j$ are rigid motions of $T^{\bC^j}_{1/2,7/16}(0)$ together with hypothesis 6) of the present lemma, we see that
\begin{align} 
& \int_{T_{1/2,7/16}^{\bC^j}(0)}\dist^2(X,\spt\|\Gamma^j_*R^j_*\bC^j\|)d\|\Gamma^j_*R^j_*V^j\|(X) \nonumber \\
= & \int_{T_{1/2,7/16}^{\bC^j}(0)}\dist^2(X,\spt\|\bC^j\|)d\|V^j\|(X) \nonumber \\
< & \gamma_j  \int_{T^{\bC^j}_{1/2,7/16}(0)} \dist^2(X,\spt\|(R^j)^{-1}_*\bD^j\|)d\|V^j\|(X) \nonumber \\ 
= & \gamma_j E_j^2. \label{E:q>0tori18}
\end{align}
This shows that on $\Omega \cap T^A_{1/2,7/16}(0)$, the function $v$ coincides with the function obtained by blowing up $\Gamma^j_*R^j_*\bC^j$ relative to $\Gamma^j_*\bD^j$ using the same excess. This latter blow-up is the blow-up of a sequence of pairs of planes and so we deduce that $v$ is given by linear functions defined on $(\bP_i^{(0)}\setminus A(\bC^{(0)})) \cap T^A_{1/2,7/16}(0)$ for $i=1,2$. 

Let us check that $v \not \equiv 0$. Relying again on the fact that $R^j$ is a rigid motion of $T^{\bC^j}_{1/2,7/16}(0)$, and then using the triangle inequality and \eqref{E:CtoC_Z1} we have that
\begin{align}
E_j^2 &= \int_{T^{\bC^j}_{1/2,7/16}(0)} \dist^2(X,\spt\|\bD^j\|)d\|R^j_*V^j\|(X) \label{E:q>0tori19} \\
& \leq \int_{T_{1/2,7/16}^{\bC^j}(0)} \dist^2(X,\spt\|R^j_*\bC^j\|)d\|R^j_*V^j\|(X) + c\; \nu^2_{\bD^j,R^j_*\bC^j} \nonumber 
\end{align}
for some absolute constant $c > 0$. Then by changing variables in the integral and using hypothesis 6) of the present lemma, we have that this is less than
\begin{align}
&\gamma_j \int_{T_{1/2,7/16}^{\bC^j}(0)} \dist^2(X,\spt\|(R^j)^{-1}_*\bD^j\|)d\|V^j\|(X) + c\; \nu^2_{\bD^j,R^j_*\bC^j} \nonumber \\
= &\gamma_j E_j^2 + c\; \nu^2_{\bD^j,R^j_*\bC^j} \label{E:q>0tori20} 
\end{align}
Thus by absorbing the first term in \eqref{E:q>0tori20} back into \eqref{E:q>0tori19}, we deduce that
\beq \label{E:q>0tori21}
0 <  c  \leq E_j^{-1}\nu_{\Gamma^j_*\bD^j,\Gamma^j_*R^j_*\bC^j} 
\eeq
for some absolute constant $c > 0$, which indeed implies that $v \not\equiv 0$. 

Now on $(\bP_i^{(0)}\setminus A(\bC^{(0)})) \cap T^A_{1/2,7/16}(0)$ set $l_i^j := d_i^j + E_j v$ and let $\bL_i^j$ denote the unique plane containing $\gr l^j_i$. We define $\bF^j := |\bL_1^j| + |\bL_2^j|$. Since $v \not \equiv 0$, notice that $\bF^j \neq \bD^j$.

We claim that $A(\bF^j) \supsetneq A(\bC^j)$ . Since $A(\bC^j) \subsetneq A(\Gamma^j_*\bD^j) \subset  A(\bC^{(0)})$ for every $j$ we have that $d^j_i \equiv 0$ on $A(\bC^j)$. And since $v$ is the blow-up of $\Gamma^j_*R^j_*\bC^j$ and $A(\Gamma^j_*R^j_*\bC^j) = A(\bC^j)$ for every $j$, we also have that $v \equiv 0$ on $A(\bC^j)$. This means that $l_i^j \equiv 0$ on $A(\bC^j)$ for $i=1,2$ which means that 
\beq\label{E:q>0tori22}
A(\bC^j) \subset (\bL_1^j \cap \bL_2^j) = A(\bF^j).
\eeq
Now, since $Z \in A(\Gamma^j_*\bD^j) \subset A(\bC^{(0)})$ for every $j$, we have that $d^j_i(Z) = 0$ for every $j$. And , since $|(\Gamma^j\circ R^j)(Z_j)^{\perp_{A(\Gamma^j_*\bD^j)}}| = 0$ for every $j$, we have that $\kappa_v(Z) = 0$ (by Remark \ref{rema:kappa=0}). This means that $l_i^j(Z) = 0$ for all $j$ and for $i=1,2$, from which we can conclude that
\beq\label{E:q>0tori23}
Z \in (\bL_1^j \cap \bL_2^j) = A(\bF^j).
\eeq
Since we also have $\dist(Z_j,A(\bC^j)) \geq 1/16$, the combination of \eqref{E:q>0tori22} and \eqref{E:q>0tori23} shows that $A(\bF^j) \supsetneq A(\bC^j)$ as claimed.

\bigskip

\noindent \textbf{Step 5.} \emph{Establishing a Contradiction}. The $L^2$ convergence to the blow-up implies that
\beq  \label{E:q>0tori24}
E_j^{-1} \int_{T_{1/2,7/16}^{\bC^j}(0)} \dist^2(X,\spt\|(\Gamma^j\circ R^j)^{-1}_*\bF^j\|)d\|V^j\|(X) \to 0.
\eeq 
And, using the triangle inequality and \eqref{E:CtoC_Z1} we have
\begin{align} 
E_j^2 &= \int_{T_{1/2,7/16}^{\bC^j}(0)} \dist^2(X,\spt\|(R^j)^{-1}_*\bD^j\|)d\|V^j\|(X) \label{E:q>0tori25}\\
& \leq \int_{T_{1/2,7/16}^{\bC^j}(0)} \dist^2(X,\spt\|\bD^j\|)d\|V^j\|(X)  + c\; \nu^2_{\bD^j,(R^j)^{-1}_*\bD^j}
\end{align}
for some absolute constant $c > 0$. So, since $\nu_{\bD^j,(R^j)^{-1}_*\bD^j} = \nu_{R^j_*\bD^j,\bD^j}$, using \eqref{E:q>0tori12} shows that
\beq \label{E:q>0tori26}
 E_j \leq c\; \int_{T_{1/2,7/16}^{\bC^j}(0)} \dist^2(X,\spt\|\bD^j\|)d\|V^j\|(X)
\eeq
for some constant $c = c(n,k,\bC^{(0)},L) > 0$. Combining \eqref{E:q>0tori24} with \eqref{E:q>0tori26} contradicts \eqref{E:q>0tori11}. This contradiction establishes that \eqref{E:q>0toricontra} does not hold. Therefore, along a subsequence we have that $\Theta_{V^j}(Z) < 2$ for all $Z \in \spt\|V^j\| \cap T^{\bC^j}_{1/2,6/16}(0)$. From here, the specific conclusions of E) follow by applying Theorem \ref{thm:grapdeco}, Allard's Regularity Theorem and \eqref{planlemm1} of Lemma \ref{lemm:planlemm} to get E) ii).
\end{proof}

The next lemma records the observations that one can easily make in the situation in which Hypothesis $6)$ of Lemma \ref{L:q>0tori} does not hold. This is the setting for the most intricate part of the proof of Theorem \ref{L2}, namely Section \ref{SSS:proofofclaimcont}.

On more than on occasion in the sequel we will need the following construction: Given $\bD \in \cC$, we define $\bH_{\bD}$ as follows. If $\dim A(\bD) = n-1$, then we have that $\bD = \sum_{i=1}^4 |\bH_i|$ for half-planes $\bH_i$ and we just set $\bH_{\bD} = \bH_1$. If instead $\dim A(\bD) < n-1$, then $\bD  = |\bQ_1| + |\bQ_2| \in \cP$ and we pick a unit length vector $\omega_{\bD}$ that lies in $\bQ_1$ and is orthogonal to $A(\bD)$. Then let $A_1$ be the $d := (\dim A(\bD)+1)$ - dimensional subspace of $\bQ_1$ that is spanned by $\omega_{\bD}$ and $A(\bD)$. Now define $\bH_{\bD}$ to be the connected component of $A_1 \setminus A(\bD)$ that contains $\omega_{\bD}$. Observe that if $X = (x,y)$ is written in a basis in which $\bD$ is properly aligned, then $\omega_{\bD}$ is the unique direction such that $(|x|\omega_{\bD},y) \in \bH_{\bD}$. 

\begin{corollary} \label{C:q>0fine=coarse} Fix $\eta$ and $\alpha > 0$. There exists a number $\eps_0$ $=$ $\eps_0(n,k,$ $\bC^{(0)},L,\eta) >0$ such that the following is true. If, for some $\eps < \eps_0$, we have that $V \in \cV_L$, $\bC = |\bP_1| + |\bP_2| \in \cP$ and $\bC^{(0)} \in \cP$ satisfy hypotheses $1)$ to $5)$ of Lemma \ref{L:q>0tori}, then: Either one of the conclusions \emph{E)} of \emph{F)} of Lemma \ref{L:q>0tori} holds, or we have:
\begin{enumerate}
\item[\emph{G)}] There exists $\bD = |\bQ_1| + |\bQ_2| \in \cP$ and an open set $\cO \subset \bH_{\bD}$ such that 
\begin{enumerate}[nolistsep, label = \roman*),ref=\roman*)]
\item \label{en:q>0fine=coarse1} $A(\bC) \subsetneq A(\bD)$, 
\item \label{en:q>0fine=coarse2} $E_{V \tiny{\res} T^{\bC}_{1/2,7/16}(0)}(\bD) \leq c \cE_{V \tiny{\res} T^{\bC}_{1/2,7/16}(0)}(\bC)$\\ for some $c = c(n,k,\bC^{(0)},L,\alpha) > 0$,
\item \label{en:q>0fine=coarse3} Either $q_{\bD} = 0$,\\
Or $q_{\bD} > 0$ and we have $c \nu_{\bD,\bC^{(0)}}$ $\geq$ $E_{V \tiny{\res} T^{\bC}_{1/2,7/16}(0)}(\bC^{(0)})$ for some $c$ $=$ $c(n,k,\bC^{(0)},L)$ $>$ $0$ and $E_{V \tiny{\res} T^{\bC}_{1/2,7/16}(0)}(\bD) \leq \alpha \cE_{V \tiny{\res} T^{\bC}_{1/2,7/16}(0)}(\bD)$,
\item \label{en:q>0fine=coarse4} $V \res T^{\bD}(\cO)$ consists only of smooth graphs defined over $\bQ_1$ or $\bQ_2$,
\item \label{en:q>0fine=coarse5} $\bH_{\bD} \cap T^{\bC}_{1/2,6/16}(0) \subset \cO \cup (\underline{\cD}(V))_{\eta}$,
\item \label{en:q>0fine=coarse6} $\cO \cap (\underline{\cD}(V))_{\eta/4} = \emptyset$, 
\end{enumerate}
where 
\[
\underline{\cD}(V) := \bigl\{  X \in \bH_{\bD} \cap T^{\bC}_{1/2,6/16}(0) : \exists Z \in T^{\bD}(\{X\})\ \text{with}\ \Theta_{V}(Z) \geq 2\bigr\}
\]
\end{enumerate}
\end{corollary}

\begin{proof} Consider to begin with a sequence of real numbers $\eps_j \downarrow 0^+$ and sequences $\{V^j\}_{j=1}^{\infty} \in \cV_L$ and $\{\bC^j\}_{j=1}^{\infty} \in \cP$  such that for every $j \geq 1$, the hypotheses are satisfied with $V^j$, $\bC^j$ and $\eps_j$ in place of $V$, $\bC$ and $\eps$ respectively. It suffices to prove that the conclusions of the lemma hold along some subsequence. By arguing as in \textbf{Step 1.} of the proof of Lemma \ref{L:q>0tori}, we see that either we have conclusion F) of Lemma \ref{L:q>0tori} or we have that $\cQ_{V^j \tiny{\res} T^{\bC^j}_{1/2,7/16}(0)}(\bC^{(0)}) \to 0$ as $j \to \infty$. If
\begin{align} \label{E:q>0fine=coarse7}
E_{V^j \tiny{\res} T^{\bC^j}_{1/2,7/16}(0)}(\bC^j)  < \gamma_0\cE_{V^j \tiny{\res} T^{\bC^j}_{1/2,7/16}(0)}(\bC^j)
\end{align}
holds for infinitely many $j$,  where $\gamma_0$ is as in the statement of Lemma \ref{L:q>0tori}, then we can of course get conclusion E) of Lemma \ref{L:q>0tori}. So we will now prove $G)$ under the assumption that \eqref{E:q>0fine=coarse7} does not hold.

Begin by constructing new cones $\{\bD^j\}_{j=1}^{\infty}$ exactly as in \textbf{Step 2.} of the proof of Lemma \ref{L:q>0tori}. The construction is such that we can immediately verify \ref{en:q>0fine=coarse1} and (bearing in mind the negation of \eqref{E:q>0fine=coarse7}), \ref{en:q>0fine=coarse2} of the present lemma. Conclusion \ref{en:q>0fine=coarse3} is obtained by arguing as per \eqref{E:q>0tori14} - \eqref{E:q>0tori16} in Lemma \ref{L:q>0tori}. Set
\[
\underline{\cD}_j := \bigl\{  X \in \bH_{\bD^j} \cap T^{\bC^j}_{1/2,6/16}(0) : \exists Z \in T^{\bD^j}(\{X\})\ \text{with}\ \Theta_{V^j}(Z) \geq 2\bigr\}.
\]
By correct choice of $\bH_{\bD_j}$ and by passing to a subsequence we may assume that: 
\begin{itemize}[nolistsep]
\item There exists a subspace $A$ with $A \subset A(\bC^{(0)})$ for which $d_{\cH}(A(\bC^j)\cap B_2(0),A \cap B_2(0)) \to 0$ as $j \to \infty$.
\item There exists a subspace $A'$ with $A \subsetneq A' \subset A(\bC^{(0)})$ for which $d_{\cH}(A(\bD^j)\cap B_2(0),A' \cap B_2(0)) \to 0$ as $j \to \infty$.
\item There exists a half-space $\bH'$ with $A' \subset \bH' \subset \spt\|\bC^{(0)}\|$ for which $d_{\cH}(\bH_{\bD^j} \cap B_2(0), \bH' \cap B_2(0)) \to 0$ as $j \to \infty$.
\item $0 \leq q_{\bD^j} \equiv q' < q$.
\item $d_{\cH}(\underline{\cD}_j, \underline{\cD}) \to 0$ as $j \to \infty$ for some closed set $\underline{\cD} \subset \bH'$.
\end{itemize}
For sufficiently large $j$ (depending on $\eta > 0$) we have that $(\underline{\cD}_j)_{\eta/4} \subset (\underline{\cD})_{\eta/2} \subset (\underline{\cD}_j)_{\eta}$. Write
\[
\cO_j := \bH^j \cap \bigl( T^{A'}(\bH' \cap (\underline{\cD})_{\eta/2}^c \cap T^{\bC^j}_{1/2,13/32}(0)) \bigr).
\]
Since $(\underline{\cD}_j)_{\eta/4} \subset (\underline{\cD})_{\eta/2}$, we can use Theorem \ref{thm:grapdeco} and Allard's regularity theorem to conclude that $V^j \res T^{A'}(\bH' \cap (\underline{\cD})_{\eta/2}^c \cap T^{\bC^j}_{1/2,6/16}(0))$ consists of a finite collection of smooth graphs, which for sufficiently large $j$ can be taken to be defined over the planes of $\bD^j$. This shows that $\cO_j$ satisfies \ref{en:q>0fine=coarse4}. It is also straightforward to check from this definition that \ref{en:q>0fine=coarse5} and \ref{en:q>0fine=coarse6} hold. 
\end{proof}

\begin{remarks}\label{R:q>0torirema} In \textbf{Step 3.} of Section \ref{SSS:proofofclaimcont}, we will have a situation where 
\[
e(K)^{-n-2}E_{V \tiny{\res} T^{\bC}(K)}(T_{Z*}\bC) < \epsilon
\]
 for sufficiently small $\eps > 0$, where $K \subset \bH_{\bC}$ is a cube with edge length $e(K)$ and that is adjacent to $A(\bC)$ and where $r_{\bC}(Z) < \tfrac{1}{10}e(K)$. This is different from the other situations analysed thus far, but arguments very similar to those that we have seen in \textbf{Step 1.} of the proof of Lemma \ref{L:q>0tori} and  again in the proof of Corollary \ref{C:q>0fine=coarse} show that one of the following situations must occur:
\begin{enumerate}[leftmargin=1cm,nolistsep]
\item[$E)_{\text{adj}}$] $\Theta_V(Z) < 2$ for all $Z \in \spt\|V\| \cap T^{\bC}(K)$. In this case, $V \res T^{\bC}(K')$ (for a smaller cube $K' \subset K$) is a union of smooth graphs over the planes of $T_{Z*}\bC$.
\item[$F)_{\text{adj}}$] $V \res T^{\bC}(K')$ is, up to a small set, a two-valued graph over one of the planes of $T_{Z*}\bC$. In this case, conclusions analogous to \emph{F)} of Lemma \ref{L:q>0tori} hold with $T^{\bC}(K'')$ and $T^{\bC}(K')$ for smaller cubes $K'' \subset K' \subset K$ in place of the regions $T^{\bC}_{1/2,6/16}(0)$ and $T^{\bC}_{1/2,13/32}(0)$ .
\item[$G)_{\text{adj}}$] None of the above: $V \res T^{\bC}(K)$ \emph{is `transverse'}, by which we mean that
\[
\int_{T^{\bC}(K) \cap \{r_{\bC^{(0)}} > \tfrac{1}{8}e(K)\}} \dist^2(X,\spt\|\tau_{\xi*}V\|) d\| \bC\|(X) < c\eps,
\]
for some constant $c = c(n,k,\bC^{(0)},L) > 0$, where $\xi := Z^{\perp_{A(\bC)}}$, \emph{and} there are still singular points present, \emph{i.e.} $\{ Z : \Theta_V(Z) \geq 2\} \cap T^{\bC}(K) \neq \emptyset$.
\end{enumerate} 
 \end{remarks}

\subsection{Proof of Theorem \ref{L2}} \label{SS:ProofofTheoremL2}
Now we begin the proof of Theorem \ref{L2}.  Suppose we have a sequence of numbers $\{\eps_j\}_{j=1}^{\infty}$ with $\eps_j \downarrow 0^+$ and sequences $\{V^j\}_{j=1}^{\infty} \in \cV_L$ and $\{\bC^j\}_{j=1}^{\infty} \in \cC$ satisfying the hypotheses of the theorem with $V^j$, $\bC^j$ and $\eps_j$ in place of $V$, $\bC$ and $\eps$. In this case, using the mass bound ($1)$ of Hypotheses A) and the compactness theorem for stationary integral varifolds, we can extract a subsequence $\{j'\}$ of $\{j\}$ (to which we pass without changing notation) and a stationary integral $n$-varifold $W$ in $B^n_2(0)\times\RR^k$ for which $V^j \to W$. We get from $3)$ and $4)$ of Hypotheses A that $\spt \|W\|  \subset  \spt \|\bC^{(0)}\|$ and $\spt \|W\| \setminus \{r_{\bC^{(0)}} < 1/8\} = \spt \|\bC^{(0)}\| \setminus \{r_{\bC^{(0)}} < 1/8\}$. Furthermore, the Constancy Theorem (\cite[$\S$ 41]{simongmt}) implies that $W$ has constant integer multiplicity on each of the connected components of $\spt \|\bC^{(0)}\| \setminus A(\bC^{(0)})$. Using this together with the mass bound for $W$ (the same mass bound is inherited), we see that this multiplicity must in fact be one everywhere and hence that $W = \bC^{(0)}$.

Now, the upper semicontinuity of $\Theta_{V^j}(\cdot)$ with respect to both the spatial variable and varifold convergence implies that for sufficiently large $j$ (depending on $\tau$, $n$, $k$ and $\bC^{(0)})$) we have that $\{Z : \Theta_{V^j}(Z) \geq 2\} \subset \{r_{\bC^{(0)}} < \tau/4\}$. This means that for any $X \in \spt \|\bC^{(0)}\| \cap B_2(0)\cap \{r_{\bC^{(0)}} \geq \tau \}$, we may apply Allard's Regularity Theorem to $V^j \res B_{\tau/2}(X)$ to deduce that $V^j \res B_{\tau/4}(X) = |\gr (u_X + c)|$, where $u_X \in C^{\infty}(U_X;\bC^{(0)\perp})$ for some domain $U_X \subset \spt \|\bC\| \cap \{r_{\bC^{(0)}} > 0\}$. Since we may do this at each point of the compact set $\spt \|\bC^{(0)}\| \cap \overline{B_{15/8}(0)} \cap \{r_{\bC^{(0)}} \geq \tau/2\}$, we may invoke unique continuation of smooth solutions to the minimal surface system to deduce that provided $j$ is sufficiently large (depending only on $\tau$, $n$, $k$ and $\bC^{(0)}$), we indeed have a function $u$ satisfying conclusion \ref{en:L21}. For the remainder of the proof we drop the index $j$.

\bigskip

\noindent The proof of the estimates \ref{en:L22} to \ref{en:L25} are based on the proof of Lemma 3.4 of \cite{simoncylindrical}, but require substantial modification. Some of these modifications are in the spirit of the proof of Theorem 10.1 of \cite{wickgeneral} and some are new. As per the derivation of (2) and (3) in the proof of Lemma 3.4 of \cite{simoncylindrical}, we let $\psi : [0,\infty) \to \RR$ be a non-increasing smooth function with $\psi \equiv 1$ on $[0,13/16]$ and $\psi \equiv 0$ on $(29/32,\infty)$ and such that $\psi'$ and $\psi''$ are bounded by some absolute constant. Then, using the monotonicity formula and a computation with the first variation formula, we establish the estimates
\begin{align} 
\int_{B_{5/8}(0)} \frac{|X^{\perp_{T_XV}}|^2}{|X|^{n+2}} d\|V\|(X) \leq & c \left( \int_{B_1(0)} \psi^2(R) d\|V\|(X)\right. \nonumber \\ 
& \hspace{1cm} \left. - \int_{B_1(0)} \psi^2(R) d\|\bC\|(X) \right) , \label{L26}
\end{align}
and 
\begin{align}
&\int_{B_1(0)} \left(l + \frac{1}{2}\sum_{j=1}^{m} |e^{\perp_{T_XV}}_{l+k+j}|^2\right)\psi^2(R) d\|V\|(X) \nonumber \\
&  \hspace{3cm}  \leq  c\int_{B_1(0)} | (x,0)^{\perp_{T_XV}}|^2 \left((\psi'(R))^2 + \psi(R)^2\right)d\|V\|(X)  \nonumber \\
&  \hspace{6cm} - 2\int_{B_1(0)} r_{\bC}^2R^{-1} \psi(R)\psi'(R) d\|V\|(X), \label{L27} 
\end{align}
for some constant $c = c(n) > 0$ and where $(x,y)$ is written in a basis in which $\bC$ is properly aligned. Also (as in (6) of the same proof in \cite{simoncylindrical}), it can be verified via a computation using the coarea formula and integration by parts that
\beq \label{L28}
l\int_{B_1(0)} \psi^2(R) d\|\bC\|(X) = -2\int_{B_1(0)} r_{\bC}^2R^{-1} \psi(R) \psi'(R) d\|\bC\|(X). 
\eeq
Subtracting \eqref{L28} from \eqref{L27} gives
\begin{align}
& \frac{1}{2}\int_{B_1(0)}\sum_{j=1}^{m} |e^{\perp_{T_XV}}_{l+k+j}|^2\psi^2(R) d\|V\|(X) \nonumber \\
& + l\int_{B_1(0)} \psi^2(R) d\|V\|(X) - l\int_{B_1(0)} \psi^2(R) d\|\bC\|(X)  \nonumber \\
& \leq  c\int_{B_1(0)} | (x,0)^{\perp_{T_XV}}|^2 \left((\psi'(R))^2 + \psi(R)^2\right)d\|V\|(X) \nonumber \\
&  - 2\int_{B_1(0)} r^2_{\bC}R^{-1} \psi(R)\psi'(R) d\|V\|(X) + 2\int_{B_1(0)} r^2_{\bC}R^{-1} \psi(R) \psi'(R) d\|\bC\|(X) . \label{L29}
\end{align}
We bound the right-hand side of the above line using a covering argument. Set $\bH := \bH_{\bC}$ and define $\cY := \bH \cap \{0 < r_{\bC} < 1/28\} \cap B_{15/16}(0)$. Now pick a countable collection $\cI$ of points $(x,y) \in \cY$ (points here are in a basis in which $\bC$ is properly aligned) such that
\begin{enumerate}[nolistsep, label = C\arabic*), ref = C\arabic*)]
\item \label{en:C1} $\cY \subset \bigcup_{(x,y) \in \cI} B_{|x|/8}(x,y)$ 
\item \label{en:C2} $\cB := \{ B_{15|x|/16}(x,y)\}_{(x,y) \in \cI}$ can be decomposed into the pairwise disjoint sub-collections $\cB_1$,...,$\cB_N$ for some $N = N(n)$. 
\end{enumerate} 
This can be done exactly as in the proof of Theorem 10.1 of \cite{wickgeneral}, immediately preceding (10.18) therein. Then let $\cJ$ be a collection of $J = J(n)$ points $Y \in \cZ := \bH \cap (B_{15/16}(0)\setminus \{r_{\bC} < 1/28\}) $ such that $\cZ \subset \bigcup_{Y \in \cJ} B_{1/64}(Y)$ and define 
\[ \label{L210}
\Psi := \{B_{3|x|/16}(x,y) \cap \bH \}_{(x,y) \in \cI} \cup \{B_{3/128}(Y) \cap \bH \}_{Y \in \cJ}.
\]
Then apply \cite[3.1.13]{federergmt} to the covering $\Psi$ and with the function
\[
h(X) := \frac{1}{20}\sup_{B \in \Psi} \min\{1,\dist(X,B^c)\}
\]
for $X \in \bigcup \Psi$. The result is that we obtain a family of smooth functions $\{\varphi_s\}_{s \in \cS}$, for which
\begin{enumerate}[nolistsep]
\item $\cS$ is a countable subset of $\bigcup \Psi$ and $\varphi_s : \bigcup \Psi \to [0,1]$ for all $s \in \cS$
\item $\{B_{h(s)}(s)\}_{s \in \cS}$ is pairwise disjoint and for each $s \in \cS$, there exists $B \in \Psi$ such that $B_{h(s)}(s) \subset \spt \varphi_s \subset B_{10h(s)}(s) \subset B$.
\item $\sum_{s \in \cS} \varphi_s(X) = 1$ for all $X \in \bigcup \Psi$.
\item $|D \varphi_s(X)| \leq Ch(X)^{-1}$ for each $s \in \cS$ and each $X \in \bigcup \Psi$, where $C = C(n) \in (0,\infty)$.
\end{enumerate}
It follows from 4) and the definition of $h$ that for each $s \in \cS$,
\beq \label{L211}
|D\varphi_s(\tilde{X})| \leq cr_{\bC}(\tilde{X})^{-1},
\eeq
whenever $\tilde{X} \in \bigcup_{(x,y) \in\cI} (B_{5|x|/32} \cap \bH) \cup \bigcup_{Y \in \cJ} (B_{5/256}(Y) \cap \bH)$. For each $s \in \cS$, extend $\varphi_s$ to the rest of $\bH$ by setting $\varphi_s(X) = 0$ for $X \in \bH \setminus \bigcup \Psi$ and let $\tilde{\varphi}_s$ be the smooth extension of $\varphi_s$ to $\RR^{n+k}$ defined by $\tilde{\varphi}_s(x,y) = \varphi_s(|x|\omega_{\bC},y)$. As per (10.22) and (10.23) of \cite[Theorem 10.1]{wickgeneral}, it can now be shown, by only elementary considerations, that there is a fixed constant $M = M(n,k)$ such that for each $(x,y) \in \cI$,
\beq \label{L212}
\#\{s \in \cS : \spt\tilde{\varphi}_s \subset T^{\bC}_{|x|,3|x|/16}(x,y)\} \leq M
\eeq
and for each $Y \in \cJ$,
\beq \label{L213}
\#\{s \in \cS : \spt\tilde{\varphi}_s \subset T^{\bC}(B_{3/128}(Y))\} \leq M.
\eeq
Note (by the construction of $\bH$) that $\varphi_s(X)$ only depends on $r_{\bC}(X) = |x|$ and $X^{\top_{A(\bC)}} = y$. The main claim is then the following:

\begin{claim} \label{L2claim}
Suppose $(\xi,\zeta) \in \cI$ and $s \in \cS$ is such that $\spt \tilde{\varphi}_s \subset T^{\bC}_{|\xi|,3|\xi|/16}(\zeta)$. Then we have the following estimate:
\begin{align}
& \int_{B_1(0)} \tilde{\varphi}_s\; |(x,0)^{\perp_{T_XV}}|^2d\|V\|(X) \nonumber \\
& - 2\int_{B_1(0)}\tilde{\varphi}_s\; r_{\bC}^2R^{-1}\psi(R)\psi'(R)d\|V\|(X) \nonumber \\
&   + 2\int_{B_1(0)} \tilde{\varphi}_s\;  r_{\bC}^2R^{-1}\psi(R)\psi'(R)d\|\bC\|(X) \nonumber \\
& \hspace{3cm} \leq\ c\int_{T^{\bC}_{|\xi|,15|\xi|/16}(0)} \dist^2(X,\spt\|\bC\|) d\|V\|(X). \label{L214}
\end{align}

\end{claim}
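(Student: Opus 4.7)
The plan is to localize the estimate to the toric region $T^{\bC}_{|\xi|,15|\xi|/16}(\zeta)$ and rescale to unit scale. Specifically, I would translate by $-\zeta$ along $A(\bC)$ and dilate by $|\xi|^{-1}$, producing a rescaled varifold $\tilde V$ for which $\bC$ is preserved (since it is a cone). Under this rescaling, $T^{\bC}_{|\xi|,15|\xi|/16}(\zeta)$ becomes a unit-scale toric region, and the support of $\tilde{\varphi}_s$ is mapped into $T^{\bC}_{1,3/16}(0)$. By Hypotheses A and appropriate choice of $\eps_0$, the rescaled excess over the unit toric region is small enough to apply either Lemma \ref{q=0smalexcetori} (when $q_{\bC}=0$) or Lemma \ref{q>0tori} (when $q_{\bC}>0$, using the induction hypothesis $H(\bC,\bC^{(0)})$). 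In the latter case, if the ratio hypothesis 6) of Lemma \ref{q>0tori} fails, Remark \ref{rema:q>0tori}\ref{en:q>0tori15} provides the transverse alternative G), after which \eqref{L214} follows from the induction hypothesis applied to a coarser cone $\bD$, combined with the triangle inequality $\dist^2(X,\spt\|\bC\|) \leq 2\dist^2(X,\spt\|\bD\|) + 2\nu^2_{\bC,\bD}$.

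After the decomposition, $\tilde V$ on (slightly smaller than) $T^{\bC}_{1,3/16}(0)$ is a union of minimal Lipschitz graphs $V_i = |\gr u_i|$ over planes of $\bC$, possibly two-valued and possibly with an exceptional set $\Sigma_i$ whose $\cH^n$-measure and $\|V\|$-mass are bounded by the excess. I would then compute each of the three integrands on the LHS of \eqref{L214} pointwise on the graphical pieces. A direct computation using the form of the tangent plane to a graph yields $|(x,0)^{\perp_{T_XV_i}}|^2 \leq c\bigl(|\xi|^2 |Du_i|^2 + |u_i|^2\bigr)$ on $\spt\tilde{\varphi}_s$ (where $|x|\sim |\xi|$). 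The signed difference $\int \tilde{\varphi}_s\, r_{\bC}^2 R^{-1}\psi(R)\psi'(R)\,(d\|V\|-d\|\bC\|)$, after restriction to a graphical piece and application of the area formula, becomes an integral on $\bC$ of $r_{\bC}^2 R^{-1}\psi\psi'\bigl(\sqrt{\det(I+Du_i^T Du_i)}-1\bigr)$ plus an error coming from evaluating the $C^1$ weight on the graph rather than on the base; expanding the determinant gives $\sqrt{\det(\cdot)}-1 = O(|Du_i|^2)$, and the weight is bounded by $c|\xi|^2$ with gradient bounded by $c|\xi|$, so this difference is also controlled by $c\int (|\xi|^2|Du_i|^2 + |u_i|^2)\,d\cH^n$.

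Combining these pointwise estimates, the LHS of \eqref{L214} is at most $c\sum_i\int_{\spt\tilde{\varphi}_s \cap \bP_i}\bigl(|\xi|^2|Du_i|^2 + |u_i|^2\bigr)\,d\cH^n$. A standard Caccioppoli inequality for smooth solutions to the minimal surface system on each graphical piece (after rescaling to unit scale) produces $\int |\xi|^2|Du_i|^2 \leq c\int_{T^{\bC}_{|\xi|,15|\xi|/16}(\zeta) \cap \bP_i}|u_i|^2\,d\cH^n$, and this in turn is $\leq c\int_{T^{\bC}_{|\xi|,15|\xi|/16}(\zeta)}\dist^2(X,\spt\|\bC\|)\,d\|V\|(X)$ by the area formula, using $|u_i(X)|^2 \leq c\dist^2(X+u_i(X),\spt\|\bC\|)$ on each graphical piece. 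This yields \eqref{L214}.

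The main obstacle is the $q_{\bC}>0$ case, where $\bC$ may be very close to a coarser cone and the naive excess relative to $\bC$ fails to control the ``gap'' between sheets: here one must carefully exploit either the graphical decomposition produced by Lemma \ref{q>0tori} under hypothesis 6) or, when that hypothesis fails, the transverse alternative of Remark \ref{rema:q>0tori}\ref{en:q>0tori15} together with the induction hypothesis applied to an optimally chosen coarser cone. A secondary technical point is the exceptional set $\Sigma_i$ in the two-valued scenarios: since $|(x,0)^{\perp_{T_XV}}| \leq c|\xi|$ and $|r_{\bC}^2 R^{-1}\psi\psi'| \leq c|\xi|^2$ hold pointwise on $\spt\tilde{\varphi}_s$, the contribution of $\Sigma_i$ to the LHS is bounded by $c|\xi|^2\bigl(\cH^n(\Sigma_i) + \|V\|(\Sigma_i \times \bP_i^{\perp})\bigr)$, which by the mass bound on $\Sigma_i$ is already absorbed into the right-hand side of \eqref{L214}.
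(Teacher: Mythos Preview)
Your proposal has two genuine gaps.

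\textbf{First gap: the ``large excess'' case is missing.} You assert that ``by Hypotheses A and appropriate choice of $\eps_0$, the rescaled excess over the unit toric region is small enough'' to apply Lemma~\ref{q=0smalexcetori} or Lemma~\ref{q>0tori}. This is false: Hypotheses~A bound the excess at unit scale, but $(\xi,\zeta)\in\cI$ lies at distance $|\xi|$ from $A(\bC)$, and $|\xi|$ can be arbitrarily small. After rescaling, the excess of $\tilde V$ in $T^{\bC}_{1/2,7/16}(0)$ need not be small at all. The paper handles this by splitting $\cY=\cU\cup\cW$ according to whether the scale-invariant excess at $(\xi,\zeta)$ is below or above a fixed threshold $\delta$; for $(\xi,\zeta)\in\cW$ one uses the trivial pointwise bounds $|(x,0)^{\perp_{T_XV}}|^2\le c|\xi|^2$ and $|r_{\bC}^2R^{-1}\psi\psi'|\le c|\xi|^2$ together with the mass bound to estimate the left-hand side by $c|\xi|^{m'+2}$, which is then absorbed by the right-hand side precisely because the excess is large. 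Your argument only covers the $\cU$ case.

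\textbf{Second gap: the $q_{\bC}>0$, ratio-hypothesis-fails case is vastly oversimplified.} You propose that when hypothesis~6) of Lemma~\ref{q>0tori} fails, the transverse alternative G) plus ``the induction hypothesis applied to a coarser cone $\bD$, combined with the triangle inequality'' closes the estimate. It does not. The left-hand side of \eqref{L214} involves $r_{\bC}$ and $d\|\bC\|$, not $r_{\bD}$ or $d\|\bD\|$, so the induction hypothesis for $\bD$ (which concerns the $L^2$ estimates of Theorem~\ref{L2} and Corollary~\ref{cor:L2Z} at density-two points) does not directly bound these terms. The paper's actual argument for this case requires: (i) constructing an optimal coarser cone $\bD$ via the iteration \eqref{coarexce0-1}--\eqref{coarexce0-3}; (ii) observing the key identity that $\int\tilde\varphi\,r_{\bC}^2R^{-1}\psi\psi'\,d\|\bC\|=\int\tilde\varphi\,r_{\bC}^2R^{-1}\psi\psi'\,d\|\bD\|$ because the integrand is invariant under rotations about $A(\bC)$ and $A(\bD)\supset A(\bC)$; (iii) a further cube decomposition of $T^{\bC}_{1/2,1/4}(0)$ around $A(\bD)$, with cubes classified according to whether the $\bD$-excess is large, small-with-ratio, or small-without-ratio; and (iv) on each cube, estimates via Corollary~\ref{cor:L2Z} applied at nearby density-two points, producing summable factors $e(K)^{d-1/4}$. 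None of this is captured by a single triangle inequality.

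Your treatment of the graphical case (the $\cU_1$ or $q_{\bC}=0$ situation) is essentially correct and matches the paper's approach, though the paper uses the tilt-excess bound \cite[Lemma~22.2]{simongmt} rather than a Caccioppoli inequality.
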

It will be clear from the proof of this claim that the following corresponding estimate for points of $\cJ$ also holds: For any $Y \in \cJ$ and any $s \in \cS$ with $\spt \tilde{\varphi}_s \subset T^{\bC}(B_{3/128}(Y))$, we have
\begin{align}
&  \int_{B_1(0)}\tilde{\varphi}_s\; | (x,0)^{\perp_{T_XV}}|^2 d\|V\|(X) \nonumber \\
& - 2\int_{B_1(0)} \tilde{\varphi}_s\; r_{\bC}^2R^{-1} \psi(R)\psi'(R) d\|V\|(X)  \nonumber \\
&   + 2\int_{B_1(0)}\varphi_s\; r_{\bC}^2R^{-1} \psi(R) \psi'(R) d\|\bC\|(X) \nonumber \\
& \hspace{3cm} \leq  c\int_{T^{\bC}(B_{1/32}(Y))} \dist^2(X,\spt\|\bC\|)d\|V\|(X). \label{L215}
\end{align}
Before we move on to the proof of Claim \ref{L2claim}, let us see how it implies the conclusions \ref{en:L22} to \ref{en:L25} in the statement. First note that
\beq \label{L216}
B_{15|x|/16}(x,y) \cap B_{15|x_0|/16}(x_0,y_0) \neq \emptyset \Leftrightarrow T^{\bC}_{|x|,15|x|/16}(y) \cap T^{\bC}_{|x_0|,15|x_0|/16}(y_0) \neq \emptyset,
\eeq
which implies (in light of \ref{en:C2}) that $\{T^{\bC}_{|x|,15|x|/16}(y)\}_{(x,y) \in \cI}$ can be decomposed into $N$ pairwise disjoint sub-collections. Now choose enumerations $\cJ = \{Y_j\}_{j=1}^J$ and $\cI = \{(x_{J+j},y_{J+j})\}_{j=1}^{\infty}$ and for $1 \leq j \leq J$ let 
\beq
\cS_j := \{s \in \cS :\spt\tilde{\varphi}_s \subset T^{\bC}(B_{3/128}(Y_j))\ \text{and}\ \spt\tilde{\varphi}_s \cap \spt\|V\| \neq \emptyset\}
\eeq
and for $j \geq J+1$ let
\beq
\cS_j := \{s \in \cS :\spt\tilde{\varphi}_s \subset T^{\bC}_{|x_j|,3|x_j|/16}(y_j)\ \text{and}\ \spt\tilde{\varphi}_s \cap \spt\|V\| \neq \emptyset\}.
\eeq
And write $\{s \in \cS : \spt\tilde{\varphi_s} \cap \spt\|V\| \neq \emptyset \} = \bigcup_{j=1}^{\infty}\cS^{\prime}_j$, where $\cS^{\prime}_1 = \cS_1$ and $\cS_j^{\prime} = \cS_j\setminus \bigcup_{i=1}^{j-1}\cS_i^{\prime}$. The collections $\cS^{\prime}_j$ are pairwise disjoint and (by \eqref{L212} and \eqref{L213}) we have that $\text{card}\ \cS^{\prime}_j \leq M$ for every $j$. So now, in \eqref{L214} and \eqref{L215} we sum first over $s \in \cS^{\prime}_j$ and then over $j \in \{1,...J\}$ in \eqref{L215} and $j \geq J+1$ in \eqref{L214}. Then adding the two resulting inequalities, using the fact that $\sum_{s \in \cS} \varphi_s(X) = 1$ and the fact that $\{T^{\bC}_{|x|,15|x|/16}(y)\}_{(x,y) \in \cI}$ can be decomposed into $N$ pairwise disjoint sub-collections, we achieve the estimate
\begin{eqnarray}
& & \int_{B_{15/16}(0)}  | (x,0)^{\perp_{T_XV}}|^2 d\|V\|(X) - 2\int_{B_{15/16}(0)}  r_{\bC}^2R^{-1} \psi(R)\psi'(R) d\|V\|(X)  \nonumber \\
& & \hspace{1.5cm} + 2\int_{B_{15/16}(0)}r_{\bC}^2R^{-1} \psi(R) \psi'(R) d\|\bC\|(X) \nonumber \\
& & \hspace{3cm} \leq  c\int_{B_1(0)} \dist^2(X,\spt\|\bC\|)d\|V\|(X) \label{L217}
\end{eqnarray}
We therefore deduce (in light of \eqref{L29} and \eqref{L26}), conclusions \ref{en:L22} and \ref{en:L24}. The other conclusions (\emph{i.e.} \eqref{en:L23} and \eqref{en:L25}) can now be derived exactly as they are in the proof of Lemma 3.4 of \cite{simoncylindrical}. We therefore shift our attention to the proof of Claim \ref{L2claim}.

\subsubsection{Proof of Claim \ref{L2claim}} \label{SSS:proofofclaim} Let $m' := \dim A(\bC) + 1$, \emph{i.e.} the dimension of $\bH$. For some $\delta$ to eventually be determined depending only on $n$, $k$, $\bC^{(0)}$ and $L$, we write $\cY = \cU \cup \cW$, where $\cW$ is the set of points $(\xi,\zeta) \in \cY$ where
\beq \label{L218}
(15|\xi|/16)^{-m'-2}E_{V \tiny{\res} T^{\bC}_{|\xi|,7|\xi|/8}(\zeta)} (\bC) \geq \delta,
\eeq
and $\cU := \cY\setminus \cW$. If $q_{\bC} >  0$, then for some $\beta$ to eventually be determined depending only on $n$, $k$, $\bC^{(0)}$ and $L$, we write $\cU = \cU_1 \cup \cU_2$, where $\cU_1$ is the set of points $(\xi,\zeta) \in \cU$ where
\beq \label{L219}
E_{V \tiny{\res} T^{\bC}_{|\xi|,7|\xi|/8}(\zeta)} (\bC) < \beta \cE_{V \tiny{\res} T^{\bC}_{|\xi|,7|\xi|/8}(\zeta)} (\bC) 
\eeq
and
$\cU_2$ is the set of points $(\xi,\zeta) \in \cU$ where
\beq \label{L220}
E_{V \tiny{\res} T^{\bC}_{|\xi|,7|\xi|/8}(\zeta)} (\bC) \geq \beta \cE_{V \tiny{\res} T^{\bC}_{|\xi|,7|\xi|/8}(\zeta)} (\bC). 
\eeq
Suppose to begin with that $(\xi,\zeta) \in \cW$. In this case, using the monotonicity formula and the fact that $r^2_{\bC}\vert_{T^{\bC}_{|\xi|,3|\xi|/16}(\zeta)} \leq c(n)|\xi|^2$, we easily have that 
\begin{align}
& \int_{B_1(0)} \tilde{\varphi}_s\; |(x,0)^{\perp_{T_XV}}|^2d\|V\|(X) - 2\int_{B_1(0)}\tilde{\varphi}_s\; r_{\bC}^2R^{-1}\psi(R)\psi'(R)d\|V\|(X) \nonumber \\
&   + 2\int_{B_1(0)} \tilde{\varphi}_s\; r_{\bC}^2R^{-1}\psi(R)\psi'(R)\|\bC\|(X)\ \leq\ c|\xi|^{m'+2}, \label{L221}
\end{align}
and so the required estimate follows immediately from \eqref{L218}. Suppose now that $(\xi,\zeta) \in \cU$. Assume to begin with that either we have $q_{\bC} = 0$ or we have both $q_{\bC} > 0$ and $(\xi,\zeta) \in \cU_1$. Under these hypotheses we will be able to appeal to Lemma \ref{q=0smalexcetori} (if $q_{\bC} = 0$) or Lemma \ref{L:q>0tori} (if $q_{\bC} > 0$ and $(\xi,\zeta) \in \cU_1$). Let us do exactly that. Define $\tilde{V} := ((\eta_{(0,\zeta),2|\xi|})_* V) \res (B^n_2(0)\times \RR^k)$. Since the negation of \eqref{L218} holds, a change of variables shows that
\beq \label{L222}
E_{\tilde{V} \tiny{\res} T^{\bC}_{1/2,7/16}(0)}(\bC) < c\delta,
\eeq
for some constant $c = c(n) > 0$ and similarly, if $q_{\bC} > 0$, we get from \eqref{L219} that
\beq
E_{\tilde{V} \tiny{\res} T^{\bC}_{1/2,7/16}(0)}(\bC) < c\beta \cE_{\tilde{V} \tiny{\res} T^{\bC}_{1/2,7/16}(0)}(\bC) .
\eeq
So, by correct choice of $\delta$ and $\beta$, we have that $\tilde{V}$, $\bC$ and $\bC^{(0)}$ satisfy the hypotheses of Lemma \ref{q=0smalexcetori} (if $q_{\bC} = 0$) or Lemma \ref{L:q>0tori} (if $q_{\bC} > 0$ and $(\xi,\zeta) \in \cU_1$). After applying the relevant Lemma, there are various possible different conclusions (\emph{i.e.} conclusions A), B), C) or D) of Lemma \ref{q=0smalexcetori} and conclusions E) and F) of Lemma \ref{L:q>0tori}). These possibilities may be summarized by saying that we may now write $V \res T^{\bC}_{|\xi|,|\xi|/2}(\zeta) = \sum_{i \in I} V_i$ where for each $i \in I$: $V_i$ is a stationary varifold in $T^{\bC}_{|\xi|,|\xi|/2}(\zeta)$ and there exists a plane $\bQ_{(i)}$ and a domain $\Omega_i \subset \bQ_{(i)}$ such that either 
\begin{itemize}
\item $\bQ_{(i)} = \bP_i$ for some $i \in \{1,2\}$ (if $\bC = |\bP_1| + |\bP_2| \in \cP$), \\or 
\item $\bQ_{(i)}\supset \bH_j \supset \Omega_i$ for some $j \in \{1,\dots,4\}$ (if $\bC = \sum_{j=1}^4|\bH_j| \in \cC_{n-1}\setminus \cP_{n-1}$)
\end{itemize}
and such that either
\begin{itemize}
\item $V_i$ is a smooth single-valued graph over $\Omega_i$ (conclusions A),C) and E)), or
\item $V_i$ is (`up to a small set') a two-valued Lipschitz graph over $\Omega_i$ (conclusions B),D) and F))
\end{itemize} 
In describing this part of the argument, let us assume that $\bC \in \cP$ and that $i \in I$ is such that $V_i$ is, up to a small set, a two-valued graph. The other cases here are strictly simpler. Let $\Sigma_i \subset \Omega_i$ and $u_i$ be the measurable set and function the existences of which are asserted by conclusions B),D) or F). Now, for $X = (x,y) \in ((\Omega_i\setminus \Sigma_i) \times \bQ_{(i)}^{\perp}) \cap \spt \|V_i\| \cap T^{\bC}_{|\xi|,|\xi|/2}(\zeta)$, write $X' = (x',y)$ for the nearest point projection of $X$ to $\bQ_{(i)}$, so that $u_i(x',y) = (x-x',0)$. Then notice that
\begin{eqnarray*}
(x,0)^{\perp_{T_XV_i}} &=& u_i^{\alpha}(x',y) + (x,0)^{\perp_{T_XV_i}} - u_i^{\alpha}(x',y) \\
&=& u_i^{\alpha}(x',y) + (\bp^{\perp}_{T_XV_i} - \bp^{\perp}_{\bQ_{(i)}})(x,0)
\end{eqnarray*}
(where $\alpha \in \{1,2\}$ for $u_i(X) = \{u_i^1(X),u_u^2(X)\}$), and hence
\beq \label{L223}
|(x,0)^{\perp_{T_XV_i}}|^2 \leq 2(|u_i^{\alpha}(X')|^2 + \|\bp^{\perp}_{T_XV_i} - \bp^{\perp}_{\bQ_{(i)}}\|^2r^2_{\bC}(X)).
\eeq
Recall also that in each of the conclusions B), D) and F) we get the estimate
\begin{align} 
\cH^n(\Sigma_i) + \|V_i\|(\Sigma_i \times &\bQ_{(i)}^{\perp}) \nonumber \\
& \leq c|\xi|^{-2}\int_{T^{\bC}_{|\xi|,15|\xi|/16}(\zeta)} \dist^2(X,\spt\|\bC\|) d\|V\|(X). \label{L224}
\end{align}
So, we use \eqref{L224} on the non-graphical set. And on the graphical set $((\Omega_i\setminus \Sigma_i) \times \bQ_{(i)}^{\perp}) \cap \spt \|V_i\|$ we use \eqref{L223} and \cite[Lemma 22.2]{simongmt} (the standard estimate for tilt-excess in term of height-excess). And in both cases, we also use the point-wise estimate $r^2_{\bC}\vert_{\Omega_i} \leq |\xi|^2$ to get that:
\begin{align} 
\int_{T^{\bC}_{|\xi|,|\xi|/4}(\zeta)}\tilde{\varphi}_s\; |(x,0)^{\perp_{T_XV_i}}|^2 &d\|V_i\|(X) \nonumber \\
&\leq c \int_{T^{\bC}_{|\xi|,|\xi|/2}(\zeta)} \dist^2(X,\spt\|\bQ_{(i)}\|) d\|V_i\|(X). \label{L225}
\end{align}
When $V_i$ is a single-valued graph, one can use elliptic estimates for the supremum of $|u_i|$ and $|Du_i|$ in terms of $L^2$ norm to get this estimate. Finally we can conclude that
\begin{align} 
\int_{B_1(0)}\tilde{\varphi}_s\; |(x,0)^{\perp_{T_XV}}|^2 &d\|V\|(X) \nonumber \\
& \leq c \int_{T^{\bC}_{|\xi|,15|\xi|/16}(\zeta)} \dist^2(X,\spt\|\bC\|) d\|V\|(X), \label{L226}
\end{align}
where, in the case $q_{\bC} > 0$, we have used the estimate \ref{en:q>0tori5} of F) of Lemma \ref{L:q>0tori}. This completely handles the first term of the estimate in Claim \ref{L2claim} when $q_{\bC} = 0$ and $(\xi,\zeta) \in \cU$ and when $q_{\bC} > 0$ and $(\xi,\zeta) \in \cU_1$. 

\bigskip

\noindent Now for the other terms of the estimate in Claim \ref{L2claim}. This part is similar to lines (10.24)-(10.27) of \cite{wickgeneral}. First, we can once again discard the non-graphical part of $V_i$, \emph{i.e.} $\spt\|V_i\| \cap (\Sigma_i \times \bQ_{(i)}^{\perp})$ and subsume it into an error term, the size of which is controlled by \eqref{L224}. Then, we use the area formula to write the graphical part $\spt\|V_i\| \cap ((\Omega_i\setminus \Sigma_i) \times \bQ_{(i)}^{\perp})$ as an integral over $\Omega_i\setminus \Sigma_i$: 
\begin{eqnarray*} 
& & \int_{T^{\bC}_{|\xi|,|\xi|/2}(\zeta)} \tilde{\varphi}_s(X)r^2_{\bC}R^{-1} \psi(R)\psi'(R) d\|V_i\|(X) \\
& = &  \sum_{\alpha=1,2}\int_{\Omega_i\setminus \Sigma_i} \varphi_s(r_{\bC,u_i^{\alpha}},y)\; r^2_{\bC,u_i^{\alpha}} R_{u_i^{\alpha}}^{-1} \psi(R_{u_i^{\alpha}})\psi'(R_{u_i^{\alpha}}) |\cJ(u_i^{\alpha})| d\cH^n(X) + E_1
\end{eqnarray*}
where $r_{\bC,u_i^{\alpha}}(x,y) := \sqrt{|x|^2 + |u_i^{\alpha}(x,y)|^2}$, $R_{u_i^{\alpha}}(X)$ $:=$ $\sqrt{|X|^2 + |u_i^{\alpha}(X)|^2}$ and $|\cJ(u_i^{\alpha})|$ $:=$  $\det(\delta_{\alpha\beta} + \Sigma_{\kappa}D_{\alpha}u_i^{\alpha,\kappa}D_{\beta}u_i^{\alpha,\kappa})^{1/2}$, where $u_i^{\alpha}(X) = (u_i^{\alpha,1},...,u_i^{\alpha,k})$. Continuing with this estimate we have that
\begin{align*}
&  \sum_{\alpha=1,2}\int_{\Omega_i\setminus \Sigma_i} \varphi_s(r_{\bC,u_i^{\alpha}},y)\; r^2_{\bC,u_i^{\alpha}} R_{u_i^{\alpha}}^{-1} \psi(R_{u_i^{\alpha}})\psi'(R_{u_i^{\alpha}}) |\cJ(u_i^{\alpha})| d\cH^n(X) + E_1 \\
& =   \sum_{\alpha=1,2}\int_{\Omega_i\setminus \Sigma_i} \varphi_s(r_{\bC,u_i^{\alpha}},y)\; r^2_{\bC} R_{u_i^{\alpha}}^{-1} \psi(R_{u_i^{\alpha}})\psi'(R_{u_i^{\alpha}}) d\cH^n(X) + E_2 \\
& =  \sum_{\alpha=1,2}\int_{\Omega_i\setminus \Sigma_i} \varphi_s(r_{\bC,u_i^{\alpha}},y)\; r^2_{\bC} R^{-1} \psi(R)\psi'(R) d\cH^n(X) \\
& + \sum_{\alpha=1,2}\int_{\Omega_i\setminus \Sigma_i}\varphi_s(r_{\bC,u_i^{\alpha}},y)\; r_{\bC}^2 \Bigl[\; R_{u_i^{\alpha}}^{-1} \psi(R_{u_i^{\alpha}})\psi'(R_{u_i^{\alpha}})  \\
& \hspace{5cm}  - R^{-1}\psi(R)\psi'(R)\; \Bigr]d\cH^n(X) + E_2, \\
& =  \sum_{\alpha=1,2}\int_{\Omega_i\setminus \Sigma_i} \varphi_s(r_{\bC,u_i^{\alpha}},y)\;r_{\bC}^2 R^{-1}\psi(R)\psi'(R) d\cH^n(X) + E_3,
\end{align*}
where, by using the estimate \eqref{L224}, the mean-value inequality, the bound for tilt-excess in terms of height-excess, the pointwise identity $|u^{\alpha}_i(X)| = \dist(X+u^{\alpha}_i(X),\bQ_{(i)})$ and $r^2_{\bC}\vert_{\Omega_i} \leq c|\xi|^2$, and  \ref{en:q>0tori5} of F) of Lemma \ref{L:q>0tori} (if $q_{\bC} > 0$), we have that
\beq \label{L227}
|E_1|, |E_2|, |E_3| \leq c\int_{T^{\bC}_{|\xi|,15|\xi|/16}(\zeta)} \dist^2(X,\spt\|\bC\|)d\|V\|(X)
\eeq
for some constant $c = c(n,k,\bC^{(0)},L) > 0$. Then finally
\begin{align*}
&   \sum_{\alpha=1,2}\int_{\Omega_i\setminus \Sigma_i} \varphi_s(r_{\bC,u_i^{\alpha}},y)\;r_{\bC}^2 R^{-1}\psi(R)\psi'(R) d\cH^n(X) + E_3\\
&=  2\int_{\Omega_i\setminus \Sigma_i} \varphi_s(r_{\bC},y)\;r_{\bC}^2 R^{-1}\psi(R)\psi'(R) d\|\bC\|(X) \\
&   + \sum_{\alpha=1,2} \int_{\Omega_i\setminus \Sigma_i} \left(\varphi_s(r_{\bC,u_i^{\alpha}},y)-\varphi_s(r_{\bC},y)\right)\; r_{\bC}^2 R^{-1}\psi(R)\psi'(R) d\cH^n(X) + E_3.
\end{align*}
We can again use the mean value inequality together with \eqref{L211}, to bound the absolute value of the second term by 
\[
c\int_{T^{\bC}_{|\xi|,15|\xi|/16}(\zeta)} \dist^2(X,\spt\|\bC\|)d\|V\|(X).
\]
Thus we achieve the estimate
\begin{align}  \label{L228}
 -2\int_{T^{\bC}_{|\xi|,|\xi|/2}(\zeta)} \tilde{\varphi}_s\; r^2_{\bC} & R^{-1} \psi(R)\psi'(R) d\|V_i\|(X) \nonumber \\
 + 2\int_{\Omega_i} & \varphi_s\; r_{\bC}^2 R^{-1}\psi(R)\psi'(R) d\|\bC\|(X) \nonumber \\
&  \leq  c\int_{T^{\bC}_{|\xi|,15|\xi|/16}(\zeta)} \dist^2(X,\spt\|\bC\|)d\|V\|(X).
\end{align}
By a similar argument for the case in which $V_i$ is a single-valued graph and then by summing over $i$, we get that
\begin{align}  \label{L229}
 -2\int_{T^{\bC}_{|\xi|,|\xi|/2}(\zeta)} \tilde{\varphi}_s\; r^2_{\bC}& R^{-1} \psi(R)\psi'(R) d\|V\|(X) \nonumber \\
 + 2\int_{T^{\bC}_{|\xi|,|\xi|/2}(\zeta)} & \varphi_s\; r_{\bC}^2 R^{-1}\psi(R)\psi'(R) d\|\bC\|(X) \nonumber \\
&  \leq  c\int_{T^{\bC}_{|\xi|,15|\xi|/16}(\zeta)} \dist^2(X,\spt\|\bC\|)d\|V\|(X),
\end{align}
which handles the second two terms in \eqref{L214} and therefore completes the proof of Claim \ref{L2claim} when $q_{\bC} = 0$ and $(\xi,\zeta) \in \cU$ and when $q_{\bC} > 0$ and $(\xi,\zeta) \in \cU_1$. 

\medskip

\noindent This completes the proof of Theorem \ref{L2} in the $q_{\bC} = 0$ case. It remains to complete the proof of Claim \ref{L2claim} in the case where $q_{\bC} > 0$ and $(\xi,\zeta) \in \cU_2$.

\subsubsection{Proof of Claim \ref{L2claim} continued: Estimates In Terms of a Coarser Excess} \label{SSS:proofofclaimcont}
So here we suppose that $q_{\bC} > 0$ and $(\xi,\zeta) \in \cU_2$. Define $\tilde{V} := ((\eta_{(0,\zeta),2|\xi|})_* V) \res (B^n_2(0)\times \RR^k)$. Notice that we have
\beq \label{proofofclaimcont1}
E_{\tilde{V} \tiny{\res} T^{\bC}_{1/2,7/16}(0)}(\bC) < c\delta,
\eeq
for some constant $c = c(n) > 0$ and we note (from \eqref{L220}) that
\beq \label{proofofclaimcont2}
E_{\tilde{V} \tiny{\res} T^{\bC}_{1/2,7/16}(0)}(\bC) \geq c\beta \cE_{\tilde{V} \tiny{\res} T^{\bC}_{1/2,7/16}(0)}(\bC) .
\eeq
Given values for the parameters $\eta$ and $\alpha > 0$, an appropriate choice of $\delta $ will ensure that that $\tilde{V}$, $\bC$ and $\bC^{(0)}$ satisfy the hypotheses of Corollary \ref{C:q>0fine=coarse} in $T^{\bC}_{1/2,7/16}(0)$ with these values. Therefore either E) or F) of Lemma \ref{L:q>0tori} holds or G) of Corollary \ref{C:q>0fine=coarse} holds. If E) or F) of Lemma \ref{L:q>0tori} holds, then we can argue exactly as we have just done for the $(\xi,\zeta) \in \cU_1$ case in order to establish the estimate of the claim. So assume that G) of Corollary \ref{C:q>0fine=coarse} holds. This implies in particular, via the induction Hypothesis $H(\bC,\bC^{(0)})$, that the estimates of Corollary \ref{cor:L2Z} hold with $\bD$ in place of $\bC$. The idea now is to bound the left-hand side of \eqref{L214} by $E_{\tilde{V}\tiny{\res} T^{\bC}_{1/2,7/16}(0)}(\bD)$, which (by \ref{en:q>0fine=coarse2} of Corollary \ref{C:q>0fine=coarse} and \eqref{proofofclaimcont2}) is controlled by $E_{\tilde{V} \tiny{\res} T^{\bC}_{1/2,7/16}(0)}(\bC)$. Let us write
\beq \label{proofofclaimcont3}
g(X) := 2 \tilde{\varphi}_s(X') r_{\bC}^2(X')R(X')^{-1}\psi(R(X'))\psi'(R(X')),
\eeq
where $X' := 2|\xi| X + (0,\zeta)$. Using the fact that $A(\bD) \supset A(\bC)$ and the fact that $g$ depends only on $R(X)$ and $X^{\top_{A(\bC)}}$, we have
\beq \label{proofofclaimcont4}
\int_{T^{\bC}_{1/2,7/16}(0)} g(X) d\|\bC\|(X) = \int_{T^{\bC}_{1/2,7/16}(0)} g(X) d\|\bD\|(X). 
\eeq
Therefore it suffices to prove the estimate
\begin{align} \label{proofofclaimcont5}
\int_{T^{\bC}_{1/2,5/16}(0)} & |(x,0)^{\perp_{T_X\tilde{V}}} |^2 d\|\tilde{V}\|(X) \nonumber \\
 - \int_{T^{\bC}_{1/2,5/16}(0)} & g(X)d\|\tilde{V}\|(X) + \int_{T^{\bC}_{1/2,5/16}(0)} g(X) d\|\bD\|(X) \nonumber \\
 & \leq \int_{T^{\bC}_{1/2,7/16}(0)} \dist^2(X,\spt\|\bD\|)d\|\tilde{V}\|(X).
\end{align}
Let us use the shorthand
\[
\tilde{E}^2 := \int_{T^{\bC}_{1/2,7/16}(0)} \dist^2(X,\spt\|\bD\|)d\|\tilde{V}\|(X)
\]
and, for any region $S \subset \bH_{\bD}$, 
\begin{align*}
\cI(S) := \int_{T^{\bD}(S)} & |(x,0)^{\perp_{T_X\tilde{V}}}|^2 d\|\tilde{V}\|(X)  - \int_{T^{\bD}(S)}  g(X)d\|\tilde{V}\|(X) \\
&+ \int_{T^{\bD}(S)} g(X) d\|\bD\|(X).
\end{align*}

\noindent \textbf{Step 1.} \emph{A cubical decomposition.} Write
\begin{align*}
\cD(\tilde{V}) &:= \{ Z \in \spt\|\tilde{V}\| \cap T^{\bC}_{1/2,6/16}(0) : \Theta_{\tilde{V}}(Z) \geq 2\}, \\
\underline{\cD}(\tilde{V}) &:= \{X \in \bH_{\bD} \cap T^{\bC}_{1/2,6/16}(0) : T^{\bD}(\{X\}) \cap \cD(\tilde{V}) \neq \emptyset\}.
\end{align*}
We will define a partition of a region of the form $(\underline{\cD}(\tilde{V}))_{\eta} \cap \bH_{\bD}$ into cubes that is reminiscent of the Whitney decomposition of the complement of $\underline{\cD}(\tilde{V}) $. Broadly speaking, the initial part of the construction is a modification of \cite[pp. 167-169]{steinsingular}. There will be various parameters involved, all of which can eventually be chosen depending only on $n$, $k$, $\bC^{(0)}$ and $L$. 

Firstly observe that there is a constant $c = c(n,k,\bC^{(0)},L) > 0$, so that the parameter $t > 0$ defined by
\beq \label{proofofclaimcont7}
t^2 := \begin{cases} c\tilde{E}^2 & \text{if}\ q_{\bD} = 0 \\
c\bigl( \nu_{\bD,\bC^{(0)}}^2\bigr)^{-1} \tilde{E}^2 & \text{if}\ q_{\bD} > 0, 
\end{cases}
\eeq
is such that
\begin{align} \label{proofofclaimcont8}
\cD(\tilde{V}) \subset (A(\bD))_t.
\end{align}
This follows directly from the estimate of \ref{en:L22} of Corollary \ref{cor:L2Z}. 

Write $d := \dim A(\bD) + 1 = \dim \bH_{\bD}$. Since $\bH_{\bD}$ is a half-space we can write it in the form $\{(x_1,\dots,x_d) : x_d > 0\}$ so that it makes sense to talk of the lattice of points in $\bH_{\bD}$ whose coordinates are integral. This lattice determines a collection of open cubes $\cM_0$ of unit edge length, the vertices of which are points of the lattice. Then $\cM_0$ defines a sequence $\cM_j := 2^{-j}\cM_0$ for $j=1,2,\dots$ of collections of cubes, where each cube $Q \in \cM_p$ determines $2^d$ open cubes in $\cM_{p+1}$ by bisecting the sides of $Q$. We then set
\beq
\Omega_j := \{X \in \bH_{\bD} : 2^{-j+1}\sqrt{n} < \dist(X,\underline{\cD}(\tilde{V})) \leq 2^{-j+2}\sqrt{n}\},
\eeq
Now set
\[
\mathscr{F}_{\text{initial}} := \cM_{J_2} \cup \bigcup_{j=J_1}^{J_2 - 1} \{ Q \in \cM_j : Q \cap \Omega_j \neq \emptyset\}.
\]
where $J_1 = \min \{j : 2^{-j} < s\}$ and $J_2 = \max \{j : 2^{-j} \geq 10t\}$. We see that $J_1$ and $J_2$ are are defined so that $10t \leq e(Q) < s$ for every $Q \in \mathscr{F}_{\text{initial}}$, where $e(Q)$ is the edge length of $Q$. Notice that if $Q_1$, $Q_2 \in \mathscr{F}_{\text{initial}}$ have $e(Q_1) \neq e(Q_2)$ and $Q_1 \cap Q_2 \neq \emptyset$, then one of them contains the other. This implies that for every cube $Q \in \mathscr{F}_{\text{initial}}$, there is a unique largest cube in $\mathscr{F}_{\text{initial}}$ that contains $Q$. Call such a cube maximal and observe that any two maximal cubes have pairwise disjoint interiors. Let $\mathscr{F}$ denote the collection of all maximal cubes $Q \in \mathscr{F}_{\text{initial}}$ that intersect $\bH_{\bD} \cap T^{\bC}_{1/2,6/16}(0)$ and are such that
\[
Q \not\subset \bigl\{X \in \bH_{\bD} :  r_{\bD}(X) < \tilde{E}^2 \bigr\}.
\]
Let $\mathscr{F}_{\text{adj}}$ denote those cubes in $\mathscr{F}$ that are adjacent to $A(\bD)$. 

We build up a number of observations about $\mathscr{F}$. The first observation we make is that 
\begin{align}\tag*{$\mathscr{F}$(i)} \label{Q1}
d_{\cH}(Q, \underline{\cD}(\tilde{V})) \leq 4\diam(Q) \quad \text{for every}\ Q \in \mathscr{F}.
\end{align}
To see this: If $Q \in \mathscr{F} \cap \cM_j$ for $j < J_2$, then $\diam Q = 2^{-j}\sqrt{n}$ and there exists $X \in Q \cap \Omega_j$. Thus $d_{\cH}(Q,\underline{\cD}(\tilde{V}))\leq \dist(X,\underline{\cD}(\tilde{V})) \leq 2^{-j+2}\sqrt{n} = 4\diam(Q)$. So suppose $Q \in \mathscr{F} \cap \cM_{J_2}$. If $Q' \cap Q \neq \emptyset$ for some $Q' \in \mathscr{F}_{\text{initial}} \cap \cM_j$ for some $j \leq J_2-1$, then $Q' \supset Q$, in which case $Q$ would not be maximal. Therefore $Q \subset \cup_{j \geq J_2}\Omega_j$ and so there is $X \in \Omega_j \cap Q$ for some $j \geq J_2$, which means that $d_{\cH}(Q,\underline{\cD}(\tilde{V})) \leq \dist(X,\underline{\cD}(\tilde{V})) \leq 2^{-J_2+2}\sqrt{n} = 4\diam (Q)$ as before.

For each cube $Q \in \mathscr{F}$, pick $Z_Q \in \spt\|\tilde{V}\|\cap T^{\bC}_{1/2,6/16}(0)$ with $\Theta_{\tilde{V}}(Z_Q) \geq 2$ satisfying
\beq
\dist(Z_Q,T^{\bD}(Q)) = \inf_{\substack{Z \in \spt\|\tilde{V}\| \cap T^{\bC}_{1/2,6/16}(0) \\ :\Theta_{\tilde{V}}(Z) \geq 2}}\dist(Z,T^{\bD}(Q)).
\eeq
Then we claim that there exists a constant $\tilde{c}$ $=$ $\tilde{c}(n,k,L,\bC^{(0)})$ $>$ $0$ such that 
\begin{align} \tag*{$\mathscr{F}$(ii)} \label{Q4}
&T^{\bD}(\tfrac{14}{10}Q) \subset B_{\tilde{c} e(Q)}(Z_Q) \subset T^{\bC}_{1/2,6/16+1/200}(0)\\ 
&\text{for every}\ Q \in \mathscr{F}, \nonumber
\end{align}
where, for $\rho > 0$, the cube $\rho Q$ is defined to be the cube with the same centre point as $Q$ but with $e(\rho Q) = \rho e(Q)$. To see this: Notice that from \ref{Q1} we have $d_{\cH}(T^{\bD}(Q),T^{\bD}(\{Z_Q\})) \leq c e(Q)$ for $c = c(n) > 0$. Then, using \eqref{proofofclaimcont8} and the fact that $10 t \leq e(Q)$ we have that $\diam T^{\bD}(\{Z_Q\}) \leq c t \leq c e(Q)$. Combining these facts gives $T^{\bD}(\tfrac{14}{10}Q) \subset B_{\tilde{c}e(Q)}(Z_Q)$ for a constant $\tilde{c}$ as claimed. Then, since $e(Q) < s$, we can suppose that $s > 0$ is such that $\tilde{c} e(Q) < 1/200$ which proves \ref{Q4}.

\wl

\noindent \begin{remarks} \label{R:tchoice} Notice that in checking \ref{Q4}, we chose $s$ to be small (depending on the allowed parameters $n$, $k$ $\bC^{(0)}$ and $L$). Since $10 t \leq e(Q) \leq s$ for every cube $Q \in \mathscr{F}_{\text{initial}}$, this means that we need to be able to choose $t$ smaller than a fixed factor times $s$. In the $q_{\bD} > 0$ case, we have that $t \leq c\alpha$ (from \ref{en:q>0fine=coarse3} of Corollary \ref{C:q>0fine=coarse}) and so this is achieved by choosing $\alpha$ small. In the $q_{\bD} = 0$ case, we have $t \leq c\delta$, so it follows just because we will choose $\delta$ small. \end{remarks}

\wl

Finally, we claim that
\begin{align}  \tag*{$\mathscr{F}$(iii)} \label{Q5}
d_{\cH}(A(\bD),Q) \leq c e(Q) \quad \text{for every}\ Q \in \mathscr{F}.
\end{align}
To see this: Notice that $d_{\cH}(A(\bD),T^{\bD}(\{Z_Q\})) \leq \diam T^{\bD}(\{Z_Q\})$, which in the explanation of \ref{Q4} above we showed was at most $c e(Q)$. Then observe that \ref{Q1} implies that $d_{\cH}(T^{\bD}(\{Z_Q\},Q) \leq 4\sqrt{n}e(Q)$. Thus \ref{Q5} is proved by using the triangle inequality $d_{\cH}(A(\bD),Q)$ $\leq$ $d_{\cH}(A(\bD),T^{\bD}(\{Z_Q\}))$ $+$ $d_{\cH}(T^{\bD}(\{Z_Q\},Q)$. 

\bigskip

\noindent \textbf{Step 2.} \emph{Regions  in which $\tilde{V}$ has large excess}.
Recall that $\tau_Y(X) = X - Y$ and $T_Y(X) = X+Y$ and write $\xi_Q := Z_Q^{\perp_{A(\bD)}}$. First we suppose that $Q \in \mathscr{F}$ is such that
\begin{align} \label{proofofclaimcont9}
e(Q)^{-n-2}\int_{T^{\bD}(\tfrac{13}{10}Q)} \dist^2(X,\spt\|\bD\|)d\|\tau_{\xi_Q *}\tilde{V}\|(X) \geq \delta',
\end{align}
for some $\delta' > 0$. Using \eqref{proofofclaimcont8} together with the fact that $e(Q) \geq 10t$, we have that $T^{\bD}(\tfrac{14}{10}Q) \supset T_{\xi_Q}\bigl(T^{\bD}(\tfrac{13}{10}Q)\bigr)$ whence
\begin{align} \label{proofofclaimcont9a}
e(Q)^{-n-2}\int_{T^{\bD}(\tfrac{14}{10}Q)} \dist^2(X,\spt\|T_{Z_Q *}\bD\|)d\|\tilde{V}\|(X) \geq \delta'.
\end{align}
In this case, we can use a crude supremum bound for the integrands to get that
\begin{align} \label{proofofclaimcont10}
\cI\bigl(\tfrac{14}{10}Q\bigr) \leq ce(Q)^d. 
\end{align} 
And, for such a cube $Q$, using \eqref{proofofclaimcont9a} followed by the first inclusion of \ref{Q4}, we can estimate thus:
\begin{align}
& ce(Q)^d \nonumber \\
& \leq  c\delta'^{-1} e(Q)^{-2-n+d}\int_{T^{\bD}(\tfrac{14}{10}Q)} \dist^2(X,\spt\|T_{Z_Q*}\bD\|)d\|\tilde{V}\|(X) \nonumber \\
& \leq  c \delta'^{-1} e(Q)^{d-1/4}  \int_{B_{\tilde{c} e(Q)}(Z_Q)} \frac{\dist^2(X,\spt\|T_{Z_Q*}\bD\|)}{|X-Z_Q|^{n+7/4}} d\|\tilde{V}\|(X). \nonumber 
\end{align}
Now, since $B_{\tilde{c}e(Q)}(Z_Q)$ $\subset$ $B_{1/200}(Z_Q)$, we can use \ref{en:L2Z3} of Corollary \ref{cor:L2Z} at a fixed scale (\emph{i.e.} with $\rho = 1/100$) to see that this is at most
\begin{align} \label{proofofclaimcont11}
c \delta'^{-1} e(Q)^{d-1/4} \int_{T^{\bC}_{1/2,6/16}(0)} \dist^2(X,\spt\|T_{Z_Q*}\bD\|)d\|\tilde{V}\|(X) 
\end{align}
And using \eqref{E:CtoC_Z1} and Remark \ref{R:CtoC_Z}, we achieve the estimate
\begin{align} \label{proofofclaimcont12}
\cI\bigl(\tfrac{14}{10}Q\bigr)  \leq c e(Q)^{d-1/4}\tilde{E}^2
\end{align}
where $c = c(n,k,L,\bC^{(0)},\delta') > 0$. 

\bigskip

\noindent \textbf{Step 3.} \emph{Regions in which $\tilde{V}$ is graphical}. Now suppose that $Q \in \mathscr{F}$ is such that
\begin{align} \label{proofofclaimcont13}
e(Q)^{-n-2}\int_{T^{\bD}(\tfrac{13}{10}Q)} \dist^2(X,\spt\|\bD\|)d\|\tau_{\xi_Q *}\tilde{V}\|(X) < \delta'.
\end{align}
Since $T^{\bD}(\tfrac{12}{10}Q) \subset T_{\xi_Q}\bigl(T^{\bD}(\tfrac{13}{10}Q)\bigr)$, this implies that
\begin{align} 
e(Q)^{-n-2}\int_{T^{\bD}(\tfrac{12}{10}Q)} \dist^2(X,\spt\|T_{Z_Q *}\bD\|)d\|\tilde{V}\|(X) < \delta'.
\end{align}
If $Q \notin \mathscr{F}_{\text{adj}}$, then using \eqref{proofofclaimcont8} and the fact that $e(Q) \geq 10t$, we have that $T^{\bD}(\tfrac{12}{10}Q) \cap \cD(\tilde{V}) = \emptyset$. Therefore we can apply Theorem \ref{thm:grapdeco} in $T^{\bD}(\tfrac{12}{10}Q)$ and, after making an appropriate choice of $\delta' > 0$, apply Allard's Regularity Theorem in $T^{\bD}(\tfrac{11}{10}Q)$, in order to deduce that $\tilde{V} \res T^{\bD}(\tfrac{11}{10}Q)$ is equal to a union of smooth graphs over the planes of $T_{Z_Q*}\bD$. If, on the other hand $Q \in \mathscr{F}_{\text{adj}}$, then we appeal to the observations of Remark \ref{R:q>0torirema}. Thus one of the following conclusions holds:
\begin{enumerate}[nolistsep,label=$\cG$\arabic*),ref=$\cG$\arabic*)]
\item \label{en:G1} $\tilde{V} \res T^{\bD}(\tfrac{11}{10}Q)$ is a union of smooth graphs over the planes of $T_{Z_Q*}\bD$ (either $Q \notin \mathscr{F}_{\text{adj}}$ or $Q \in \mathscr{F}_{\text{adj}}$ and conclusion E$)_{\text{adj}}$ of Remark \ref{R:q>0torirema} holds). 
\item \label{en:G2} $Q \in \mathscr{F}_{\text{adj}}$, and $\tilde{V} \res T^{\bD}(\tfrac{11}{10}Q)$ is, up to a small set, a two-valued graph over one of the planes of $T_{Z_Q*}\bD$ (conclusion F$)_{\text{adj}}$ of Remark \ref{R:q>0torirema} holds).
\end{enumerate}
Or,
\begin{enumerate}[nolistsep,label=`trouble',font=\scriptsize, ref=(`trouble')]
\item \label{en:trouble} $Q \in \mathscr{F}_{\text{adj}}$ and G$)_{\text{adj}}$ of  Remark \ref{R:q>0torirema} holds.
\end{enumerate}
If \ref{en:G1} or \ref{en:G2} hold, then the arguments of Section \ref{SSS:proofofclaim} can easily be repeated with only very minor modifications and combined with \ref{Q4} to achieve the estimate
\begin{align}
\int_{T^{\bD}(\tfrac{11}{10}Q)} & |(x,0)^{\perp_{T_X\tilde{V}}}|^2 d\|\tilde{V}\|(X) \nonumber \\
-  \int_{T^{\bD}(\tfrac{11}{10}Q)} & g(X)d\|\tilde{V}\|(X) + \int_{T^{\bD}(\tfrac{11}{10}Q)} g(X) d\|T_{Z_Q*}\bD\|(X) \nonumber \\
&\leq  c e(Q)^{-2}\int_{B_{\tilde{c} e(Q)}(Z_Q)} \dist^2(X,\spt\|T_{Z_Q*}\bD\|)d\|\tilde{V}\|(X).
\end{align} 
(notice that the third term on the left-hand side of the above estimate is with respect to $\|T_{Z_Q*}\bD\|$ and not $\|\bD\|$). Then, arguing as per the series of estimates at the end of \textbf{Step 2.} that led to \eqref{proofofclaimcont12}, we bound this term above by
\beq \label{proofofclaimcont14}
c e(Q)^{n-1/4}\int_{T^{\bC}_{1/2,6/16}(0)} \dist^2(X,\spt\|T_{Z_Q*}\bD\|)d\|\tilde{V}\|(X).
\eeq
Again combining this with \eqref{E:CtoC_Z1} and Remark \ref{R:CtoC_Z} we get that
\begin{align} \label{proofofclaimcont15}
\int_{T^{\bD}(\tfrac{11}{10}Q)} & |(x,0)^{\perp_{T_X\tilde{V}}}|^2 d\|\tilde{V}\|(X) \nonumber \\
-  \int_{T^{\bD}(\tfrac{11}{10}Q)} & g(X)d\|\tilde{V}\|(X) + \int_{T^{\bD}(\tfrac{11}{10}Q)} g(X) d\|T_{Z_Q*}\bD\|(X) \nonumber \\
&\leq  c e(Q)^{n-1/4}\tilde{E}^2.
\end{align}
Also, by expressing the planes of $T_{Z_Q*}\bD$ as graphs over the respective planes of $\bD$ and again arguing as per the series of steps leading to \eqref{L228} in Section \ref{SSS:proofofclaim}, we achieve the estimate
\begin{align}
-\int_{T^{\bD}(\tfrac{11}{10}Q)} &g(X)d\|T_{Z_Q*}\bD\|(X) \nonumber \\
& + \int_{T^{\bD}(\tfrac{11}{10}Q)} g(X)d\|\bD\|(X) \leq c e(Q)^d \nu_{\bD,T_{Z_Q*}\bD}^2. \label{proofofclaimcont16}
\end{align}
By \eqref{E:CtoC_Z2} (if $q_{\bD} = 0$) or \eqref{E:CtoC_Z4} (if $q_{\bD} > 0$) and \ref{en:L2Z2} of Corollary \ref{cor:L2Z}, this is at most
\beq \label{proofofclaimcont17}
c e(Q)^d\tilde{E}^2.
\eeq
Thus, by combining \eqref{proofofclaimcont15}, \eqref{proofofclaimcont16} and \eqref{proofofclaimcont17} and noting that $d \leq n$, we achieve the estimate
\begin{align} \label{proofofclaimcont18}
\cI \bigl( \tfrac{11}{10}Q \bigr)  \leq  c\; e(Q)^{d-1/4}\tilde{E}^2.
\end{align} 

\bigskip

\noindent \textbf{Step 4.} \emph{Handling Troublesome Cubes.} It remains to deal with cubes $Q \in \mathscr{F}_{\text{adj}}$ such that \eqref{proofofclaimcont13} and \ref{en:trouble} hold. We call such cubes \emph{troublesome} for $\mathscr{F}$ and we let $\mathscr{F}_{\text{trouble}}$ denote the collection of such cubes. 

Since $\tau_{\xi_Q *}\tilde{V} \res T^{\bD}(\tfrac{12}{10}Q)$ is transverse in the sense of G$)_{\text{adj}}$ of Remark \ref{R:q>0torirema}, we have that (by choice of $\delta'$) the estimates of Corollary \ref{cor:L2Z} hold for $\tau_{\xi_Q *}\tilde{V}$ at the scale of $T^{\bD}(\tfrac{12}{10}Q)$. 

\begin{remarks} Roughly speaking, troublesome cubes are cubes over which the varifold looks no simpler: $\tilde{V} \res T^{\bD}(Q)$ has the same structure as $\tilde{V} \res T^{\bC}_{1/2,6/16}(0)$, \emph{i.e.} the varifold is transverse, passes close to $A(\bD)$ and contains singular points. We will turn this to our advantage by `restarting' the decomposition algorithm inside these cubes.\end{remarks}

Write $\tilde{V}_Q := \tau_{\xi_Q *}\tilde{V}$. If $q_{\bD} = 0$, then set
\beq \label{proofofclaimcont19}
t_Q^2 :=   ce(Q)^{-n}\int_{T^{\bD}(\tfrac{12}{10}Q)}\dist^2(X,\spt\|\bD\|)d\|\tilde{V}_Q\|(X).
\eeq
In this case, for the same reasons as before (except this time at the new scale $T^{\bD}(\tfrac{12}{10}Q)$), we have that
\begin{align} \label{proofofclaimcont20}
\bigl(\underline{\cD}(\tilde{V}_Q) \cap T^{\bD}(\tfrac{11}{10}Q)\bigr)\ \subset\ (A(\bD))_{t_Q}.
\end{align}
Notice also that $t_Q < ce(Q)\sqrt{\delta'}$.  If, on the other hand, $q_{\bD} > 0$, then we suppose for the time being that
\begin{align} \label{proofofclaimcont21}
& E_{\tilde{V}_Q \tiny{\res} T^{\bD}(\tfrac{12}{10}Q)}(\bD) \leq \alpha  \cE_{\tilde{V}_Q \tiny{\res} T^{\bD}(\tfrac{12}{10}Q)}(\bD)
\end{align}
and set
\beq \label{proofofclaimcont22}
t_Q^2 :=   ce(Q)^{-n}\Bigl( \nu_{\bD,\bC^{(0)}}^2 \Bigr)^{-1} \int_{T^{\bD}(\tfrac{12}{10}Q)}\dist^2(X,\spt\|\bD\|)d\|\tilde{V}_Q\|(X).
\eeq
Then we get \eqref{proofofclaimcont20} again, and (by using \eqref{proofofclaimcont21} in the same way that \eqref{E:q>0tori16} was derived in the proof of Lemma \ref{L:q>0tori}) we have that $t_Q <\leq ce(Q)\sqrt{\alpha}$. This shows that we can insist that $t_Q$ is smaller than a fixed, small factor of $e(Q)$ (this is analogous to the issue discussed in Remark \ref{R:tchoice}). So assume now that either $q_{\bD} = 0$ or $q_{\bD}  > 0$ \emph{and} \eqref{proofofclaimcont21} holds.  The final case, in which $q_{\bD} > 0$ and \eqref{proofofclaimcont21}  does not hold, we postpone to \textbf{Step 6.}

Following a similar procedure as was done in \textbf{Step 1.}, set
\[
\mathscr{F}_{\text{initial}}(Q) := \cM_{J_{2,Q}} \cup \bigcup_{j=J_1}^{J_{2,Q} - 1} \{ L \in \cM_j : L \cap \Omega_j \neq \emptyset\}.
\]
where $J_1$ is as it was before and $J_{2,Q} = \max \{j : 2^{-j} \geq 10t_Q\}$. Note that $10t_Q \leq e(L) < s$ for every $L \in \mathscr{F}_{\text{initial}}(Q)$. Let $\mathscr{F}(Q)$ denote the collection of all maximal cubes $L$ in $\mathscr{F}_{\text{initial}}(Q)$ that intersect $\bH_{\bD} \cap T^{\bC}_{1/2,6/16}(0)$ and that satisfy
\begin{align} \label{proofofclaimcont23}
L \not\subset \bigl\{ X \in \bH_{\bD} : r_{\bD}(X) < \tilde{E}^2 \bigr\}.
\end{align}
We perform this construction for every cube $Q \in \mathscr{F}_{\text{trouble}}$.

It is straightforward to see that \ref{Q1} holds with $\mathscr{F}(Q)$ in place of $\mathscr{F}$. Let us explain the property of $\mathscr{F}(Q)$ that is analogous to \ref{Q4}: For each $L \in \mathscr{F}(Q)$, choose $Z_L \in \spt\|\tilde{V}_Q\| \cap T^{\bD}(Q)$ with $\Theta_{\tilde{V}_Q}(Z_L) \geq 2$ satisfying
\beq \label{proofofclaimcont24}
\dist(Z_L,T^{\bD}(L)) = \inf_{\substack{Z \in \spt\|\tilde{V}_Q\| \cap T^{\bD}(Q) \\ :\Theta_{\tilde{V}_Q}(Z) \geq 2}}\dist(Z,T^{\bD}(L)).
\eeq
From \ref{Q1} we again have $d_{\cH}(T^{\bD}(L),T^{\bD}(\{Z_L\})) \leq c e(L)$ for $c = c(n) > 0$ and from \eqref{proofofclaimcont20}, we have $\diam T^{\bD}(\{Z_L\}) \leq c t_Q \leq c e(L)$. Combining these facts gives $T^{\bD}(\tfrac{13}{10}L) \subset B_{\tilde{c} e(L)}(Z_L)$. Once again we have that $\tilde{c} e(L) \leq \tilde{c} s < 1/200$, which ensures that $B_{\tilde{c} \alpha e(L)}(Z_L) \subset T^{\bC}_{1/2,6/16+1/200}(0)$. Now we also get that \ref{Q5} holds for $\mathscr{F}(Q)$ by the same argument as before. Finally, we record that we can easily insist that
\beq \label{proofofclaimcont25}
e(L) \leq \frac{1}{2}e(Q)\ \text{for every}\ L \in \mathscr{F}(Q).
\eeq
\wl

\noindent Suppose first that $L \in \mathscr{F}(Q)$ is such that
\begin{align} \label{proofofclaimcont26}
e(L)^{-n-2}\int_{T^{\bD}(\tfrac{13}{10}L)} \dist^2(X,\spt\|\bD\|)d\|\tau_{\xi_L *}\tilde{V}_Q\|(X) \geq \delta'.
\end{align}
Here we can follow the arguments of \textbf{Step 2.} to get that
\begin{align} \label{proofofclaimcont27}
\cI\bigl( \tfrac{13}{10}L \bigr) \leq c e(L)^{d-1/4} \int_{B_{1/200}(Z_L)} \frac{\dist^2(X,\spt\|T_{Z_L*}\bD\|)}{|X-Z_L|^{n+7/4}}d\|\tilde{V}_Q\|(X).
\end{align}
At this point, notice that Remark \ref{R:CtoC_Z} implies that
\begin{align} \label{proofofclaimcont28}
\int_{T^{\bC}_{1/2,13/32}(0)} \dist^2(X,\spt\|\bD\|)d\|\tilde{V}_Q\|(X) \leq c\tilde{E}^2 < c\delta,
\end{align}
for $c = c(n,k,L,\bC^{(0)}) > 0$, which means that the hypotheses of Corollary \ref{cor:L2Z} are satisfied at unit scale with $\tilde{V}_Q$ in place of $\tilde{V}$. So combining \eqref{proofofclaimcont27} with \ref{en:L23} of Corollary \ref{cor:L2Z} (for $\tilde{V}_Q$) gives us that
\begin{align} \label{proofofclaimcont29}
\cI\bigl( \tfrac{13}{10}L \bigr) \leq c e(L)^{d-1/4} \int_{T^{\bC}_{1/2,13/32}(0)} \dist^2(X,\spt\|\bD\|)d\|\tilde{V}_Q\|(X).
\end{align}
Applying \eqref{proofofclaimcont28} we in particular have that
\begin{align} \label{proofofclaimcont30}
\cI\bigl( \tfrac{11}{10}L \bigr) \leq  c e(L)^{d-1/4} \tilde{E}^2.
\end{align}

Now suppose that $L \in \mathscr{F}(Q)$ is such that
\begin{align} \label{proofofclaimcont31}
e(L)^{-n-2}\int_{T^{\bD}(\tfrac{13}{10}L)} \dist^2(X,\spt\|\bD\|)d\|\tau_{\xi_L *}\tilde{V}_Q\|(X) < \delta'.
\end{align}
In this case, we can similarly mimic the arguments of \textbf{Step 3.} The result is that we achieve the estimate \eqref{proofofclaimcont30}, except in the case where $L$ is troublesome for $\mathscr{F}(Q)$, \emph{i.e.} except when $L \in \mathscr{F}_{\text{adj}}(Q)$ and $L$ satisfies both \eqref{proofofclaimcont31} and \ref{en:trouble}. Let $\mathscr{F}_{\text{trouble}}(Q)$ denote the collection of such cubes: Cubes that are troublesome for $\mathscr{F}(Q)$. Now, with $\mathscr{F}^{(1)} := \mathscr{F}$ and $\mathscr{F}^{(1)}_{\text{trouble}} := \mathscr{F}_{\text{trouble}}$, we inductively define
\begin{align*}
\mathscr{F}^{(j)} &=  \bigcup_{Q \in \mathscr{F}^{(j-1)}_{\text{trouble}}} \{L : L \in \mathscr{F}(Q) \}. \\
\mathscr{F}^{(j)}_{\text{trouble}} &:= \bigcup_{Q \in \mathscr{F}^{(j-1)}_{\text{trouble}}} \{L : L \in \mathscr{F}_{\text{trouble}}(Q)) \}.
\end{align*}
for $j = 2,3,...$. And we claim that there exists some $J \geq 1$ for which $\mathscr{F}^{(J)}_{\text{trouble}} = \emptyset$. To see this: Notice that \eqref{proofofclaimcont25} implies that $
\sup_{L \in \mathscr{F}^{(j)}} e(L) \leq c 2^{-j}.$ So for sufficiently large $J$, any cube $L$ that is adjacent to $A(\bD)$ and that comes from subdividing some $Q \in \mathscr{F}^{(J)}$ will satisfy
\[
L \subset \bigl\{X \in \bH_{\bD} : r_{\bD}(X) < \tilde{E}^2 \bigr\}.
\]
By construction, such cubes will not be included in any collection $\mathscr{F}^{(j')}$ for $j' > J$. In particular, this means that $\mathscr{F}^{(J)}_{\text{trouble}} = \emptyset$. Finally, write
\[
\mathscr{F}_{\text{final}} := \bigcup_{j=1}^J \mathscr{F}^{(j)} \setminus \mathscr{F}^{(j)}_{\text{trouble}}.
\]
Our construction is such that \ref{Q4} and \ref{Q5} hold for $\mathscr{F}_{\text{final}}$.

\bigskip

\noindent \textbf{Step 5.} \emph{Completing the argument}. We can now pick $\eta > 0$ in our application of Corollary \ref{C:q>0fine=coarse} so that 
\[
\bH_{\bD} \cap T^{\bC}_{1/2,5/16}(0) \subset \biggl( \{ r_{\bD} < \tilde{E}^2\} \cup \cO \cup \bigcup_{Q \in \mathscr{F}_{\text{final}}}\tfrac{11}{10}Q  \biggr)
\]
To do this, $\eta$ must depend on $s$, because $s$ determines the region that is covered by $\mathscr{F}_{\text{final}}$. Having done this, we can finally fix the $\delta > 0$ in \eqref{L218} (the definition of the set $\cU$) so that the conclusions of Corollary \ref{C:q>0fine=coarse} hold with the values of $\alpha$ and $\eta$ that we have established here. 

By arguing as in the $(\xi,\zeta) \in \cU_1$ case in Section \ref{SSS:proofofclaim}, we achieve the estimate:
\begin{align} \label{proofofclaimcont32}
\cI\bigl( \cO \bigr) \leq  c\; \int_{T^{\bD}((\cO)_{1/200} \cap \bH_{\bD})} \dist^2(X,\spt\|\bD\|)d\|\tilde{V}\|(X), 
\end{align} 
where $c = c(n,k,L,\bC^{(0)}) > 0$. Elementary geometric considerations tell us that 
\begin{align}
&T^{\bC}_{1/2,5/16}(0) \subset T^{\bD}(\bH_{\bD} \cap T^{\bC}_{1/2,5/16}(0)) \nonumber \\
 \subset &\biggl(\{ r_{\bD} < \tilde{E}^2\} \cup T^{\bD}(\cO) \cup \bigcup_{Q \in \mathscr{F}_{\text{final}}}T^{\bD}(\tfrac{11}{10}Q) \biggr),
\end{align}
and we trivially have
\begin{align} \label{proofofclaimcont33}
\cI \Bigl( \bigcup_{Q \in \mathscr{F}_{\text{final}}} \tfrac{11}{10}Q \Bigr)  \leq \sum_{Q \in \mathscr{F}_{\text{final}}} \cI\bigl(\tfrac{11}{10}Q\bigr).
\end{align}
Suppose that we have the estimate \eqref{proofofclaimcont30} for every cube in $\mathscr{F}_{\text{final}}$. This would mean that the right-hand side of \eqref{proofofclaimcont33} were at most
\begin{align} \label{proofofclaimcont34}
c\biggr(\sum_{Q \in \mathscr{F}_{\text{final}}} e(Q)^{d-1/4} \biggl)\tilde{E}^2 
\end{align}
for $c = c(n,k,L,\bC^{(0)}) > 0$. Now, for each $Q \in \mathscr{F}_{\text{final}}$, property \ref{Q5} implies that $\dist(X,A(\bD))^{-1/4} \geq c e(Q)^{-1/4}$ for all $X \in Q$ whence $e(Q)^{d-1/4} \leq c \int_Q \dist(X,A(\bD))^{-1/4} d\cH^d(X)$. This means that
\begin{align} \label{proofofclaimcont35}
\sum_{Q \in \mathscr{F}_{\text{final}}} e(Q)^{d-1/4} \leq \int_{\bH_{\bD} \cap B_1(0)} \dist(X,A(\bD))^{-1/4} d\cH^d(X) \leq C,
\end{align}
for some absolute constant $C  > 0$. Observing lastly that
\begin{align} \label{proofofclaimcont36}
\|\tilde{V}\| \bigl( \{ r_{\bD} < \tilde{E}^2 \} \bigr) &\leq c\tilde{E}^2,
\end{align}
then \eqref{proofofclaimcont5} follows by putting together \eqref{proofofclaimcont33} - \eqref{proofofclaimcont37}.

\wl

\textbf{Step 6.} \emph{Handling Troublesome Cubes when $q_{\bD} > 0$.} In light of \eqref{proofofclaimcont12}, \eqref{proofofclaimcont18} and \eqref{proofofclaimcont30}, it remains to prove the estimate $\cI(\tfrac{11}{10}Q) \leq ce(Q)^{d-1/4}\tilde{E}^2$ when $q_{\bD} > 0$ and $Q \in \mathscr{F}_{\text{trouble}}$ is such that \eqref{proofofclaimcont21} does not hold. In this case, by using the same process used in \textbf{Step 2.} of the proof of Lemma \ref{L:q>0tori} and in the proof of Corollary \ref{C:q>0fine=coarse}, but carried out in the domain $T^{\bD}(\tfrac{12}{10}Q)$ and with the varifold $\tilde{V}_Q$, we can find a new cone $\bD^Q \in \cC$ so that $A(\bD^Q) \supset A(\bD)$, $\dim A(\bD^Q) \leq \dim A(\bC^{(0)})$, 
\begin{align} \label{proofofclaimcont37}
E_{\tilde{V}_Q \tiny{\res} T^{\bD}(\tfrac{12}{10}Q)}(\bD^Q) \leq c E_{\tilde{V}_Q \tiny{\res} T^{\bD}(\tfrac{12}{10}Q)}(\bD).
\end{align}
and such that either $q_{\bD^Q} = 0$ or
\begin{align} \label{proofofclaimcont38}
& E_{\tilde{V}_Q \tiny{\res} T^{\bD}(\tfrac{12}{10}Q)}(\bD^Q) \leq \alpha  \cE_{\tilde{V}_Q \tiny{\res} T^{\bD}(\tfrac{12}{10}Q)}(\bD^Q).
\end{align}
Cubes $Q$ for which this is necessary are such that at unit scale $\tilde{V}$ resembles $\bD$, but at the scale of $Q$, the varifold $\tilde{V}_Q$ resembles a cone with a larger dimensional axis. Now, if we are able to argue that
\begin{align} \label{proofofclaimcont39}
\cI(\tfrac{12}{10}Q) \leq \int_{T^{\bD}(\tfrac{12}{10}Q)} \dist^2(X,\spt\|\bD^Q\|)d\|\tilde{V}_Q\|(X),
\end{align}
then by using \eqref{proofofclaimcont37}, the non-concentration estimate \ref{en:L24} of Corollary \ref{cor:L2Z}, and \eqref{proofofclaimcont28} we would get the result that 
\[
\cI(\tfrac{12}{10}Q) \leq ce(Q)^{d-1/4}\tilde{E}^2,
\]
which is exactly the estimate required to sum over the cubes as is done at the end of the previous step. To justify \eqref{proofofclaimcont39}, we observe that it is of the same form as \eqref{proofofclaimcont5} except that $q_{\bD^Q} < q_{\bD}$. Therefore we argue inductively with respect to this parameter: Relying on the scale invariance of the arguments we have made thus far, we can repeat \textbf{Steps 1.} to \textbf{4.} with $T^{\bD}(\tfrac{12}{10}Q)$ taking the role of $T^{\bC}_{1/2,7/16}(0)$, $\bD^Q$ taking the role $\bD$ and with $\tilde{V}_Q$ taking the role of $\tilde{V}$ and do so in a such a way that the parameters $\delta'$, $s$, $\eta$ and $\alpha$ can be chosen once and for all depending only on $n$, $k$, $\bC^{(0)}$ and $L$. Moreover the strict inequality $q_{\bD^Q} < q_{\bD}$ ensures that this process terminates; eventually we are either working with a cone $\bD^L$ for which $q_{\bD^L} = 0$ (where $L$ has come from subdividing $Q$) or else every cube that satisfies \eqref{proofofclaimcont13} also satisfies \eqref{proofofclaimcont21}. In these cases, the estimate has already been justified by the work of the other steps. And in this way, we justify \eqref{proofofclaimcont39}. 

\wl

\noindent Notice that despite the complexity of the full construction, there are only ever finitely many cubes. This finishes the proof of \eqref{L214} of Claim \ref{L2claim}, which completes the proof of the induction step for Theorem \ref{L2}.

\subsection{Proof of Corollary \ref{cor:L2Z}} \label{SS:ProofofcorL2Z}

This proof relies on being able to apply Theorem \ref{L2} with $\tau = 1/16$ and with $\bar{V} := \eta_{Z,\rho/2 *}(V \res B_{\rho}(Z))$ in place of $V$. To do this, we need to show that it is possible to choose $\eps$ in such a way that all of the hypotheses of Theorem \ref{L2} are satisfied with $\bar{V}$ in place of $V$. Let $\eps_0$ be the constant the existence of which is asserted by Theorem \ref{L2} when we take $\tau = 1/16$ therein. We claim that the following list of statements can be satisfied:
\begin{enumerate}[nolistsep]
\item $\|\bar{V}\|(B_2^n(0)\times\RR^k) \leq \|\bC^{(0)}\|(B_2^n(0)\times\RR^k) + 1/2$.
\item $0 \in A(\bC) \subset A(\bC^{(0)})$.
\item $\nu_{\bC, \bC^{(0)}} < \eps$.
\item $Q_{\bar{V}}(\bC^{(0)}) < \eps$.
\item $\Theta_{\bar{V}}(0) \geq 2$. 
\end{enumerate}
Firstly observe that 2) and  3) hold trivially because they only concern $\bC$ and $\bC^{(0)}$, which are unchanged and that 5) is immediate because $\Theta_V(Z) \geq 2$. Now, 4) follows by choosing $\eps < \rho^{n+2}\eps_0$ and 1) also follows, using varifold convergence, by choice of $\eps$ sufficiently small depending on $\rho$ and $\sigma$. Thus we may apply Theorem \ref{L2} to $\bar{V}$. This establishes \ref{en:L23} directly. The other conclusions rest on being able to prove \ref{en:L22}.

Writing $\xi = Z^{\perp_{A(\bC)}}$, notice that for fixed $\rho_0 = \rho_0(n,k,\bC^{(0)},L) > 0$, the argument of \cite[Lemma 6.21]{wickrigidity} shows that we can choose $\eps$ sufficiently small so that
\beq
\|V\|(\{X \in B_{\rho_0}(Z) : |\xi^{\perp_{T_{x'}\bC}}| \geq c\nu_{\bC,\bC^{(0)}}|\xi| \}) \geq c\rho_0^n,
\eeq
for some constant $c \in (0,1)$, where $x'$ is the nearest point projection of $X^{\perp_{A(\bC)}}$ onto $\spt\|\bC\|$. This means that
\beq \label{L2Z6}
\nu_{\bC,\bC^{(0)}}^2|\xi|^2 \leq c\rho_0^{-n}\int_{B_{\rho_0}(Z)} |\xi^{\perp_{T_{x'}\bC}}|^2d\|V\|(X),
\eeq
which implies that
\beq \label{L2Z7}
\nu_{\bC,\bC^{(0)}}^2|\xi^{\top_{\bP_i^{(0)}}}|^2 \leq c\rho_0^{-n}\int_{B_{\rho_0}(Z)} |\xi^{\perp_{T_{x'}\bC}}|^2d\|V\|(X).
\eeq
Also, since for any $X \in \spt\|V\| \cap B_{\rho_0}(Z)$ and $i=1,2$, the triangle inequality implies that
\beq
|\xi^{\perp_{\bP_i^{(0)}}}|^2  \leq 2|\xi^{\perp_{\bP_i^{(0)}}} - \xi^{\perp_{T_{x'}\bC}}|^2 + 2| \xi^{\perp_{T_{x'}\bC}}|^2,
\eeq 
we have that
\begin{align*}
|\xi^{\perp_{\bP_i^{(0)}}}|^2 \leq c\nu_{\bC,\bC^{(0)}}^2|\xi|^2 + c\rho_0^{-n}\int_{B_{\rho_0}(Z)} |\xi^{\perp_{T_{x'}\bC}}|^2d\|V\|(X) 
\end{align*}
and so using \eqref{L2Z6} we get that
\beq \label{L2Z8}
|\xi^{\perp_{\bP_i^{(0)}}}|^2 \leq c\rho_0^{-n}\int_{B_{\rho_0}(Z)} |\xi^{\perp_{T_{x'}\bC}}|^2d\|V\|(X).
\eeq
Using \eqref{L2Z7}, \eqref{L2Z8} and the fact that
\beq
|\xi^{\perp_{T_{x'}\bC}}|^2 \leq 2\dist^2(X,\spt\|T_{Z *}\bC\|) + 2\dist^2(X,\spt\|\bC\|),
\eeq
we have
\begin{align}
|\xi^{\perp_{\bP_i^{(0)}}}|^2 + \nu_{\bC,\bC^{(0)}}^2&|\xi^{\top_{\bP_i^{(0)}}}|^2 \nonumber \\
  \leq c\rho_0^{-n}\int_{B_{\rho_0}(Z)}& |\xi^{\perp_{T_{x'}\bC}}|^2d\|V\|(X) \nonumber \\
\leq c\rho_0^{-n}\int_{B_{\rho_0}(Z)} &\dist^2(X,\spt\|T_{Z *}\bC\|)d\|V\|(X) \nonumber \\
 &  + c\rho_0^{-n}\int_{B_{\rho_0}(Z)} \dist^2(X,\spt\|\bC\|)d\|V\|(X). \label{L2Z9}
\end{align}
Then, using \ref{en:L2Z3} of the present Corollary applied to $\eta_{Z,\rho_0 *}V$ (which we have already established), we bound this last expression by
\begin{align}
c\rho_0^{7/4}\int_{B_1(0)} \dist^2(X,&\spt\|T_{Z *}\bC\|)d\|V\|(X) \nonumber \\
& + c\rho_0^{-n}\int_{B_{\rho_0}(Z)} \dist^2(X,\spt\|\bC\|)d\|V\|(X). \label{L2Z10}
\end{align}
By first summing over $i=1,2$ and then using the triangle inequality on the integrand of the first term of \eqref{L2Z10}, followed by \eqref{E:CtoC_Z3} and \eqref{E:CtoC_Z4}, we conclude that
\begin{align}
 \max_{i=1,2}\bigl[ |\xi^{\perp_{\bP_i^{(0)}}}|^2 + \nu_{\bC,\bC^{(0)}}^2|&\xi^{\top_{\bP_i^{(0)}}}|^2\bigr] \nonumber \\
 \leq c\rho_0^{7/4}\int_{B_1(0)} \dist^2(X,&\spt\|\bC\|)d\|V\|(X) \nonumber \\
+ c\rho_0^{7/4}&\max_{i=1,2}\big[|\xi^{\perp_{\bP_i^{(0)}}}|^2 + \nu_{\bC,\bC^{(0)}}^2|\xi^{\top_{\bP_i^{(0)}}}|^2\big] \nonumber \\
 & + c\rho_0^{-n}\int_{B_{\rho_0}(Z)} \dist^2(X,\spt\|\bC\|)d\|V\|(X).
\end{align}
From here was see that by choosing $\rho_0$ sufficiently small depending only on the allowed parameters $n$, $k$, $\bC^{(0)}$ and $L$, we can absorb the middle term to the left-hand side to get that
\beq
|\xi^{\perp_{\bP_i^{(0)}}}|^2 + \nu_{\bC,\bC^{(0)}}^2|\xi^{\top_{\bP_i^{(0)}}}|^2 \leq c\int_{B_1(0)} \dist^2(X,\spt\|\bC\|)d\|V\|(X),
\eeq
for some constant $c = c(n,k,\bC^{(0)},L) > 0$. This completes the proof of (ii) of Corollary \ref{cor:L2Z}, from which (i) follows by taking $\bC = \bC^{(0)}$ and from which one can now deduce (iv) and (v) by using (iii) of Corollary \ref{cor:L2Z} and (v) of Theorem \ref{L2} applied to $\eta_{Z,\rho *}V$ respectively. This completes the proof of Corollary \ref{cor:L2Z} and completes the induction for $q_{\bC}$.

\section{Proofs of Main Results} 

In this chapter we prove the main Excess Improvement Lemma (\ref{lemm:exceimprlemm}) and the main Theorems 1 - 4.

\subsection{Excess Improvement}

Firstly we must prove a lemma that allows us  - when $\bC^{(0)} \in \cP_{n-1}$ - to only work with blow-ups taken relative to sequences of pairs of planes. 

\begin{lemma}\label{blowupoffplan} Fix $\bC^{(0)} \in \cP_{n-1}$, $L > 0$ and $\delta > 0$. There exists $\eps_0 = \eps_0(n,k,\bC^{(0)},\delta) > 0$ and $\eta = \eta(n,k,\bC^{(0)},\delta) > 0$ such that the following is true. Suppose that for some $\eps < \eps_0$, we have that $V \in \cV_L$, $\bC^{(0)}$ and $\bC \in \cC_{n-1}\setminus \cP_{n-1}$ satisfy Hypotheses A and suppose that there exists $Y \in A(\bC^{(0)}) \cap B_1(0)$ for which
\beq \label{blowupoffplan1}
B_{\delta}(Y) \cap \{ X : \Theta_{V}(X) \geq 2\} = \emptyset.
\eeq
This is called a ``$\delta$-gap''. Then
\beq \label{blowupoffplan2}
E_V(\bC) \geq \eta E_V(\tilde{\bC}),
\eeq 
where $\tilde{\bC} \in \cP$ is chosen so that
\beq \label{blowupoffplan3}
E_V(\tilde{\bC}) \leq (3/2) \inf_{\bC^{\prime} \in \cP}E_V(\bC^{\prime}).
\eeq
\end{lemma}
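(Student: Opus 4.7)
The plan is to argue by contradiction, using the blow-up machinery of Section 4 with $\{\tilde{\bC}^j\}$ as the reference sequence. Suppose the conclusion fails; then there exist sequences $\eps_j \downarrow 0^+$, $\{V^j\}\subset\cV_L$, $\{\bC^j\}\subset\cC_{n-1}\setminus\cP_{n-1}$ satisfying Hypotheses A with $\eps_j$ in place of $\eps$, near-optimal pairs of planes $\{\tilde{\bC}^j\}\subset\cP$ (satisfying \eqref{blowupoffplan3} for $V^j$), and gap points $Y^j\in A(\bC^{(0)})\cap B_{1/2}(0)$ with $B_\delta(Y^j)\cap\{\Theta_{V^j}\geq 2\}=\emptyset$, such that $E_{V^j}(\bC^j)/E_{V^j}(\tilde{\bC}^j)\to 0$. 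Since $\bC^{(0)}\in\cP$ competes in the infimum defining $\tilde{\bC}^j$, we have $E_{V^j}(\tilde{\bC}^j)\leq(3/2)E_{V^j}(\bC^{(0)})\to 0$, whence $\nu_{\tilde{\bC}^j,\bC^{(0)}}\to 0$. After extracting a subsequence and applying a rotation of $\RR^{n+k}$ of size $O(\nu_{\tilde{\bC}^j,\bC^{(0)}})$ aligning $A(\tilde{\bC}^j)$ with $A(\bC^{(0)})$ (which only inflates the relevant excesses by a bounded factor), I may assume $\tilde{\bC}^j\in\cP_{n-1}$ with $A(\tilde{\bC}^j)=A(\bC^{(0)})$, that Hypotheses A hold with $\tilde{\bC}^j$ in place of $\bC$, and that $Y^j\to Y\in A(\bC^{(0)})\cap\overline{B_{1/2}(0)}$.

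Setting $E_j:=E_{V^j}(\tilde{\bC}^j)$, I would then construct the blow-up $v\in\fB_{\cP}(\bC^{(0)})$ of $\{V^j\}$ off $\bC^{(0)}$ relative to $\{\tilde{\bC}^j\}$. The gap hypothesis forces $\cD\cap B_\delta(Y)=\emptyset$ in Hypotheses $\dagger$, so by \ref{B3} of Theorem \ref{thm:propblowup} each $v|_{\bP_i^{(0)}}$ extends as a smooth harmonic function on $\bP_i^{(0)}\cap B_\delta(Y)$. Next I would identify $v$: writing $\tilde{u}^j=u^j+(c^j-\tilde{c}^j)$ on the graphical region, where $c^j,\tilde{c}^j$ represent $\bC^j,\tilde{\bC}^j$ graphically over $\bC^{(0)}$ and $u^j,\tilde{u}^j$ are the graphical deviations of $V^j$ from $\bC^j,\tilde{\bC}^j$ respectively, Lemma \ref{lemm:planlemm} yields $\|u^j\|_{L^2}\leq c\,E_{V^j}(\bC^j)$, and hence $E_j^{-1}u^j\to 0$ in $L^2_{\mathrm{loc}}$. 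Combined with $v=\lim E_j^{-1}\tilde{u}^j$, this gives $v=\lim E_j^{-1}(c^j-\tilde{c}^j)$, which on a further subsequence equals some $\psi\in\cH(\bC^{(0)})$.

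The heart of the matter is a uniform lower bound on the $|x|\varphi$-component of $\psi$. On each plane $\bP_i^{(0)}$, parametrising the perpendicular direction by $t\in\RR$, the graph $c^j|_{\bP_i^{(0)}}$ is $t\,\alpha^j_{i,+}$ for $t>0$ and $-t\,\alpha^j_{i,-}$ for $t<0$ (continuous at $t=0$ with value zero since $A(\bC^j)=A(\bC^{(0)})$), and the non-planarity of $\bC^j$ on $\bP_i^{(0)}$ is encoded in the slope-jump $J^j_i:=\alpha^j_{i,+}+\alpha^j_{i,-}$. Let $\bar{\bC}^j\in\cP_{n-1}$ be the pair of planes whose slope on each $\bP_i^{(0)}$ is $(\alpha^j_{i,+}-\alpha^j_{i,-})/2$; then $\nu_{\bC^j,\bar{\bC}^j}\leq c(|J^j_1|+|J^j_2|)$. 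Combining the triangle inequality $E_{V^j}(\bar{\bC}^j)\leq E_{V^j}(\bC^j)+c\,\nu_{\bC^j,\bar{\bC}^j}$ with the near-optimality $E_j\leq(3/2)E_{V^j}(\bar{\bC}^j)$ and $E_{V^j}(\bC^j)/E_j\to 0$, I obtain $E_j^{-1}(|J^j_1|+|J^j_2|)\geq c>0$ for large $j$. Since $\tilde{c}^j$ is linear across the axis, the slope-jumps of $c^j-\tilde{c}^j$ coincide with $J^j_i$; on passing to the limit, $\psi$ has a non-zero slope-jump across $A(\bC^{(0)})$ on at least one of $\bP_1^{(0)},\bP_2^{(0)}$. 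In the parametrisation \eqref{Hdefn}, this means $\varphi(\omega_a)+\varphi(\omega_b)\neq 0$ for some antipodal pair $\{\omega_a,\omega_b\}$ lying in a common plane of $\bC^{(0)}$, which forces $\psi$ to fail to be $C^1$ across $A(\bC^{(0)})$ on that plane. This contradicts the smooth extension of $v$ near $Y$ and finishes the proof.

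The main obstacle is the clean algebraic identification $v=\lim E_j^{-1}(c^j-\tilde{c}^j)$: one must control $\|u^j\|_{L^2}$ by $E_{V^j}(\bC^j)$ (not merely by $E_j$) uniformly on the graphical region, which requires running Lemma \ref{lemm:planlemm} on each plane of $\bC^j$ and absorbing the graphical errors in the $\tau_j$-neighbourhood of the axis via the non-concentration estimate \eqref{nonconcnearD1} built into the blow-up construction. The remaining geometric arguments — the slope-jump lower bound and its propagation into $\psi$ — are quantitative but elementary.
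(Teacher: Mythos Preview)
Your argument is correct and follows essentially the same blow-up-by-contradiction route as the paper: set up sequences, blow up $V^j$ off $\bC^{(0)}$ relative to the near-optimal $\tilde{\bC}^j$, identify the blow-up $v$ with $\lim E_j^{-1}(c^j-\tilde{c}^j)$, and play this off against the smoothness of $v$ on $B_\delta(Y)$ coming from \ref{B3}. One small correction: Lemma~\ref{lemm:planlemm} is stated only for $\bC\in\cP$ and does not apply to $\bC^j\in\cC_{n-1}\setminus\cP_{n-1}$; the bound $\|u^j\|_{L^2}\leq c\,E_{V^j}(\bC^j)$ you want follows directly from the pointwise inequality in \ref{en:L21} of Theorem~\ref{L2} (equivalently \ref{7j}) on the graphical region, together with the non-concentration estimate near $\cD$, so your identification $v=\psi$ goes through without Lemma~\ref{lemm:planlemm}.

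Your endgame differs only in orientation from the paper's and is logically equivalent. You argue: near-optimality of $\tilde{\bC}^j$ forces $E_j^{-1}(|J_1^j|+|J_2^j|)\geq c>0$, so $\psi=v$ carries a nontrivial slope jump across $A(\bC^{(0)})$ on some $\bP_i^{(0)}$, contradicting the harmonic extension at $Y$. The paper runs the contrapositive: smoothness at $Y$ plus homogeneity forces $v$ to be linear on each $\bP_i^{(0)}$; then $v\not\equiv 0$ (from $\tilde{E}_j^{-1}\nu_{\tilde{\bC}^j,\bC^j}\geq c$) means $\hat{\bC}^j:=\gr(\tilde{c}^j+\tilde{E}_jv)\in\cP$ satisfies $\tilde{E}_j^{-1}E_{V^j}(\hat{\bC}^j)\to 0$, contradicting the near-optimality \eqref{blowupoffplan3}. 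Your version is slightly more direct in that it avoids naming the competitor $\hat{\bC}^j$; the paper's version makes the contradiction with \eqref{blowupoffplan3} explicit.
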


\begin{proof} If the lemma is false then there are sequences of numbers $\{\eps_j\}_{j=1}^{\infty}$, $\{\eta_j\}_{j=1}^{\infty}$ with $\eps_j,\eta_j \downarrow 0^+$, points $Y_j \in A(\bC^{(0)}) \cap B_1(0)$ and $\{\bC^j\}_{j=1}^{\infty} \in \cP$, $\{V^j\}_{j=1}^{\infty} \in \cV_L$ satisfying all of the hypotheses but with $V^j$, $\bC^j$, $\eps_j$, $\eta_j$ and $Y_j$ in place of $V$, $\bC$, $\eps$, $\eta$ and $Y$ respectively and for which
\beq \label{blowupoffplan4}
E_{V^j}(\bC^j) < \eta_jE_{V^j}(\tilde{\bC}^j)
\eeq
for all $j$, where the $\tilde{\bC}^j \in \cP$ are such that
\beq \label{blowupoffplan5}
E_{V^j}(\tilde{\bC}^j) \leq (3/2) \inf_{\bC^{\prime} \in \cP}E_{V^j}(\bC^{\prime}).
\eeq
Begin by passing to a subsequence for which $Y_j \to Y \in A(\bC^{(0)}) \cap \overline{B_1(0)}$ as $j \to \infty$. Using the definition of $\tilde{\bC}^j$, the fact that $\bC^{(0)} \in \cP$ and 3) of Hypotheses A, we see that $\tilde{\bC}^j \to \bC^{(0)}$. Choose a sequence $\Gamma^j$ of rigid motions of $\RR^{n+k}$ for which $\Gamma^j(A(\tilde{\bC}^j)) \subset A(\bC^{(0)})$ and such that
\beq
|\mathrm{id}_{\RR^{n+k}} - \Gamma^j| \leq \frac{3}{2}\inf|\mathrm{id}_{\RR^{n+k}} - \Gamma|,
\eeq
where the infimum is taken over all rigid motions $\Gamma$ for which $\Gamma(A(\tilde{\bC}^j)) \subset A(\bC^{(0)})$. Then, using \eqref{blowupoffplan4} and the triangle inequality, we have that 
\beq \label{blowupoffplan6}
\nu_{\Gamma^j_*\tilde{\bC}^j,\Gamma^j_*\bC^j} \leq cE_{\Gamma^j_*V^j}(\Gamma^j_*\tilde{\bC}^j),
\eeq
for some absolute constant $c > 0$. Let $\tilde{c}^j + c^j$ be the function that represents $\Gamma^j_*\bC^j$ as a graph over $\bC^{(0)}$ in a such a way that $\tilde{c}^j$ represents $\Gamma^j_*\tilde{\bC}^j$ as a graph over $\bC^{(0)}$ (at least away from a small neighbourhood of $A(\bC^{(0)})$). Now we blow up $\Gamma^j_*\bC^j$ off $\bC^{(0)}$ relative to $\Gamma^j_*\tilde{\bC}^j$ using the excess $\tilde{E}_j := E_{\Gamma^j_*V^j}(\Gamma^j_*\tilde{\bC}^j)$, \emph{i.e.} using \eqref{blowupoffplan6}, we deduce that along a subsequence $\tilde{E}_j^{-1}c^j$ converges locally uniformly in $\spt\|\bC^{(0)}\| \cap \{r_{\bC^{(0)}} > 0 \} \cap B_1(0)$ to some function $w$, say. 

Now, if we let $v$ be a blow-up of $\Gamma^j_*V^j$ off $\bC^{(0)}$ relative to $\Gamma^j_*\tilde{\bC}^j$, then dividing \eqref{blowupoffplan4} by $\tilde{E}_j^2$, letting $j \to \infty$ and using the smooth convergence to the blow-up together with the non-concentration estimate \eqref{8j1} shows that $w = v$. Now let us see that $v \not\equiv 0$: From \eqref{blowupoffplan1} we have that $B_{\delta/2}(Y) \cap \cD_v = \emptyset$ and so the $C^2_{loc}$ convergence to the blow-up and the fact that $v=w$ imply that $\gr v$ is a pair of planes. Notice again now that by a pointwise triangle inequality we have that
\begin{align*}
\tilde{E}_j^2 \leq E^2_{\Gamma^j_*V^j}(\Gamma^j_*\bC^j) + c\, \nu^2_{\Gamma^j_*\tilde{\bC}^j,\Gamma^j_*\bC^j},
\end{align*}
where $c$ is a positive absolute constant, from which, using \eqref{blowupoffplan4}, we get that
\beq
0 < c  \leq \tilde{E}_j^{-1}\nu_{\Gamma^j_*\tilde{\bC}^j,\Gamma^j_*\bC^j}.
\eeq
This implies that $v\not\equiv 0$. But now, if we write $\hat{\bC}^j$ for the unique pair of planes containing $\gr(\tilde{c}^j + \tilde{E}_jv)$, we have that 
\[
\tilde{E}_j^{-1}E_{\Gamma^j_*V^j}(\hat{\bC}^j) \to 0
\]
as $j \to \infty$. Thus for sufficiently large $j$, the pair of planes $(\Gamma^j)^{-1}_*\hat{\bC}^j$ contradicts \eqref{blowupoffplan5} and this completes the proof of the Lemma. \end{proof}

We now come to the main lemma. 

\begin{lemma}[Excess Improvement] \label{lemm:exceimprlemm} Let $\bC^{(0)} \in \cC$ and $L > 0$. There exists $\eps_0 = \eps_0(n,k,\bC^{(0)},L) > 0$ such that the following is true. If, for some $\eps < \eps_0$, we have $V \in \cV_L$ and $\bC$, $\bC^{(0)} \in \cC$ satisfying Hypotheses A and $\Theta_V(0) \geq 2$, then there exists $\theta = \theta(n,k,\bC^{(0)},L) > 0$, $c_1 = c_1(n,k,\bC^{(0)}L) \geq 1$, $\bC^{\prime} \in \cC$ and a rotation $\Gamma$ of $\RR^{n+k}$ such that 
\begin{enumerate}[nolistsep]
\item $0 \in A(\bC^{\prime}) \subset A(\bC^{(0)})$,
\item $|\Gamma - \mathrm{id}_{\RR^{n+k}}| \leq c_1\cQ_V(\bC^{(0)})$,
\item $\nu_{\bC^{\prime},\bC^{(0)}} \leq c_1\cQ_{V}(\bC^{(0)})$,
\end{enumerate}
and such that
\begin{align} 
\hspace{0.5cm}\theta^{-n-2}&\int_{B_{\theta}^n(0)\times\RR^k}\dist^2(X,\spt \|\Gamma_*\bC^{\prime}\|) d\|V\|(X) \nonumber \\
&  + \theta^{-n-2}\int_{\Gamma\big((B_{\theta}^n(0)\times\RR^k) \setminus \{r_{\bC^{(0)}} < \theta/8\}\big) }\dist^2(X,\spt \|V\|) d\|\Gamma_*\bC^{\prime}\|(X) \nonumber \\ \nonumber \\
& \hspace{4cm} \leq \frac{1}{2} \cQ^2_V(\bC),  \label{exceimprlemm1}
\end{align}  
 where
 \begin{align*}
\cQ_V&(\bC) := \left(\int_{B_2^n(0)\times\RR^k}\dist^2(X,\spt \|\bC\|) \, d\|V\|(X)\right. \\
&+ \left.\int_{(B_2^n(0)\times\RR^k) \setminus \{r_{\bC^{(0)}} < 1/8\}}\dist^2(X,\spt \|V\|) \, d\|\bC\|(X)\right)^{1/2}
\end{align*} 
Moreover, if $\bC^{(0)} \in \cP_{\leq n-2}$, then $\bC^{\prime} \in \cP_{\leq n-2}$. And if $\bC^{(0)} \in \cC_{n-1}\setminus \cP_{n-1}$, then $\bC^{\prime} \in \cC_{n-1}\setminus \cP_{n-1}$.
\end{lemma}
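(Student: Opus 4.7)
The plan is to prove Lemma~\ref{lemm:exceimprlemm} by contradiction, taking $\bar\theta$ to be the constant $\bar\theta_1$ of Theorem~\ref{blowupregu} and reducing the estimate \eqref{exceimprlemm1} to the blow-up decay estimate \eqref{blowupregu1}. So I would fix $\theta \in (0,\bar\theta)$ and assume failure: there are $\eps_j \downarrow 0^+$, $V^j \in \cV_L$ and $\bC^j \in \cC$ satisfying Hypotheses~A with $\eps_j$ in place of $\eps$ and with $\Theta_{V^j}(0) \geq 2$, but for which no $\bC^{\prime}, \Gamma$ satisfying 1)-3) yields \eqref{exceimprlemm1}. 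Since $0 \in \cD_j$ for every $j$, the condition $\cD_j \cap B_{1/16}(0) \neq \emptyset$ holds trivially, and after passing to a subsequence along which $\cD_j \cap \overline{B_2(0)}$ converges in Hausdorff distance to a limit $\cD \ni 0$, Hypotheses~$\dagger$ are in force. Setting $E_j := E_{V^j}(\bC^j)$, I would extract a blow-up $v \in \fB(\bC^{(0)})$ of $\{V^j\}$ off $\bC^{(0)}$ relative to $\{\bC^j\}$. When $\bC^{(0)} \in \cP_{\leq n-2}$, Remarks~\ref{rema:C0inP} ensure $\bC^j \in \cP$ for large $j$, so $v \in \fB_{\cP}(\bC^{(0)})$; when $\bC^{(0)} \in \cP_{n-1}$ but $\bC^j \in \cC_{n-1}\setminus\cP_{n-1}$, Lemma~\ref{blowupoffplan} allows replacing $\bC^j$ by a comparable element of $\cP$, again giving $v \in \fB_{\cP}(\bC^{(0)})$, as required by Theorem~\ref{blowupregu}.

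Next I would apply Theorem~\ref{blowupregu} to $v$ to obtain $\psi \in \cH(\bC^{(0)})$ with $\sup_\Omega|\psi|^2 \leq c\int_\Omega|v|^2$ and
\begin{equation*}
\theta^{-n-2}\int_{\Omega \cap B_\theta(0)} |v - \psi|^2\,d\cH^n \leq c\theta^{2\mu}\int_\Omega |v|^2\,d\cH^n.
\end{equation*}
By construction of $\cH(\bC^{(0)})$ (see Remark~\ref{rema:Hasblowup}), $\psi$ encodes a pair consisting of a small rotation $\Gamma_j$ of $\RR^{n+k}$ and a new cone $\bC^{\prime}_j$. Explicitly, when $\bC^{(0)} \in \cC_{n-1}$, writing $\psi(x,y) = \sum_{p=1}^{n-1} y^p c_p^{\perp_{T_X\bC^{(0)}}} + |x|\varphi(x/|x|)$, the shear produced by the vectors $c_p \in A(\bC^{(0)})^\perp$ integrates to a rotation $\Gamma_j$ with $|\Gamma_j - \mathrm{id}_{\RR^{n+k}}| \leq cE_j$, while the vectors $\{\varphi(\omega_i)\}_{i=1}^4$ rotate the four half-planes of $\bC^j$ about the rotated axis to yield $\bC^{\prime}_j \in \cC_{n-1}$ with $\nu_{\bC^{\prime}_j,\bC^{(0)}} \leq c E_j$. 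When $\bC^{(0)} \in \cP_{\leq n-2}$, $\psi|_{\bP_i^{(0)}}$ is linear and directly parametrises a nearby pair of planes $\bC^{\prime}_j \in \cP_{\leq n-2}$ after extracting its axis-tilting component into $\Gamma_j$. Since $E_j \leq \cQ_{V^j}(\bC^j)$, conditions 1)-3) hold with $c_1$ depending only on the bounds in Theorem~\ref{blowupregu}.

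Converting the $L^2$ approximation $v \approx \psi$ back into \eqref{exceimprlemm1} is then a routine unwinding of the blow-up. The graphical decomposition \ref{en:L21} of Theorem~\ref{L2} writes $V^j$ as $|\gr(u^j + c^j)|$ over $\spt\|\bC^{(0)}\|\cap\{r_{\bC^{(0)}} > \tau_j\}$, with $E_j^{-1}u^j \to v$ in $C^2_{\mathrm{loc}}$ on $\{r_{\bC^{(0)}} > 0\}$ and strongly in $L^2$ on $\Omega \cap B_1(0)$ (the latter by the non-concentration estimate~\eqref{consblowup2}). Pulling the distance $\int \dist^2(X, \spt\|\Gamma_{j*}\bC^{\prime}_j\|)\, d\|V^j\|(X)$ back to the domain via the area formula, controlling the near-axis contribution by \eqref{8j1} and the axis-translation error by \eqref{CtoC_Z}, yields the first integral in \eqref{exceimprlemm1} up to $o(\theta^{n+2}E_j^2)$ errors. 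The two-sided second integral is handled in the same way, since the graphical decomposition shows $V^j$ covers a full graphical neighbourhood of $\spt\|\bC^{\prime}_j\| \setminus \{r_{\bC^{(0)}} < \theta/8\}$.

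The principal obstacle is guaranteeing $\bC^{\prime}_j \in \cC_{n-1}\setminus\cP_{n-1}$ when $\bC^{(0)} \in \cC_{n-1}$. If $\bC^{(0)} \in \cC_{n-1}\setminus\cP_{n-1}$ this is automatic for $\eps$ small, by openness of $\cC_{n-1}\setminus\cP_{n-1}$ under small Hausdorff perturbations; but if $\bC^{(0)} \in \cP_{n-1}$ the perturbation $\psi$ might a priori land in the pair-of-planes locus. The quantitative tool to exclude this is Lemma~\ref{blowupoffplan}: once some point of $A(\bC^{(0)}) \cap B_{1/2}(0)$ is bounded away from $\cD$, the infimum of excess over $\cP$ cones is bounded below by a constant multiple of the excess of any $\bC^{\prime} \in \cC_{n-1}\setminus\cP_{n-1}$, which forces the iteration to select $\bC^{\prime}_j \in \cC_{n-1}\setminus\cP_{n-1}$; when $\cD \supset A(\bC^{(0)}) \cap B_{1/2}(0)$, a generic infinitesimal perturbation of $\bC^{\prime}_j$ within $\cC_{n-1}\setminus\cP_{n-1}$ at distance $o(E_j)$ preserves \eqref{exceimprlemm1}. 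The resulting $\Gamma_j$ and $\bC^{\prime}_j$ then contradict the assumption of failure, completing the proof.
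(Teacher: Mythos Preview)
Your approach is essentially the same as the paper's: argue by contradiction, verify Hypotheses~$\dagger$, replace $\bC^j$ by pairs of planes via Lemma~\ref{blowupoffplan} when $\bC^{(0)}\in\cP_{n-1}$ and $\bC^j\notin\cP$, blow up, apply Theorem~\ref{blowupregu}, and convert back using the graphical representation plus the non-concentration estimate. The paper packages the construction of the new cone and rotation through Remark~\ref{rema:psiasblowup} (which produces $R^j$ and $\hat\bC^j$ abstractly), then defines $\Gamma^j$ as the rotation closest to the identity carrying $A(\bC^{(0)})$ onto $A((R^j)^{-1}_*\hat\bC^j)$ and sets $\bC^{\prime\,j}=(\Gamma^j)^{-1}_*(R^j)^{-1}_*\hat\bC^j$; your explicit description from the components of $\psi$ amounts to the same thing.

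One point to correct in your final paragraph: your invocation of Lemma~\ref{blowupoffplan} there is backward. That lemma says that when there is a gap in the set of density-two points, the excess relative to any $\bC\in\cC_{n-1}\setminus\cP_{n-1}$ is bounded \emph{below} by a constant times the best pair-of-planes excess, i.e.\ the pair-of-planes approximation is \emph{better}. This is exactly why it is used at the start (to justify replacing $\bC^j$ by an element of $\cP$), not at the end to force $\bC^{\prime}_j$ out of $\cP_{n-1}$. Your generic-perturbation remark, however, is correct and suffices: if the constructed $\bC^{\prime}_j$ lands in $\cP_{n-1}$, perturbing one half-plane by an amount $\delta E_j$ with $\delta$ fixed small (depending only on $\theta$) moves it into $\cC_{n-1}\setminus\cP_{n-1}$ and changes the left-hand side of \eqref{exceimprlemm1} by $O(\delta^2 E_j^2)$, which is absorbed into $c_2\theta^{2\mu}\cQ_V^2(\bC)$. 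The paper itself is terse here, simply citing conclusion~3) and Remark~\ref{rema:C0inP}; for $\bC^{(0)}\in\cC_{n-1}\setminus\cP_{n-1}$ this suffices by openness, and for $\bC^{(0)}\in\cP_{n-1}$ your perturbation argument fills the gap.
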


\begin{proof} We first prove a weakened version of the lemma that is analogous to Lemma 1 of \cite{simoncylindrical}, in which a dichotomy is established. We claim that when all the hypotheses are satisfied for sufficiently small $\eps$, there exists $\delta_0 = \delta_0(n,k,\bC^{(0)},L) > 0$ such that \emph{either} $V$ has a $\delta_0$-gap (in the language of the statement of Lemma \ref{blowupoffplan}) \emph{or} the conclusions of the present lemma hold. So take a sequence $\{\eps_j\}_{j=1}^{\infty}$ of positive numbers with $0 < \eps_j \downarrow 0^+$ as $j \to \infty$ and arbitrary sequences $\bC^j$, $V^j$ satisfying the hypotheses with $V^j$, $\bC^j$ and $\eps_j$ in place of $V$, $\bC$ and $\eps$ respectively. We will prove that the conclusions of this claim hold along a subsequence.

If there is a fixed $\delta_0 > 0$ such that for sufficiently large $j$, $V^j$ has a $\delta_0$ gap, then we are of course done. So assume that this is not the case. This means that given $\{\delta_j\}_{j=1}^{\infty}$ with $\delta_j \downarrow 0^+$, we can pass to a subsequence for which it is the case that $V^j$ has no $\delta_j$-gap. To be precise, this means that $B_{\delta_j}(Y) \cap \cD_j \neq \emptyset$ for every $Y \in A(\bC^{(0)}) \cap B_1(0)$. 
 
Note that $V^j$, $\bC^j$ and $\bC^{(0)}$ satisfy Hypotheses A and let $\cD_j$ be as in \ref{en:dagger2} of Hypotheses $\dagger$. Using the fact that $\cD_j \cap \overline{B_2(0)}$ is closed, together with the sequential compactness of the Hausdorff metric on the space of closed subsets of a compact space, we have that there exists a closed subset $\cD \subset A(\bC^{(0)}) \cap B_2(0)$ such that (along a further subsequence to which we pass without changing notation), we have $d_{\cH}(\cD_j \cap \overline{B_2(0)},\cD \cap \overline{B_2(0)}) \to 0$. Since there are no $\delta_j$ gaps, we have that $\cD \cap B_1(0) = A(\bC^{(0)}) \cap B_1(0)$. So \ref{en:dagger3} is vacuously satisfied. And since $0 \in \cD_j$ for all $j$ by assumption, we have that \ref{en:dagger2} is satisfied. Now pass to a subsequence along which $q_{\bC^j} \equiv q$. Then, since $V^j$, $\bC^j$ and $\bC^{(0)}$ satisfy Hypotheses A, we have that \ref{en:dagger1} is satisfied. 

Now let $v \in \fB(\bC^{(0)})$ denote a blow-up of $V^j$ off $\bC^{(0)}$ relative to $\bC^j$ and let $\psi$ be as in \eqref{blowupregu1} of Theorem \ref{blowupregu} and pass to a subsequence along which we have convergence to $v$. Remark \ref{rema:psiasblowup} shows that $\|v - \psi\|_{L^2(\Omega)}^{-1}(v - \psi)$ is a blow-up of $\tilde{V}^j := R^j_*V^j$ off $\bC^{(0)}$ relative to a new sequence $\{\hat{\bC}^j\}_{j=1}^{\infty} \in \cC$, for some sequence of rotations $R^j$ satisfying $|R^j - \mathrm{id}_{\RR^{n+k}}| \leq c\cQ_{V^j}(\bC^j)$. 

Now note that for sufficiently large $j$ we have $\spt\|V^j\| \cap (B_{\theta}^n(0)\times \RR^k) \subset B_{2L\theta}(0)$. Let $\bar{\theta}_1$ be as in Theorem \ref{blowupregu} and pick $\theta < \bar{\theta}_1/(2\min\{L,1\})$. So, using \eqref{blowupregu1} of Theorem \ref{blowupregu}, the non-concentration estimate \eqref{nonconcnearD1} of Remark \eqref{rema:dagger} and the strong $L^2$ convergence to the blow-up, we have
\begin{align}
\theta^{-n-2}\int_{B^n_{\theta}(0)\times\RR^k} \dist^2(X,&\spt \|(R^j)_*^{-1}\hat{\bC}^j\|)d\|V^j\|(X) \nonumber \\
& \leq  c_2\theta^{2\mu}\int_{B_2^n(0)} \dist^2(X,\spt\|(R^j)_*^{-1}\bC^j\|) d\|V^j\|(X)  \label{exceimprlemm3}
\end{align}
for sufficiently large $j$ and for some $c_2 = c_2(n,k,\bC^{(0)},L) > 0$.

We define $\Gamma^j$ to be the rigid motion of $\RR^{n+k}$ which minimizes $|\Gamma^j - \mathrm{id}_{\RR^{n+k}}|$ subject to the constraint that $\Gamma^j(A(\bC^{(0)})) \supset A((R^j)^{-1}_*\hat{\bC}^j))$. Then we set $\bC^{\prime\; j} = (\Gamma^j)^{-1}_*(R^j)^{-1}_*\hat{\bC}^j$. It is easy to see that by construction, conclusions 1), 2) and 3) are satisfied. Using the fact that $V^j$ is graphical outside of a small neighbourhood of $A(\bC^{(0)})$ to control the second term on the left-hand side of \eqref{exceimprlemm1} and choosing $\theta$ so that $c_2\theta^{2\mu} = 1/2$, we get \eqref{exceimprlemm1}. Conclusion 1) shows that if $\bC^{(0)} \in \cP_{\leq n-2}$ then $\bC^{\prime\; j} \in \cP_{\leq n-2}$. And if $\bC^{(0)} \in \cC_{n-1}\setminus \cP_{n-1}$, then the final conclusion follows from 3), Remark \ref{R:C0inP} and choice of $\eps_0$ sufficiently small depending on $\bC^{(0)}$, $n$ and $k$. This completes the proof of the claim.

Then to establish the full lemma, we return to an arbitrary sequence satisfying the hypotheses. By applying the claim, we either already have the conclusions of the lemma, or else there is a fixed $\delta_0$-gap for every $j$. Now, if we are in the situation where $\bC^{(0)} \in \cP$ and $\bC^j \notin \cP$ for sufficiently large $j$, then we can appeal to Lemma \ref{blowupoffplan} in order to replace each $\bC^j$ by $\tilde{\bC}^j \in \cP$ such that $E_{V^j}(\tilde{\bC}^j) \leq cE_{V^j}(\bC^j)$, whence it suffices to improve the excess relative to this new sequence. Moreover, this replacement process means that \ref{en:dagger3} is now satisfied. Thus we can proceed with the proof of the current lemma as in the proof of the claim and again yield \eqref{exceimprlemm1} and conclusions 1), 2) and 3). This completes the proof of the lemma. 
\end{proof}

\subsection{Proofs of Theorems 1,2, and 3}

We begin by making arguments that are common to the proof of all three theorems: We claim that by iterating Lemma \ref{lemm:exceimprlemm} we can produce a sequence $\{\bC^{(j)}\}_{j=1}^{\infty} \in \cC$ and a sequence $\{\Gamma^j\}_{j=1}^{\infty}$ of rotations of $\RR^{n+k}$ such that
\begin{enumerate}[nolistsep,label = \arabic*),ref= \arabic*)]
\item \label{en:main1} $0 \in A(\bC^{(j)}) \subset \bC^{(0)}$.
\item \label{en:main2} $\nu_{\Gamma^j_*\bC^{(j)}, \Gamma^{j-1}_*\bC^{(j-1)}} \leq c2^{-j}\cQ_V(\bC^{(0)})$. 
\item \label{en:main3} $|\Gamma^j - \mathrm{id}_{\RR^{n+k}}|  \leq c\cQ_V(\bC^{(0)})$.
\item \label{en:main4} $\theta^{-j(n+2)} \int_{B_{\theta^j}^n(0)\times\RR^k} \dist^2(X, \spt\|\Gamma^j_*\bC^{(j)}\|)\, d\|V\|(X)  \leq 2^{-j} \cQ^2_V(\bC^{(0)})$.
\item \label{en:main5} $\theta^{-j(n+2)} \int_{\Gamma^j\big((B_{\theta^j}^n(0)\times \RR^k)\setminus \{ r_{\bC^{(0)}} < \theta^j/8\}\big)} \dist^2(X,\spt\|V\|)\, d\|\Gamma^j_*\bC^{(j)}\|(X)$ \\ $\leq$ $2^{-j}\cQ^2_V(\bC^{(0)})$.
\end{enumerate}
To prove this claim, we construct the sequence inductively: Let $\eps_0$ be as in Lemma \ref{lemm:exceimprlemm}. By choice of $\eps$ in the hypotheses of the present theorems and by applying Lemma \ref{lemm:exceimprlemm} with $\bC^{(0)}$ in place of $\bC$, we produce $\bC^{(1)} \in \cC$ and $\Gamma^1$ which, by the conclusions of Lemma \ref{lemm:exceimprlemm}, show that \ref{en:main1} to \ref{en:main5} hold with $j=1$. Now suppose we have constructed $\{\bC^{(j)}\}_{j=1}^J$ and $\{\Gamma^j\}_{j=1}^J$ satisfying \ref{en:main1} to \ref{en:main5}. By choice of $\eps$, we can insist that $\eps c_1 (1 + \tfrac{1}{2} + (\tfrac{1}{2})^2 + ... ) < \eps_0$. Then note that \ref{en:main1} to \ref{en:main5} imply that the hypotheses of Lemma \ref{lemm:exceimprlemm} are satisfied with $\bC^{(J)}$ in place of $\bC$ and with $\eta_{0,\theta^J\, *}(\Gamma^J)^{-1}_*V$ in place of $V$. Applying the Lemma produces $\bC^{(J+1)}$ and a rigid motion $\Gamma_*^{J+1}$ which satisfy the listed properties and this shows that we indeed have the sequence as claimed.

\wl

\noindent Now observe that by choosing $\eps$ sufficiently small, we can repeat the proof of the claim but starting with $(\eta_{Z,1/8})_*V$ in place of $V$ for any $Z \in \spt \|V\| \cap B_{3/4}(0)$ with $\Theta_V(Z) \geq 2$ (to initially satisfy the hypotheses of Lemma \ref{lemm:exceimprlemm} here we need to use the translation invariance of $\bC^{(0)}$ along its axis and the estimate \ref{en:L2Z1} of Corollary \ref{cor:L2Z}). Then \ref{en:main2} implies that for each such $Z$, the sequence $\{\Gamma_{Z\; *}^j\bC_Z^{(j)}\}_{j=1}^{\infty}$ whose existence is asserted by the claim converges. The result is that there exists some $\bC_Z \in \cC$, a rotation $\Gamma_Z$ and $\alpha = \alpha(n,k,\bC^{(0)},L) \in (0,1)$ for which (writing $V_Z := (\eta_{Z,1/8})_*V$ ) we have
\begin{enumerate}[label = \Roman*),ref= \Roman*)]
\item \label{en:main6} $|\Gamma_Z - \mathrm{id}_{\RR^{n+k} }|  \leq c\cQ_{V_Z}(\bC^{(0)})$   
\item \label{en:main7} $\nu_{\bC_Z,\bC^{(0)}}  \leq c\cQ_{V_Z}(\bC^{(0)})$ 
\item \label{en:main8} $\rho^{-(n+2)} \int_{B_{\rho}^n(0)\times\RR^k} \dist^2(X, \spt\|\Gamma_{Z *}\bC_Z\|)\, d\|V_Z\|(X)  \leq c\rho^{2\alpha} \cQ^2_{V_Z}(\bC^{(0)})$
\item \label{en:main9} $\rho^{-(n+2)} \int_{\Gamma_{Z}\big((B_{\rho}^n(0)\times \RR^k)\setminus \{ r_{\bC^{(0)}} < \rho/8\}\big)} \dist^2(X,\spt\|V_Z\|)\, d\|\Gamma_{Z *}\bC_Z\|(X)$ \\ $\leq c\rho^{2\alpha}\cQ^2_{V_Z}(\bC^{(0)})$ 
for all $\rho \in (0,\theta)$,
\end{enumerate}
where the last two points are proved by using a standard argument to interpolate between the scales $\theta^j$ for $j = 1,2,...$. Observe that \ref{en:main8} implies that $\Gamma_{Z *}\bC_Z \in \cC$ is the unique tangent cone to $V$ at $Z$.

\wl

\noindent Write $\cD(V) = \{Z : \Theta_V(Z) \geq 2\}$. Suppose that $\bC^{(0)}$ is properly aligned and suppose for the sake of contradiction that there exists $Y \in A(\bC^{(0)}) \cap B_{1/4}(0)$ for which $(\RR^{l+k} \times \{Y\}) \cap \cD(V)$ contains more than one point: Pick such a $Y$ and let $Z_1$, $Z_2$ be two distinct points of $(\RR^{l+k} \times \{Y\}) \cap \cD(V)$. But now if we write $\sigma = |Z_1 - Z_2|$, we can violate estimate \ref{en:L2Z1} of Corollary \ref{cor:L2Z} when we apply it with $\eta_{0,16\sigma}\Gamma_{Z_1\; *}^{-1}V_{Z_1}$ in place of $V$ and $(16\sigma)^{-1}\Gamma_{Z_1}^{-1}(8(Z_2-Z_1))$ in place of $Z$. Thus $\cD(V) \cap B_{1/4}(0)$ is graphical over $A(\bC^{(0)})$: There exists a function $\tilde{\varphi} : A(\bC^{(0)}) \cap B_{3/16}(0) \to A(\bC^{(0)})^{\perp}$ for which $\cD(V) \cap B_{1/8}(0) \subset \gr \tilde{\varphi}$. In fact, if, for $Z \in \cD(V) \cap B_{1/4}(0)$, we write $S_Z = Z + \Gamma_Z(A(\bC^{(0)}))$, then using \ref{en:L2Z1} of Corollary \ref{cor:L2Z} in a similar way actually tells us that 
\beq \label{main10}
\cD(V) \cap B_{\rho}(Z) \subset (S_Z)_{c\rho^{1+\alpha}}
\eeq
for every $\rho \in (0,1/8)$, which, in light of \ref{en:main6} above implies that $\tilde{\varphi}$ is Lipschitz.

\wl 

\noindent Now, pick two points $X_1$, $X_2 \in \spt \|V\| \cap (B_{1/64}^n(0)\times \RR^k)$ with $\Theta_V(X_i) \geq 2$ for $i=1,2$ and write $\sigma :=|\pi X_1 - \pi X_2| > 0$. Using \ref{en:main8} above, we have that
\begin{align} 
&  c\sigma^{-n-2}\int_{B_{32\sigma}^n(0)\times \RR^k}  \dist^2(X, \spt \|\bC_{X_2}\|) d\|\Gamma_{X_2\; *}^{-1}V_{X_2}\|(X) \nonumber \\
& \leq  c\sigma^{2\alpha} \cQ^2_V(\bC^{(0)}). \label{main11}
\end{align}
Note that $\tilde{Z} := \Gamma_{X_2}^{-1}(8(X_1 - X_2))$ is a point of density at least two for the varifold $\Gamma_{X_2 *}^{-1}V_{X_2}$. Using the inclusion $B^n_{2\sigma}(\pi X_2) \supset B^n_{\sigma}(\pi X_1)$, followed by \ref{en:L2Z2} of Corollary \ref{cor:L2Z} with $\eta_{0,16\sigma *}\Gamma_{X_2\; *}^{-1}V_{X_2}$ and $\bC_{X_2}$ in place of $V$ and $\bC$ respectively and with $Z = (16\sigma)^{-1}\tilde{Z}$, we have that
\begin{align} 
& \int_{B_2^n(0)\times \RR^k}\dist^2(X,\spt\|\bC_{X_2}\|)d\|\eta_{\tilde{Z},8\sigma *}\Gamma_{X_2 *}^{-1}V_{X_2}\|(X) \nonumber \\
& = c\sigma^{-n-2}\int_{B_{16\sigma}^n(\pi \tilde{Z})\times \RR^k}\dist^2(X,\spt\|T_{\tilde{Z} *}\bC_{X_2}\|)d\|\Gamma_{X_2 *}^{-1}V_{X_2}\|(X) \nonumber \\
& \leq c\sigma^{-n-2}\int_{B_{32\sigma}^n(0)\times \RR^k}\dist^2(X,\spt\|T_{\tilde{Z} *}\bC_{X_2}\|)d\|\Gamma_{X_2 *}^{-1}V_{X_2}\|(X) \nonumber \\
& \leq c \sigma^{-n-2}\int_{B_{32\sigma}^n(0)\times \RR^k}\dist^2(X,\spt\|\bC_{X_2}\|)d\|\Gamma_{X_2 *}^{-1}V_{X_2}\|(X) \nonumber \\
& \leq c\sigma^{2\alpha} \cQ^2_V(\bC^{(0)})\ \ \text{(by \eqref{main11})}. \label{main12} 
\end{align}
This shows that we may apply Lemma \ref{lemm:exceimprlemm} with $V' := \eta_{\tilde{Z},8\sigma *}\Gamma_{X_2 *}^{-1}V_{X_2}$ in place of $V$ and with $\bC_{X_2}$ in place of $\bC$ and perform the same iteration argument that led to \ref{en:main6} - \ref{en:main9} above. The result is that we deduce the existence of some $\bC_{X_1}^{\prime} \in \cC$ and rotation $\Gamma_{X_1}^{\prime}$ for which 
\begin{enumerate}[label = \roman*)',ref= \roman*)']
\item \label{en:main13} $|\Gamma_{X_1}^{\prime} - \mathrm{id}_{\RR^{n+k}}| \leq c\sigma^{\alpha} \cQ_{V'}(\bC^{(0)})$
\item \label{en:main14} $\nu_{\bC_{X_2},\bC_{X_1}^{\prime}} \leq c\sigma^{\alpha} \cQ_{V'}(\bC^{(0)})$.
\item \label{en:main15} $\rho^{-n-2}\int_{B_{\rho}^n(0)\times \RR^k} \dist^2(X, \spt \|\Gamma_{X_1 *}^{\prime}\bC_{X_1}^{\prime}\|) d\|V'\|(X) \\
\leq c\rho^{2\alpha}\sigma^{2\alpha} \cQ^2_{V'}(\bC^{(0)})$ for all $\rho \in (0,\theta)$.
\end{enumerate}
But now \eqref{en:main15} together with \ref{en:main8} (used with $X_1$) implies that 
\beq
\eta_{X_2,1/8\; *}^{-1}\Gamma_{X_2 *}\eta_{Z,8\sigma\; *}^{-1}\Gamma_{X_1 *}^{\prime}\bC_{X_1}^{\prime} = T_{X_1}\Gamma_{X_1 *}\bC_{X_1}.
\eeq
Unravelling this and using \ref{en:main13} tells us that 
\beq
|(\Gamma_{X_2}^{-1} \circ \Gamma_{X_1}) - \mathrm{id}_{\RR^{n+k}}| \leq c\sigma^{\alpha}\cQ_V(\bC^{(0)}),
\eeq
whence (using the fact that $\tilde{\varphi}$ is Lipschitz)
\beq \label{main15}
|\Gamma_{X_1} - \Gamma_{X_2}| \leq c(n,k,L,\bC^{(0)}) |X_1^{\top_{{A(\bC^{(0)})}}} - X_2^{\top_{{A(\bC^{(0)})}}}|^{\alpha} \cQ_V(\bC^{(0)}).
\eeq
From here, one can invoke general Whitney-type extension theorems to deduce that there is a $C^{1,\alpha}$ function $\varphi : A(\bC^{(0)}) \cap B_{1/64}(0) \to A(\bC^{(0)})$, for which $\cD(V) \cap B_{1/128}(0) \subset \gr \varphi$ (the classical Whitney extension will only give $C^1$ regularity of $\varphi$, but \cite[Theorem 4; \textsection 2.3, Chapter VI]{steinsingular} suffices to deduce the existence of a $C^{1,\alpha}$ extension satisfying $\|\varphi\|_{C^{1,\alpha}(A(\bC^{(0)}) \cap B_{1/64}(0))} \leq c\cQ_V(\bC^{(0)})$).

Using 3) of Remark \ref{rema:dagger} and initial choice of $\epsilon$, we may assume that $\sing V \cap B_{1/4}(0) \subset (\cD(V))_{\theta/2}$. If there exists $X_0 \in (\sing V \setminus \cD(V)) \cap B_{1/4}(0)$, then let $Z \in \cD(V)$ be such that $r := |X_0 - Z| = \dist(X_0,\cD(V))$. But now, using \ref{en:main7}, \ref{en:main8} and \ref{en:main9}, we have that $\cQ_{(\eta_{0,4r})_*\Gamma_Z^{-1}V_Z \tiny{\res} B_1(0)}(\bC^{(0)}) \leq c\eps$ and yet the singular point $(4r)^{-1}\Gamma^{-1}_Z(8(X_0 - Z))$ is distance $1/4$ from the nearest point of $\cD((\eta_{0,4r})_*\Gamma_Z^{-1}V_Z)$. For correct choice of $\epsilon$, this would directly violate the observations of 3) of Remark \ref{rema:dagger}. Combining this with \eqref{main10}, we therefore deduce that 
\beq \label{main16}
\cD(V) \cap B_{1/128}(0) = \sing V \cap B_{1/128}(0) \subset \gr \varphi.
\eeq

\bigskip

\noindent \textbf{Specifics of the proof of Theorem 1.} So we have that $V \res  B_{1/128}(0)$ decomposes as two disjoint smooth graphs locally away from $\gr \varphi$. This means that we can write $V \res B_{1/130}(0) = (|\gr \bar{u}_1| + |\gr \bar{u}_2|) \res B_{1/130}(0)$, where for $i=1,2$, we have that $\bar{u}_i \in C^{0,1}(\bP_i^{(0)} \cap B_{1/130}(0),\bP_i^{(0)\perp})$ and $\bar{u}_i$ is smooth and solves the Minimal Surface System on $B_{1/130}(0)\setminus \bp_{\bP_i^{(0)}}(\gr \varphi)$. Now a removability result due to Harvey and Lawson (\cite[Theorem 1.2]{harveylawsonextending}) gives us that $\bar{u}_i$ extends over $\bp_{\bP_i^{(0)}}(\gr \varphi)$ as a weak solution to the minimal surface system, after which Allard Regularity implies that $\bar{u}_i$ is actually smooth in $B_{1/132}(0)$. With $M_i := \gr \bar{u}_i$, the conclusions of Theorem 2 now hold in ball $B_{132}(0)$, but it is clear that our arguments show that it can be made to hold with $B_{1/2}(0)$ in place of $B_{1/132}(0)$.

\bigskip

\noindent \textbf{Specifics of the Proof of Theorem 3.} Notice that \ref{en:main7} and Remark \ref{en:C0inP2} imply that $C_Z \in \cC_{n-1}\setminus \cP_{n-1}$ for all $Z \in \cD(V) \cap B_{3/4}(0)$. Then using \ref{en:main8} and the argument of \emph{1)} of Remark \ref{rema:dagger}, one can deduce that we have equality in \eqref{main16}, \emph{i.e.} $\sing V \cap B_{1/128}(0) = \gr \varphi \cap B_{1/128}(0)$. Now observe that $B^n_{1/128}(0)\setminus \pi (\gr \varphi)$ is the disjoint union of two simply connected components $U_a$ and $U_b$, say, and whence $V \res (U_a \times \RR^k)$ decomposes as $|\gr f_1| + |\gr f_2|$, where $f_i$ is smooth on $U_a$. Now, using a Campanato regularity lemma (\emph{e.g.} \cite[Theorem 4.4]{rafeiromorreycampanato}), we can separately prove $C^{1,\alpha}$ regularity of each $f_i$ up to its boundary $\pi \gr \varphi$. Therefore $V \res (U_a \times \RR^k)$ consists of two separate smooth minimal submanifolds and the same holds for $U_b$ and 1) of Theorem 3 follows directly.

\subsection{Proof of Theorem 4}

By looking at the cross-section of $\bC$, the problem is immediately reduced to that of showing that a two-dimensional Lipschitz minimal two-valued graphical cone $\bC_0$ with trivial spine must be a pair of planes meeting only at the origin. Suppose then that $\bC_0$ is the varifold associated to the graph of the two-valued function $f : \RR^2 \to \RR^k$. We will analyse the link $\Sigma := \spt \|\bC_0\| \cap S^{1+k}$, which defines a one-dimensional stationary varifold in the sphere $S^{1+k}$. If the link does not contain any singularities, then (by the Allard-Almgren classification of one-dimensional stationary varifolds in Riemannian manifolds - \cite{allardalmgrenstructure}) it is the disjoint union of two great circles, in which case $\bC_0$ is a pair of planes meeting only at the origin and we are done. Thus we may assume that $\Sigma$ has at least one singular point. In fact we show that this leads to a contradiction.

\bigskip

\noindent Let $S^1\times\{0\}$ denote the unit circle in the domain, \emph{i.e.} $S^1\times\{0\} := \{(x,0) \in \RR^2\times\RR^k : |x| = 1\}$. For each $(x,0) \in S^1\times\{0\}$, write 
\[
S^k_x := \{Z \in S^{1+k} : Z = (x,y)/|(x,y)|,\ \text{for some}\ y \in \RR^k\}.
\]
This set is an open $k$-dimensional hemisphere. The fact that $\Sigma$ is the link of a two-valued graph implies that for every $(x,0) \in S^1\times\{0\}^k$, we have that $S^k_x \cap \Sigma$ consists of two (possibly coinciding) points. For notational ease we define the following two-valued function: For $(x,0) \in S^1\times\{0\}$, let $\tilde{f}((x,0)) = S^k_x \cap \Sigma$. We also write $\tilde{\bp}$ for the `projection' which sends $S^k_x$ to $(x,0)$.

\bigskip

\noindent Note that every singular point of $\Sigma$ is a multiplicity two point of $V$. The work of \cite{allardalmgrenstructure} gives us a good description of the singularities: For each point $X \in \sing \Sigma$, there is a $\delta$ such that, writing $d_S$ for the distance on the sphere, we have that
\beq \label{tangconeclass1}
\Sigma \cap \{ d_S(\cdot,X) < \delta\} = \bigcup_{i=1}^4 \{\gamma^{X}_i(s) : s \in [0,t_{\delta})\}
\eeq 
where for $i=1,...,4,$ $\gamma^X_i : [0,1] \to S^{1+k}$ are geodesics in the sphere with $\gamma^X_i(0) = X$ and such that
\beq \label{tangconeclass2}
\sum_{i=1}^4 \dot{\gamma}^X_i(0) = 0.
\eeq
Note that here we can actually take $\delta$ be the distance to the nearest singular point, \emph{i.e.}
\beq \label{tangconeclass3}
\delta = \dist(X, \sing \Sigma \setminus \{X\}),
\eeq
where this distance is computed in the sphere metric.
\bigskip

\noindent Now fix a singular point $X_0 \in \sing \Sigma$. Let $X_1$ denote a singular point at distance $\delta$ from $X_0$  and write $X_1 = \gamma^{X_0}_1(t_{\delta})$ (where $\delta$ and $t_{\delta}$ are as in \eqref{tangconeclass1}). Write $x_i := \tilde{\bp} X_i$ for $i=0,1$ and write $S^1\times\{0\} = [-\pi,\pi)\times\{0\}$ in such a way that $x_0 = 0$ and $x_1 > 0$. Consider 
\beq
R:= \tilde{f}(\{ (x,0) : 0 < x < x_1 \} ) \setminus \gamma^{X_0}_1((0,t_{\delta})).
\eeq
Notice that for every  $x \in (0,x_1)$, $R \cap S^k_x$ is a single point. Thus in fact $R = \gamma^{X_0}_j( (0,t') )$ for some $j \in \{2,3,4\}$, where
\beq
t' : = \inf_{t \in [0,1]}\gamma^{X_0}_j(t) \in S^k_{x_1}
\eeq
Assume without loss of generality that $j=2$. Since $X_1$ is a singular point, it is a multiplicity two point. This means that $\Sigma \cap S^k_{x_1}$ is a single point and therefore that $\gamma^{X_0}_2(t') = X_1$. However, observe that the great circles of which $\gamma^{X_0}_i$ for $i=1,2$ are segments can only possibly meet at two antipodal points. Since they meet at $X_0$, we deduce that they do in fact meet at $-X_0$ and therefore that $X_1 = -X_0$. This means $\delta = \diam S^{1+k}$ which implies that $\Sigma$ is the union of four half-great-circles meeting only at the points $X_0$ and $-X_0$. We deduce that $\bC_0$ is four half-planes meeting along a line, which means that $\dim S(\bC) = n-1$. This contradiction shows that $\Sigma$ could not have had any singularities and this completes the proof.


\begin{thebibliography}{DLS16b}

\bibitem[AA76]{allardalmgrenstructure}
W.~Allard and F.~Almgren, \emph{The structure of stationary one dimensional
  varifolds with positive density}, Invent. Math. \textbf{34} (1976), no.~2,
  83--97, MR 0425741,  Zbl 0339.49020

\bibitem[All72]{allard}
W.K. Allard, \emph{On the first variation of a varifold}, Ann. of Math.
  \textbf{95} (1972), no.~3, 417--491. MR 0307015, Zbl 0252.49028

\bibitem[Alm00]{almgrenbig}
F.J. Almgren, \emph{Almgren's big regularity paper: Q-valued functions
  minimizing dirichlet's integral and the regularity of area-minimizing
  rectifiable currents up to codimension 2}, vol.~1, World Scientific, 2000.
  MR 1777737, Zbl 0985.49001

\bibitem[Bar79]{barbosaextrinsic}
J. L. M. Barbosa, \emph{An extrinsic rigidity theorem for
  minimal immersions from {$S^2$}\ into {$S^n$}}, J. Differential Geom.
  \textbf{14} (1979), no.~3, 355--368 (1980). MR 594706, Zbl 0427.53028

\bibitem[Cha88]{changtwodimcurr}
S. X. Chang, \emph{Two-dimensional area minimizing integral currents are
  classical minimal surfaces}, J Amer. Math. Soc. \textbf{1} (1988), no.~4,
  699--778, MR 0946554, Zbl 0667.49026

\bibitem[DLS11]{despadQ}
C.~De~Lellis and E.~Spadaro, \emph{Q-valued functions revisited}, Mem. Amer.
  Math. Soc. \textbf{211} (2011), no.~991,  MR 2663735, Zbl 1246.49001

\bibitem[DLS13]{despadmultiple}
C.~De~Lellis and E.~Spadaro, \emph{Multiple valued functions and integral currents}, arXiv preprint
  arXiv:1306.1188 (2013).

\bibitem[DLS14]{despadregularityI}
C.~De~Lellis and E.~Spadaro,  \emph{Regularity of area minimizing currents i: gradient l p
  estimates}, Geom. Funct. Anal. \textbf{24} (2014), no.~6, 1831--1884, MR 3283929, Zbl 1307.49043

\bibitem[DLS16a]{despadregularityII}
C.~De~Lellis and E.~Spadaro,  \emph{Regularity of area minimizing currents ii: center manifold},
  Ann. of Math. (2) \textbf{183} (2016), 499--575, MR 3450482, Zbl 06575343

\bibitem[DLS16b]{despadregularityIII}
C.~De~Lellis and E.~Spadaro, \emph{Regularity of area minimizing currents iii: blow up},
  Ann. of Math. (2) \textbf{183} (2016), 577--617, MR 3450483, Zbl 06575344

\bibitem[Fed69]{federergmt}
H.~Federer, \emph{Geometric measure theory}, vol. 1996, Springer New York,
  1969, MR 0257325, Zbl 0874.49001 

\bibitem[HL75]{harveylawsonextending}
R.~Harvey and B.~Lawson, \emph{Extending minimal varieties}, Invent. math.
  \textbf{28} (1975), no.~3, 209--226, MR 0370319, Zbl 0316.49032

\bibitem[JL05]{juutlindremovability}
P.~Juutinen and P.~Lindqvist, \emph{Removability of a level set for solutions
  of quasilinear equations}, Comm. Partial Differential Equations \textbf{30}
  (2005), no.~3, 305--321, MR 2131056,  Zbl 1115.35029 

\bibitem[Kru13]{krumexis}
B.~Krummel, \emph{Existence and regularity of multivalued solutions to elliptic
  equations and systems}, arXiv preprint arXiv:1309.6233 (2013).

\bibitem[LO77]{lawsonosserman}
H.~B. Lawson and R.~Osserman, \emph{Non-existence, non-uniqueness and
  irregularity of solutions to the minimal surface system}, Acta Math.
  \textbf{139} (1977), no.~1, 1--17, MR 0452745, Zbl 0376.49016

\bibitem[Mor66]{morreymultiple}
C.~B. Morrey, \emph{Multiple integrals in the calculus of variations}, vol.~4,
  Springer, 1966, MR 2492985, Zbl 1213.49002

\bibitem[Ros10]{rosales2valumse}
L.~Rosales, \emph{The geometric structure of solutions to the two-valued
  minimal surface equation}, Calc. Var. Partial Differential Equations
  \textbf{39} (2010), no.~1-2, 59--84, MR 2659679, Zbl 1195.49051

\bibitem[RS13]{rafeiromorreycampanato}
N.~Rafeiro, H.~Samko and S.~Samko, \emph{Morrey-campanato spaces: an overview},
  Operator Theory, Pseudo-Differential Equations, and Mathematical Physics,
  Springer, 2013, pp.~293--323, MR 3025501, Zbl 1273.46019

\bibitem[Sim83]{simongmt}
L.~Simon, \emph{Lectures on geometric measure theory, volume 3 of proceedings
  of the centre for mathematical analysis, australian national university},
  Australian National University Centre for Mathematical Analysis, Canberra
  (1983), 6, MR 0756417, Zbl 0546.49019

\bibitem[Sim93]{simoncylindrical}
\bysame, \emph{Cylindrical tangent cones and the singular set of minimal
  submanifolds}, J. Differential Geom. \textbf{38} (1993), no.~3, 585--652, MR 1243788, Zbl 0819.53029

\bibitem[Sim96]{simontheorems}
\bysame, \emph{Theorems on regularity and singularity of energy minimizing
  maps: based on lecture notes by norbert hungerbuhler}, Birkhauser, 1996, MR 1399562,  Zbl 0864.58015

\bibitem[Ste70]{steinsingular}
E.~M. Stein, \emph{Singular integrals and differentiability properties of
  functions}, vol.~2, Princeton university press, 1970, MR 0290095, Zbl 0207.13501

\bibitem[SW07]{simonwickpresboun}
L.~Simon and N.~Wickramasekera, \emph{Stable branched minimal immersions with
  prescribed boundary}, J. Differential Geom. \textbf{75} (2007), no.~1,
  143--173, MR 2282727, Zbl 1109.53064

\bibitem[SW10]{simonwickfrequency}
L.~Simon and N.~Wickramasekera, \emph{A frequency function and singular set bounds for branched
  minimal immersions}, Comm. Pure Appl. Math. \textbf{69} (2016), no. 7, 1213--1258, MR 3503021, Zbl 06594248

\bibitem[Whi83]{whiteuniqueness}
B.~White, \emph{Tangent cones to two-dimensional area-minimizing integral
  currents are unique}, Duke Univ. Press, Durham, NC. \textbf{50} (1983),
  no.~1, 143--160, MR 0700134, Zbl 0538.49030

\bibitem[Whi97]{whitestratification}
\bysame, \emph{Stratification of minimal surfaces, mean curvature flows, and
  harmonic maps}, J. Reine Angew. Math \textbf{488} (1997), 1--36, MR 1465365, Zbl 0874.58007

\bibitem[Wic04]{wickrigidity}
N.~Wickramasekera, \emph{A rigidity theorem for stable minimal hypercones}, J.
  Differential Geom. \textbf{68} (2004), no.~3, 433--514, MR 2144538, Zbl 1085.53055

\bibitem[Wic08]{wick08}
\bysame, \emph{A regularity and compactness theory for immersed stable minimal
  hypersurfaces of multiplicity at most 2}, J. Differential Geom. \textbf{80}
  (2008), no.~1, 79, MR 2434260, Zbl 1153.53044

\bibitem[Wic14]{wickgeneral}
\bysame, \emph{A general regularity theory for stable codimension 1 integral
  varifolds}, Ann. Math. \textbf{179} (2014), no.~3, 843--1007, MR 3171756, Zbl 1307.58005

\end{thebibliography}

\providecommand{\bysame}{\leavevmode\hbox to3em{\hrulefill}\thinspace}
\providecommand{\MR}{\relax\ifhmode\unskip\space\fi MR }
\providecommand{\MRhref}[2]{%
  \href{http://www.ams.org/mathscinet-getitem?mr=#1}{#2}
}
\providecommand{\href}[2]{#2}

\end{document}